\numberwithin{equation}{section}
\setlist[enumerate,1]{label={\rm(\arabic*)}, ref={\rm\arabic*}}
\setlist[enumerate,2]{label={\rm(\alph*)}, ref={\rm\alph*}}
\definecolor{dblue}{rgb}{0,0,.6}
\theoremstyle{plain}
\newtheorem{theorem}{Theorem}[section]
\newtheorem{proposition}[theorem]{Proposition}
\newtheorem{lemma}[theorem]{Lemma}
\newtheorem{corollary}[theorem]{Corollary}
\theoremstyle{definition}
\newtheorem{definition}[theorem]{Definition}
\theoremstyle{remark}
\newtheorem{remark}[theorem]{Remark}
\newcommand{\del}{\partial}
\newcommand{\Z}{\mathbb Z}
\newcommand{\Q}{\mathbb Q}
\newcommand{\C}{\mathbb C}
\newcommand{\N}{\mathbb N}
\newcommand{\RR}{\operatorname{R}}
\newcommand{\CP}{\mathbb P}
\newcommand{\im}{\operatorname{im}}
\newcommand{\Hom}{\operatorname{Hom}}
\newcommand{\Div}{\operatorname{div}}
\newcommand{\id}{\operatorname{id}}
\newcommand{\Spec}{\operatorname{Spec}}
\newcommand{\pr}{\operatorname{pr}}
\newcommand{\codim}{\operatorname{codim}}
\newcommand{\rat}{\operatorname{rat}}
\newcommand{\CH}{\operatorname{CH}}
\newcommand{\supp}{\operatorname{supp}}
\newcommand{\sing}{\operatorname{sing}} 
\newcommand{\red}{\operatorname{red}}
\newcommand{\cl}{\operatorname{cl}}
 \newcommand{\sm}{\operatorname{sm}}
  \newcommand{\Grifft}{A_0}
\newcommand{\Sh}{\operatorname{Shv}}
\newcommand{\et}{\mathrm{\acute{e}t}}
\newcommand{\proet}{\mathrm{pro\acute{e}t}}
\newcommand{\Ab}{\operatorname{Ab}}
\newcommand{\Tr}{\operatorname{Tr}}
\newcommand{\colim}{\operatorname{colim}}
\newcommand{\exc}{\operatorname{exc}}
\newcommand{\Mod}{\operatorname{Mod}}
\newcommand{\an}{\operatorname{an}}
\newcommand{\restr}{\operatorname{restr}}
\newcommand{\dashedlongrightarrow}{\xymatrix@1@=15pt{\ar@{-->}[r]&}}
\renewcommand{\twoheadrightarrow}{\xymatrix@1@=15pt{\ar@{->>}[r]&}}
\newcommand{\hooklongrightarrow}{\xymatrix@1@=15pt{\ar@{^(->}[r]&}}
\newcommand{\congpf}{\xymatrix@1@=15pt{\ar[r]^-\sim&}}
\renewcommand{\cong}{\simeq}
\newcommand{\lra}{\longrightarrow}
\def\lowcong{\vbox to 0pt{\vss\hbox{$\scriptstyle\cong$}\vskip-1.5pt}}
\def\lowsim{\vbox to 0pt{\vss\hbox{$\scriptstyle\sim$}\vskip-2pt}}
\newcommand{\BM}{\mathrm{BM}}
\newcommand{\nr}{\mathrm{nr}}
\newcommand{\cons}{\mathrm{cons}}
\newcommand{\cont}{\mathrm{cont}}
\newcommand{\comp}{\mathrm{comp}}
\newcommand{\tr}{\mathrm{tr}}
\newcommand{\op}{\operatorname{op}}
\newcommand{\supth}[1]{\ensuremath{#1^{\mathrm{th}}}}
\title{A moving lemma for cohomology with support}
\author{Stefan Schreieder} 
\address{Institute of Algebraic Geometry, Leibniz University Hannover, Welfengarten 1, 30167 Hannover, Germany.}
\email{schreieder@math.uni-hannover.de}
\begin{document}


\maketitle

\begin{prelims}

\DisplayAbstractInEnglish

\bigskip

\DisplayKeyWords

\medskip

\DisplayMSCclass

\end{prelims}


\newpage

\setcounter{tocdepth}{1}

\tableofcontents


\section{Introduction}

On a smooth quasi-projective variety $X$, Chow's moving lemma, see \cite{chow}, allows one to move an algebraic cycle modulo rational equivalence into good position with respect to a closed subset $S\subset X$.

The essential content of the Gersten conjecture for smooth varieties, proven for K-theory by Quillen, see \cite{quillen}, and for \'etale cohomology by Bloch--Ogus, see \cite{BO}, and Gabber, see \cite{gabber,CTHK}, is an effacement theorem, which is equivalent to a similar moving lemma for classes with support in the particular case where $X$ is affine and $S$ is a finite set of points.  For instance, in the case of \'etale cohomology, the effacement theorem is equivalent to saying that for any smooth affine $k$-variety $X$, any finite set $S\subset X$, and any class $\alpha\in H^i_Z(X)$ whose support $Z\subset X$ is nowhere dense, there is a class $\alpha'\in H^i_{Z'}(X)$ with $S\cap Z'=\emptyset$ such that $\alpha$ and $\alpha'$ have the same image in $H^i_W(X)$ for some closed $W\subset X$ with $Z,Z'\subset W$ and $\dim W=\dim Z+1$.  This is a fundamental result in algebraic geometry.  We refer to the surveys \cite{CTHK,mochizuki} for more details, applications, references, and historical remarks.

The above analogy leads naturally to the question whether the effacement theorems of Quillen, Bloch--Ogus, and Gabber are special instances of a more general moving lemma which allows one to move a class with support $Z\subset X$ to one with support $Z'$ such that $Z'$ is in good position with respect to an arbitrary given closed subset $S\subset X$.  This paper answers that question positively for a natural class of cohomology theories on smooth quasi-projective varieties in characteristic zero (or more generally, with a smooth projective compactification).  In the classical case where $X$ is affine and $\dim S=0$, this yields a new proof of the Gersten conjecture for \'etale cohomology in characteristic zero.  Our proof yields a stronger conclusion than what was known before, as it is well behaved with respect to localization and hence gives rise to new Gersten-type resolutions on finite levels.  The general case where $\dim S>0$ is new and has several applications that go beyond the original Gersten conjecture.

\subsection{Main result}  \label{subsec:higher-effacement:intro}
We fix a field $k$ and a twisted cohomology theory with support $(X,Z)\mapsto H^\ast_Z(X,n)$ for smooth equi-dimensional algebraic $k$-schemes $X$ with $Z\subset X$ closed.  We assume that some natural axioms, as outlined in Section \ref{sec:axioms} below, are satisfied.  Concrete examples include \'etale or pro-\'etale cohomology with suitable coefficients; see Proposition \ref{prop:pro-etale-coho}.

To simplify notation, we write $H^\ast(X,n):=H^\ast_X(X,n)$ and $H^\ast_Z(U,n):=H^\ast_{Z\cap U}(U,n)$ for $U\subset X$ open.  Our main result is the following.

\begin{theorem}  [Moving lemma]
\label{thm:moving-lemma-cohomology}
Let $X$ be a smooth equi-dimensional $k$-scheme that admits a smooth projective compactification.  Let $S,Z\subset X$ be closed subsets with $\dim Z<\dim X$.  Then there are closed subsets $Z'\subset W\subset X$ with $Z\subset W$, $\dim Z'=\dim Z$, and $\dim W=\dim Z+1$ such that $Z'$ and $W\setminus Z$ meet $S$ properly and for any $\alpha\in H^\ast_Z(X,n)$, there is a class $\alpha'\in H_{Z'}^\ast(X,n)$ such that $\alpha$ and $\alpha'$ have the same image in $H^\ast_W(X,n)$.
\end{theorem}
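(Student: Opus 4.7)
The strategy is to combine a geometric construction of $W$ and $Z'$ with a cohomological transfer argument. The smooth projective compactification enters essentially in the geometric step, through a Bertini-type construction of a one-parameter family of $d$-dimensional subschemes of $\bar X$ (where $d = \dim Z$) containing the closure $\bar Z$ of $Z$. I expect the cohomological transfer to be the main difficulty.

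For the geometric part, embed a smooth projective compactification $\bar X \supset X$ into some $\mathbb P^N$, and let $\bar Z, \bar S \subset \bar X$ denote the closures of $Z$ and $S$. I would produce a one-parameter family $\{Z_t\}_{t \in C}$ of $d$-dimensional closed subschemes of $\bar X$, indexed by a smooth curve $C$, with $\bar Z = Z_{t_0}$ for some $t_0 \in C$, and such that for $t$ in a nonempty open subset of $C$ the fiber $Z_t$ meets $\bar S$ in $\bar X$ with proper dimension. Such a family can be built by a Bertini/pencil argument after a suitable generic projection or blowup. Set $\bar W = \bigcup_{t \in C} Z_t$, $W = \bar W \cap X$, pick a general $t_1 \in C$, and take $Z' = Z_{t_1} \cap X$. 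Dimension counts together with the genericity of $t_1$ yield $\dim W = d+1$, $\dim Z' = d$, and the required proper-intersection conditions for both $Z'$ and $W \setminus Z$ against $S$.

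For the cohomological transfer, I would use the localization sequence
\[
H^*_{Z'}(X,n) \longrightarrow H^*_W(X,n) \longrightarrow H^*_{W \setminus Z'}(X \setminus Z', n).
\]
It is enough to show that the image of $\alpha$ in the rightmost group vanishes. The image of $\alpha$ in $H^*_W(X,n)$ has support in $Z \subset W$, so its restriction to $X \setminus Z'$ has support in $Z \setminus Z' \subset W \setminus Z'$. The family structure $W \to C$ presents $W \setminus Z'$ as $W \times_C (C \setminus \{t_1\})$, and a specialization or homotopy-invariance argument (relying on the axioms of Section~\ref{sec:axioms}) should force the image in $H^*_{W \setminus Z'}(X \setminus Z', n)$ to vanish; morally, the class is pulled back from a single fiber of the family and can be deformed off $Z \setminus Z'$ inside the tube.

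The main obstacle will be this cohomological vanishing. Constructing the family is essentially standard once the smooth compactification is in hand, though simultaneously arranging proper intersection of both $Z'$ and $W \setminus Z$ with $S$ requires some care in the choice of the generic projection or blowup. The vanishing step, by contrast, needs precise control on how classes with support behave in one-parameter families. It will likely force me to refine the geometric construction so that $W \to C$ is smooth (or at least smooth near the relevant loci), enabling purity, proper base change, and specialization from the axioms to be applied. A further technical subtlety is that the total space of the family may meet $\bar X \setminus X$, so keeping track of the behavior at the boundary — and ensuring that the curve $C$ and the family can be chosen so that $Z$ and $Z'$ remain inside $X$ — will require additional bookkeeping.
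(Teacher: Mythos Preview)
Your proposal differs substantially from the paper's proof, and the cohomological transfer step contains a genuine gap.

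The paper does not move $Z$ directly. Instead it applies Chow's moving lemma for \emph{algebraic cycles} (Theorem~\ref{thm:moving}, due to Levine) to the diagonal $\Delta_X\subset X\times X$, producing a cycle $\Delta'_X$ rationally equivalent to $\Delta_X$ on some closed $W_{X\times X}\subset X\times X$ of codimension $d_X-1$, together with suitable transversality against maps built from $\bar Z$, $R=\bar Z\setminus Z$, and $S$. The subsets $W$ and $Z'$ are then defined as projections to the second factor of $W_{X\times X}\cap(\bar Z\times X)$ and of $\supp(\Delta'_X)\cap(\bar Z\times X)$ (plus a correction from $R$). The cohomological vanishing is obtained entirely from axiom~\ref{item:action-of-cycles}: one constructs an action $\Gamma(W_{X\times X})_\ast$ of correspondences on cohomology with support (Lemma~\ref{lem:Gamma-ast}), checks that $\Delta_X(W_{X\times X})_\ast$ is the natural restriction map (Lemma~\ref{lem:Gamma-ast:Gamma=Delta}), that $\Delta'_X(W_{X\times X})_\ast=\Delta_X(W_{X\times X})_\ast$ because rational equivalence on $W_{X\times X}$ kills the action (Lemma~\ref{lem:Gamma-ast:Gamma-sim-0} via~\ref{item:action-of-cycles-rationally-trivial}), and that $\Delta'_X(W_{X\times X})_\ast=0$ because the support of $\Delta'_X$ has been moved away (Lemma~\ref{lem:Gamma-ast-zero-if-wrong-support}). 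No homotopy invariance or specialization is used.

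Your argument breaks precisely where you write ``a specialization or homotopy-invariance argument \dots\ should force the image \dots\ to vanish.'' The axioms in Section~\ref{sec:axioms} contain neither $\mathbb A^1$-invariance nor specialization maps, and nothing in them implies that a class supported on a single fibre $Z=Z_{t_0}$ of a family $W\to C$ becomes zero in $H^\ast_{W\setminus Z'}(X\setminus Z',n)$. Concretely, even if $W\to C$ were smooth and the family trivial, a class in $H^\ast_Z(X,n)$ pushes forward to a typically nonzero element of $H^\ast_W(X,n)$; removing another fibre $Z'=Z_{t_1}$ does not kill it. What makes the paper's argument work is that the \emph{cycle-theoretic} rational equivalence $\Delta_X\sim\Delta'_X$ on $W_{X\times X}$ is converted, via~\ref{item:action-of-cycles-rationally-trivial}, into an equality of cohomological operators. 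Your family $\{Z_t\}$ encodes a rational equivalence of cycles on $X$, but you have not explained how to turn that into an operator identity on $H^\ast_Z(X,n)$; doing so is exactly the content of Section~\ref{sec:action}, and it requires working on $X\times X$ rather than on $X$. A secondary issue is that ``Bertini/pencil'' does not produce families containing a prescribed $\bar Z$; the classical constructions (cone over $\bar Z$, generic projection) give instead a cycle of the form $\bar Z+(\text{excess})$, which is again the diagonal-decomposition picture rather than a literal deformation of $\bar Z$.
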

 
In Theorem \ref{thm:moving-body} below, we prove a stronger, but more technical, version of Theorem~\ref{thm:moving-lemma-cohomology}, where $S\subset X$ is replaced by a morphism $f\colon S\to X$ and the behaviour under localization on $X$ is discussed.

The main idea of our proof is to reduce the problem to a version of Chow's moving lemma due to Levine.  More precisely, we construct an action of correspondences on the cohomology class in question and note that the diagonal acts as the identity.  Moving the diagonal via Chow's moving lemma will then move our class.  This is particularly clear in the case where $X$ is smooth projective, but technical difficulties appear in the (important) case where $X$ is only an open subset of a smooth projective scheme.

Even if $X$ in Theorem \ref{thm:moving-lemma-cohomology} is affine, our proof is global and makes essential use of a smooth projective compactification.  We explain several applications in the following two subsections.

\subsection{Effacement theorems and a finite-level version of the Gersten conjecture} 
By the long exact sequence of triples, the conclusion of Theorem \ref{thm:moving-lemma-cohomology} is equivalent to saying that the natural map $H^\ast_{Z}(X,n)\to H^\ast_W(X\setminus Z',n)$ is zero.  If $\dim S+\dim Z<\dim X$, then the condition that $Z'$ meets $S$ properly simply means that $X\setminus Z'$ is a neighbourhood of $S$.  We thus get the following.

\begin{corollary}[Global effacement]
\label{cor:global-effacement}
Let $X$ be a smooth equi-dimensional $k$-scheme that admits a smooth projective compactification.  Let $S,Z\subset X$ be closed subsets with $\dim S+\dim Z<\dim X$.  Then there exist a neighbourhood $U\subset X$ of $S$ and a closed subset $W\subset X$ with $Z\subset W$ and $\dim W=\dim Z+1$ such that the following composition is zero:
$$
H^\ast_{Z }(X,n)\longrightarrow H^\ast_{W}(X,n)\longrightarrow H^\ast_{W}(U,n).
$$ 
 \end{corollary}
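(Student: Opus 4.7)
The plan is to deduce Corollary~\ref{cor:global-effacement} as a direct consequence of Theorem~\ref{thm:moving-lemma-cohomology}, choosing the neighborhood $U$ of $S$ to be the complement of the moved support $Z'$. First, I would apply Theorem~\ref{thm:moving-lemma-cohomology} to the given closed subsets $S, Z\subset X$ to produce closed subsets $Z'\subset W\subset X$ with $Z\subset W$, $\dim Z' = \dim Z$ and $\dim W = \dim Z + 1$, such that $Z'$ and $W\setminus Z$ both meet $S$ properly, and such that every class $\alpha\in H^\ast_Z(X,n)$ admits some $\alpha'\in H^\ast_{Z'}(X,n)$ having the same image as $\alpha$ in $H^\ast_W(X,n)$. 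Note that the hypothesis $\dim S + \dim Z < \dim X$ in the corollary is at least as strong as the hypothesis $\dim Z < \dim X$ of the theorem, so this application is legitimate (and trivial if $S = \emptyset$).

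The crucial point is a dimension estimate. Since $X$ is equi-dimensional and $Z'$ meets $S$ properly,
\[
\dim(Z'\cap S) \;\leq\; \dim Z' + \dim S - \dim X \;=\; \dim Z + \dim S - \dim X,
\]
and the strict inequality $\dim S + \dim Z < \dim X$ forces the right-hand side to be negative. Hence $Z'\cap S = \emptyset$, so $U := X\setminus Z'$ is a Zariski open neighborhood of $S$ in $X$.

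To finish, I would check that the composition $H^\ast_Z(X,n)\to H^\ast_W(X,n)\to H^\ast_W(U,n)$ vanishes on an arbitrary class $\alpha$. Pick $\alpha'\in H^\ast_{Z'}(X,n)$ as above. By functoriality of cohomology with support under open immersions (one of the standing axioms), the natural square
\[
\xymatrix{
H^\ast_{Z'}(X,n) \ar[r] \ar[d] & H^\ast_W(X,n) \ar[d] \\
H^\ast_{Z'}(U,n) \ar[r] & H^\ast_W(U,n)
}
\]
commutes. Since $Z'\cap U = \emptyset$, the bottom-left group equals $H^\ast_\emptyset(U,n) = 0$, so the restriction of $\alpha'$ to $H^\ast_W(U,n)$ vanishes. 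But $\alpha$ and $\alpha'$ have equal image in $H^\ast_W(X,n)$, hence equal image in $H^\ast_W(U,n)$. Therefore $\alpha$ maps to zero in $H^\ast_W(U,n)$, and as $\alpha$ was arbitrary the composition is the zero map.

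There is no genuine obstacle in this deduction: the entire analytical and geometric content is already packaged inside Theorem~\ref{thm:moving-lemma-cohomology}. The one point deserving attention is the conversion of ``$Z'$ meets $S$ properly'' into the stronger statement ``$Z'\cap S = \emptyset$'', which uses precisely the \emph{strict} inequality in the hypothesis; without strictness one would only obtain $\dim(Z'\cap S)\leq 0$, and the complement $X\setminus Z'$ would fail to be a neighborhood of $S$ in general.
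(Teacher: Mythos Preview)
Your proof is correct and follows essentially the same route as the paper: apply the moving lemma to obtain $Z'\subset W$, use the strict inequality $\dim S+\dim Z<\dim X$ together with proper intersection to conclude $Z'\cap S=\emptyset$, set $U=X\setminus Z'$, and deduce the vanishing of the composition. The only cosmetic difference is that the paper phrases the final step via the long exact sequence of triples (the composite $H^\ast_{Z'}(X,n)\to H^\ast_W(X,n)\to H^\ast_{W\setminus Z'}(X\setminus Z',n)$ is zero by exactness), whereas you use the commutative square together with $H^\ast_\emptyset(U,n)=0$; these are equivalent packagings of the same fact.
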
 

The case $\dim S=0$ implies formally by specialization that the result holds for any finite set of (possibly non-closed) points $S\subset X$ and any nowhere dense closed subset $Z\subset X$.  This is the aforementioned effacement theorem of Bloch--Ogus and Gabber in our context; \textit{cf.} \cite[Theorems 2.2.1]{CTHK}.  A similar effacement theorem for K-theory had previously been proven by Quillen; see \cite[p.\ 125, Theorem 5.11]{quillen}.

Now let $X_S$ be the (Zariski) localization of $X$ along $S\subset X$, \textit{i.e.}~the pro-scheme given by the system of all open neighbourhoods $U\subset X$ of $S$ in $X$.  One defines
\begin{align*} 
H^\ast_Z(X_S,n):=\lim_{\substack{\longrightarrow\\S\subset U\subset X}} 
 H^\ast_{Z }(U,n) \quad \text{and}\quad H^\ast (X_S\setminus Z,n):=\lim_{\substack{\longrightarrow\\S\subset U\subset X}} 
 H^\ast (U\setminus Z,n).
\end{align*} 

\begin{corollary} 
\label{cor:tubular-neighbourhood}
Let $X$ be a smooth equi-dimensional $k$-scheme that admits a smooth projective compactification.  Let $S,Z\subset X$ be a closed subsets with $ \dim S+\dim Z<\dim X$.  Then $H^\ast_Z(X_S,n)\to H^\ast(X_S,n)$ is zero, and the long exact sequence of triples induces for all $i$ a short exact sequence
\begin{align*} 
0\longrightarrow H^i(X_S,n)\longrightarrow H^i(X_S\setminus Z,n)\stackrel{\del}\longrightarrow H^{i+1}_Z(X_S,n)\longrightarrow 0 .
\end{align*}
\end{corollary}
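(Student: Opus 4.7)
The plan is to derive both assertions from Corollary \ref{cor:global-effacement}, applied not to $X$ itself but to the various open neighborhoods $V\subset X$ of $S$. The key observation is that any smooth projective compactification $\bar{X}$ of $X$ is also a smooth projective compactification of every nonempty open subscheme $V\subset X$, so the hypotheses of Corollary \ref{cor:global-effacement} are inherited by every such $V$.

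First I would establish that $H^\ast_Z(X_S,n)\to H^\ast(X_S,n)$ vanishes. By definition of the direct limits, it suffices to show that for every open $V\subset X$ containing $S$ and every $\alpha\in H^\ast_{Z\cap V}(V,n)$, there is an open $V'\subset V$ still containing $S$ such that $\alpha$ maps to zero in $H^\ast(V',n)$. Since $V$ is smooth equi-dimensional of dimension $\dim X$ and satisfies
$$
\dim S+\dim(Z\cap V)\le \dim S+\dim Z<\dim X=\dim V,
$$
Corollary \ref{cor:global-effacement} applied to $(V,S,Z\cap V)$ yields an open $V'\subset V$ containing $S$ and a closed $W'\subset V$ with $Z\cap V\subset W'$ and $\dim W'=\dim(Z\cap V)+1$ such that the composition
$$
H^\ast_{Z\cap V}(V,n)\longrightarrow H^\ast_{W'}(V,n)\longrightarrow H^\ast_{W'}(V',n)
$$
is zero. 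The map $H^\ast_{Z\cap V}(V,n)\to H^\ast(V',n)$ factors through this composition via the forget-support map $H^\ast_{W'}(V',n)\to H^\ast(V',n)$, so the image of $\alpha$ in $H^\ast(V',n)$ is zero, as required.

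To deduce the short exact sequence, I would split the long exact sequence of the pair at the level of $X_S$: such a sequence exists because filtered colimits are exact in abelian groups, so the long exact sequences for $(V,V\setminus Z)$ combine into one for $(X_S,X_S\setminus Z)$. Using the vanishing just established in every degree, the restriction maps $H^i(X_S,n)\to H^i(X_S\setminus Z,n)$ are injective (their kernels are the images of the zero maps $H^i_Z(X_S,n)\to H^i(X_S,n)$) and the boundary maps $H^i(X_S\setminus Z,n)\to H^{i+1}_Z(X_S,n)$ are surjective (their cokernels are the kernels of the zero maps $H^{i+1}_Z(X_S,n)\to H^{i+1}(X_S,n)$), which gives the claimed short exact sequence.

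The only nontrivial ingredient is the inheritance of smooth projective compactifications under open embeddings, which is an immediate observation; beyond that the argument is purely formal manipulation of colimits and long exact sequences, and I do not foresee any genuine obstacle.
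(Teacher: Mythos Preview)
Your proposal is correct and follows essentially the same route as the paper's proof: the paper also derives the vanishing of $H^\ast_Z(X_S,n)\to H^\ast(X_S,n)$ from Corollary~\ref{cor:global-effacement} (invoking the compatibility of pushforwards with restriction to open subsets, i.e.\ \ref{item:f_*:open-immersion}, to make the factorization through $H^\ast_{W'}(V',n)$ work), and then reads off the short exact sequence from the long exact sequence of triples passed to the colimit. Your write-up is in fact more explicit than the paper's about the crucial point that global effacement must be applied to each open neighbourhood $V\supset S$ separately, using that $V$ inherits the smooth projective compactification of $X$.
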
 

An interesting special case is when $S=Z$ with $\dim Z<\frac{1}{2}\dim X$, where we get an algebro-geometric analogue of the following fact from differential topology: if $N$ is a tubular neighbourhood of a submanifold $A$ of a real manifold $M$ of real codimension $c$, then the natural map $H^i_A(N,\Z)\to H^i(N,\Z)$ identifies with the map $H^{i-c}(A,\Z)\to H^i(A,\Z)$ given by cup product with the Euler class of the normal bundle of $A$ in $M$.  Hence the map in question is zero if the Euler class is zero.
Corollary~\ref{cor:tubular-neighbourhood} is an analogue of that result which also applies  to singular subvarieties $Z\subset X$ and to the (comparatively coarse) Zariski localization $X_S$; the triviality of the Euler class is replaced by the condition $\dim Z<\frac{1}{2}\dim X$.

Contrary to Chow's moving lemma, the subset $Z'$ in Theorem \ref{thm:moving-lemma-cohomology} cannot be chosen to be well behaved with respect to localization: if we shrink $X$, then $Z'$ typically has to be enlarged; see Remark \ref{rem:localization-Z'}.  Surprisingly, the closed subset $W$ in Theorem \ref{thm:moving-lemma-cohomology} is much better behaved.  This yields the following.

\begin{corollary} [Local effacement]
\label{cor:local-effacement}
Let $X$ be a smooth equi-dimensional $k$-scheme which admits a smooth projective compactification.  Let $Z\subset X$ be a nowhere dense closed subset.  Let $S\subset X$ be either closed with $\dim S <\codim Z-1$ or a finite set of points.  Then there is a closed subset $W\subset X$ with $Z\subset W$ and $\dim W=\dim Z+1$ such that the natural map $ H^\ast_Z(X_S,n)\to H^\ast_W(X_S,n) $ is zero.
\end{corollary}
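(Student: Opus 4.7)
The plan is to invoke the moving lemma (Theorem \ref{thm:moving-lemma-cohomology}, or more precisely its localized refinement Theorem \ref{thm:moving-body}) for the triple $(X,S,Z)$ and extract from it a single $W$ that effaces every class, while exploiting the dimension hypothesis on $S$ to push the auxiliary support $Z'$ completely off $S$.

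First I would apply the moving lemma to $(X,S,Z)$, obtaining closed subsets $Z'\subset W\subset X$ with $\dim Z'=\dim Z$ and $\dim W=\dim Z+1$ such that both $Z'$ and $W\setminus Z$ meet $S$ properly. Crucially, since the theorem produces $W$ before any cohomology class is specified, $W$ depends only on the triple $(X,S,Z)$; this is the $W$ we want. I would then verify that the dimension hypothesis forces $Z'\cap S=\emptyset$: proper intersection gives $\dim(Z'\cap S)\leq \dim Z+\dim S-\dim X$, and both the assumption $\dim S<\codim Z-1$ and the case of a finite set $S$ (where $\dim S=0$ and $\dim Z<\dim X$ by hypothesis) yield $\dim Z+\dim S<\dim X$, hence $Z'\cap S=\emptyset$. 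Consequently some neighborhood $V\supset S$ in $X$ satisfies $V\cap Z'=\emptyset$, so that $H^\ast_{Z'}(V,n)=0$.

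To finish, fix a class $\beta\in H^\ast_Z(X_S,n)$ represented by $\beta_U\in H^\ast_Z(U,n)$ for some open $U\supset S$, and show its image in $H^\ast_W(X_S,n)$ vanishes. The localized moving lemma applied to $\beta_U$ --- legitimate because $U$ is open in the smooth projective compactification $\bar X$ of $X$ and therefore admits $\bar X$ as a smooth projective compactification itself --- produces, after possibly shrinking $U$, an auxiliary class $\beta'_U\in H^\ast_{Z'}(U,n)$ such that $\beta_U$ and $\beta'_U$ have the same image in $H^\ast_W(U,n)$ for the \emph{same} $W$ chosen above. Restricting to $V\cap U$ kills $\beta'_U$ because $Z'\cap V=\emptyset$, so the image of $\beta_U$ vanishes in $H^\ast_W(V\cap U,n)$; passing to the colimit over $U\supset S$ finishes the proof.

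The main obstacle is this uniformity of $W$: Theorem \ref{thm:moving-lemma-cohomology} on its own only provides effacement for global classes on $X$, but the corollary requires handling classes defined merely on neighborhoods of $S$ while still insisting on a single effacing $W\subset X$. This uniformity, singled out in the excerpt as a surprising feature of the proof (in sharp contrast to $Z'$, which \emph{must} be enlarged upon shrinking $X$), is precisely what the localized refinement Theorem \ref{thm:moving-body} must supply; granting it, the dimension count producing $Z'\cap S=\emptyset$ delivers the vanishing immediately.
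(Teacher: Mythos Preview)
Your approach is essentially the paper's: invoke the localization property of $W$ in Theorem~\ref{thm:moving-body} (item~(\ref{item:moving-main:4})) and use the dimension hypothesis to force $Z'\cap S=\emptyset$. One minor inaccuracy: once you insist on item~(\ref{item:moving-main:4}), you may no longer keep $\dim Z'=\dim Z$ nor the same $Z'$ across neighbourhoods (see item~(\ref{item:moving-main:5}) and Remark~\ref{rem:localization-Z'}); this is harmless, since each new $Z'_U$ still avoids $S$ by item~(\ref{item:moving-main:1}) and Lemma~\ref{lem:fibre-dimension}.

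There is, however, a genuine gap in the finite-set case. You write ``$\dim S=0$'', but the statement allows $S$ to consist of non-closed points (this is the case relevant to Gersten-type applications, cf.\ Corollary~\ref{cor:gersten}), so the closure $\bar S$ may have large dimension and neither your dimension count nor the ``$\dim f(S)<\codim_U Z-1$'' clause of item~(\ref{item:moving-main:4}) applies directly. The paper's fix (see the proof of Corollary~\ref{cor:local-effacement-body}) is to apply Theorem~\ref{thm:moving-body} to the natural map $f:\bar S^\nu\to X$ from the disjoint union of the closures of the points; item~(\ref{item:moving-main:4}) then furnishes a dense open $\bar S^\circ\subset\bar S^\nu$ over which $W$ localizes well, and the points of $S$, being generic points of the components of $\bar S^\nu$, automatically lie in $\bar S^\circ$. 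Disjointness $Z'_U\cap S=\emptyset$ then follows from item~(\ref{item:moving-main:1}), since $f^{-1}(Z'_U)$ has codimension at least $\codim_X Z\geq 1$ in each component of $\bar S^\nu$ and hence misses the generic points.
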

 
Corollary~\ref{cor:local-effacement} is new even in the classical case where $S$ is a finite set of points.  In this case, $X_S=\Spec(\mathcal O_{X,S})$
is the spectrum of the semi-local ring $\mathcal O_{X,S}$, and it was known before that the above vanishing result holds if one passes to the direct limit over all such closed subsets $W$; see \cite[Theorem 4.2.3]{BO}. The fact that a single subscheme suffices is new  (and may be somewhat surprising). In fact,  Colliot-Th\'el\`ene--Hoobler--Kahn write in \cite[Remark 2.2.8]{CTHK}  the following concerning this issue:
\smallskip

``\textit{We would like to point out that $($contrary to the definition of effaceability$)$ the statement in Theorem 2.2.7 is not local: the proof by no means implies that the map of Theorem 2.2.7 (2) remains $0$ when U is replaced by a smaller open set. \emph{[\ldots]}  This shows the subtlety of the situation and probably why Gersten’s conjecture is so difficult for general regular local rings of dimension $\geq 2$.}''
\smallskip

As a consequence, we obtain the following finite-level version of the Gersten conjecture for the \'etale cohomology of varieties over fields of characteristic zero.

\begin{corollary}[The Gersten conjecture on finite levels] \label{cor:gersten}
Let $X$ be a smooth affine variety over a field $k$. Assume $X$ admits a smooth projective compactification $($e.g.\ $\operatorname{char} k=0)$.  Let $x\in X^{(c)}$ with localization $X_x=\Spec(\mathcal O_{X,x})$.  Let $ Z_c=\{x\}\subset Z_{c-1}\subset \dots \subset Z_1\subset Z_0=X_x$ be a chain of closed subsets of $X_x$ of increasing dimensions.  Up to replacing the given chain $\{Z_j\}_j $ by one that is finer $($i.e.\ by a chain $\{Z_j'\}$ as above with $Z_j\subset Z_j'$ for all $j)$, the following complex is exact for all $i$:
\begin{align*}
0\lra H^i(X_x,n)\lra  H_{\BM}^{i}(X_x\setminus Z_{1}) \stackrel{\del}\lra H_{\BM}^{i-1} (Z_1\setminus Z_{2}) \stackrel{\del}\lra \cdots \stackrel{\del}\lra H_{\BM}^{1}(Z_{i-1}\setminus Z_{i}) \stackrel{\del}\lra H_{\BM}^{0}(Z_{i}\setminus Z_{i+1}) \lra 0 ,
\end{align*} 
where
$Z_j=\emptyset$ for $j>c$,  
$$
H^{i-j}_{\BM}(Z_j\setminus Z_{j+1}):=\lim_{\substack{\longrightarrow \\ x\in  U\subset X}}H^{i+j}_{\bar Z_{j}\setminus \bar Z_{j+1}}(U\setminus  \bar Z_{j+1},n+j ), 
$$
and $\bar Z_j\subset X$ denotes the closure of\, $Z_j$ in $X$.  If moreover $T\subset X_x$ is closed and each closed subset $Z_j\subset X_x$ of the given chain $\{Z_j\}$ meets $T\setminus \{x\}$ dimensionally transversely, then the above chosen refinement $\{Z_j'\}$ still satisfies this transversality condition with respect to $T\setminus \{x\}$. 
\end{corollary}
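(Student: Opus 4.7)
The refinement $\{Z_j'\}$ is built by iteratively applying local effacement (Corollary \ref{cor:local-effacement}) to the chain from the bottom up. Setting $Z_c' := Z_c = \{x\}$, for each $j = c, c-1, \ldots, 1$ in turn we apply Corollary \ref{cor:local-effacement} to $Z := \bar Z_j' \subset X$ and $S := \{x\}$ (a finite set of points of $X$, so the corollary applies without any codimension hypothesis on $S$), obtaining a closed subset $W_j \subset X$ with $\bar Z_j' \subset W_j$, $\dim W_j = \dim \bar Z_j' + 1 = \dim \bar Z_{j-1}$, and such that $H^\ast_{\bar Z_j'}(X_x, n') \to H^\ast_{W_j}(X_x, n')$ vanishes for the finitely many twists $n'$ relevant to the Gersten complex at level $i$. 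Define the refined chain by $Z_j' := Z_j \cup \bigcup_{k > j}(W_k \cap X_x)$; then $Z_j \subset Z_j'$, $Z_j' \subset Z_{j-1}'$, and $\dim Z_j' = \dim Z_j$, and by construction $W_k \cap X_x \subset Z_{k-1}'$ on $X_x$, so on $X_x$ we have inclusions of supports $\bar Z_k' \subset W_k \subset \bar Z_{k-1}'$. To preserve dimensional transversality with $T \setminus \{x\}$, I use the stronger Theorem \ref{thm:moving-body} in place of Corollary \ref{cor:local-effacement}, ensuring each $W_j$ is also in good position against $\bar T \setminus \bar x$.

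Exactness at slot $j \geq 1$ follows from a diagram chase in the long exact sequences of the triples $\bar Z_{j+2}' \subset \bar Z_{j+1}' \subset \bar Z_j' \subset \bar Z_{j-1}'$. Given $\alpha \in H^{i+j}_{\bar Z_j' \setminus \bar Z_{j+1}'}(X_x \setminus \bar Z_{j+1}', n+j)$ with $d_j\alpha = 0$, the long exact sequence of the pair $\bar Z_{j+2}' \subset \bar Z_{j+1}'$ yields a lift $\beta \in H^{i+j+1}_{\bar Z_{j+2}'}(X_x, n+j)$ of $\partial\alpha$. On $X_x$, the natural map $H^\ast_{\bar Z_{j+2}'}(X_x) \to H^\ast_{\bar Z_{j+1}'}(X_x)$ factors through $H^\ast_{W_{j+2}}(X_x)$, and the first factor vanishes by the choice of $W_{j+2}$; hence $\partial\alpha = 0$ and $\alpha$ lifts to $\tilde\alpha \in H^{i+j}_{\bar Z_j'}(X_x, n+j)$. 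The symmetric factorization through $W_j$ (between $\bar Z_j'$ and $\bar Z_{j-1}'$) shows $\tilde\alpha$ vanishes in $H^{i+j}_{\bar Z_{j-1}'}(X_x, n+j)$, so by the long exact sequence of the pair $\bar Z_j' \subset \bar Z_{j-1}'$ there is $\gamma$ in slot $j-1$ with $d_{j-1}(\gamma) = \alpha$. Injectivity at slot 0 is the same argument using only the effacement at $\bar Z_1'$ (noting $W_1 = X$, so $H^\ast_{W_1}(X_x) = H^\ast(X_x)$).

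The main obstacle is coordinating the effacements so that a single $W_j$ per level (possibly a finite union) controls both diagram chases into which $W_j$ enters, at slots $j$ and $j-2$, simultaneously for each of the relevant twists. Since Corollary \ref{cor:local-effacement} provides the effacement uniformly in the class, and only finitely many twists appear in the Gersten complex for fixed $i$, this is handled by replacing each $W_j$ by a finite union of effacements within its fixed dimension bound $\dim \bar Z_{j-1}$. A minor technical point is translating the globally defined effacement on $X$ to the statement on $X_x$; this is routine because cohomology with support on $X_x$ is defined as a direct limit over open neighborhoods of $x$ in $X$ and the $W_j$ are globally defined on $X$.
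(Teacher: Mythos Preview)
Your approach is correct and essentially the same as the paper's. Both build the refined chain by iterating the local effacement (Corollary~\ref{cor:local-effacement}, in the body Corollary~6.4) from the bottom up so that the pushforward $H^\ast_{\bar Z'_{j+1}}(X_x,m)\to H^\ast_{\bar Z'_{j}}(X_x,m)$ vanishes for every $j$; the paper then packages the exactness via the associated exact couple (the map $\iota_\ast:D_1\to D_1$ is zero, so the spectral sequence degenerates at $E_1$), while you unfold this as an explicit diagram chase. These are the same argument.

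Two small remarks. First, your concern about needing a finite union of effacements to handle several twists is unnecessary: the subset $W$ produced by Theorem~\ref{thm:moving-body} (and hence by Corollary~\ref{cor:local-effacement}) is constructed purely geometrically from Chow's moving lemma applied to the diagonal, so the vanishing $H^\ast_{\bar Z'_{j+1}}(X_x,m)\to H^\ast_{W}(X_x,m)$ holds for all degrees and all twists $m$ with one and the same $W$. Second, for the transversality with $T\setminus\{x\}$ you do not need to bypass Corollary~\ref{cor:local-effacement} and go back to Theorem~\ref{thm:moving-body}: the body version of the local effacement (Corollary~6.4) already records that, for $S$ a finite set of points, one may arrange $W\setminus Z$ to meet any prescribed closed $T\subset X_S$ dimensionally transversely, which is exactly what the paper uses in its inductive construction of the refined chain.
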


If $H^\ast_Z(X,n)=H^\ast_Z(X_{\et},\mu_{\ell^r}^{\otimes n})$ is \'etale cohomology with support and coefficients in $\mu_{\ell^{r}}^{\otimes n}$ for some prime $\ell$ invertible in $k$, then $H^{i-j}_{\BM}(Z_j\setminus Z_{j+1})$ as defined above coincides up to a shift with Borel--Moore homology of $Z_j\setminus Z_{j+1}$ (\textit{cf.} \cite[Section~1]{BO}), whence the notation.  If moreover $Z_j\setminus Z_{j+1}$ is regular and equi-dimensional, then $H^{i-j}_{\BM}(Z_j\setminus Z_{j+1})=H^{i-j} (Z_j\setminus Z_{j+1},\mu_{\ell^r}^{\otimes n+j}) $ agrees by Gabber's purity theorem, see \cite{fujiwara-2}, with ordinary \'etale cohomology of $Z_j\setminus Z_{j+1}$.  The latter holds in particular in the limit where we run over all chains $\{Z_j\}_j $ as above.
 
Corollary~\ref{cor:gersten} says that we can compute the cohomology of the localization $X_x=\Spec(\mathcal O_{X,x})$ in terms of arbitrarily fine stratifications of $X_x$.  The original Gersten conjecture  for \'etale cohomology, proven in \cite{BO}, asserts this only in the limit: the above complex is exact if we pass to the direct limit over all chains $\{Z_j\}_j$.  The fact that exactness happens already on finite levels as well as the possibility of requiring transversality conditions with respect to a closed subset $T\subset X_x$ are new.

\subsection{Codimension j+1 purity and new motivic invariants}
For a subset $S\subset X$, we denote by $F_j^SX$ the pro-scheme given by the inverse system of all open neighbourhoods $U\subset X$ of $S$ with $\codim_X(X\setminus U)>j$.  We further set $F_jX:=F_j^{\emptyset}X$.  As before, the cohomology $H^\ast(F_j^SX,n)$ is defined as direct limit over $ H^\ast(U,n)$, where $U$ runs through the given inverse system.  Intuitively, one obtains $F_j^SX$ by successively removing from $X$ all closed subsets $Z\subset X$ with $\codim_XZ\geq j+1$ that are disjoint from $S$.  For instance, if $X$ is irreducible, $F_0X\cong \Spec(k(X))$, and if $S$ is a finite set, $F_0^S X\cong \Spec(\mathcal O_{X,S})$.
  
Important consequences of \cite{BO}, highlighted for instance in \cite[Section~3.8]{CT}, are the injectivity and codimension $1$ purity property for \'etale cohomology; see \cite[Theorems 3.8.1 and 3.8.2]{CT}.  Theorem \ref{thm:moving-lemma-cohomology} implies the following generalization.

\begin{corollary}[Injectivity and codimension j+1 purity] 
\label{cor:injectivity+purity-thm}
Let $X$ be a smooth equi-dimensional $k$-scheme that admits a smooth projective compactification, and let $S\subset X$ be closed.
\begin{enumerate}
\item  The   restriction map $H^\ast(F_j^SX,n)\to H^\ast(F_jX,n)$ is injective for $j\geq  \dim S$.
\item  
A class in $H^\ast(F_jX,n)$ that lifts to $F_{j+1}X$ also lifts to $F_{j+1}^SX$ for   $j   \geq \dim S-1$.
\end{enumerate} 
\end{corollary}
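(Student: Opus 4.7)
Both parts are deduced from Theorem \ref{thm:moving-lemma-cohomology} applied to suitable opens; since a smooth projective compactification $\bar X$ of $X$ is also a smooth projective compactification of any open $U\subset X$, the moving lemma applies verbatim on $U$. For (1), let $\alpha \in H^*(F_j^SX, n)$ have zero image in $H^*(F_jX, n)$, represented by $\alpha_U \in H^*(U, n)$ with $S\subset U$ and $\codim_X(X \setminus U) > j$; the vanishing hypothesis supplies $V \subset U$ with $\codim_X(X \setminus V) > j$ and $\alpha_U|_V = 0$. Setting $T := U \setminus V$, the localization sequence writes $\alpha_U = \iota(\beta)$ for some $\beta \in H^*_T(U, n)$. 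Applying Theorem \ref{thm:moving-lemma-cohomology} on $U$ to $T$ and $S$ gives $T' \subset W \subset U$ with $\dim T' = \dim T$ and $\beta' \in H^*_{T'}(U, n)$ whose image in $H^*_W(U, n)$, and hence in $H^*(U, n)$, equals that of $\beta$. As $j \geq \dim S$ forces $\dim T' + \dim S < \dim X$, proper intersection implies $T' \cap S = \emptyset$; then $U' := U \setminus T'$ lies in $F_j^SX$ and $\alpha_U|_{U'} = 0$, so $\alpha = 0$.

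For (2), represent the given lift by $\tilde\alpha_V \in H^*(V, n)$ with $Z := X \setminus V$ of codimension $> j+1$, and set $\eta := \delta(\tilde\alpha_V) \in H^{*+1}_Z(X, n)$, the boundary coming from the localization sequence. Apply Theorem \ref{thm:moving-lemma-cohomology} to $Z$ and $S$ on $X$, producing $Z' \subset W \subset X$ with $Z \subset W$, $\dim Z' = \dim Z$, $\dim W = \dim Z + 1$, and $\eta' \in H^{*+1}_{Z'}(X, n)$ whose image in $H^{*+1}_W(X, n)$ coincides with that of $\eta$. The hypothesis $j+1 \geq \dim S$ yields $\dim Z' + \dim S < \dim X$, hence $Z' \cap S = \emptyset$ and $R := X \setminus Z' \supset S$ lies in $F_{j+1}^SX$.

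To represent $\alpha$ by a class on $R$, restrict the equality $\eta = \eta'$ from $H^{*+1}_W(X, n)$ to $H^{*+1}_{W \cap R}(R, n)$: since $\eta'$ has support in $Z' = X \setminus R$, its restriction vanishes, and therefore so does that of $\eta$. By functoriality, the restriction of $\eta$ is the image of $\delta_R(\tilde\alpha_V|_{V \cap R}) \in H^{*+1}_{Z \cap R}(R, n)$ under the forgetful map to $H^{*+1}_{W \cap R}(R, n)$; the localization sequence for the pair $Z \cap R \subset W \cap R$ in $R$ therefore provides $\gamma \in H^*_{(W \setminus Z) \cap R}(V \cap R, n)$ with $\partial \gamma = \delta_R(\tilde\alpha_V|_{V \cap R})$. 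Then $\tilde\alpha_V|_{V \cap R} - \iota(\gamma)$ has trivial boundary and extends to some $\beta \in H^*(R, n)$; since $\iota(\gamma)$ vanishes on $V \cap R \setminus (W \setminus Z)$, whose complement in $X$ is $W \cup Z'$ of codimension $\geq j+1$, $\beta$ and $\tilde\alpha_V$ agree on an open in $F_jX$, so $\beta$ represents $\alpha$. The main technical obstacle is this diagram chase: one must link the moving lemma's equality in $H^{*+1}_W$ with the localization sequence on $R$ to recognize $\delta_R(\tilde\alpha_V|_{V\cap R})$ as a boundary modulo an error supported on the lower-dimensional $(W \setminus Z)\cap R$. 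The precise calibration $j \geq \dim S - 1$ is exactly what enforces $Z' \cap S = \emptyset$, so that $R$ contains $S$ as required.
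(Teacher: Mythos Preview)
Your proof is correct and follows essentially the same strategy as the paper: move the obstruction class (in (1), a lift $\beta\in H^*_T(U,n)$; in (2), the residue $\eta=\partial\tilde\alpha_V$) to a class supported away from $S$ via the moving lemma, then use the long exact sequence of triples to conclude. The paper routes this through the global effacement statement (Corollary~\ref{cor:global-effacement-body}, itself a corollary of Theorem~\ref{thm:moving-body}), which packages the dimension count ``$\dim Z'+\dim S<\dim X\Rightarrow Z'\cap S=\emptyset$'' once and for all, whereas you invoke Theorem~\ref{thm:moving-lemma-cohomology} directly and carry out that count by hand in each part; your explicit diagram chase in (2), producing $\gamma$ and checking that $\beta$ agrees with $\tilde\alpha_V$ on $X\setminus(W\cup Z')$, is exactly what the paper compresses into the phrase ``by functoriality of the long exact sequence of triples''.
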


The $\dim S=0$ case of Corollary~\ref{cor:injectivity+purity-thm} implies by specialization that the results hold for $j\geq 0$ and any finite set of points $S\subset X$.  Hence, the case $j=\dim S=0$ of Corollary~\ref{cor:injectivity+purity-thm} corresponds to the injectivity property and codimension $1$ purity theorem in \'etale cohomology; see \cite[Theorems 3.8.1 and 3.8.2]{CT}.  We note that our purity result is new even for $j=0$: we get that an unramified class on the generic point of $X$ lifts to an open neighbourhood of any given closed curve in $X$, while this was previously only known for points in $X$.

As pointed out by one of the referees: it is natural to wonder if Corollary \ref{cor:injectivity+purity-thm} has an analogue for $G$-torsors, \textit{i.e.}~for the functor $H^1_{\et}(-,G)$ for suitable group schemes $G$.
 
Following \cite{Sch-refined}, the refined unramified cohomology associated to the given twisted cohomology theory is given by
$$
H^\ast_{j,\nr}(X,n):=\im(H^\ast (F_{j+1}X,n)\to H^\ast (F_jX,n)) .
$$
In other words, an element $[\alpha]\in H^\ast_{j,\nr}(X,n)$ is represented by a class $\alpha \in H^\ast(U,n)$ on some open $U\subset X$ whose complement has codimension $j+2$, and any two such representatives yield the same element in $H^\ast_{j,\nr}(X,n)$ if they coincide on some open subset $V\subset X$ whose complement has codimension $j+1$.

This is a common generalization of traditional unramified cohomology and Kato homology; see \cite[Section 1.3]{Sch-refined}.  The latter are known to be motivic as a consequence of the Gersten conjecture; see \cite{BO}.  The generalization of the Gersten conjecture proven in this paper implies that refined unramified cohomology $H^i_{j,\nr}(X,n)$ is motivic for all $i$ and $j$.

\begin{corollary}\label{cor:motivic}
Let $X$ and $Y$ be smooth projective equi-dimensional schemes over a field $k$ with $d_X=\dim X$.  For all $c,i,j\geq 0$, there is a natural bi-additive pairing
$$
\CH^c(X\times Y)\times H^i_{j,\nr}(X,n)\longrightarrow H^{i+2c-2d_X}_{j+c-d_X,\nr}(Y,n+c-d_X),\quad ([\Gamma],[\alpha])\longmapsto [\Gamma]_\ast([\alpha])
$$
which is functorial with respect to the composition of correspondences.
\end{corollary}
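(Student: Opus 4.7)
To define the pairing, pick a representative $\alpha \in H^i(U, n)$ of $[\alpha] \in H^i_{j,nr}(X, n)$, with $U = X \setminus C$ and $\codim_X C \geq j + 2$, and a cycle $\Gamma \in \CH^c(X\times Y)$. By Chow's moving lemma on the smooth projective variety $X\times Y$, replace $\Gamma$ by a rationally equivalent cycle $\Gamma'$ with support $T'$ meeting $C\times Y$ properly, so $\codim_{X\times Y}(T' \cap C\times Y) \geq c + j + 2$. Since $X$ is projective, $\pi_Y|_{T'}$ is proper, hence $W := \pi_Y(T' \cap C\times Y)$ is closed in $Y$ of codimension $\geq (j+c-d_X) + 2$; set $V := Y\setminus W$. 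Then $T' \cap (X\times V) \subset U\times V$ and is closed in $X\times V$, so the cycle class $[\Gamma'|_{X\times V}] \in H^{2c}_{T' \cap X\times V}(X\times V, c)$ identifies (by excision) with a class supported in $U\times V$, and cups with $\pi_X^*\alpha \in H^i(U\times V, n)$ to yield a class in $H^{i+2c}_{T'\cap X\times V}(X\times V, n+c)$. The support maps properly to $V$, so pushforward along $\pi_Y$ produces
$$
[\Gamma]_*([\alpha]) := (\pi_Y)_*\bigl(\pi_X^*\alpha \cup [\Gamma'|_{U\times V}]\bigr) \in H^{i+2c-2d_X}(V,\, n+c-d_X),
$$
and since $\codim_Y(Y\setminus V) \geq (j+c-d_X) + 2$, this represents a bona fide element of $H^{i+2c-2d_X}_{j+c-d_X,nr}(Y, n+c-d_X)$.

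The heart of the proof is to show that this element is independent of the three choices made. Independence of $\alpha$ and of $V$ reduces to compatibility of cup product, pullback, and pushforward with restriction to smaller opens. For independence of $\Gamma'$, any two admissible moves $\Gamma_1', \Gamma_2'$ cobound a cycle $\Delta$ on $X\times Y\times \mathbb P^1$; applying Chow's moving lemma to $\Delta$ so that it meets $C \times Y \times \mathbb P^1$ properly and specializing at $0, \infty$ yields that the two constructed classes agree in $H^{i+2c-2d_X}(V'', n+c-d_X)$ for some open $V'' \subset V$ with $\codim_Y(Y\setminus V'') \geq (j+c-d_X)+1$, whence they represent the same element of $H^{i+2c-2d_X}_{j+c-d_X, nr}(Y, n+c-d_X)$. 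It is precisely in this specialization step that Theorem \ref{thm:moving-lemma-cohomology} becomes essential: it permits the two cohomology classes with support on the $\mathbb P^1$-family to be matched in a single controlled open, rather than only after passing to a direct limit over arbitrarily small opens.

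Bi-additivity is then immediate from bilinearity of cup products together with the possibility of simultaneously moving two cycles into admissible position. Functoriality under composition of correspondences $\Gamma \in \CH^c(X\times Y)$ and $\Gamma'' \in \CH^{c'}(Y\times Y_1)$ is obtained by the standard three-space projection-formula argument on $X\times Y\times Y_1$: after simultaneously moving $p_{XY}^*\Gamma$ and $p_{Y Y_1}^*\Gamma''$ into general position with respect to the relevant bad loci (arising from $C$ and from the images of the partial projections), one applies proper base change and the projection formula. The main obstacle throughout is this finite-level well-definedness: ordinary unramified cohomology can afford to pass to the direct limit over all opens of $Y$, but refined unramified cohomology requires matching cohomology classes with support in a single open of controlled codimension complement, which is exactly what the moving lemma for cohomology with support (Theorem \ref{thm:moving-lemma-cohomology}) delivers.
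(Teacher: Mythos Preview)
Your overall strategy---fix a representative $\alpha$ on $U = X \setminus C$ and then move $\Gamma$ via Chow's moving lemma so that its support meets $C \times Y$ properly---is a legitimate alternative to the paper's route. However, your argument for independence of the move $\Gamma'$ has a genuine gap: if $\Delta$ is a cycle on $X \times Y \times \mathbb P^1$ witnessing $\Gamma_1' \sim \Gamma_2'$ and you apply Chow's moving lemma to $\Delta$, the specializations of the moved $\Delta$ at $0$ and $\infty$ are no longer $\Gamma_1'$ and $\Gamma_2'$ (only rationally equivalent to them), so nothing has been compared. The correct tool is Levine's refinement of Chow's moving lemma (Theorem~\ref{thm:moving}(2)): since $\Gamma_1' - \Gamma_2'$ is rationally trivial and its support already meets $C \times Y$ properly, there is a closed $W \subset X \times Y$ of codimension $c-1$, still meeting $C \times Y$ properly, containing $\supp(\Gamma_1'-\Gamma_2')$, on which $\Gamma_1' - \Gamma_2'$ is rationally trivial; then axiom~\ref{item:action-of-cycles-rationally-trivial} gives vanishing of the action, and the complement in $Y$ of the resulting open has codimension $\geq j+c-d_X+1$, which is exactly what is needed.

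Relatedly, your invocation of Theorem~\ref{thm:moving-lemma-cohomology} is misplaced. In the approach you outline, that theorem is not used anywhere: Chow's moving lemma for algebraic cycles (Theorem~\ref{thm:moving}) does all the work, both for the construction and (via part~(2)) for well-definedness. The paper takes the dual route: it fixes $\Gamma$ (more precisely, any closed $W \supset \supp \Gamma$ of codimension $c$ or $c-1$) and uses the codimension $j{+}1$ purity theorem (Corollary~\ref{cor:injectivity+purity-thm}, itself a consequence of Theorem~\ref{thm:moving-lemma-cohomology}) to produce a representative $\alpha$ with $X \setminus U$ pulling back to codimension $\geq j+2$ on $W$. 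This makes the well-definedness checks uniform in $W$ (Lemmas~\ref{lem:Gamma-ast:Gamma-sim-0} and~\ref{lem:Gamma-ast-compatible-W&U}), and functoriality under composition becomes one explicit computation (Proposition~\ref{prop:action:composition-of-correspondences}); in your approach each change of $\alpha$ forces a fresh move of $\Gamma$, and the bookkeeping for composition is correspondingly more delicate than ``standard three-space projection formula'' suggests. Done correctly, your approach would actually prove this corollary without invoking Theorem~\ref{thm:moving-lemma-cohomology} at all; the paper's approach trades that for the flexibility of adapting $\alpha$ to an arbitrary $W$.
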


Corollary~\ref{cor:motivic} establishes in particular the existence of functorial pullbacks $f^\ast\colon H^i_{j,\nr}(Y,n)\to H^i_{j,\nr}(X,n)$ along morphisms $f\colon X\to Y$ between smooth projective varieties; \textit{cf.}~Corollary \ref{cor:pullbacks-for-H_nr}.  The existence of pullbacks is non-trivial already in the case where $Y=\CP^n$ and $X$ is the blow-up in a smooth subvariety.  The main problem is that a class on some open subset $U\subset Y$ can of course be pulled back to a class on $f^{-1}(U)$, but if $f$ is not flat, then the complement of $f^{-1}(U)$ in $X$ may have the wrong codimension.  The key ingredient that allows one to overcome this issue is the codimension $j$ purity property proven in Corollary \ref{cor:injectivity+purity-thm}, which allows one to represent unramified classes by classes on particular open subsets $U$ where this issue does not occur.

Despite the simple definition, the refined unramified cohomology groups of a variety $X$ are rather subtle invariants that interpolate between cohomology and cycle theory of $X$; see \cite{Sch-refined,Sch-griffiths}.  For instance, $H_{0,\nr}^i(X,n)\cong H_{\nr}^i(X,n)$ is classical unramified cohomology, while
$$
H^i_{j,\nr}(X,n)\cong H^i (X,n)\quad\text{for }  \lceil i/2\rceil\leq j;
$$
see  Lemma \ref{lem:H_nr=H} below or \cite[Corollary 5.10]{Sch-refined}.

If $ \lceil i/2\rceil >j$, then the refined unramified cohomology groups differ in general from ordinary cohomology.  In this (interesting) range we show that refined unramified cohomology satisfies the following basic properties that generalize the fact that traditional unramified cohomology is a stable birational invariant by \cite{CTO}.  To state our result, recall the decreasing filtration $F^\ast$ on $H^i_{j,\nr}(X,n)$, given by $F^mH^i_{j,\nr}(X,n):=\im(H^i(F_mX,n)\to H^i(F_jX,n))$ for $m\geq j+1$.

\begin{corollary} \label{cor:H_j-nr-basic}
Let $X$ and $Y$ be smooth projective equi-dimensional $k$-schemes, and let $i,j,n,m\geq 0$.  Then the following properties hold true:
\begin{enumerate} 
\item There is a natural isomorphism 
$$
\sum_lf_l:\bigoplus_{l=0}^{\min(j,n)}H^{i-2l}_{j-l,\nr}(Y,m-l)\stackrel{\lowcong}\lra H^i_{j,\nr}(Y \times \CP^n_k,m),
$$
where $f_l$ is the composition of the pullback along the projection $Y\times \CP^{n-l}_k\to Y$ followed by the pushforward along the inclusion $Y\times \CP^{n-l}_k\to Y\times \CP^{n}_k$ induced by some linear embedding $ \CP^{n-l}_k\subset \CP^{n}_k $.
\label{item:cor:H_j-nr-basic:2}
\item If $f\colon X\stackrel{\lowsim}\dashrightarrow Y$ is a birational map that is an isomorphism in codimension $c$, then $f$ induces for any $j\leq c$ an isomorphism $ f^\ast\colon H^i_{j,\nr}(Y,n) \stackrel{\lowsim}\to H^i_{j,\nr}(X,n)$.\label{item:cor:H_j-nr-basic:3}
\item \label{item:cor:H_j-nr-basic:4} The isomorphisms in \eqref{item:cor:H_j-nr-basic:2} and \eqref{item:cor:H_j-nr-basic:3} respect the decreasing filtration $F^\ast$ on both sides.
\end{enumerate}  
\end{corollary}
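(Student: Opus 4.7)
The plan is to leverage Corollary \ref{cor:motivic}, which provides a functorial action of Chow correspondences on refined unramified cohomology, and in particular (via Corollary \ref{cor:pullbacks-for-H_nr}) well-defined and functorial pullbacks $f^\ast$ along morphisms of smooth projective varieties. Both (\ref{item:cor:H_j-nr-basic:2}) and (\ref{item:cor:H_j-nr-basic:3}) will be proven by exhibiting explicit correspondences realizing the desired inverses; (\ref{item:cor:H_j-nr-basic:4}) will be a formal consequence of the manifest compatibility of the constructions with the filtration $F^\ast$.

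For (\ref{item:cor:H_j-nr-basic:3}), the hypothesis provides open subsets $V \subset X$ and $U \subset Y$ with $\codim_X(X \setminus V), \codim_Y(Y \setminus U) > c$ such that $f$ restricts to an isomorphism $f|_V : V \stackrel{\sim}\to U$. Since $j \leq c$, for any open $W \subset X$ with $\codim_X(X \setminus W) > j$ the intersection $W \cap V$ still has complement codimension $>j$ in $X$, because the codimension of a union of closed subsets is the minimum of the codimensions. Hence the opens contained in $V$ form a cofinal subsystem of the pro-scheme $F_j X$, and likewise for $F_j Y$ with the subsystem of opens in $U$. The isomorphism $f|_V$ identifies these two subsystems, inducing a natural isomorphism $H^\ast(F_j X, n) \cong H^\ast(F_j Y, n)$; the same argument on $F_{j+1}$ gives the desired iso on the quotient $H^i_{j,nr}$. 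To identify this iso with the pullback from Corollary \ref{cor:pullbacks-for-H_nr}, I would use that the graph $\Gamma_f \subset X \times Y$ restricts over $V \times U$ to the graph of $f|_V$, together with the compatibility of the correspondence action of Corollary \ref{cor:motivic} with restriction to dense opens.

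For (\ref{item:cor:H_j-nr-basic:2}), injectivity of $\pr_1^\ast$ is immediate from the section $\iota : Y \to Y \times \CP^n$, $y \mapsto (y, p_0)$, for any rational point $p_0 \in \CP^n$: functoriality gives $\iota^\ast \pr_1^\ast = \id$. For surjectivity in the range $\lceil i/2 \rceil > j$, I would invoke the projective bundle decomposition of the diagonal,
$$\Delta_{\CP^n} = \sum_{k=0}^n h^k \times h^{n-k} \in \CH^n(\CP^n \times \CP^n),$$
which, after producting with $\Delta_Y$, decomposes the identity correspondence of $Y \times \CP^n$ as $\sum_{k=0}^n \alpha_k$, with $\alpha_0$ realizing $\pr_1^\ast \iota^\ast$. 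Each $\alpha_k$ with $k \geq 1$ factors through $Y$ and, by the bi-degree formula in Corollary \ref{cor:motivic}, its action on $H^i_{j,nr}(Y \times \CP^n, m)$ factors through the intermediate group $H^{i-2k}_{j-k, nr}(Y, m-k)$. The hypothesis $\lceil i/2 \rceil > j$ then forces the $k \geq 1$ contributions to vanish in $H^i_{j,nr}(Y \times \CP^n, m)$: either the intermediate index $j-k < 0$ renders the refined group trivial by convention, or the reinsertion via $h^{n-k}$ supports the resulting class on a proper closed subset of each fiber of $\pr_1$, which a codimension analysis via Lemma \ref{lem:H_nr=H} shows to lie in a strictly smaller filtration step of $F^\ast H^i_{j,nr}(Y \times \CP^n, m)$ and hence to vanish in the quotient. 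This vanishing of the higher K\"unneth components is the main obstacle, and is where the numerical hypothesis $\lceil i/2\rceil > j$ enters essentially.

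Item (\ref{item:cor:H_j-nr-basic:4}) then follows immediately: the correspondence action of Corollary \ref{cor:motivic} is by construction compatible with the pro-systems $F_m$ for all $m \geq j$, so the isomorphisms constructed in (\ref{item:cor:H_j-nr-basic:2}) and (\ref{item:cor:H_j-nr-basic:3}) commute with the natural maps $H^i(F_m, n) \to H^i(F_j, n)$ whose images define the filtration $F^m H^i_{j,nr}$.
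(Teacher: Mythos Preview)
Your argument for item~(\ref{item:cor:H_j-nr-basic:3}) via cofinality of the opens contained in $V$ (resp.\ $U$) is correct and in fact more direct than the paper's: the paper instead uses the correspondence machinery of Corollary~\ref{cor:motivic-body}, writing $[\Gamma_f]\circ[\Gamma_{f^{-1}}]=[\Delta_Y]+[\Omega_Y]$ with $q(\supp\Omega_Y)$ of codimension $\geq c+1$, and then invoking item~(\ref{item:thm:motivic-body:wrong-support}) there to kill $[\Omega_Y]_\ast$. Your route avoids this and yields the filtration compatibility for free, since the isomorphism is literally induced by an isomorphism of the underlying pro-systems $F_mX\cong F_mY$ for all $m\leq c$.

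There is, however, a genuine gap in your treatment of item~(\ref{item:cor:H_j-nr-basic:2}). Your two-case analysis does not close. For $k>j$ you appeal to $H^{i-2k}_{j-k,nr}(Y)=0$ ``by convention'', but the paper only defines refined unramified cohomology for nonnegative index, and Corollary~\ref{cor:motivic} is only stated for $j\geq 0$; so the factoring you invoke is not even well-posed. This case \emph{can} be salvaged, but via item~(\ref{item:thm:motivic-body:wrong-support}) of Corollary~\ref{cor:motivic-body} applied directly to $\alpha_k$ (whose support projects to a linear subspace of codimension $k>j$ in $Y\times\CP^n$), not by your argument. More seriously, for $1\leq k\leq j$ your ``codimension analysis via Lemma~\ref{lem:H_nr=H}'' does not do what you need: the intermediate group $H^{i-2k}_{j-k,nr}(Y)$ is nonzero in general, the reinsertion map lands in classes supported on $Y\times\CP^{n-k}$, and neither Lemma~\ref{lem:H_nr=H} nor any filtration-step argument shows such classes lie in $\im(\pr_1^\ast)$ or vanish. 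Lying in a deeper step of $F^\ast$ is not a vanishing condition on $H^i_{j,nr}$.

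The paper closes this gap by \emph{induction on $n$} (see Corollary~\ref{cor:YxP^n}): each diagonal piece factors through $H^i_{j,nr}(Y\times\CP^l)$ with $l<n$, which by induction equals $\pr_1^\ast H^i_{j,nr}(Y)$; one then checks (using the base-change property \ref{item:f_*:f,g-smooth} and that the pushforward of $1$ along a positive-dimensional $\CP^l$-bundle vanishes in negative degree) that the remainder of the composition kills such pulled-back classes or lands in $\im(\pr_1^\ast)$. The induction is the missing ingredient in your sketch.
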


The formula stated in item~\eqref{item:cor:H_j-nr-basic:2} implies for instance 
$$
 H^i_{j,\nr}(\CP^n_k,m)\cong \bigoplus_{l=0}^{\min(j,n)} H^{i-2l}(\Spec(k),m-l) .
 $$  
This is, thanks to Corollary \ref{cor:motivic}, a consequence of the motivic decomposition of $\CP^n_k$, while a direct computation of $ H^i_{j,\nr}(\CP^n_k,m)$ appears to be a rather difficult task because $H^i(F_j\CP^n_k,m)$ is typically a huge group.

Item \eqref{item:cor:H_j-nr-basic:4} is new even for $j=0$, where it shows that the unramified cohomology groups from \cite{CTO}, that are known to be stable birational invariants, carry a filtration that is still a stable birational invariant.
 
\section{Preliminaries}

\subsection{Conventions} \label{subsec:not-preliminaries} 
 
An algebraic scheme is a separated scheme of finite type over a field; it is smooth if it is smooth over the ground field.  A variety is an integral algebraic scheme.  A closed subset of a scheme is implicitly identified with the corresponding reduced closed subscheme.  A morphism $f\colon X\to Y$ of Noetherian schemes is of pure relative dimension $d$ if for each $x\in X$, we have $d=\dim_x(X)-\dim_{f(x)}(Y)$.

If $X$ is an algebraic scheme and $Z\subset X$ is an irreducible subset, then the codimension $\codim_X(Z)$ of $Z$ in $X$ is the dimension of the local ring $\mathcal O_{X,\eta_Z}$, where $\eta_Z\in Z$ denotes the generic point of $Z$.  If $Z$ is not necessarily irreducible, then the local codimension $\codim_{X,z}(Z)$ of $Z$ in $X$ at a point $z\in Z$ is given by
$$
\codim_{X,z}(Z):=\inf_{z\in Z'\subset Z}\dim \mathcal O_{X,\eta_{Z'}}
$$
where the infimum (which is in fact a minimum because $X$ is of finite type over a field) ranges over all irreducible components $Z'$ of $Z$ that contain the point $z$.  The (global) codimension of $Z$ in $X$ is the minimum of the local codimensions at points $z\in Z$:
$$
\codim_X(Z)=\inf_{z\in Z} \codim_{X,z}(Z).
$$
This agrees with the definition that can for instance be found in \cite[Definition 5.28]{goertz-wedhorn}.  (We warn the reader that there are places in the literature where different definitions are used; \textit{e.g.}~one could replace the infimum by the supremum in the above definitions; see \textit{e.g.}~comments to \cite[\href{https://stacks.math.columbia.edu/tag/02I0}{Tag 02I0}]{stacks-project}.  However, the above definition is the one that works for our purposes.)

For an algebraic scheme $X$, we denote by $X_{(j)}$ the set of points of dimension $j$.  If $X$ is equi-dimensional, then we will also write $X^{(j)}:=X_{(d_X-j)}$, where $d_X=\dim X$.  The free abelian group generated by the closures of points $X^{(j)}$ is denoted by $Z^j(X)$.
The Chow group $\CH^j(X):=\CH_{d_X-j}(X)$ is the quotient of $Z^j(X)$ modulo rational equivalence; see \cite{fulton}.  In this paper we use the above convention only in the case where $X $ is pure-dimensional, so that points in $X^{(j)}$ have local codimension $j$ in $X$ in the above sense.

The support of a cycle $\Gamma= \sum a_i Z_i\in Z^j(X)$ is the reduced subscheme $\supp(\Gamma):=\bigcup Z_i$, where the union runs through all $i$ with $a_i\neq 0$.  If $X$ is equi-dimensional, then we say that two cycles $\Gamma_1\in Z^{j_1}(X) $ and $\Gamma_2\in Z^{j_2}(X) $ meet properly (or dimensionally transversely) if each point of $\supp(\Gamma_1)\cap \supp(\Gamma_2)$ has local codimension at least $j_1+j_2$ in $X$.  In this case, either $\supp(\Gamma_1)\cap \supp(\Gamma_2)$ is  empty, or it has pure codimension $j_1+j_2$.
 
If $\Gamma= \sum a_i Z_i\in Z^j(X)$ is a cycle and $W\subset X$ is a closed subset with $\supp(\Gamma)\subset W$, then we say that $\Gamma$ is rationally equivalent to zero on $W$ if $\Gamma$, viewed as a class in $\CH^\ast(W)$, is zero.

\subsection{Chow's moving lemma}
We will need the following version of Chow's moving lemma due to Levine; see \cite[Section I.II.3.5]{levine-2} and \cite[Theorem 2.13]{levine}.  The given references  prove a moving lemma for Bloch's cycle complex and hence for higher Chow groups (see also \cite{bloch-moving}); the version below concerns the special case of ordinary Chow groups and is deduced from \cite[Section I.II.3.5]{levine-2} and \cite[Theorem 2.13]{levine} in a straightforward way.

\begin{theorem}  \label{thm:moving}
Let $X$ be a smooth projective equi-dimensional scheme over a field $k$.  Let $S$ be a locally equi-dimensional algebraic $k$-scheme with a morphism $f\colon S\to X$. Then the following hold:
\begin{enumerate}
\item \label{item:Levine:moving:1} Any class $[\Gamma]\in \CH^c(X)$ can be represented by a cycle $\Gamma$ such that each point of $f^{-1}(\supp(\Gamma))$ has local codimension $c$ on $S$ $($\textit{i.e.} the expected codimension$)$.
\item \label{item:Levine:moving:2} Let $\Gamma\in Z^c(X)$ be such that $f^{-1}(\supp(\Gamma))$ has local codimension $c$ at each point.  If\, $\Gamma\sim_{\rat}0$, then there is a closed subscheme $W\subset X$ of codimension $c-1$ such that
\begin{itemize}
\item $f^{-1}(W)$ has, locally at each point, codimension $c-1$ on $S$;
\item $\supp \Gamma\subset W$, and $\Gamma$, viewed as a cycle on $W$, is rationally equivalent to zero on $W$.
\end{itemize}
\end{enumerate}  
\end{theorem} 

Occasionally, we will use Theorem~\ref{thm:moving} in conjunction with the following simple result.   

\begin{lemma} \label{lem:fibre-dimension}
Let $f\colon S\to X$ be a morphism between locally equi-dimensional algebraic $k$-schemes.  Let $Z\subset X$ be closed such that $f^{-1}(Z)$ is locally on $S$ of codimension at least $ \dim (f(S))+1$.  Then $f^{-1}(Z)=\emptyset$.
\end{lemma}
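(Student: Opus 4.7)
The plan is to argue by contradiction. Assume $f^{-1}(Z)$ is non-empty and pick a point $s\in f^{-1}(Z)$. I want to produce an irreducible closed subset $W\subset f^{-1}(Z)$ containing $s$ with $\dim_s W\geq \dim_s S-\dim f(S)$; this would force the codimension of $f^{-1}(Z)$ in $S$ at $s$ to be at most $\dim f(S)$, contradicting the hypothesis.

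First I would cut down to an irreducible component: since $S$ is locally equi-dimensional, any irreducible component $T$ of $S$ through $s$ has the common dimension $d=\dim_s S$, and codimensions computed in $T$ at $s$ agree with codimensions in $S$ at $s$. Replacing $f$ by $f|_T$, the image $f(T)$ is a constructible subset of $f(S)$, so $\dim\overline{f(T)}\leq \dim f(S)$ and $f|_T\colon T\to \overline{f(T)}$ is dominant between irreducible algebraic $k$-schemes.

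The main step is then the standard fiber dimension theorem applied to $f|_T$: every irreducible component of the fiber $(f|_T)^{-1}(f(s))$ passing through $s$ has dimension at least $\dim T-\dim\overline{f(T)}\geq d-\dim f(S)$. Let $W$ be such a component. Since $W$ is contained in $f^{-1}(f(s))\subset f^{-1}(Z)$, it realizes an irreducible subset of $f^{-1}(Z)$ through $s$ of codimension at most $\dim f(S)$ in $T$, hence also in $S$ by local equi-dimensionality. This contradicts the assumption that every component of $f^{-1}(Z)$ through $s$ has codimension at least $\dim f(S)+1$, so $f^{-1}(Z)=\emptyset$.

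I do not anticipate any real obstacle; the only point that requires some care is invoking the fiber dimension theorem in the correct generality for a finite-type morphism of possibly non-integral Noetherian schemes, which is why I first reduce to an irreducible component of $S$ and pass to the closure of its image.
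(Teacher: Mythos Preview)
Your proposal is correct and follows essentially the same strategy as the paper: reduce to an irreducible component of $S$, invoke the fibre dimension theorem to find a fibre of dimension at least $\dim S-\dim f(S)$ sitting inside $f^{-1}(Z)$, and deduce a contradiction with the codimension hypothesis. The only cosmetic difference is that the paper picks a closed point $x\in f(S)\cap Z$ and bounds $\dim f^{-1}(x)$, whereas you start from $s\in f^{-1}(Z)$ and bound the fibre over $f(s)$; both amount to the same application of the fibre dimension inequality.
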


\begin{proof} 
It suffices to prove the result for each irreducible component of $S$; hence we may assume that $S$ is irreducible.  Towards a contradiction, assume that $f^{-1}(Z)\neq \emptyset$, or equivalently, $f(S)\cap Z\neq \emptyset$.  Let $x\in f(S)\cap Z$ be a closed point.  Since $S$ is irreducible by the above reduction step, the theorem on the fibre dimensions of morphisms between algebraic schemes shows that $\dim S\leq \dim (f(S)) +\dim f^{-1}(x)$.  Since $f^{-1}(x)\subset f^{-1}(Z)$, it follows that $f^{-1}(Z)$ is, locally on $S$, of codimension at most $\dim (f(S)) $.  This contradicts our assumptions, which concludes the proof.
\end{proof}

\section{Twisted cohomology theory with an action by cycles} \label{sec:axioms}

In this section we list several natural properties of a twisted cohomology theory with support which admits an action by algebraic cycles.  We will show that any theory that satisfies (some of) these properties satisfies the moving lemma for classes with support as in Theorem \ref{thm:moving-lemma-cohomology}.

We fix a field $k$.  A pair $(X,Z)$ of algebraic $k$-schemes is an algebraic $k$-scheme $X$ and a closed subset $Z\subset X$.  A morphism of pairs $f\colon (X,Z_X)\to (Y,Z_Y)$ is a morphism of schemes $f\colon X\to Y$ with $f^{-1}(Z_Y)\subset Z_X$.\footnote{This condition translates to the standard definition of a morphism of pairs in topology if one replaces $(X,Z)$ by $(X,X\setminus Z)$.}  The total space of a pair $(X,Z)$ is the scheme $X$.

Let $\mathcal V_k$ be the category of pairs $(X,Z)$ of algebraic $k$-schemes. We have 
\begin{align} \label{eq:cohomology-functor} 
\mathcal V_k^{\op} \longrightarrow \{\text{graded abelian groups}\},\quad (X,Z )\longmapsto H^\ast_{Z }(X,n)
\end{align}
for $n\in \Z$.  The degree $i$ part of $H^\ast _{Z }(X,n)$ is denoted by $H^i_{Z }(X,n)$.  For any morphism $f\colon (X,Z_X)\to (Y,Z_Y)$, contravariance yields functorial pullback maps $f^\ast\colon H^i_{Z_Y}(Y,n)\to H^i_{Z_X}(X,n)$ for all $i$.

For $U\subset X$ open, we write $H^\ast_Z(U,n):=H^\ast_{Z\cap U}(U,n)$.  (This is in line with the fact that $H^\ast_Z(-,n)$ is a Zariski presheaf on $X$.)  Moreover, for $Z=X$, we write $H^\ast(X,n):=H^\ast_X(X,n)$.

We will need the following natural properties: 

\begin{enumerate}[label={C\arabic*},ref={\rm C\arabic*}]
\item \label{item:excision}
(Excision) Let $f\colon U\to X$ be an open immersion of smooth algebraic $k$-schemes, and let $Z \subset X$ be closed with $Z\subset U$.  Then the natural map $f^\ast\colon H^i_{Z }(X,n)\to H^i_{Z }(U,n)$ is an isomorphism.
\item   \label{item:f_*}
(Pushforwards)
Let $f\colon X\to Y$ be a proper morphism between smooth equi-dimensional algebraic $k$-schemes.  Let $Z_Y\subset Y$ and $Z_X\subset X$ be closed subsets with $f(Z_X)\subset Z_Y$.  Then there are pushforward maps
$$
f_\ast\colon H^{i-2c}_{Z_X}(X,n-c)\longrightarrow H^{i}_{Z_Y}(Y,n), 
$$
where $c:=\dim Y-\dim X$.  These are functorial, \textit{i.e.}~satisfy $f_\ast\circ g_\ast=(f\circ g)_\ast$.

Consider the  diagrams
$$
\xymatrix{
X'\ar[r]^{g'}\ar[d]_-{f'} & X \ar[d]^-{f} \\
Y'\ar[r]_{g}& Y\rlap{,}
}
\qquad
\xymatrix{
H_{Z_{X'}}^{i-2c} (X',n-c)\ar[d]^-{f'_\ast} &  H^{i-2c}_{Z_X} (X,n-c) \ar[d]^-{f_\ast} \ar[l]_{(g')^\ast} \\
 H^{i}_{Z_{Y'}}(Y',n)&  H^{i}_{Z_{Y}} (Y,n) \ar[l]_{g^\ast}\rlap{,}
}
$$ 
where the diagram on the left is Cartesian, $X'$ and $Y'$ are smooth and equi-dimensional, $f$ is proper, $Z_X\subset X$ and $Z_Y\subset Y$ are closed with $f(Z_X)\subset Z_Y$, $Z_{Y'}=g^{-1}(Z_Y)$, and $Z_{X'}=Z_{Y'}\times_{Y}Z_{X}\subset X'$.
\begin{enumerate}[ref=\theenumi{}(\alph*)]
\item If $g$ is an open immersion, then the diagram on the right commutes.\label{item:f_*:open-immersion}
\item If $f$ and $g$ are smooth of pure relative dimensions and if $Z_Y=Y$ and $Z_X=X$, then the diagram on the right commutes. \label{item:f_*:f,g-smooth}
\end{enumerate} 
\item  \label{item:les-triple}
(Long exact sequence of triples)
Let $Z\subset W\subset X$ be closed subsets.  Then there is a long exact sequence
$$
\dots \longrightarrow H^i_Z(X,n)  \stackrel{\iota_\ast} \longrightarrow 
H^i_W(X,n) \xrightarrow{\restr} H^i_{W\setminus Z}(X\setminus Z,n)\stackrel{\del}\longrightarrow H^{i+1}_Z(X,n) \longrightarrow \cdots,
$$
where $\iota_\ast$ denotes the proper pushforward with respect to the identity on $X$ and $\restr$ denotes the canonical restriction map.  This sequence is functorial for pullbacks: for any morphism $f\colon X'\to X$ and closed subsets $Z'\subset W'\subset X'$ such that $f^{-1}(Z)\subset Z'$ and $f^{-1}(W)\subset W'$, pullback along $f$ induces a commutative ladder between the long exact sequence of the triple $(X',W',Z')$ and that of $(X,W,Z)$.
\item \label{item:action-of-cycles}
(Action of cycles) 
For a cycle $\Gamma\in Z^c(X)$ on a smooth equi-dimensional algebraic $k$-scheme $X$ and any closed subset $W\subset X$ with $\supp(\Gamma)\subset W$, there is an additive action
$$
\cl_W^X(\Gamma)\cup\colon H^i_Z(X,n)\longrightarrow H^{i+2c}_{W\cap Z}(X,n+c),\quad \alpha\longmapsto \cl_W^X(\Gamma)\cup \alpha 
$$
which is linear in $\Gamma$ and such that the following hold:
\begin{enumerate}[ref=\theenumi{}(\alph*)]
\item \label{item:action-of-cycles-rationally-trivial}
If $\Gamma$, viewed as a cycle on $W$, is rationally equivalent to zero on $W$, then the above action is zero, \textit{i.e.} $ \cl_W^X(\Gamma)\cup \alpha =0$ for all $\alpha$.
\item \label{item:action-of-cycles:projection-formula}
If $\Gamma=[W]$ is a prime cycle with smooth support $W$ and inclusion $f\colon W\hookrightarrow X$,  then
\begin{align*} 
\cl_W^X(\Gamma)\cup \alpha =f_\ast(f^\ast \alpha)\in H^{i+2c}_{W\cap Z}(X,n+c) \quad \text{for all $\alpha\in H^i_Z(X,n)$,}
\end{align*}
where $f^\ast\colon H^i_Z(X,n)\to H^i_{W\cap Z}(W,n)$ and $f_\ast\colon H^i_{W\cap Z}(W,n)\to H^{i+2c}_{W\cap Z}(X,n+c)$ denote the natural pullback and pushforward maps, respectively.
\item \label{item:action-of-cycles:enlargening-the-support}
If $Z\subset Z'\subset X$ and $W'\subset X$ are closed with $W\subset W'$, then the following diagram commutes: 
$$
\xymatrix{
H^i_Z(X,n)\ar[r]^-{\iota_\ast} \ar[d]_-{\cl_{W}^X(\Gamma)\cup }  &  \ar[d]^-{\cl_{W'}^X(\Gamma)\cup }H^i_{Z'} (X,n)
\\
H^{i+2c}_{W\cap Z}(X,n+c) \ar[r]^-{\iota_\ast} & H^{i+2c}_{W'\cap Z'}(X,n+c),
}
$$
where the $\iota_\ast$ denote the respective pushforwards with respect to the identity on $X$.
\item \label{item:action-of-cycles:compatible-with-restriction}
For any open subset $U\subset X$,  the  following diagram commutes: 
$$
\xymatrix{
H^i_Z(X,n)\ar[rrr]^-{\cl_{W}^X(\Gamma)\cup } \ar[d]_-{\restr }  &&& H^{i+2c}_{W\cap Z}(X,n+c) \ar[d]^-{\restr } \\
H^{i}_{Z }(U,n) \ar[rrr]^-{\cl_{W\cap U}^U( \Gamma|_U)\cup }  &&& H^{i+2c}_{W\cap Z }(U,n+c) ,
}
$$
where the vertical arrows are the canonical restriction maps and $\Gamma|_U\in Z^c(U)$ is the flat pullback of $\Gamma$.
\item \label{item:action-of-cycles:compatible-with-intersections}
If $\Gamma\in Z^c(X)$ and $\Gamma'\in Z^{c'}(X)$ are such that $W=\supp \Gamma$ meets $W'=\supp \Gamma'$ properly, then
$$
\cl_W^X(\Gamma)\cup  (\cl_{W'}^X(\Gamma')\cup  \alpha)=\cl^X_{W\cap W'}(\Gamma\cdot \Gamma')\cup \alpha \quad \text{for all $\alpha\in H^i_Z(X,n)$.}
$$ 
\item \label{item:action-of-cycles:projection-formula-everything}
Let $f\colon X'\to X$ be a morphism between smooth equi-dimensional algebraic $k$-schemes, and let $\alpha\in H^i_Z(X,n)$.
\begin{itemize}
\item If $f$ is flat, then
$ 
f^\ast(\cl_W^X(\Gamma)\cup  \alpha)=\cl_{f^{-1}(W)}^{X'}(f^\ast\Gamma)\cup f^\ast \alpha .
$ 
\item If $f$ is smooth and proper,   then  the following projection formulas hold true:
 $$
f_\ast(\cl_{W'}^{X'}(\Gamma)\cup f^\ast \alpha)=\cl_{f(W')}^X(f_\ast\Gamma)\cup \alpha
\quad \text{and}\quad 
f_\ast(\cl_{f^{-1}(W)}^{X'}(f^\ast \Gamma)\cup \alpha)=\cl_{W}^X(\Gamma)\cup f_\ast \alpha  .
$$
\end{itemize} 
\end{enumerate}  
\item \label{item:semi-purity}
(Semi-purity)
If $X$ is smooth and equi-dimensional, then $H^i_Z(X,n)=0$ for $i<2\codim_Z(X)$. 
\end{enumerate}

\begin{remark}
Items \ref{item:f_*:f,g-smooth}, \ref{item:action-of-cycles:compatible-with-restriction}, \ref{item:action-of-cycles:compatible-with-intersections}, \ref{item:action-of-cycles:projection-formula-everything}, and \ref{item:semi-purity} are  not needed to prove the moving lemma (see Theorems \ref{thm:moving-lemma-cohomology} and \ref{thm:moving-body}) and its immediate applications (see Corollaries \ref{cor:global-effacement}, \ref{cor:local-effacement}, \ref{cor:gersten}, and \ref{cor:injectivity+purity-thm}). 
We list these properties because we will need  
 items  \ref{item:f_*:f,g-smooth} and \ref{item:action-of-cycles:compatible-with-restriction}  to get a natural action on refined unramified cohomology, \textit{cf.} Corollaries \ref{cor:motivic} and \ref{cor:motivic-body},  while  \ref{item:action-of-cycles:compatible-with-intersections} and \ref{item:action-of-cycles:projection-formula-everything} are needed to guarantee that this action is functorial with respect to the composition of correspondences. 
 Finally, semi-purity (\textit{i.e.}~\ref{item:semi-purity}) is only needed to guarantee some normalizations in Corollaries \ref{cor:gersten} and \ref{cor:H_j-nr-basic}.
\end{remark}

\subsection{Examples} \label{subsec:example}

\begin{proposition} \label{prop:pro-etale-coho}
Let $k$ be a field, let $\ell$ be a prime invertible in $k$, and denote by $\pi_X\colon X\to \Spec(k)$ the structure map of a $k$-scheme $X$.
\begin{enumerate}
\item \label{item:prop:etale-coho} Let $F$ be an $\ell^\infty$-torsion \'etale sheaf on $\Spec(k)$.  The \'etale cohomology functor
$$
(X,Z)\longmapsto H^\ast_Z(X,n):= H^\ast_{Z}(X_\et,  \mathcal F (n))
$$
satisfies all conditions \ref{item:excision}--\ref{item:semi-purity} above,  where $\mathcal F(n)=\colim_r  (\pi_X^\ast F\otimes_\Z \mu_{\ell^r}^{\otimes n})$.  
\item \label{item:prop:continuous-etale-coho} Let $F=(F_r)_r$ be an inverse system of  \'etale sheaves on $\Spec(k)$  such that $F_r$ is $\ell^r$-torsion and the transition maps $F_{r+1}\to F_r$ are surjective. 
The continuous \'etale cohomology functor
$$
(X,Z)\longmapsto H^\ast_Z(X,n):= H^\ast_{Z,\cont}(X_\et,  \mathcal F (n))
$$
satisfies all conditions \ref{item:excision}--\ref{item:semi-purity} above, where $\mathcal F (n)=(\pi_X^\ast F_r   \otimes_{\Z/\ell^r} \mu_{\ell^r}^{\otimes n})_r$.  
\item  \label{item:prop:pro-etale-coho} Let $K\in D_{\cons}((\Spec(k))_{\proet},\widehat {\Z}_\ell)$ be a constructible complex of $\widehat {\Z}_\ell$-modules on the pro-\'etale site $(\Spec(k))_\proet$. 
Then the pro-\'etale $($hyper-$)$cohomology functor  
$$
(X,Z)\longmapsto H^\ast_Z(X,n):= \RR^\ast \Gamma_{Z}(X_\proet, \mathcal K(n))
$$
satisfies all conditions \ref{item:excision}--\ref{item:action-of-cycles} above, where $\mathcal K(n)=(\pi_{X})_{\comp}^\ast K \otimes _{\widehat \Z_\ell} \widehat \Z_{\ell}(n)$ is the $\supth{n}$ Tate twist of the completed pullback of $K$; \textit{cf.} Appendix \ref{app:A}. 
If the complex $K$ is concentrated in non-negative degrees, then condition \ref{item:semi-purity} holds true as well. 
\end{enumerate} 
\end{proposition}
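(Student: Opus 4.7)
The plan is to treat the three parts in the order (1), (2), (3), reducing (2) and (3) to (1) via standard limit and comparison arguments, so that the substantive verification is concentrated in (1).

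For (1), the axioms are properties of the classical \'etale derived category. Excision \ref{item:excision} and the triple long exact sequence \ref{item:les-triple} are formal from the localization triangle $i_{\ast}i^{!}\to \id \to Rj_{\ast}j^{\ast}$. The pushforward \ref{item:f_*} for proper $f\colon X\to Y$ of relative dimension $-c$ between smooth equidimensional schemes comes from Gabber's trace (or Deligne's Poincar\'e duality), i.e.\ from a map $Rf_{\ast}\mathcal F(n-c)[-2c]\to \mathcal F(n)$: composition of traces yields functoriality, open base change of $Rf_{\ast}$ gives \ref{item:f_*:open-immersion}, and smooth base change for the trace gives \ref{item:f_*:f,g-smooth}. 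For the cycle action \ref{item:action-of-cycles} I would use the \'etale cycle class map $\cl^{X}_{W}(\Gamma)\in H^{2c}_{W}(X,\mathcal F(c))$ built from Gabber's fundamental class: linearity in $\Gamma$ and the factorization through rational equivalence on $W$ give \ref{item:action-of-cycles-rationally-trivial}; the identification $\cl^{X}_{W}([W])\cup = f_{\ast}f^{\ast}$ for smooth prime $W$ in \ref{item:action-of-cycles:projection-formula} is a tautology from the Poincar\'e-duality definition; the support-enlargement compatibility \ref{item:action-of-cycles:enlargening-the-support} and restriction-to-opens compatibility \ref{item:action-of-cycles:compatible-with-restriction} are functoriality of the cycle class; the intersection formula \ref{item:action-of-cycles:compatible-with-intersections} and the projection formulas \ref{item:action-of-cycles:projection-formula-everything} are the standard compatibilities of the cycle class map with intersections of properly meeting cycles and with proper/flat pullback-pushforward. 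Semi-purity \ref{item:semi-purity} is Gabber's absolute cohomological purity together with a d\'evissage along a smooth stratification of $Z$ via the triple long exact sequence.

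For (2), since the transition maps $\mathcal F_{r+1}\to \mathcal F_{r}$ are surjective, continuous \'etale cohomology fits into a Milne--Jannsen short exact sequence
\begin{align*}
0\to {R^{1}\!\lim_{r}}\,H^{i-1}_{Z}(X_{\et},\mathcal F_{r}(n))\to H^{i}_{Z,cont}(X_{\et},\mathcal F(n))\to \lim_{r}H^{i}_{Z}(X_{\et},\mathcal F_{r}(n))\to 0 .
\end{align*}
All structure constructed in (1) (pullbacks, pushforwards, cycle action, connecting maps) is compatible with the transition maps and hence passes to the derived limit, so the axioms are inherited level-wise via the five lemma. For (3), I would invoke Bhatt--Scholze's equivalence between $D_{cons}((\Spec k)_{\proet},\widehat{\Z}_{\ell})$ and the derived category of constructible continuous \'etale $\widehat{\Z}_{\ell}$-complexes, together with the comparison identifying pro-\'etale hypercohomology $\RR\Gamma_{Z}(X_{\proet},\mathcal K(n))$ with the corresponding continuous \'etale hypercohomology of $(K_{r}\otimes\mu_{\ell^{r}}^{\otimes n})_{r}$; this reduces \ref{item:excision}--\ref{item:action-of-cycles} to (2), applied hypercohomologically. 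The non-negativity hypothesis on $K$ is used only in \ref{item:semi-purity}: under it, the hypercohomology spectral sequence with $E_{2}^{p,q}=H^{p}_{Z}(X_{\proet},\mathcal H^{q}(K)(n))$ and $q\ge 0$ vanishes in total degrees $<2\codim_{Z}(X)$ by the semi-purity already established in (2).

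The main obstacle is axiom \ref{item:action-of-cycles}, specifically the compatibilities \ref{item:action-of-cycles:compatible-with-intersections} and \ref{item:action-of-cycles:projection-formula-everything}, which require a cycle class map that behaves well under intersection products of properly meeting cycles and under proper pushforward and flat pullback. In characteristic zero this is accessible via resolution of singularities, and in positive characteristic it rests on Gabber's refined construction via alterations. Once these inputs are in hand, the remaining verifications are routine consequences of the six-functor formalism.
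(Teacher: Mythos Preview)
Your approach is correct but reverses the paper's order of reduction. You argue $(1)\Rightarrow(2)\Rightarrow(3)$: verify everything for classical \'etale cohomology with torsion coefficients, pass to continuous \'etale cohomology via the Milne--Jannsen $\RR^1\lim$ sequence, then to pro-\'etale via the Bhatt--Scholze comparison. The paper instead argues $(3)\Rightarrow(2)\Rightarrow(1)$: it formulates and proves all the lemmas directly in the pro-\'etale setting (Appendix~\ref{app:A}), then observes that continuous \'etale cohomology is a special case via Lemma~\ref{lem:BS-H_cont=H_proet}, and ordinary \'etale cohomology follows by compatibility with colimits. In practice the difference is largely organizational, since the paper's individual pro-\'etale lemmas (excision, base change for pushforwards, cycle-class compatibilities) are themselves reduced to the \'etale case via Lemma~\ref{lem:extension-compatible}. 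Your packaging has the advantage that the substantive work sits squarely in SGA4 and SGA4$\frac12$; the paper's packaging has the advantage that the six-functor constructions ($f^!$, $\Tr_f$, cup product) are written down once in the setting where they will ultimately be used.

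Two minor remarks. First, calling \ref{item:action-of-cycles:projection-formula} a ``tautology from the Poincar\'e-duality definition'' undersells it: one needs the projection formula $f_\ast(f^\ast\beta\cup\alpha)=\beta\cup f_\ast\alpha$ for the closed immersion (the paper's Lemma~\ref{lem:projection-formula}) together with $f_\ast f^\ast\cl^X_X(X)=\cl^X_W([W])$. Second, the compatibilities of the cycle class with proper intersection and proper pushforward (\ref{item:action-of-cycles:compatible-with-intersections}, \ref{item:action-of-cycles:projection-formula-everything}) do not require alterations; they are in SGA4$\frac12$ (cycle) for torsion coefficients and pass to the limit---Gabber's input is only needed for absolute purity and hence for \ref{item:semi-purity}.
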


Proposition~\ref{prop:pro-etale-coho} is certainly well known to experts.  For the convenience of the reader, we include a detailed proof in the appendix of this paper.
We limit ourselves here to sketching the argument briefly.

Using the compatibility of \'etale cohomology with direct limits in the coefficients from \cite[\href{https://stacks.math.columbia.edu/tag/09YQ}{Tag 09YQ}]{stacks-project}, one reduces item \eqref{item:prop:etale-coho} to the case where $\mathcal F$ is $\ell^r$-torsion for some fixed $r$.  Hence, \cite[Equation~(3.1)]{jannsen} implies  that \eqref{item:prop:continuous-etale-coho} $\Rightarrow$ \eqref{item:prop:etale-coho}.  Moreover, the arguments in \cite[Section~5.6]{BS} imply that \eqref{item:prop:pro-etale-coho} $\Rightarrow$ \eqref{item:prop:continuous-etale-coho}, and so Proposition~\ref{prop:pro-etale-coho}\eqref{item:prop:pro-etale-coho} implies the rest.  Conversely, the way the six-functor formalism of Bhatt--Scholze from \cite[Section~6.7]{BS} works allows one to essentially reduce item \eqref{item:prop:pro-etale-coho} to the case of \'etale cohomology.  The proof of Proposition~\ref{prop:pro-etale-coho} then goes roughly as follows: conditions \ref{item:excision} and \ref{item:les-triple} are straightforward, the pushforwards in condition \ref{item:f_*} is a consequence of Poincar\'e duality (\textit{cf.} \cite[Theorem~XVIII.3.2.5]{SGA4.3}) and the action in condition \ref{item:action-of-cycles} is given by cup products with a suitable cycle class (\textit{cf.} \cite[p.\ 129]{SGA4.5}).  Finally, semi-purity reduces by topological invariance to the case where $k$ is perfect, and so the result may be deduced from Poincar\'e duality.
 
\begin{remark} 
Proposition \ref{prop:pro-etale-coho} has an analogue for singular cohomology that applies to algebraic schemes over $k=\C$.  In fact, if $k=\C$ and $A$ is an abelian group, then the sheaf cohomology functor
 $$
(X,Z)\longmapsto H^\ast_Z(X,n):= H^\ast_{Z}(X_{\an},  \underline A_X)
$$
satisfies all conditions \ref{item:excision}--\ref{item:semi-purity} above, where $X_{\an}$ denotes the analytic space that underlies $X$ and $\underline A_X$ denotes the constant sheaf on $X_{\an}$ associated to $A$.
The proof of this statement  follows the same lines as that of Proposition \ref{prop:pro-etale-coho}, and we leave it to the reader.
 \end{remark}

\section{Action of cycles on open varieties} \label{sec:action}
 In this section we fix a field $k$ and a twisted cohomology theory as in \eqref{eq:cohomology-functor} which satisfies  conditions \ref{item:excision}--\ref{item:action-of-cycles} from Section \ref{sec:axioms}.  Moreover, $X$ and $Y$ denote smooth projective equi-dimensional algebraic schemes over $k$, and we set $d_X:=\dim (X)$.  We will denote the natural projections by $p\colon X\times Y\to X$ and $q\colon X\times Y\to Y$, respectively.

Since $X$ and $Y$ are assumed to be smooth projective and equi-dimensional, any algebraic cycle $\Gamma\in Z^c(X\times Y)$ gives rise to an action 
$$
\Gamma_\ast\colon H^i(X,n)\lra H^{i+2c-2d_X}(Y,n+c-d_X), \quad \alpha\longmapsto q_\ast(\cl_{X\times Y}^{X\times Y}(\Gamma)\cup p^\ast \alpha)
$$
which by condition~\ref{item:action-of-cycles-rationally-trivial} depends only on the rational equivalence class of $\Gamma$.
Roughly speaking, the purpose of this section is  to generalize this result to smooth quasi-projective varieties that are not necessarily projective but which admit a smooth projective compactification.
In other words, we aim to construct a similar action between the cohomology of suitable open subsets $U\subset X$ and $U'\subset Y$. 
Moreover, instead of ordinary cohomology groups, we will consider the more general situation of cohomology with support.  
A closely related discussion of the action of correspondences on Chow groups with support can be found in \cite[Section 1]{ruelling}.

\begin{lemma}\label{lem:Gamma-ast}
Let $\Gamma\in Z^{c}(X\times Y)$ be a cycle, and let $W\subset X\times Y$ be a closed subset that contains the support of\, $\Gamma$: $\supp(\Gamma)\subset W$.
Let $R,Z\subset X$ be closed, and put  $W_R:= W\cap (R\times Y)$ and $W_Z:= W\cap (Z\times Y)$.  
Let $R',Z'\subset Y$ be closed with $q(W_R)\subset R'$ and $q(W_Z)\subset Z'$.
Finally, let $U:=X\setminus R$ and $U':=Y\setminus R'$ be the complements of $R$ and $R'$, respectively.\footnote{The closed subsets $R\subset X$ and $R'\subset Y$ need not be of codimension $1$, but play the role of a ``divisor at $\infty$'', while the subsets $Z,Z'$ will be the supports of the respective cohomology classes.}

Then the composition in  \eqref{eq:big-composition-Gamma} below defines an additive action 
$$
\Gamma(W)_\ast\colon H^i_{Z }(U,n)\longrightarrow H^{i+2c-2d_X}_{Z' }(U', n+c-d_X ) .
$$
If\, $\Gamma=\Gamma_1+\Gamma_2$ with $\supp(\Gamma_i)\subset W$ for $i=1,2$, then $\Gamma(W)_\ast=\Gamma_1(W)_\ast+\Gamma_2(W)_\ast$.
\end{lemma}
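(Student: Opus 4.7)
The plan is to define $\Gamma(W)_\ast$ as a six-step composition: flat pullback, cup with the cycle class of $\Gamma$, excision, enlargement of support, and finally a proper smooth pushforward along the second projection. The only non-trivial geometric input is the properness of $q$, which forces us to leave $U\times U'$ and work on $X\times U'$ at the right moment; this is made possible by a simple support observation coming from the assumptions on $q(W_R)$ and $q(W_Z)$.

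The crucial observation is that $q(W_R)\subset R'$ means $W_R\cap (X\times U')=\emptyset$, hence $W\cap(R\times U')=\emptyset$, so every closed subset of $W$ meets $X\times U'$ only inside $U\times U'$. In particular $W_Z\cap(X\times U')=W_Z\cap(U\times U')$, and since $q(W_Z)\subset Z'$ we further have $W_Z\cap(X\times U')\subset X\times(Z'\cap U')$. These inclusions are exactly what is needed to transport supports from $U\times U'$ to $X\times U'$ via excision and then enlarge them to something of the form $X\times(Z'\cap U')$ on which the pushforward along $q$ behaves well.

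Concretely, given $\alpha\in H^i_Z(U,n)$, I would first pull back along the flat projection $p':U\times U'\to U$, producing a class in $H^i_{(Z\cap U)\times U'}(U\times U',n)$. Next, letting $W'':=W\cap(U\times U')$ and $\Gamma'':=\Gamma|_{U\times U'}$ (flat pullback, with $\supp(\Gamma'')\subset W''$), I apply \ref{item:action-of-cycles} to form $\cl^{U\times U'}_{W''}(\Gamma'')\cup(p')^\ast\alpha\in H^{i+2c}_{W_Z\cap(U\times U')}(U\times U',n+c)$. By the observation above together with \ref{item:excision}, this group is canonically isomorphic to $H^{i+2c}_{W_Z\cap(X\times U')}(X\times U',n+c)$. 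Using \ref{item:les-triple} (or \ref{item:action-of-cycles:enlargening-the-support} applied to the zero cycle) I enlarge the support to $X\times(Z'\cap U')$, and finally I push forward along $q:X\times U'\to U'$, which is proper and smooth of relative dimension $d_X$ because $X$ is smooth projective. By \ref{item:f_*} this yields a class in $H^{i+2c-2d_X}_{Z'\cap U'}(U',n+c-d_X)=H^{i+2c-2d_X}_{Z'}(U',n+c-d_X)$, which is the desired target of $\Gamma(W)_\ast$. Additivity in $\Gamma$ is immediate from the linearity of $\cl$ in \ref{item:action-of-cycles} since every other step is linear.

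The main obstacle is bookkeeping of the support conditions at each step to ensure that the various incarnations of $W$, $W_Z$, and their restrictions fit into the required ambient spaces; once the support observation $W\cap(R\times U')=\emptyset$ is isolated, all the support-tracking becomes mechanical, and the properness of $q:X\times U'\to U'$ (which is where the smooth projectivity of $X$ enters in an essential way) makes the final pushforward available.
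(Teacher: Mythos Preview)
Your construction is correct and essentially the same as the paper's, differing only in the order of two steps: the paper pulls back along $U\times Y\to U$, cups with $\cl^{U\times Y}_{W\setminus W_R}(\Gamma|_{U\times Y})$, excises to $(X\times Y)\setminus W_R$, then restricts to $X\times U'$, whereas you restrict to $U\times U'$ from the outset and excise directly to $X\times U'$. Both rest on the same support observation $W_R\cap(X\times U')=\emptyset$, and your explicit enlargement of support before the pushforward is unnecessary (axiom \ref{item:f_*} already allows any $Z_Y$ containing $q(W_Z\cap(X\times U'))$), but harmless.
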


\begin{proof}
  We denote the flat pullback of $\Gamma$ to $U\times Y$ by $\Gamma|_{U\times Y}$ and define $\Gamma(W)_\ast$ by the following composition: 
\begin{align} \label{eq:big-composition-Gamma}
\xymatrix{
 H^i_{Z }(U,n) \ar[d]^{p^\ast}\\
 H^i_{Z\times Y}(U\times Y,n)\ar[d]^{\cl^{U\times Y}_{W\setminus W_R}(\Gamma|_{U\times Y}) \cup  } \\
H^{i+2c}_{W_Z }(U\times Y,n+c)
\ar[d]^{\exc}_{\cong}
\\
H^{i+2c}_{W_Z }((X\times Y)\setminus W_{R}, n+c ) \ar[d]^{\restr} \\ 
H^{i+2c}_{ W_Z } (X\times U' , n+c) \ar[d]^{q_\ast} \\
 H^{i+2c-2d_X}_{Z'}(U',  n+c-d_X ),
}
\end{align}
where $p^\ast$ denotes the pullback map with respect to the projection $U\times Y\to U$, $ \cl^{U\times Y}_{W\setminus W_R}(\Gamma|_{U\times Y})\cup$ denotes the action from condition~\ref{item:action-of-cycles}, $\restr$ denotes the restriction map given by pullback, and $q_\ast$ denotes the proper pushforward from condition~\ref{item:f_*}.  Moreover, $\exc$ is constructed as follows.  The closed subset $W_Z\subset X\times Y$ intersects the complement of $U\times Y\subset X\times Y$ in $W_R$ (because $R=X\setminus U$), and so
$$
 W_Z\cap (( X\times Y)\setminus W_R)=W_Z\cap (U\times Y)  \quad \text{and}\quad U\times Y\subset ( X\times Y)\setminus W_R .
$$ 
It follows that the natural pullback map 
$$
H^{i+2c}_{W_Z }((X\times Y)\setminus W_{R}, n+c )\longrightarrow H^{i+2c}_{W_Z }(U\times Y,n+c)
$$
 is an isomorphism by  excision (see condition~\ref{item:excision}).
The map 
$$
\exc\colon H^{i+2c}_{W_Z }(U\times Y,n+c)\stackrel{\lowcong}\longrightarrow H^{i+2c}_{W_Z }((X\times Y)\setminus W_{R}, n+c )
$$
is then the inverse of the above isomorphism. 

The composition in \eqref{eq:big-composition-Gamma} yields an additive action because each map is a group homomorphism.
Additivity in $\Gamma$ follows from the fact that the action of $\Gamma$ from condition~\ref{item:action-of-cycles} is additive in $\Gamma$. 
This concludes the lemma.
\end{proof}

\begin{lemma}\label{lem:Gamma-ast:Gamma-sim-0}
Assume in the notation of Lemma \ref{lem:Gamma-ast} that  $\Gamma$, viewed as a cycle on $W$, is rationally equivalent to zero on $W$.
Then the action $\Gamma(W)_\ast$ 
from Lemma \ref{lem:Gamma-ast} is zero.
\end{lemma}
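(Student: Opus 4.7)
The plan is to inspect the composition \eqref{eq:big-composition-Gamma} defining $\Gamma(W)_\ast$ and observe that it factors through the cycle-class action $\cl^{U\times Y}_{W\setminus W_R}(\Gamma|_{U\times Y})\cup$. It therefore suffices to show that this cup product map vanishes, and by axiom \ref{item:action-of-cycles-rationally-trivial} this reduces to verifying that the flat pullback $\Gamma|_{U\times Y}$, viewed as a cycle on $W\setminus W_R = W\cap (U\times Y)$, is rationally equivalent to zero on $W\setminus W_R$.

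For this, I would use the standard localization sequence for Chow groups of the (possibly singular) closed subscheme $W\subset X\times Y$. Since $W_R\subset W$ is a closed subset with open complement $W\setminus W_R$, one has an exact sequence
$$
\CH_\ast(W_R) \longrightarrow \CH_\ast(W) \longrightarrow \CH_\ast(W\setminus W_R) \longrightarrow 0,
$$
where the second map is the flat pullback along the open immersion. By assumption, the class of $\Gamma$ on $W$ is zero in $\CH_\ast(W)$; hence its image in $\CH_\ast(W\setminus W_R)$ is zero. But this image is exactly the flat restriction $\Gamma|_{U\times Y}$, viewed as a cycle on $W\setminus W_R$. So $\Gamma|_{U\times Y}\sim_{\rat}0$ on $W\setminus W_R$, as desired.

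Applying axiom \ref{item:action-of-cycles-rationally-trivial} to $\Gamma|_{U\times Y}\in Z^c(U\times Y)$ with support in $W\setminus W_R\subset U\times Y$, we conclude that the second arrow in \eqref{eq:big-composition-Gamma} is the zero map. All other arrows in \eqref{eq:big-composition-Gamma} are additive, so the entire composition $\Gamma(W)_\ast$ is zero, proving the lemma.

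I expect no serious obstacle here: the only content is the compatibility between rational equivalence on $W$ and on its open subscheme $W\setminus W_R$, which is a formal consequence of the localization sequence for Chow groups. The rest is reading off \eqref{eq:big-composition-Gamma} and invoking the axiom.
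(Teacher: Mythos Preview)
Your proof is correct and follows the same approach as the paper: both observe that the second arrow in \eqref{eq:big-composition-Gamma} vanishes by axiom \ref{item:action-of-cycles-rationally-trivial} once one knows $\Gamma|_{U\times Y}$ is rationally trivial on $W\setminus W_R$. The paper simply asserts this implication, whereas you supply the justification via the localization sequence for Chow groups; this is exactly the right reason and your argument is complete.
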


\begin{proof}
The assumption implies that $\Gamma|_{U\times Y}$ is rationally equivalent to zero on $W\cap (U\times Y)=W\setminus W_R$.
The result thus follows from condition \ref{item:action-of-cycles-rationally-trivial}, which implies that the second morphism in the composition \eqref{eq:big-composition-Gamma} is zero.
\end{proof}

\begin{lemma} \label{lem:Gamma-ast-compatible-W&U}
Let\, $\Gamma\in Z^{c}(X\times Y)$ be a cycle, and let $W_i\subset X\times Y$ for $i=1,2$ be closed subsets with $\supp(\Gamma)\subset W_i$ for $i=1,2$. 
Let $Z_1\subset Z_2\subset X$ and $Z'_1\subset Z'_2\subset X$ be closed with $q(W_i\cap (Z_i\times Y))\subset Z'_i$ for $i=1,2$.
Let $R_1\subset R_2\subset X$ and  $R'_1\subset R'_2\subset Y$ be closed with $ q(W_i\cap (R_i\times Y))\subset R_i'$ and with complements  $U_i:=X\setminus R_i$ and  $U'_i:=Y\setminus R'_i$ for $i=1,2$. 

Then the following diagram commutes: 
{\small{
\begin{align} \label{eq:diag:lem:Gamma-ast-compatible-W&U}
\xymatrix{
 H^i_{Z_1} (U_1,n)\ar[d]^-{\Gamma(W_1)_\ast}\ar[r]^-{\restr} & 
 H^i_{Z_1} (U_2,n)\ar[r]^{\iota_\ast} \ar[d]^-{\Gamma(W_1)_\ast} &
  H^i_{Z_2} (U_2,n)\ar[d]^-{\Gamma(W_2)_\ast} 
 \\
 H^{i+2c-2d_X}_{Z_1'} (U'_1,n+c-d_X)\ar[r]^-{\restr} 
 & H^{i+2c-2d_X}_{Z_1'} (U'_2,n+c-d_X) \ar[r]^{\iota_\ast}
   &H^{i+2c-2d_X}_{Z_2'} (U'_2,n+c-d_X),
}
\end{align}
}}
where the vertical arrows are the respective actions from Lemma \ref{lem:Gamma-ast}.  
\end{lemma}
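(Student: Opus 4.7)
The plan is to unpack both vertical arrows $\Gamma(W_i)_\ast$ using the five-step composition (\ref{eq:big-composition-Gamma})---pullback by $p$, cup product with the cycle class, excision isomorphism, restriction to $X\times U'$, and proper pushforward by $q$---and then check that the diagram (\ref{eq:diag:lem:Gamma-ast-compatible-W&U}) commutes stage by stage. Thus it suffices to verify that each of these five operations is compatible with the horizontal arrows of the diagram.

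For the left square, where the horizontal arrows are the restrictions along the open immersions $U_2\hookrightarrow U_1$ and $U'_2\hookrightarrow U'_1$: functoriality of pullback handles $p^\ast$; axiom \ref{item:action-of-cycles:compatible-with-restriction} handles the cup-product stage, noting $(\Gamma|_{U_1\times Y})|_{U_2\times Y}=\Gamma|_{U_2\times Y}$; naturality of (\ref{eq:cohomology-functor}) handles the excision isomorphism and the subsequent restriction map; and axiom \ref{item:f_*:open-immersion} handles the proper pushforward $q_\ast$ via the Cartesian square $q^{-1}(U'_2)=X\times U'_2$.

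For the right square, where the horizontal arrows are the pushforwards $\iota_\ast$ for enlarging support from $Z_1$ to $Z_2$ and from $Z'_1$ to $Z'_2$: I would first reduce to the case $W_1\subseteq W_2$ by factoring through the common enlargement $W_1\cup W_2$ and applying the argument twice. Then each stage commutes with $\iota_\ast$: the pullback $p^\ast$ by functoriality of the long exact sequence of triples (item \ref{item:les-triple}); the cup product by axiom \ref{item:action-of-cycles:enlargening-the-support}; excision and restriction by naturality of (\ref{eq:cohomology-functor}); and $q_\ast$ by functoriality of pushforwards in item \ref{item:f_*}, interpreting $\iota_\ast$ as a pushforward along the identity with respect to enlarging the support.

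I expect the main obstacle to be the bookkeeping of supports rather than any substantive step. At each intermediate stage one must verify that the closed subsets $(W_i)_{R_i}$ and $(W_i)_{Z_i}$, together with the given compatibility constraints $q((W_i)_{R_i})\subseteq R'_i$ and $q((W_i)_{Z_i})\subseteq Z'_i$, propagate correctly so that every intermediate cohomology group is defined and each invoked axiom applies in the relevant form (e.g.\ the hypothesis $W\subseteq W'$ in axiom \ref{item:action-of-cycles:enlargening-the-support} must be checked for the corresponding intermediate supports). Once this bookkeeping is carried out, the commutativity of (\ref{eq:diag:lem:Gamma-ast-compatible-W&U}) is formal, assembled from the stagewise verifications above.
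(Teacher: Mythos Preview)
Your proposal is correct and follows essentially the same approach as the paper: expand the five-step composition (\ref{eq:big-composition-Gamma}) into a large ladder diagram, verify each intermediate square commutes using the axioms you cite, and handle the general case by passing to $W_1\cup W_2$ (the paper notes explicitly that this replacement does not force a change in $R_2'$ because $R_1'\subset R_2'$, which is part of the bookkeeping you anticipate). The only cosmetic difference is that the paper performs the reduction to $W_1\subset W_2$ up front rather than only for the right square.
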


\begin{proof} 
We first assume that $W_1\subset W_2$.
The action from Lemma  \ref{lem:Gamma-ast} is defined via the diagram in \eqref{eq:big-composition-Gamma}.
Since  $W_1\subset W_2$,  the diagram in question may therefore be expanded to the  big diagram
{\small{
$$
\xymatrix{
 H^i_{Z_1 }(U_1,n) \ar[d]^{p^\ast} \ar[r]^{\restr}& H^i_{Z_1 }(U_2,n) \ar[d]^{p^\ast} \ar[r]^{\iota_\ast}& H^i_{Z_2}(U_2,n) \ar[d]^{p^\ast} 
 \\
 H^i_{Z_1\times Y}(U_1\times Y,n)\ar[d]^{\cl^{U_1\times Y}_{W_1\setminus {W_1}_{R_1}}(\Gamma|_{U_1\times Y}) \cup  } \ar[r]^{\restr}&
 H^i_{Z_1\times Y}(U_2\times Y,n)\ar[d]^{\cl^{U_2\times Y}_{W_1\setminus {W_1}_{R_2}}(\Gamma|_{U_2\times Y}) \cup  } \ar[r]^{\iota_\ast}&
  H^i_{Z_2\times Y}(U_2\times Y,n)\ar[d]^{\cl^{U_2\times Y}_{W_2\setminus {W_2}_{R_2}}(\Gamma|_{U_2\times Y}) \cup  }  
 \\
H^{i+2c}_{{W_1}_{Z_1} }(U_1\times Y,n+c)
\ar[d]^{\exc}_{\cong} \ar[r]^{\restr} &
H^{i+2c}_{{W_1}_{Z_1} }(U_2\times Y,n+c)
\ar[d]^{\exc}_{\cong} \ar[r]^{\iota_\ast} &
H^{i+2c}_{{W_2}_{Z_2} }(U_2\times Y,n+c)
\ar[d]^{\exc}_{\cong} 
\\
H^{i+2c}_{{W_1}_{Z_1}}((X\times Y)\setminus {W_1}_{R_1}, n+c ) \ar[d]^{\restr} \ar[r]^{\restr}&
H^{i+2c}_{{W_1}_{Z_1}}((X\times Y)\setminus {W_1}_{R_2}, n+c ) \ar[d]^{\restr} \ar[r]^{\xi}&
H^{i+2c}_{{W_2}_{Z_2}}((X\times Y)\setminus {W_2}_{R_2}, n+c ) \ar[d]^{\restr} 
\\ 
H^{i+2c}_{ {W_1}_{Z_1}} (X\times U_1' , n+c) \ar[d]^{q_\ast} \ar[r]^{\restr} &
H^{i+2c}_{ {W_1}_{Z_1}} (X\times U_2' , n+c) \ar[d]^{q_\ast} \ar[r]^{\iota_\ast} &
H^{i+2c}_{ {W_2}_{Z_2}} (X\times U_2' , n+c) \ar[d]^{q_\ast}
\\
 H^{i+2c-2d_X}_{Z_1'}(U_1',  n+c-d_X ) \ar[r]^{\restr}&
 H^{i+2c-2d_X}_{Z_1'}(U_2',  n+c-d_X ) \ar[r]^{\iota_\ast} &
  H^{i+2c-2d_X}_{Z_2'}(U_2',  n+c-d_X ) ,
}
$$
}}
\noindent
where $\xi$ is given by the  composition
$$
H^{i+2c}_{{W_1}_{Z_1}}((X\times Y)\setminus {W_1}_{R_2}, n+c )\xrightarrow{\restr} H^{i+2c}_{{W_1}_{Z_1}}((X\times Y)\setminus {W_2}_{R_2}, n+c ) 
\stackrel{\iota_\ast}\longrightarrow H^{i+2c}_{{W_2}_{Z_2}}((X\times Y)\setminus {W_2}_{R_2}, n+c ). 
$$
We first show  that each square on the left is commutative.
This follows for the  first, third, and fourth squares from the functoriality of pullbacks. 
 The second square on the left  is commutative because of condition~\ref{item:action-of-cycles:compatible-with-restriction}, while the last square  on the left column is commutative because of condition~\ref{item:f_*:open-immersion}.

Next, we show that the right squares of the above diagram are commutative.
This follows for the first square from the compatibility of proper pushforwards and pullbacks via open immersions (see condition~\ref{item:f_*:open-immersion}), and for the third and fourth squares by the same compatibility together with the functoriality of pullbacks.
The commutativity of the second square on the right  follows from condition~\ref{item:action-of-cycles:enlargening-the-support}, while the commutativity of the last square on the right follows from the functoriality of pushforwards; see condition~\ref{item:f_*}.
This concludes the proof in the case where $W_1\subset W_2$. 
The general case follows from this by replacing $W_2$ by $W_1\cup W_2$ and noting that this does not require to change $R'_2$ because $R'_1\subset R'_2$ holds by assumption.
\end{proof}

\begin{lemma} \label{lem:Gamma-ast:Gamma=Delta}
Assume in the notation of Lemma \ref{lem:Gamma-ast} that $X=Y$, $c=\dim X$, and $\Gamma=\Delta_X$ is the diagonal with $\Delta_X\subset W$.
Then $U'\subset U$ and $Z\subset Z'$, and the action
$$
\Gamma(W)_\ast\colon H^i_{Z }(U,n)\longrightarrow H^{i }_{Z' }(U', n  )
$$ 
from Lemma \ref{lem:Gamma-ast} identifies with the natural composition
$$
H^i_{Z }(U,n)\stackrel{\iota_\ast}\longrightarrow H^{i }_{Z' }(U, n  )\xrightarrow{\restr} H^{i }_{Z' }(U', n) .
$$ 
\end{lemma}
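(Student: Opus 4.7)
The hypothesis $\Delta_X\subset W$ gives $\Delta_R\subset W_R$ and $\Delta_Z\subset W_Z$, so $R=q(\Delta_R)\subset q(W_R)\subset R'$ and $Z=q(\Delta_Z)\subset q(W_Z)\subset Z'$; hence $U'\subset U$ and $Z\subset Z'$, as claimed.

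To evaluate $\Gamma(W)_\ast$, I would first reduce to the case $W=\Delta_X$ by invoking Lemma \ref{lem:Gamma-ast-compatible-W&U} with $W_1=\Delta_X$, $R_1=R$, $R_1'=R$, $Z_1=Z$, $Z_1'=Z$, and $W_2=W$, $R_2=R$, $R_2'=R'$, $Z_2=Z$, $Z_2'=Z'$. The equalities $q(\Delta_R)=R$ and $q(\Delta_Z)=Z$ make all hypotheses of that lemma satisfied, and since $Z_1=Z_2$ and $U_1=U_2=U$, the top row of diagram (\ref{eq:diag:lem:Gamma-ast-compatible-W&U}) reduces to the identity on $H^i_Z(U,n)$. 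The lemma therefore yields
$$\Gamma(W)_\ast \;=\; \iota_\ast \circ \restr \circ \Gamma(\Delta_X)_\ast,$$
with $\restr\colon H^i_Z(U,n)\to H^i_Z(U',n)$ and $\iota_\ast\colon H^i_Z(U',n)\to H^i_{Z'}(U',n)$. By naturality of the long exact sequence of triples \ref{item:les-triple} under pullback along open immersions, this agrees with the desired composition $\restr\circ\iota_\ast$, and it therefore suffices to show $\Gamma(\Delta_X)_\ast=\id_{H^i_Z(U,n)}$.

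To this end, I would trace $\alpha\in H^i_Z(U,n)$ through diagram (\ref{eq:big-composition-Gamma}) with $W=\Gamma=\Delta_X$. Since $\Gamma|_{U\times Y}=[\Delta_U]$ has smooth support $\Delta_U$, and the diagonal embedding $\delta_U\colon U\hookrightarrow U\times Y$ satisfies $p\circ\delta_U=\id_U$, the projection formula \ref{item:action-of-cycles:projection-formula} identifies the cup product in step 2 with $\delta_{U\ast}\delta_U^\ast p^\ast\alpha=\delta_{U\ast}\alpha$. Noting that $\Delta_U=\Delta_X\setminus\Delta_R$ is closed in $(X\times Y)\setminus\Delta_R$, the diagonal extends to a closed immersion $\overline\delta\colon U\hookrightarrow(X\times Y)\setminus\Delta_R$, and applying \ref{item:f_*:open-immersion} to the two Cartesian squares
$$\xymatrix@C=1.5em{
U \ar@{=}[r]\ar[d]_{\delta_U} & U\ar[d]^{\overline\delta} \\
U\times Y \ar@{^{(}->}[r] & (X\times Y)\setminus\Delta_R
}
\qquad
\xymatrix@C=1.5em{
U \ar@{=}[r]\ar[d]_{\widetilde\delta} & U\ar[d]^{\overline\delta} \\
X\times U \ar@{^{(}->}[r] & (X\times Y)\setminus\Delta_R
}$$
shows that the excision step 3 sends $\delta_{U\ast}\alpha$ to $\overline\delta_\ast\alpha$, and the subsequent restriction to $X\times U$ in step 4 sends this further to $\widetilde\delta_\ast\alpha$, where $\widetilde\delta\colon U\hookrightarrow X\times U$ is again the diagonal. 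Since $q\circ\widetilde\delta=\id_U$, functoriality of pushforward \ref{item:f_*} gives $q_\ast\widetilde\delta_\ast\alpha=\alpha$, yielding $\Gamma(\Delta_X)_\ast=\id$. The main technical point is arranging the Cartesian squares so as to track supports correctly through the excision, restriction, and pushforward steps.
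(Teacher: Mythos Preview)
Your proof is correct and follows the same strategy as the paper's: reduce to the case $W=\Delta_X$ via Lemma~\ref{lem:Gamma-ast-compatible-W&U}, and then verify that $\Delta_X(\Delta_X)_\ast=\id$ by tracing through the composition~(\ref{eq:big-composition-Gamma}) using the projection formula~\ref{item:action-of-cycles:projection-formula}, the compatibility of pushforwards with open immersions~\ref{item:f_*:open-immersion}, and functoriality of pushforwards in~\ref{item:f_*}. The paper's proof is a two-sentence sketch invoking exactly these ingredients; yours is a careful expansion of it, including the explicit verification of $U'\subset U$ and $Z\subset Z'$ and the interchange of $\iota_\ast$ and $\operatorname{restr}$, which the paper leaves implicit.
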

\begin{proof}
First assume  that $W=\supp \Delta_X$.
Then the result is a straightforward consequence of condition~\ref{item:action-of-cycles:projection-formula}  together with the compatibility of pushforwards and restrictions to open subsets (see condition~\ref{item:f_*:open-immersion}) and the functoriality of pushforwards (see condition~\ref{item:f_*}).
The general case follows from this and Lemma \ref{lem:Gamma-ast-compatible-W&U}.
\end{proof}

\begin{remark} \label{rem:H_emptyset=0}
  Condition~\ref{item:les-triple} together with the functoriality of pullbacks and pushforwards implies $H^i_\emptyset(X,n)=0$ for any smooth equi-dimensional $k$-scheme $X$.
Indeed, condition~\ref{item:les-triple} applied to $Z=W=\emptyset$ shows that the sequence
$$
H^i_\emptyset(X,n)\stackrel{\iota_\ast}\longrightarrow H^i_\emptyset(X,n) \xrightarrow{\restr} H^i_\emptyset(X,n)
$$
is exact, while both arrows are isomorphisms by the functoriality of pullbacks and pushforwards, respectively. 
\end{remark}

\begin{lemma} \label{lem:Gamma-ast-zero-if-wrong-support}
Assume in the notation of Lemma \ref{lem:Gamma-ast} that 
$  
(\supp \Gamma )\cap (Z \times Y)\subset X\times R'.
$ 
Then $\Gamma(W)_\ast$ from Lemma \ref{lem:Gamma-ast}  is zero.
\end{lemma}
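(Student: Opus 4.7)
The plan is to reduce to the case $W = \supp(\Gamma)$ by means of Lemma~\ref{lem:Gamma-ast-compatible-W&U}, and then to observe that under the hypothesis the composition~(\ref{eq:big-composition-Gamma}) defining $\Gamma(W)_\ast$ factors through a cohomology group with empty support, which vanishes by Remark~\ref{rem:H_emptyset=0}.

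First I would apply Lemma~\ref{lem:Gamma-ast-compatible-W&U} with $W_1 := \supp(\Gamma) \subset W =: W_2$ and all of $Z$, $Z'$, $R$, $R'$ the same on both sides. The required containments $q(W_1 \cap (R \times Y)) \subset q(W_R) \subset R'$ and $q(W_1 \cap (Z \times Y)) \subset q(W_Z) \subset Z'$ are automatic from $W_1 \subset W_2$, so the lemma applies. Because $Z_i$, $Z'_i$, $U_i$, $U'_i$ do not depend on $i$, all horizontal arrows in diagram~(\ref{eq:diag:lem:Gamma-ast-compatible-W&U}) are identity maps, so the diagram degenerates to the equality $\Gamma(\supp(\Gamma))_\ast = \Gamma(W)_\ast$ as morphisms $H^i_Z(U,n) \to H^{i+2c-2d_X}_{Z'}(U', n+c-d_X)$. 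It therefore suffices to prove the lemma under the additional assumption $W = \supp(\Gamma)$.

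With this choice, the hypothesis gives $W_Z = \supp(\Gamma) \cap (Z \times Y) \subset X \times R'$, and hence
$$
W_Z \cap (X \times U') \subset (X \times R') \cap (X \times (Y \setminus R')) = \emptyset .
$$
The group $H^{i+2c}_{W_Z}(X \times U', n+c)$ that appears in the fifth row of~(\ref{eq:big-composition-Gamma}) therefore equals $H^{i+2c}_{\emptyset}(X \times U', n+c)$, which vanishes by Remark~\ref{rem:H_emptyset=0}. Since $\Gamma(W)_\ast$ factors through this zero group, it must itself be zero.

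There is essentially no obstacle in the argument beyond bookkeeping of supports. The only point requiring care is the legitimacy of shrinking $W$ down to $\supp(\Gamma)$ without altering $\Gamma(W)_\ast$, but this is exactly the (degenerate) content of Lemma~\ref{lem:Gamma-ast-compatible-W&U}. Once this reduction is made, the vanishing follows directly from the fact that $Z \times Y$ is forced by hypothesis into the complement of the open set $X \times U'$ onto which the class is eventually restricted.
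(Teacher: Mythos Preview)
Your proof is correct and follows essentially the same route as the paper's. Both arguments use Lemma~\ref{lem:Gamma-ast-compatible-W&U} to replace $W$ by $\supp(\Gamma)$ and then observe that the hypothesis forces the relevant support to become empty after restriction to $X\times U'$, so that the composition factors through a group which vanishes by Remark~\ref{rem:H_emptyset=0}; the only cosmetic difference is that the paper locates the zero group one step later, after applying $q_\ast$, as $H^{i+2c-2d_X}_{\emptyset}(U', n+c-d_X)$ rather than $H^{i+2c}_{\emptyset}(X\times U', n+c)$.
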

\begin{proof}
The assumption $(\supp \Gamma) \cap (Z \times Y)\subset X\times R'$ implies by Lemma \ref{lem:Gamma-ast-compatible-W&U} that $\Gamma(W)_\ast$ from Lemma \ref{lem:Gamma-ast} factors through the group
$
H^{i+2c-2d_X}_{\emptyset}(U', n+c-d_X )
$, which vanishes by Remark \ref{rem:H_emptyset=0}.
This concludes the proof. 
\end{proof}

The proof of the following functoriality property of the action from Lemma \ref{lem:Gamma-ast} is slightly tedious.
The result is only needed to see that the action of cycles on refined unramified cohomology that we construct in this paper is functorial with respect to the composition of correspondences.
In particular, the result is not needed in the proof of the moving lemma for classes with support (see Theorem \ref{thm:moving-lemma-cohomology} and \ref{thm:moving-body}) and its immediate consequences.
Readers who are mainly interested in the latter may therefore skip the next result and move directly to Section \ref{sec:moving} below. 

In the following, we let $p_i\colon X_1\times X_2\times X_3 \to X_i $,  $p_{ij}\colon X_1\times X_2\times X_3 \to X_i\times X_j$,  $p_{i}^{ij}\colon  X_i\times X_j\to X_i$, and $p_{j}^{ij}\colon X_i\times X_j\to X_j$ be the natural projections.

\begin{proposition} \label{prop:action:composition-of-correspondences}
Let $X_i$ for $i=1,2,3$ be smooth projective equi-dimensional algebraic $k$-schemes of dimensions $d_{X_i}=\dim X_i$.
Let $\Gamma_1\in Z^{c_1}(X_1\times X_2)$ and $\Gamma_2\in Z^{c_2}(X_2\times X_3)$ be cycles, and let   $W_i:=\supp \Gamma_i $ for $i=1,2$.
Let $W_{12}:=  (W_1\times X_3) \cap (X_1\times W_2) $, and assume that
 $W_{12}$ has codimension at least $c_1+c_2$.
 Let $W_3:=p_{13}(W_{12})\subset X_1\times X_3$, which is of codimension at least $c_3:=c_1+c_2-d_{X_2}$.
Consider the cycle 
$$
\Gamma_3:= (p_{13})_\ast(p_{12}^\ast \Gamma_1 \cdot p_{23}^\ast \Gamma_2)\in Z^{c_3}(X_1\times X_3) 
$$  
with $\supp \Gamma_3\subset W_3$.
$($This is well defined on the level of cycles because the intersection is dimensional transverse as $W_{12}$ has codimension at least $c_1+c_2$.$)$
Let $U_1\subset X_1$ be an open subset with complement $R_1$. 
Let
\begin{align} \label{eq:R3}
R_2:=p_2^{12}(W_1\cap (R_1\times X_2))\subset X_2, \quad 
R_3:=
p_3(W_{12}\cap (R_1\times X_2\times X_3))  \subset X_3,
\end{align} 
and put $U_i:=X_i\setminus R_i$ for $i=2,3$.
Then the following diagram commutes:
$$
\xymatrix{
H^i(U_1,n)\ar[d]_{\Gamma_1(W_{1})_\ast }  \ar[rrd]^-{\Gamma_3(W_3)_\ast}     & &  \\
  H^{i+2c_1-2d_{X_1}}  (U_2,n+c_1-d_{X_1})          \ar[rr]^{\Gamma_2(W_{2})_\ast}      & &   H^{i+2c_3-2d_{X_1}}  (U_3,n+c_3-d_{X_1})    .           
}
$$
\end{proposition}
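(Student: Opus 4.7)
The plan is to lift both sides of the desired identity to the triple product $X_1\times X_2\times X_3$, where by hypothesis the pullbacks $p_{12}^\ast\Gamma_1$ and $p_{23}^\ast\Gamma_2$ meet properly with intersection supported on $W_{12}$. First I would fully unfold the composition $\Gamma_2(W_2)_\ast\circ \Gamma_1(W_1)_\ast$ by concatenating two copies of the defining diagram (\ref{eq:big-composition-Gamma}). The result is a long composition on $X_1\times X_2$ followed by one on $X_2\times X_3$, linked by a pushforward along $p_2^{12}$ followed by a pullback along $p_2^{23}$. Applying the base change property \ref{item:f_*:f,g-smooth} to the Cartesian square formed by $p_{12}$ and $p_{23}$ on $X_1\times X_2\times X_3$, this middle junction can be rewritten as a pullback along $p_{12}$ followed by a pushforward along $p_{23}$. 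Combined with functoriality of pullbacks, this recasts the whole composition as an operation carried out on $X_1\times X_2\times X_3$: pull $\alpha$ back to the triple product, cup successively with $p_{12}^\ast\Gamma_1$ and $p_{23}^\ast\Gamma_2$, and then push forward along $p_3$, with suitable excision and restriction steps in between.

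Next, I would invoke \ref{item:action-of-cycles:compatible-with-intersections}, which (since $p_{12}^\ast\Gamma_1$ and $p_{23}^\ast\Gamma_2$ meet properly) identifies the iterated cup product above with the single cup product action of $p_{12}^\ast\Gamma_1\cdot p_{23}^\ast\Gamma_2$ supported on $W_{12}$. The morphism $p_{13}:X_1\times X_2\times X_3\to X_1\times X_3$ is smooth and proper because $X_2$ is smooth projective, so the projection formula in \ref{item:action-of-cycles:projection-formula-everything} yields
\[
(p_{13})_\ast\bigl(\cl^{X_1\times X_2\times X_3}_{W_{12}}(p_{12}^\ast\Gamma_1\cdot p_{23}^\ast\Gamma_2)\cup p_{13}^\ast\beta\bigr) \;=\; \cl^{X_1\times X_3}_{W_3}(\Gamma_3)\cup \beta
\]
for any $\beta\in H^\ast(U_1\times X_3,\cdot)$, and factoring the final pushforward along $p_3$ as the composition of $(p_{13})_\ast$ with $(p_3^{13})_\ast$ (functoriality of pushforwards, \ref{item:f_*}) converts the triple-product composition into the first half of the diagram defining $\Gamma_3(W_3)_\ast$.

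Once the two compositions agree as maps on ambient cohomology, it remains to check that both respect the bookkeeping of supports and of the open subsets $U_2,U_3$ determined by the definitions (\ref{eq:R3}). By construction, $R_2$ is precisely chosen so that the intermediate class obtained from $\Gamma_1(W_1)_\ast$ has support in $X_2\setminus R_2$, allowing the second excision step to make sense; similarly $R_3=p_3(W_{12}\cap(R_1\times X_2\times X_3))$ is exactly the image of the portion of $W_{12}$ lying over $R_1$, which coincides with the locus to be removed to apply excision inside the composition defining $\Gamma_3(W_3)_\ast$. Tracking these supports through the diagram, the remaining squares commute by functoriality of pullbacks and pushforwards, \ref{item:f_*:open-immersion}, \ref{item:action-of-cycles:enlargening-the-support}, \ref{item:action-of-cycles:compatible-with-restriction}, and Lemma \ref{lem:Gamma-ast-compatible-W&U} (to enlarge the support cycles from the minimal $W_i$'s to the required intermediate ones).

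The main obstacle will be the combinatorial bookkeeping: the combined diagram has on the order of a dozen rows, and each face must be justified by one of the axioms while keeping track of three families of closed subsets (the $W$'s, the $R$'s, and their intersections with coordinate subvarieties). The conceptual content is exhausted once base change, proper intersection, and the projection formula for $p_{13}$ are applied; the difficulty lies in verifying that the peculiar choices of $R_2$ and $R_3$ in (\ref{eq:R3}) really do make all excision steps legal simultaneously on both sides.
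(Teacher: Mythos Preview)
Your proposal is correct and uses essentially the same ingredients as the paper's proof: the base change property \ref{item:f_*:f,g-smooth} for the Cartesian square over $X_2$, the proper-intersection axiom \ref{item:action-of-cycles:compatible-with-intersections} to merge the two cup products on the triple product, the projection formula in \ref{item:action-of-cycles:projection-formula-everything} for the smooth proper map $p_{13}$, and functoriality of pushforwards to factor $p_3$. The only difference is the direction of the argument: the paper starts from $\Gamma_3(W_3)_\ast(\alpha)$ and successively unfolds it (first the projection formula for $p_{13}$, then \ref{item:action-of-cycles:compatible-with-intersections}, then base change) until it recognises $\Gamma_2(W_2)_\ast(\Gamma_1(W_1)_\ast(\alpha))$, whereas you start from the composition and work toward $\Gamma_3(W_3)_\ast$. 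The paper also isolates the set-theoretic identity $R_3=p_3^{13}(W_3\cap(R_1\times X_3))=p_3^{23}(W_2\cap(R_2\times X_3))$ as a separate lemma (Lemma~\ref{lem:R3}), which is exactly the fact you need to make the excision bookkeeping match on both sides; you allude to this but should state and prove it explicitly, since it is what guarantees that the three a priori different ``bad loci'' in $X_3$ arising from the three actions coincide.
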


Before we turn to the proof of the proposition, we need the following result on $R_3$ from \eqref{eq:R3}.

\begin{lemma} \label{lem:R3}
In the notation of Proposition \ref{prop:action:composition-of-correspondences}, we have
$$
R_3=
p_3^{13}(W_3\cap (R_1\times X_3))=
 p_3^{23}(W_2\cap(R_2\times X_3)) .
$$
\end{lemma}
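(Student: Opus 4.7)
The plan is to prove both equalities as equalities of reduced closed subsets by a direct chase of points. All the subsets in question are images under the natural projections of closed subsets, and in the context of Section \ref{sec:action} the $X_i$ are projective, so these projections are proper and the images are genuinely closed. Thus it suffices to verify equality at the level of underlying point sets (e.g.\ after base change to $\bar k$), and I will repeatedly use the defining identities $p_3 = p_3^{13}\circ p_{13}= p_3^{23}\circ p_{23}$ together with $W_{12}=(W_1\times X_3)\cap (X_1\times W_2)$.

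For the first equality $R_3=p_3^{13}(W_3\cap (R_1\times X_3))$, suppose $x_3\in R_3$ and pick a preimage $(x_1,x_2,x_3)\in W_{12}\cap (R_1\times X_2\times X_3)$. Projecting to $X_1\times X_3$ gives $(x_1,x_3)\in p_{13}(W_{12})=W_3$ with $x_1\in R_1$, so $x_3\in p_3^{13}(W_3\cap (R_1\times X_3))$. Conversely, given $(x_1,x_3)\in W_3\cap (R_1\times X_3)$, the definition $W_3=p_{13}(W_{12})$ yields some $x_2$ with $(x_1,x_2,x_3)\in W_{12}$; since $x_1\in R_1$, this triple lies in $W_{12}\cap (R_1\times X_2\times X_3)$, whence $x_3\in R_3$.

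For the second equality $R_3=p_3^{23}(W_2\cap (R_2\times X_3))$, take any $x_3\in R_3$ and a preimage $(x_1,x_2,x_3)\in W_{12}\cap (R_1\times X_2\times X_3)$. By the definition of $W_{12}$ we have $(x_1,x_2)\in W_1$ and $(x_2,x_3)\in W_2$, and since $x_1\in R_1$ this forces $x_2\in p_2^{12}(W_1\cap (R_1\times X_2))=R_2$; hence $(x_2,x_3)\in W_2\cap (R_2\times X_3)$ and $x_3=p_3^{23}(x_2,x_3)$. Conversely, if $(x_2,x_3)\in W_2\cap (R_2\times X_3)$, then $x_2\in R_2$ unfolds to give some $x_1\in R_1$ with $(x_1,x_2)\in W_1$. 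Together with $(x_2,x_3)\in W_2$ this places $(x_1,x_2,x_3)$ in $W_{12}\cap (R_1\times X_2\times X_3)$, so $x_3=p_3(x_1,x_2,x_3)\in R_3$.

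There is no genuine obstacle: the lemma is a bookkeeping exercise unpacking nested images. The only point deserving care is the justification that the sets involved are honestly closed (so that a reduced-subscheme equality makes sense and is implied by the point-set equality), which is why one invokes properness of the projections in the projective setting of this section.
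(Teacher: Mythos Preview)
Your proof is correct and follows essentially the same approach as the paper's. Both arguments are purely set-theoretic: the paper packages the computation using the projection formula $f(A)\cap B=f(A\cap f^{-1}(B))$ for subsets along the various projections, while you unpack this as an explicit point-chase; the content is identical. Your remark on closedness is a harmless aside but not actually needed---the lemma and the paper's proof are statements about underlying sets, and the equalities hold without invoking properness.
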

\begin{proof}
We have
\begin{align*}
p_3^{13}(W_3\cap (R_1\times X_3)) &=p_3^{13}(p_{13}(W_{12})\cap (R_1\times X_3)) \\
&=p_3^{13}(p_{13}(W_{12} \cap (R_1\times X_2\times X_3))) \\
&=p_3( W_{12} \cap (R_1\times X_2\times X_3)) ,
\end{align*}
where the second equality follows from the projection formula.
This proves the first equality claimed in the lemma.

Since $R_2=p_2(W_1\cap (R_1\times X_2))$, we have
$$
R_2\times X_3=p_2(W_1\cap (R_1\times X_2))\times X_3=p_{23}(p_{12}^{-1}(W_1)\cap (R_1\times X_2\times X_3))
$$
and hence
$$
W_2\cap(R_2\times X_3)=W_2\cap p_{23}(p_{12}^{-1}(W_1)\cap (R_1\times X_2\times X_3)).
$$
The projection formula with respect to $p_{23}$ then gives
\begin{align} \label{eq:R3-1}
W_2\cap(R_2\times X_3)=p_{23}(p_{23}^{-1}(W_2)\cap p_{12}^{-1}(W_1)\cap (R_1\times X_2\times X_3)).
\end{align}
Since $p_{23}^{-1}(W_2)=X_1\times W_2$ and $p_{12}^{-1}(W_1)=W_1\times X_3$, we conclude
$$
p_3^{23}(W_2\cap(R_2\times X_3))=p_3(W_{12}\cap (R_1\times X_2\times X_3))=R_3 ,
$$
which proves the second equality in the lemma.
This concludes the proof. 
\end{proof}

\begin{proof}[Proof of Proposition \ref{prop:action:composition-of-correspondences}]
For $\alpha\in H^i(U_1,n)$, by the construction in Lemma \ref{lem:Gamma-ast}, we have 
\begin{align} \label{eq:lem:action:composition-of-correspondences:1}
\Gamma_3(W_3)_\ast(\alpha) &= (p^{13}_{3})_\ast \left( \exc_1 \left( \cl^{U_1\times X_3}_{W_3\setminus (W_3)_{R_1}}( (p_{13})_\ast (p_{12}^\ast \Gamma_1 \cdot p_{23}^\ast \Gamma_2 ) )\cup p_{1}^\ast \alpha \right)|_{X_1\times U_3} \right) ,
\end{align}
where 
$$
\exc_1\colon H^\ast_{W_3\setminus (W_3)_{R_1}}(U_1\times X_3,-)\stackrel{\lowcong} \longrightarrow H^\ast_{W_3\setminus (W_3)_{R_1}}( (X_1\times X_3)\setminus (W_3)_{R_1},-)
$$ 
denotes the   isomorphism  given by excision and $p^{13}_3\colon X_1\times U_3\to U_3$ denotes the projection.
Here
 $(W_3)_{R_1}=W_3\cap (R_1\times X_3)$, and 
so
$p_3^{13}((W_3)_{R_1})= R_3$ by Lemma \ref{lem:R3}. 

Since $W_3=p_{13}(W_{12})$, the projection formula in condition~\ref{item:action-of-cycles:projection-formula-everything} yields
\begin{align} \label{eq:lem:action:composition-of-correspondences:2}
\cl^{U_1\times X_3}_{W_3\setminus (W_3)_{R_1}}( (p_{13})_\ast (p_{12}^\ast \Gamma_1 \cdot p_{23}^\ast \Gamma_2 ) )\cup p_{1}^\ast \alpha  &=   (p_{13})_\ast \left( \cl^{U_1\times X_2\times X_3}_{W_{12}\setminus (W_{12})_{R_1}} (p_{12}^\ast \Gamma_1 \cdot p_{23}^\ast \Gamma_2 ) \cup p_{13}^\ast ( p_{1}^\ast \alpha ) \right)  ,
\end{align}
where $p_{1}^\ast \alpha$ denotes the pullback of $\alpha$ via the projection $U_1\times X_3\to U_1$.
Note that $p_{12}^\ast \Gamma_1 \cdot p_{23}^\ast \Gamma_2= p_{23}^\ast \Gamma_2 \cdot p_{12}^\ast \Gamma_1$ by the commutativity of the intersection product of cycles.
Since $W_{12}=  (W_1\times X_3) \cap (X_1\times W_2)$ has codimension at least $c_1+c_2$, the compatibility of the action of cycles with the  intersection product (see condition~\ref{item:action-of-cycles:compatible-with-intersections}) thus shows
\begin{align*}
 \cl^{U_1\times X_2\times X_3}_{W_{12}\setminus (W_{12})_{R_1}} (p_{12}^\ast \Gamma_1 \cdot p_{23}^\ast \Gamma_2 ) \cup p_{13}^\ast ( p_{1}^\ast \alpha ) =  
 \cl^{U_1\times X_2\times X_3}_{U_1\times W_{2} } (p_{23}^\ast \Gamma_2) \cup \left( \cl^{U_1\times X_2\times X_3}_{(W_{1}\setminus (W_{1})_{R_1})\times X_3 }(  p_{12}^\ast \Gamma_1 )\cup p_{13}^\ast ( p_{1}^\ast \alpha )  \right) .
\end{align*}
Note that $p_{13}^\ast ( p_{1}^\ast \alpha ) =p_{12}^\ast {p'}_{1}^\ast \alpha $, where ${p'}_{1}^\ast \alpha $ denotes the pullback of $\alpha$ via the projection $ U_1\times X_2\to U_1$.
 Using the compatibility with pullbacks from condition~\ref{item:action-of-cycles:projection-formula-everything}, we thus get
\begin{align*}
 \cl^{U_1\times X_2\times X_3}_{W_{12}\setminus (W_{12})_{R_1}} (p_{12}^\ast \Gamma_1 \cdot p_{23}^\ast \Gamma_2 ) \cup p_{13}^\ast ( p_{1}^\ast \alpha ) =  \cl^{U_1\times X_2\times X_3}_{U_1\times W_{2} } (p_{23}^\ast \Gamma_2) \cup  p_{12}^\ast \left( \cl^{U_1\times X_2}_{W_{1}\setminus (W_{1})_{R_1}}(   \Gamma_1 )\cup ( {p'}_{1}^\ast \alpha )  \right) .
\end{align*} 
Substituting this into \eqref{eq:lem:action:composition-of-correspondences:2} and plugging the result into  \eqref{eq:lem:action:composition-of-correspondences:1}, we get
\begin{align*}
\Gamma_3(W_3)_\ast(\alpha) &= (p^{13}_{3})_\ast \left( \exc_1\left( (p_{13})_\ast \left(\cl^{U_1\times X_2\times X_3}_{U_1\times W_{2} } (p_{23}^\ast \Gamma_2) \cup  p_{12}^\ast \left( \cl^{U_1\times X_2}_{W_{1}\setminus (W_{1})_{R_1}}(   \Gamma_1 )\cup ( {p'}_{1}^\ast \alpha ) \right) \right) \right)|_{X_1\times U_3} \right) . 
\end{align*}
The compatibility of proper pushforward with pullbacks via open immersions (see condition~\ref{item:f_*:open-immersion}) and the fact that $\exc_1$ is the inverse of a pullback via an open immersion show that 
\begin{align*}
\Gamma_3(W_3)_\ast(\alpha) &= (p^{13}_{3})_\ast \left( (p_{13})_\ast \left( \exc_2 \left(\cl^{U_1\times X_2\times X_3}_{U_1\times W_{2} } (p_{23}^\ast \Gamma_2) \cup  p_{12}^\ast \left( \cl^{U_1\times X_2}_{W_{1}\setminus (W_{1})_{R_1}}(   \Gamma_1 )\cup ( {p'}_{1}^\ast \alpha ) \right) \right)|_{X_1\times X_2\times U_3}  \right)\right) ,
\end{align*}
where  
$$
\exc_2\colon H^\ast_{W_{12}\setminus (W_{12})_{R_1}}(U_1\times X_2\times X_3,-) \stackrel{\lowcong}\longrightarrow H^\ast_{W_{12}\setminus (W_{12})_{R_1}}( (X_1\times X_2\times X_3)\setminus  (W_{12})_{R_1},-)
$$
is the isomorphism
given by excision.
Here  we use that
$(W_{12})_{R_1}=W_{12}\cap (R_1\times X_2\times X_3)$ and $W_3=p_{13}(W_{12})$, and so
$p_3((W_{12})_{R_1})=p_3^{13}(W_3\cap R_1\times X_3)= R_3=X_3\setminus U_3$ by Lemma \ref{lem:R3}.

The functoriality of pushforwards then shows 
\begin{align*}
\Gamma_3(W_3)_\ast(\alpha) &= (p^{23}_{3})_\ast \left( (p_{23})_\ast \left( \exc_2 \left(\cl^{U_1\times X_2\times X_3}_{U_1\times W_{2} } (p_{23}^\ast \Gamma_2) \cup  p_{12}^\ast \left( \cl^{U_1\times X_2}_{W_{1}\setminus (W_{1})_{R_1}}(   \Gamma_1 )\cup ( {p'}_{1}^\ast \alpha ) \right) \right)|_{X_1\times X_2\times U_3}  \right)  \right) .
\end{align*}

Since the action of cycles is compatible with pullbacks along open immersions (hence with excision) by condition~\ref{item:action-of-cycles:compatible-with-restriction},   $\Gamma_3(W_3)_\ast(\alpha) $ is given by
\begin{align*}
(p^{23}_{3})_\ast \left( (p_{23})_\ast \left( \exc_2 \left(\cl^{X_1\times X_2\times X_3}_{X_1\times W_{2} } (p_{23}^\ast \Gamma_2) \cup  p_{12}^\ast \circ \exc_3 \left( \cl^{U_1\times X_2}_{W_{1}\setminus (W_{1})_{R_1}}(   \Gamma_1 )\cup ( {p'}_{1}^\ast \alpha ) \right)|_{X_1\times U_2} \right)|_{X_1\times X_2\times U_3}  \right) \right) ,
\end{align*}
where
$$
\exc_3\colon  H^\ast_{W_{1}\setminus (W_{1})_{R_1}}(U_1\times X_2 ,-) \stackrel{\lowcong}\longrightarrow H^\ast_{W_{1}\setminus (W_{1})_{R_1}}((X_1\times X_2)\setminus  (W_{1})_{R_1},-)
$$
is the isomorphism
given by excision, and where we use that $R_2=p_2^{12}((W_1)_{R_1})$; see \eqref{eq:R3}.

The compatibility of proper pushforwards with pullbacks along open immersions (hence with excision) from condition~\ref{item:f_*:open-immersion} then shows
that $\Gamma_3(W_3)_\ast(\alpha) $ is given by
\begin{align*}
(p^{23}_{3})_\ast \left( \exc_4\left( (p_{23})_\ast  \left(\cl^{X_1\times X_2\times X_3}_{X_1\times W_{2} } (p_{23}^\ast \Gamma_2) \cup  p_{12}^\ast \circ \exc_3 \left( \cl^{U_1\times X_2}_{W_{1}\setminus (W_{1})_{R_1}}(   \Gamma_1 )\cup ( {p'}_{1}^\ast \alpha ) \right)|_{X_1\times U_2 }  \right) \right)|_{X_2\times U_3} \right) ,
\end{align*}
where
$$
\exc_4\colon H^\ast_{W_{2}}(U_2\times X_3 ,-) \stackrel{\lowcong}\longrightarrow H^\ast_{W_{1}\setminus (W_{1})_{R_1}}((X_2\times X_3)\setminus  p_{23}((W_{12})_{R_1}),-)
$$
is the isomorphism
given by excision, where $p_{23}((W_{12})_{R_1})=p_{23}(W_{12} \cap (R_1\times X_2\times X_3))$  and so $ p_3(p_{23}( (W_{12}) _{R_1}))=R_3$; see \eqref{eq:R3}.

Applying the (last) projection formula from condition~\ref{item:action-of-cycles:projection-formula-everything} to $p_{23}$ then shows that $\Gamma_3(W_3)_\ast(\alpha) $ is given by
\begin{align*}
(p^{23}_{3})_\ast \left( \exc_4\left(  \cl^{  X_2\times X_3}_{W_{2} } ( \Gamma_2) \cup   (p_{23})_\ast \circ p_{12}^\ast \circ \exc_3 \left( \cl^{U_1\times X_2}_{W_{1}\setminus (W_{1})_{R_1}}(   \Gamma_1 )\cup ( {p'}_{1}^\ast \alpha ) \right) | _{X_1\times U_2 }  \right)|_{X_2\times U_3} \right) .
\end{align*}
We then consider the natural pushforward map
$$
\epsilon\colon H^\ast_{W_{1}\setminus (W_{1})_{R_1}}(X_1\times U_2 ,-)\longrightarrow H^\ast (X_1\times U_2 ,-) 
$$
and note that by the functoriality of pullbacks and proper pushforwards, the above class identifies with
\begin{align*}
(p^{23}_{3})_\ast \left( \exc_4\left( \cl^{  X_2\times X_3}_{W_{2} } ( \Gamma_2) \cup   (p_{23})_\ast \circ p_{12}^\ast \circ \epsilon\circ \exc_3 \left( \cl^{U_1\times X_2}_{W_{1}\setminus (W_{1})_{R_1}}(   \Gamma_1 )\cup ( {p'}_{1}^\ast \alpha ) \right) | _{X_1\times U_2 } \right) |_{X_2\times U_3} \right) .
\end{align*}
By condition~\ref{item:f_*:f,g-smooth}, we have $  (p_{23})_\ast \circ p_{12}^\ast \circ \epsilon = (p^{23}_2)^\ast \circ (p^{12}_2)_\ast \circ \epsilon $, and so
$\Gamma_3(W_3)_\ast(\alpha) $ is given by
\begin{align*}
(p^{23}_{3})_\ast \left( \exc_4\left(  \cl^{  X_2\times X_3}_{ W_{2} } ( \Gamma_2) \cup   (p^{23}_2)^\ast \circ (p^{12}_2)_\ast  \circ \epsilon \circ \exc_3 \left( \cl^{U_1\times X_2}_{W_{1}\setminus (W_{1})_{R_1}}(   \Gamma_1 )\cup ( {p'}_{1}^\ast \alpha ) \right) | _{X_1\times U_2 }  \right)|_{X_2\times U_3} \right) .
\end{align*} 
This shows that
\begin{align*}
\Gamma_3(W_3)_\ast(\alpha) 
&=
(p^{23}_{3})_\ast \left( \exc_4\left( \cl^{  X_2\times X_3}_{X_1\times W_{2} } ( \Gamma_2) \cup   (p^{23}_2)^\ast \left( \Gamma_1(W_1)_\ast(\alpha) \right) \right) |_{X_2\times U_3} \right) \\
&=
\Gamma_2(W_2)_\ast(\Gamma_1(W_1)_\ast(\alpha)) ,
\end{align*}
as we want.
\end{proof}

\section{The moving lemma} \label{sec:moving}

Theorem \ref{thm:moving-lemma-cohomology} stated in the introduction will be deduced from the following stronger (but more technical) result.

\begin{theorem} \label{thm:moving-body}
Let $k$ be a field, and fix a twisted cohomology theory  as in \eqref{eq:cohomology-functor} which satisfies conditions  \ref{item:excision}--\ref{item:action-of-cycles} from Section \ref{sec:axioms}. 
 Let $U$ be a smooth equi-dimensional algebraic $k$-scheme that admits a smooth projective compactification, and let
 $f\colon S\to U$ be a morphism from an algebraic $k$-scheme $S$ that is locally of pure dimension. 
Then for any nowhere dense closed subset $Z \subset U$, there are closed subsets $Z' \subset W \subset U$ with  $Z \subset W$ and $\dim W\leq \dim Z +1$  such that the following conditions are satisfied:
\begin{enumerate}
\item \label{item:moving-main:0} 
The subsets $Z '$ and $W\setminus Z $   are in good position with respect to $f$ in the following sense:
\begin{enumerate}[ref=\theenumi.\alph*]
\item The codimension of $f^{-1}(Z ')$ in $S$ is, locally at each point,  at  least $\codim_UZ $. \label{item:moving-main:1}
\item The codimension of $f^{-1}(W\setminus Z )$ in $ S$ is, locally at each point,  at  least $\codim_UZ -1$.\label{item:moving-main:2}
 \end{enumerate} 
\item  \label{item:moving-main:3}
Classes with support on $Z $ can be moved along $W$ to classes with support on $Z '$ in the following sense:
$$
\im (H^\ast_{Z}(U,n)\to H^\ast_{W}(U,n) ) \subset \im (H^\ast_{Z '}(U,n)\to H^\ast_{W}(U,n) ).
$$ 
\item \label{item:moving-main:4}
There is an open subset $S^\circ \subset S$ with the following properties:
\begin{itemize}
\item The complement $S\setminus S^\circ$  locally has codimension at least $\codim_UZ-1$ in $S$. 
In particular, $S^\circ$ is dense in $S$.
\item If $\dim f(S)<\codim_UZ -1$, then  $S^\circ=S$.
\item If we replace $S$ by $S^{\circ}$, then $W$ is well behaved under localization  in the following sense: if we replace $U$ by an open subset $V\subset U$  and $S$ by $f^{-1}(V)\cap S^{\circ}$, then   $W$ can be replaced by $W\cap V$. 
\end{itemize}
\item \label{item:moving-main:5}
Any component  $Z''\subset Z'$ with $\dim Z''>\dim Z $ satisfies $f^{-1}(Z'')=\emptyset$. 
If we only require items \eqref{item:moving-main:0} and \eqref{item:moving-main:3},  but not \eqref{item:moving-main:4}, then we may assume that no such component exists.  
\end{enumerate}  
\end{theorem}

\begin{remark} \label{rem:localization-Z'}
Even if $\dim S=0$, the subset $Z'\subset U$ in Theorem \ref{thm:moving-body} can in general not be chosen to be well behaved with respect to localization.
This can already be observed in the case where $U=\CP^2$, $Z\subset \CP^2$ is a line,  $S\subset Z$ is a closed point, $k =\bar k$ is algebraically closed, and $H^\ast(-,n)$ denotes \'etale cohomology with coefficients in $\mu_{\ell^r}^{\otimes n}$ for some prime $\ell$ invertible in $k$.
Indeed, if in this case $Z'\subset \CP^2$ is any closed subset which meets $S$ properly, then $Z\not \subset  Z'$, and so we can pick two points $\{p,q\}\subset Z\setminus Z'$.
But then there is a class
$$
\alpha\in H^3_Z(\CP^2\setminus \{p,q\},n)\cong H^1(Z\setminus \{p,q\},n-1)\cong \Z/\ell^r
$$
with non-trivial residue $\del_p\alpha$ at $p$, and so the pushforward of $\alpha$ to $H^3(\CP^2\setminus \{p,q\},n)$ does not admit a lift to $ H^3_{Z'}(\CP^2\setminus \{p,q\},n)$ because the latter agrees with $H^3_{Z'}(\CP^2 ,n)$ by excision and so the residue at $p$ would need to be trivial. 
\end{remark}

\begin{proof} [Proof of Theorem \ref{thm:moving-body}]
Let $X$ be a smooth projective compactification of $U$, and let $\bar Z  \subset X$ be the closure of $Z $ in $X$.
Since $U$ is equi-dimensional, so is $X$, and we let $d_X:=\dim X$.
Let $R:=\bar Z \setminus Z $.
By excision (see condition~\ref{item:excision}), the canonical restriction map $H^\ast_{Z }(X\setminus R,n)\to H^\ast_{Z }(U,n)$ is an isomorphism.
Since the pushforward maps from  condition~\ref{item:f_*} are compatible with respect to pullbacks  along open immersions (see condition~\ref{item:f_*:open-immersion}), one easily concludes that it suffices to prove the theorem in the case where
\begin{align}\label{eq:reduction-R}
U=X\setminus R\quad \text{with } R:=\bar Z \setminus Z .
\end{align}

For convenience, we will use the following terminology: if $\varphi:A\to B$ is a morphism of locally equi-dimensional algebraic schemes with $B$ equi-dimensional and $Z$ is a closed codimension $c$ subscheme of $B$ or a cycle on $B$ whose support has codimension $c$ in $B$, then we say that $Z$ is in good position with respect to $\varphi$ if $\varphi^{-1}(Z)\subset A$ locally has codimension at least $c$ (\textit{i.e.} locally at each point, the codimension is at least $c$).

The idea of the proof is to apply the moving lemma for algebraic cycles to the diagonal  $\Delta_X\subset X\times X$ and to exploit the action of cycles on open varieties from Section \ref{sec:action}.
The former yields the following precise statements.

\begin{lemma}\label{lem:transversal-auxiliary-lemma}
Let $\varphi\colon A\to X\times X$ be a morphism from a locally equi-dimensional algebraic scheme $A$ to $X\times X$.
\begin{enumerate}
\item \label{item:1-transverse-auxiliary}
There is a cycle $\Delta_X'\in Z^{d_X}(X\times X)$, rationally equivalent to the diagonal $\Delta_X$, that is in good position with respect to $\varphi$.
\item \label{item:2-transverse-auxiliary}
Assume  that  $\Delta_X$ is in good position with respect to $\varphi$, and let $\Delta_X'$ be a cycle on $X\times X$ that is rationally equivalent to $\Delta_X$ and that is in good position with respect to $\varphi$.
Then there is a closed subscheme $W_{X\times X}\subset X\times X$ of codimension $d_X-1$ which is in good position with respect to $\varphi$ and such that the following hold:
\begin{itemize} 
\item The supports of the cycles $\Delta_X$ and $\Delta_X'$ are contained in $W_{X\times X}$.
\item The cycle $\Gamma:=\Delta_X-\Delta_X'$, viewed as a cycle on $W_{X\times X}$, is rationally equivalent to zero on $W_{X\times X}$.
\end{itemize} 
\end{enumerate} 
\end{lemma}
\begin{proof}
  We apply Theorem \ref{thm:moving}\eqref{item:Levine:moving:1}  
  to the class of the diagonal $\Delta_X$ to get a cycle $\Delta'_X$ that is rationally equivalent to $\Delta_X$ and that is in good position with respect to $\varphi$. 
This proves the first assertion.
To prove the second assertion,  assume that $\Delta_X$ is in good position with respect to $\varphi$.
Then we consider the cycle $\Gamma=\Delta_X-\Delta_X'$ that is rationally equivalent to zero on $X\times X$ and in good position with respect to $\varphi$ by assumption.
We then apply  Theorem \ref{thm:moving}\eqref{item:Levine:moving:2}  to the cycle $\Gamma=\Delta_X-\Delta_X'$ and get a closed subscheme $W_{X\times X}\subset X\times X$ of codimension $d_X-1$ which is in good position with respect to $\varphi$ and which contains the support of $\Gamma$ such that $\Gamma$, viewed as a cycle on $W_{X\times X}$, is rationally equivalent to zero on $W_{X\times X}$.
The resulting cycle $\Delta_X'$ and the subscheme $W_{X\times X}$ then have the properties we want in the lemma.
\end{proof}

\begin{lemma} \label{lem:transversal}
There exist a closed subscheme  $W_{X\times X}\subset X\times X$ of codimension $d_X-1$ and a cycle $\Delta_X'\in Z^{d_X}(X\times X)$ with $\supp(\Delta'_X)\subset W_{X\times X}$ and $\Delta_X\subset W_{X\times X}$ such that $\Delta_X$ and $\Delta_{X}'$, viewed as cycles on $W_{X\times X}$, are rationally equivalent to each other on $W_{X\times X}$.
Moreover, the following transversality properties hold:
\begin{enumerate} 
\item \label{item:transverse-intersection-1}
$W_{X\times X}$ and $\Delta'_X$ are in good position with respect to 
 the morphism
 $$
 e:= \tau_{\bar Z}\times\id\colon \bar Z^{\nu} \times X \longrightarrow X\times X ,
 $$
 where $\bar Z^{\nu}$ denotes the disjoint union of the components of\, $\bar Z$ and $\tau_{\bar Z}\colon \bar Z^\nu\to \bar Z$ denotes the natural map. 
\item \label{item:transverse-intersection-2}
$W_{X\times X}$ and $\Delta'_X$ are in good position with respect to 
 the morphism
$$
g:=\tau_R\times f\colon R^{\nu}\times S\longrightarrow X\times X ,
$$
where $R^{\nu}$ denotes the disjoint union of the components of $R$  and $\tau_R:R^{\nu}\to X$ denotes the natural map.  
That is,  the  preimage $g^{-1}(W_{X\times X})$ $($resp.\ $g^{-1}(\supp(\Delta_X')))$ locally has codimension at least $d_X-1$ $($resp.\ at least $d_X)$ in $ R^{\nu}\times S$. 
\item \label{item:transverse-intersection-3}
Consider the morphism 
$$
h:=\tau_{\bar Z }\times f\colon \bar Z ^\nu\times S\longrightarrow X\times X .
$$
Then the following hold:
\begin{enumerate}[ref=\theenumi.\alph*]
\item The cycle $\Delta'_X$ is in good position with respect to $h$; i.e.\  the preimage $h^{-1}(\supp(\Delta_X'))$  locally has codimension at least $d_X$ in $\bar Z ^\nu\times S$.\label{item:transverse-intersection-3.1}
\item The  preimage $h^{-1}(W_{X\times X}\setminus \Delta_X)$   locally has codimension at least $d_X-1$.\label{item:transverse-intersection-3.2}
\end{enumerate}   
\end{enumerate}
\end{lemma}

\begin{proof}
We will apply Lemma \ref{lem:transversal-auxiliary-lemma} to a suitable morphism from a locally equi-dimensional scheme to $X\times X$.
For convenience, we first explain how this implies items \eqref{item:transverse-intersection-1}--\eqref{item:transverse-intersection-3}  individually.
Afterwards, we will explain how to arrange that all items hold simultaneously, as we want. 

\eqref{item:transverse-intersection-1}~ 
The preimage of the diagonal $\Delta_X$ via $e\colon \bar Z^{\nu} \times X \to X\times X $ identifies with the graph of $\tau_{\bar Z}$ and hence has locally codimension $d_X$.
Item \eqref{item:transverse-intersection-1} therefore follows directly  by applying Lemma \ref{lem:transversal-auxiliary-lemma} to the morphism $e$.
 
\eqref{item:transverse-intersection-2}~
Since $\im f\subset U$, while $R=X\setminus U$, we find that $g^{-1}(\Delta_X)=\emptyset$.
Item  \eqref{item:transverse-intersection-2}  therefore follows by applying Lemma \ref{lem:transversal-auxiliary-lemma} to the morphism $g$ above.

\eqref{item:transverse-intersection-3.1}~
Item \eqref{item:transverse-intersection-3.1} follows from  Lemma \ref{lem:transversal-auxiliary-lemma}\eqref{item:1-transverse-auxiliary} applied to the morphism $h$.

\eqref{item:transverse-intersection-3.2}~
We note that $\Delta_X$ is not in good position with respect to $h$, and so  Lemma \ref{lem:transversal-auxiliary-lemma}\eqref{item:2-transverse-auxiliary} does not apply directly.
Instead, we consider the morphism
$$
h'\colon (\bar Z ^\nu\times S) \setminus h^{-1}(\Delta_X) \longrightarrow X\times X 
$$
that is given by the restriction of $h$.
Clearly, $\Delta_X$ is in good position with respect to this morphism, and so Lemma \ref{lem:transversal-auxiliary-lemma} applies.
Since $(h')^{-1}(W_{X\times X})$ identifies naturally with $h^{-1}(W_{X\times X}\setminus \Delta_X)$,  we find that $h^{-1}(W_{X\times X}\setminus \Delta_X)$  locally has codimension $d_X-1$,  as we want.

It remains to show that we can arrange that items  \eqref{item:transverse-intersection-1}--\eqref{item:transverse-intersection-3} hold simultaneously.
To this end, we introduce the following notation:
by the disjoint union of a collection of morphisms $\varphi_i\colon A_i\to X\times X$, we mean the natural morphism $\sqcup \varphi_i\colon \bigsqcup A_i\to X\times X$.

We consider the  disjoint union of the morphisms $e$,  $g$,  and $h$ above and apply  Lemma \ref{lem:transversal-auxiliary-lemma}\eqref{item:1-transverse-auxiliary} to this morphism to get a cycle $\Delta_X'$.
It follows from the above arguments that the resulting cycle  $\Delta_X'$ satisfies items \eqref{item:transverse-intersection-1}--\eqref{item:transverse-intersection-3} in the lemma.
We then apply  Lemma \ref{lem:transversal-auxiliary-lemma}\eqref{item:2-transverse-auxiliary} to the disjoint union of the morphisms $e$,  $g$,  and $h'$ to get a closed subset $W_{X\times X}\subset X\times X$ as in the lemma.
As before it follows from the above arguments that $W_{X\times X}$ satisfies items \eqref{item:transverse-intersection-1}--\eqref{item:transverse-intersection-3} in the lemma, as we want.
This concludes the proof of Lemma~\ref{lem:transversal}.   
\end{proof}

From now on we fix $\Delta_X'$ and $W_{X\times X}$ as in Lemma \ref{lem:transversal}.
We then define
\begin{align} \label{eq:Z1'}
\bar Z ':=q(W_{X\times X}\cap (R\times X))\cup q(\supp(\Delta_X')\cap (\bar Z \times X)), \quad   Z ':=\bar Z '\cap U ,
\end{align} 
\begin{align} \label{eq:Z2}
\bar W:=q(W_{X\times X}\cap (\bar Z \times X)), \quad\text{and} \quad W:=\bar W \cap U.
\end{align}
(Here and in the following,  $q\colon X\times X\to X$ denotes, as before, the projection onto the second factor.)
Since $\Delta_X\subset W_{X\times X}$, we have $Z \subset W$.
Since $R\subset \bar Z $ (by the definition of $R$, \textit{cf.} \eqref{eq:reduction-R}) and $\supp(\Delta'_X)\subset W_{X\times X}$, we also find that $Z' \subset W$.
Moreover, since $W_{X\times X}\subset X\times X$ has codimension $d_X-1$, Lemma \ref{lem:transversal}\eqref{item:transverse-intersection-1} implies $\dim W\leq\dim Z +1$, as we want.

It remains to prove that conditions~\eqref{item:moving-main:0}--\eqref{item:moving-main:5} of Theorem~\ref{thm:moving-body} are satisfied. 

\begin{enumerate}[wide,itemsep=10pt,label={\textbf{Step~\arabic*.}},ref=\arabic* ]
\item \label{step1}   
  Proof that condition~\eqref{item:moving-main:3} is satisfied.

Let $U':=U\setminus Z' $.
By the long exact sequence of triples (see condition~\ref{item:les-triple}), it suffices to show that the following composition is zero:
\begin{align} \label{eq:composition:step3-proof-of-moving}
H^\ast_{Z }(U,n)\stackrel{\iota_\ast}\longrightarrow H^\ast_{W}(U,n) \xrightarrow{\restr} H^\ast_{W}(U',n) .
\end{align}

We view $\Delta_X$ as a cycle on $X\times X$.
Its support is contained in $W_{X\times X}$.
By \eqref{eq:Z2}, $W=q(W_{X\times X} \cap (\bar Z \times X))\cap U$.
Moreover, \eqref{eq:Z1'} implies that $q(W_{X\times X}\cap (R\times X))\subset X\setminus U'$. 
Lemma \ref{lem:Gamma-ast} thus yields an action
$$
\Delta_X(W_{X\times X})_\ast\colon H^\ast_{Z }(U,n)\longrightarrow H^\ast_{W}(U',n) .
$$
By Lemma \ref{lem:Gamma-ast:Gamma=Delta}, this action identifies with the composition in \eqref{eq:composition:step3-proof-of-moving}.

By Lemma \ref{lem:transversal}, the cycle $\Delta_X$ is rationally equivalent to $\Delta_X'$ on $W_{X\times X}$.
Lemma \ref{lem:Gamma-ast:Gamma-sim-0} and the additivity of the action thus imply that 
$$
\Delta_X(W_{X\times X})_\ast=\Delta'_X(W_{X\times X})_\ast\colon H^\ast_{Z }(U,n)\longrightarrow H^\ast_{W}(U',n) .
$$

By \eqref{eq:Z1'}, we have  
$q(\supp(\Delta_X')\cap (\bar Z \times X))\subset \bar Z '$.   
Moreover,  $U'\cap \bar Z' =\emptyset$ because $U'=U\setminus Z' =X\setminus \bar Z' $, where the last equality uses that  $\bar Z \setminus Z =X\setminus U$ by \eqref{eq:reduction-R}.
Hence,  Lemma \ref{lem:Gamma-ast-zero-if-wrong-support} implies that
$$
\Delta'_X(W_{X\times X})_\ast\colon H^\ast_{Z }(U,n)\longrightarrow H^\ast_{W}(U',n)
$$
is zero. This concludes the proof of Step~\ref{step1}.

\item\label{step2}
  Proof of that condition~\eqref{item:moving-main:0} is satisfied. 

Since $\im f\subset U\subset X$, the definition of $Z'$ in \eqref{eq:Z1'} implies that
$$
f^{-1}(Z')=f^{-1}(q(W_{X\times X}\cap (R\times X)))\cup f^{-1}(q(\supp(\Delta_X')\cap (\bar Z \times X))) .
$$
This splits into a union of two closed subsets, and it suffices to bound the codimension of each of these two subsets separately.

Consider the  commutative diagram
$$
\xymatrix{
R^\nu \times S\ar[r]^{\id\times f} \ar[d]^{\pr_2}& R^\nu\times X\ar[d]^{\pr_2}\ar[r]^{\tau_R\times \id}& X\times X\ar[d]^{q}\\
S\ar[r]^f  & X\ar[r]^{\id} & X\rlap{,} 
}
$$
where each vertical arrow is the respective projection onto the second factor,  the square on the left is Cartesian, and the composition of the two horizontal arrows on the top coincides with the morphism $g$ from Lemma \ref{lem:transversal}\eqref{item:transverse-intersection-2}.
It is elementary to check that we have an equality of sets
$$
\pr_2((\id\times f)^{-1}(A))=f^{-1}(\pr_2(A))
$$
for any subset $A\subset R^\nu\times X$.
Applying this to $A=(\tau_R\times \id)^{-1}(W_{X\times X})$, we get
$$
\pr_2(g^{-1}(W_{X\times X}))=f^{-1}(\pr_2((\tau_R\times \id)^{-1}(W_{X\times X}))) .
$$
The right-hand side agrees with $f^{-1}(q(W_{X\times X} \cap (R\times X)))$,  and so
$$
\pr_2(g^{-1}(W_{X\times X}))=f^{-1}(q(W_{X\times X} \cap (R\times X))). 
$$
The codimension of $\pr_2(g^{-1}(W_{X\times X}))$ in $S$
 is bounded  below by 
$$
\codim_{R^{\nu}\times S} (g^{-1}(W_{X\times X})) -\dim R^\nu.
$$
By Lemma \ref{lem:transversal}\eqref{item:transverse-intersection-2}, the above number is bounded  below by $d_X-1-\dim R$, where we use $\dim R=\dim R^\nu$.
Since $R=\bar Z \setminus Z $ is nowhere dense in $\bar Z $, we finally conclude that the above number is bounded  below by $d_X-\dim Z $.
Hence, $f^{-1}(q(W_{X\times X}\cap (R\times X)))$ has codimension at least $\codim_X\bar Z =\codim_U Z $ in $S$, as we want.

Similarly, consider the commutative diagram
\begin{align} \label{eq:diag-f&h}
\xymatrix{
\bar Z^\nu \times S\ar[r]^{\id\times f} \ar[d]^{\pr_2}& \bar Z^\nu\times X\ar[d]^{\pr_2}\ar[r]^{\tau_{\bar Z}\times \id}& X\times X\ar[d]^{q}\\
S\ar[r]^f  & X\ar[r]^{\id} & X ,
}
\end{align}
where the composition of the morphisms in the top row is $h$.
As before, one checks that there is an equality of sets
\begin{align} \label{eq:diag-f&h-consequence}
\pr_2((\id\times f)^{-1}(A))=f^{-1}(\pr_2(A))
\end{align}
for any subset $A\subset \bar Z^\nu\times X$.
Applying this to $A=(\tau_{\bar Z}\times \id)^{-1}(\supp \Delta'_X)$, we conclude in a similar way as above that the codimension of $ f^{-1}(q(\supp(\Delta_X')\cap (\bar Z \times X)))$ in $S$ is bounded  below by 
$$
\codim_{\bar Z ^{\nu}\times S} \left(h^{-1}\left(\supp\left(\Delta'_X\right)\right)\right) -\dim \bar Z .
$$
By  Lemma \ref{lem:transversal}\eqref{item:transverse-intersection-3.1}, this number is bounded  below by $d_X-\dim \bar Z =\codim_X\bar Z $, as we want.
This concludes the proof that condition~\eqref{item:moving-main:1} is satisfied.

Next, we aim to prove that condition~\eqref{item:moving-main:2} is satisfied. 
Using \eqref{eq:diag-f&h} and applying \eqref{eq:diag-f&h-consequence} to $(\tau_{\bar Z}\times \id)^{-1} (W_{X\times X})$, we find that
$$
f^{-1}(\pr_2 ((\tau_{\bar Z}\times \id)^{-1} (W_{X\times X})) )=\pr_2(h^{-1}(W_{X\times X})).
$$
Note that  $\pr_2 ((\tau_{\bar Z}\times \id)^{-1} (W_{X\times X})) =q( W_{X\times X}\cap (\bar Z\times X))=\bar W $, where the first equality uses that $\bar Z^\nu$ is the disjoint union of the irreducible components of $\bar Z$ and the last equality uses \eqref{eq:Z2}.
Hence,
$$
f^{-1}(W)= f^{-1}(\bar W)=\pr_2(h^{-1}(W_{X\times X})),
$$
where the first equality uses $\im f\subset U$.
Similarly, $f^{-1}(Z)=\pr_2(h^{-1}(\Delta_X))$,  and so
$$
f^{-1}(W\setminus Z )\subset \pr_2(h^{-1}(W_{X\times X}\setminus \Delta_X)) .
$$
Hence, the codimension of $f^{-1}(W\setminus Z )$ in $S$ is bounded  below by 
$$
\codim_{\bar Z ^{\nu}\times S}(h^{-1}(W_{X\times X}\setminus \Delta_X) )-\dim \bar Z .
$$
By  Lemma \ref{lem:transversal}\eqref{item:transverse-intersection-3.2}, the above number is bounded  below by 
$$
d_X-1-\dim Z =\codim_UZ  -1 .
$$
This proves that condition~\eqref{item:moving-main:2} of Theorem \ref{thm:moving-body} is satisfied and hence concludes Step~\ref{step2}.

\item\label{step3}
  Proof of that condition~\eqref{item:moving-main:4} is satisfied. 

If $\codim_U Z=1$, then we may take $W=U$, and so condition~\eqref{item:moving-main:4} clearly holds for $S^\circ:=S$.  
 Since $Z\subset X$ is nowhere dense, it remains to consider the case $\codim_UZ\geq 2$. 
To this end, we consider $ h^{-1}(W_{X\times X} \setminus \Delta_X)\subset \bar Z^\nu\times S$ and define 
$$ 
S^{\circ}:=S\setminus S^{\cl}\quad \text{with }  
S^{\cl}:= \overline{ \pr_2(  h^{-1}(W_{X\times X} \setminus \Delta_X))}\subset S.
$$
By \eqref{eq:diag-f&h-consequence},
$$
S^{\cl}= \overline{f^{-1}(A)},\quad \text{where } A:=\pr_2((\tau\times \id)^{-1}(W_{X\times X}\setminus \Delta_X)) .
$$ 

By Lemma \ref{lem:transversal}\eqref{item:transverse-intersection-3}, $ h^{-1}(W_{X\times X} \setminus \Delta_X)$ locally has codimension at least $d_X-1$. 
Hence,  
$S^{\cl}$ locally has codimension at least  $\dim U-\dim Z-1=\codim_UZ-1$ in $S$.
In particular, the open subset $S^\circ \subset S$ is dense because $\codim_UZ\geq 2$.
Note further that $\im h=\bar Z\times f(S)$.
Therefore, if $\dim f(S)<\codim_UZ-1$, then $\dim (\im h)<d_X-1$.
The aforementioned observation that  $ h^{-1}(W_{X\times X} \setminus \Delta_X)$ locally has codimension at least $d_X-1$ thus implies by Lemma \ref{lem:fibre-dimension} that $S=S^\circ$, as we want. 

It remains to prove that up to replacing $S$ by $S^\circ$, the closed subset $W$ is well behaved under localization.
By the definition of $S^\circ$, this replacement has the effect that we may from now on assume that 
\begin{align} \label{eq:h^-1=emptyset}
 h^{-1}(W_{X\times X}\setminus \Delta_X)=\emptyset.
\end{align}

Now let $V\subset U$ be an open subset, and let $R_V:=X\setminus V$.
By excision, we reduce to the case where $R_V\subset \bar Z $.
Then $R\subset R_V\subset \bar Z $, and replacing $U$ by $V$ amounts to replacing $R$ by $R_V$ and $S$ by $S_V:=f^{-1}(V)$. 
We aim to show that under these replacements,  
$W$ can be replaced by $W\cap V$.
In view of the definition in \eqref{eq:Z2},
  it will be enough to show that $W_{X\times X}$  from Lemma \ref{lem:transversal} does not need to be changed if we replace $R$ by $R_V$ and $S$ by $S_V$.

It is clear that the conclusion of Lemma \ref{lem:transversal} is not affected if we replace $S$ by $S_V$, and so we may from now on assume that $f(S)\subset V$.
It remains to analyse the effect of replacing $R$ by the possibly larger subset $R_V\subset \bar Z $.
Items \eqref{item:transverse-intersection-1} and \eqref{item:transverse-intersection-3} of Lemma~\ref{lem:transversal} are not affected by this replacement. 
It thus suffices to show that Lemma \ref{lem:transversal}\eqref{item:transverse-intersection-2} remains true after replacing $R$ by $R_V$.
To this end, we consider  the map
$$
g_V:=\tau_{R_V}\times f\colon R_V^{\nu}\times S\longrightarrow X\times X
$$
that we obtain by replacing $R$ by $R_V$ in Lemma \ref{lem:transversal}\eqref{item:transverse-intersection-2}.
Since $R_V\subset \bar Z$ (by the above reduction step), 
$h^{-1}(W_{X\times X}\setminus \Delta_X)=\emptyset$ (see \eqref{eq:h^-1=emptyset}) implies $g_V^{-1}(W_{X\times X}\setminus \Delta_X)=\emptyset $. 
Since $f(S)\subset V$ by the above reduction step, we have $R_V\cap f(S)=\emptyset$ and so $g_V^{-1}(\Delta_X)=\emptyset$.
We thus conclude
$g_V^{-1}(W_{X\times X})=\emptyset$.
Since $\supp(\Delta_X')\subset W_{X\times X}$, this also implies that $g_V^{-1}(\supp(\Delta_X'))=\emptyset$.
Hence, the conclusion in  Lemma \ref{lem:transversal}\eqref{item:transverse-intersection-2} remains true if we replace $R$ by $R_V$, as claimed.
This concludes the proof of that condition~\eqref{item:moving-main:4} of Theorem~\ref{thm:moving-body} is satisfied.

\item\label{step4}
  Proof that condition~\eqref{item:moving-main:5} is satisfied.

We first note that $Z '\subset W$.
Since $ \dim W\leq \dim Z +1 $, we thus get $\dim Z '\leq   \dim Z +1$.
By condition~\eqref{item:moving-main:1}, $f^{-1}(Z ')$ has, locally at each point, codimension at least $\codim_UZ$.
But if $Z ''\subset Z '$ is a component of dimension $\dim Z +1$, then $f^{-1}(Z '')$, if non-empty,  locally has codimension at most $\codim_U Z -1$.
Hence, $f^{-1}(Z '')=\emptyset$, as claimed.

Finally,  we only ask that conditions~\eqref{item:moving-main:0} and  \eqref{item:moving-main:3} of Theorem~\ref{thm:moving-body} are satisfied (but not \eqref{item:moving-main:4}), and we aim to show that we can arrange to have $\dim Z \geq \dim Z '$. 
To this end, we use the moving lemma for cycles in the form of Lemma~\ref{lem:transversal-auxiliary-lemma} to ask that in addition to the properties in Lemma \ref{lem:transversal}, $W_{X\times X}$ is in good position with respect to the natural map
$$
R^\nu\times X\longrightarrow X\times X.
$$ 
(We can do this because the diagonal $\Delta_X$ is clearly in good position with respect to the above morphism, and so the above morphism can simply be added to the disjoint unions of morphisms used in the proof of Lemma \ref{lem:transversal}.)
By the definition of $Z'$ in \eqref{eq:Z1'}, we have
$$
\bar Z  '=q(W_{X\times X}\cap (R\times X))\cup q(\supp(\Delta_X')\cap (\bar Z \times X)), \quad
Z ':=\bar Z '\cap U .
$$

Since $W_{X\times X}$ is in good position with respect to the above map $R^\nu\times X\to X\times X$, we find that
\begin{align*}
 \dim( q(W_{X\times X}\cap (R\times X)))&\leq  \dim( W_{X\times X}\cap (R\times X)) \\
 &\leq \dim R+1\\
& \leq \dim \bar Z=\dim Z,
\end{align*}
where the last inequality uses that $R=\bar Z\setminus Z$ is a closed nowhere dense subset of $\bar Z$ and hence has strictly smaller dimension than $\bar Z$.

By Lemma \ref{lem:transversal}\eqref{item:transverse-intersection-1}, the cycle $\Delta'_X$ is in good position with respect to the natural map $e\colon \bar Z^\nu\times X\to X\times X$, which implies
\begin{align*}
\dim (q(\supp(\Delta_X')\cap (\bar Z \times X))) &\leq  \dim (\supp(\Delta_X')\cap (\bar Z \times X))  \\
&\leq \dim (\bar Z ) =\dim (Z) .
\end{align*}

Altogether, we get $\dim Z \geq \dim \bar Z' \geq \dim Z '$, as we want.
(Here we emphasize that we cannot ensure any longer that condition~\eqref{item:moving-main:4} of Theorem~\ref{thm:moving-body} holds true:
 unless every component of $W_{X\times X}\cap (\bar Z \times X)$ dominates a component of $\bar Z $, we cannot ensure for varying $R$ that the intersection of $W_{X\times X}$ with $R\times X$ is dimensionally transversely,  hence we may lose the localization property from  Step~\ref{step3}  above.)

 This concludes the proof of Step~\ref{step4}, and hence the proof of the theorem.\hfill\qedhere
\end{enumerate}
\end{proof}

\section{Applications: Proofs of Theorem \ref{thm:moving-lemma-cohomology} and Corollaries \ref{cor:global-effacement}--\ref{cor:H_j-nr-basic}}
\label{sec:applications-of-moving}

\subsection{Proof of Theorem \ref{thm:moving-lemma-cohomology}}

\begin{proof}  [Proof of Theorem \ref{thm:moving-lemma-cohomology}]
 Let $X$ be a smooth equi-dimensional $k$-scheme that admits a smooth projective compactification, and let $S,Z\subset X$ be  closed subsets.
Let $S^\nu$ be the disjoint union of the irreducible components of $S$, and let $f\colon S^\nu\to X$ be the natural map.
We apply Theorem \ref{thm:moving-body} to $U:=X$.
We only ask that conditions~\eqref{item:moving-main:0} and \eqref{item:moving-main:3} hold true, but not \eqref{item:moving-main:4}.
By condition~\eqref{item:moving-main:5}, we may thus  assume that $\dim Z' \leq \dim Z $.
By condition  \eqref{item:moving-main:3},  
\begin{align}\label{eq:moving-statement-intro}
\im (H^\ast_{Z}(X,n)\lra H^\ast_{W}(X,n) ) \subset \im (H^\ast_{Z'}(X,n)\lra H^\ast_{W}(X,n) ).
\end{align}
In other words, for any $\alpha\in H^\ast_Z(X,n)$, there is a class $\alpha'\in H_{Z'}^\ast(X,n)$ such that $\alpha$ and $\alpha'$ have the same image in 
  $H^\ast_W(X,n)$. 
We further know that $\dim W\leq \dim Z+1$, and by condition~\eqref{item:moving-main:0}, we have $\codim_X (S\cap Z')\geq \codim_X S + \codim_X Z$ and $\codim_X (S\cap (W\setminus Z))\geq \codim_X S + \codim_X Z-1$.
So if we know that the  inequalities $\dim Z' \leq \dim Z $ and $\dim W\leq \dim Z+1$ are in fact equalities, we will be done. 
Note however that passing from inequalities to equalities is trivial: if some of the inequalities are strict, we add to
$Z'$ (resp.\ $W$)  closed subsets of the desired dimensions, in general position with respect to $S$,  in such a way that $Z'\subset W$ remains true.
(This uses the assumption that $X$ admits a projective closure, and so we can construct the subsets we add as suitable complete intersections.)
Equation \eqref{eq:moving-statement-intro} then remains true by the functoriality of pushforwards (see condition~\ref{item:f_*}); the conditions $\codim_X (S\cap Z')\geq \codim_X S + \codim_X Z$ and $\codim_X (S\cap (W\setminus Z))\geq \codim_X S + \codim_X Z-1$ remain true by the fact that we added subsets  in general position with respect to $S$. 
Since $\dim W=\dim Z+1$ and $\dim Z'=\dim Z$,  it follows that
 $Z'$ and $W\setminus Z$ meet $S$ dimensionally transversely.
This concludes the proof of the theorem.
\end{proof}

\subsection{Global and local effacement theorems}
Corollary \ref{cor:global-effacement}, stated in the introduction, results from the following more general statement.

\begin{corollary} 
\label{cor:global-effacement-body}
 Let $X$ be a smooth equi-dimensional $k$-scheme that admits a smooth projective compactification.
Let $Z\subset X$ be a closed subset, and let  $f\colon S\to X$ be a morphism from an algebraic $k$-scheme $S$ that is locally of pure dimension. 
Then there is  a closed subset $W\subset X$ with $Z\subset W$, and $\dim W=\dim Z+1$, and there is an open subset $U\subset X$  with $\codim_X(X\setminus U)\geq \codim_X(Z)$ such that $f^{-1}(X\setminus U)\subset S$ locally has codimension at least $\codim_X(Z)$  and  
 such that  the following composition is zero:
$$
H^\ast_{Z }(X,n)\longrightarrow H^\ast_{W}(X,n)\longrightarrow H^\ast_{W}(U,n) .
$$ 
Moreover,  if $\dim f(S)+\dim Z<\dim X$, then 
 $U$ is an open neighbourhood of $f(S)$ in $X$.
 \end{corollary}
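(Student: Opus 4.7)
The plan is to derive this as a direct application of Theorem \ref{thm:moving-body}, the key move being to choose the open set $U$ to be the complement of the auxiliary subset $Z'$ furnished by that theorem; the vanishing of the composition will then follow from the long exact sequence of triples (axiom \ref{item:les-triple}).

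More precisely, I would first apply Theorem \ref{thm:moving-body} to the ambient variety $X$ (playing the role of ``$U$'' there), the morphism $f:S\to X$, and the closed subset $Z$. Since only items (\ref{item:moving-main:0}) and (\ref{item:moving-main:3}) are needed, I invoke item (\ref{item:moving-main:5}) to arrange in addition that $\dim Z' \leq \dim Z$. I then set $U := X \setminus Z'$. The codimension bound $\codim_X(X\setminus U) \geq \codim_X Z$ is then immediate from $\dim Z'\leq \dim Z$, while the analogous bound on $f^{-1}(X\setminus U) = f^{-1}(Z')\subset S$ is precisely item (\ref{item:moving-main:1}).

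For the vanishing, I would invoke axiom \ref{item:les-triple} for the triple $Z' \subset W \subset X$, giving exactness at $H^\ast_W(X,n)$ of
$$
H^\ast_{Z'}(X,n)\stackrel{\iota_\ast}\longrightarrow H^\ast_W(X,n)\stackrel{\operatorname{restr}}\longrightarrow H^\ast_{W\setminus Z'}(X\setminus Z',n).
$$
By the convention $H^\ast_W(U,n) := H^\ast_{W\cap U}(U,n)$, the rightmost group coincides with $H^\ast_W(U,n)$. Item (\ref{item:moving-main:3}) of Theorem \ref{thm:moving-body} places $\im(H^\ast_Z(X,n)\to H^\ast_W(X,n))$ inside $\im(\iota_\ast)$, so the composition $H^\ast_Z(X,n)\to H^\ast_W(X,n)\to H^\ast_W(U,n)$ vanishes. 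Should the inequality $\dim W \leq \dim Z+1$ be strict, I would adjoin to $W$ an arbitrary closed subset of the required dimension; functoriality of $\iota_\ast$ (item \ref{item:f_*}) together with its compatibility with restriction to opens (item \ref{item:f_*:open-immersion}) show that the composition for the enlarged $W$ factors through the already vanishing one.

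For the ``moreover'' clause, the hypothesis $\dim f(S) + \dim Z < \dim X$ gives $\codim_X Z \geq \dim f(S)+1$, hence by item (\ref{item:moving-main:1}) also $\codim_S f^{-1}(Z')\geq \dim f(S)+1$; Lemma \ref{lem:fibre-dimension} then forces $f^{-1}(Z')=\emptyset$, so $f(S)\subset U$. I do not anticipate any real obstacle: all the substantive work is absorbed into Theorem \ref{thm:moving-body}, and what remains is a bookkeeping exercise that converts the ``moving classes from $Z$ to $Z'$ within $W$'' conclusion of that theorem into the equivalent ``vanishing after restriction to the complement of $Z'$'' formulation demanded by the corollary.
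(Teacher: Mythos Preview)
Your proposal is correct and follows essentially the same route as the paper: apply Theorem \ref{thm:moving-body} (requesting items (\ref{item:moving-main:0}), (\ref{item:moving-main:3}), (\ref{item:moving-main:5}) but not (\ref{item:moving-main:4})), set $U:=X\setminus Z'$, and deduce the vanishing from the long exact sequence of triples, with the ``moreover'' handled via Lemma \ref{lem:fibre-dimension}. Your explicit treatment of enlarging $W$ to force $\dim W=\dim Z+1$ is a small point the paper glosses over, but your argument for it is fine.
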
 

\begin{proof} 
We apply  Theorem \ref{thm:moving-body} to $X$ and ask that conditions~\eqref{item:moving-main:0}, \eqref{item:moving-main:3}, and \eqref{item:moving-main:5} are satisfied, but we do not require the localization property from condition~\eqref{item:moving-main:4}.
We thus get closed subsets $Z'\subset W\subset X$ with $Z\subset W$, $\dim Z'=\dim Z$ and $\dim W=\dim Z+1$ such that $f^{-1}(Z')\subset S$  locally has codimension at least $\codim_X(Z)$ (see condition~\eqref{item:moving-main:1}).
We then define $U:=X\setminus Z'$. 
By the long exact sequence of triples (see condition~\ref{item:les-triple}), Equation~\eqref{eq:moving-statement-intro} implies that the composition
$ H^\ast_{Z}(X,n)\to H^\ast_{W}(X,n) \to H^\ast_{W}(U,n)
$ 
is zero, as claimed. 
Moreover, if $\dim f(S)+\dim Z<\dim X$, then Lemma \ref{lem:fibre-dimension} together with the fact that $f^{-1}(Z')\subset S$  locally has codimension at least $\codim_X(Z)$ implies $f^{-1}(X\setminus U)=f^{-1}(Z')=\emptyset$, and so $U$ is an open neighbourhood of $f(S)$, as we want.
This concludes the proof.
\end{proof}

The following result proves Corollary \ref{cor:tubular-neighbourhood}, stated in the introduction.
Before we state it, we recall that for a smooth equi-dimensional $k$-scheme $X$ and a closed subset $S\subset X$, we denote by $X_S$ the pro-scheme that consists of all open subsets $U\subset X$ with $S\subset U$.
The cohomology of $X_S$ with support in a closed subset $Z\subset X$ is then formally defined by the direct limit
$$
H^\ast_Z(X_S,n):=\lim_{\substack{\longrightarrow\\ S\subset U\subset X}} H^\ast_Z(U,n),
$$
where $U$ runs through all open subsets of $X$ that contain $S$.

\begin{corollary} \label{cor:ses-neighbourhoods}
 Let $X$ be a smooth equi-dimensional $k$-scheme that admits a smooth projective compactification.
 Let $Z,S\subset X$ be closed subsets with $\dim S+\dim Z<\dim X$.
Then  the natural map $H^\ast_Z(X_S,n)\to H^\ast(X_S,n)$ is zero, and there is a natural short exact sequence 
$$
0\longrightarrow H^i(X_S,n)\longrightarrow H^i(X_S\setminus Z,n)\stackrel{\del}\longrightarrow H^{i+1}_Z(X_S,n)\longrightarrow 0 .
$$
\end{corollary}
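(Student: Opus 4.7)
The plan is to deduce both claims from Corollary \ref{cor:global-effacement-body} applied to the closed inclusion $\iota_S:S\hookrightarrow X$, combined with the long exact sequence of triples \ref{item:les-triple} and the exactness of filtered colimits of abelian groups.

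First I would establish the vanishing of the natural map $H^\ast_Z(X_S,n)\to H^\ast(X_S,n)$. Any class on the source is represented by some $\alpha\in H^\ast_{Z\cap V}(V,n)$ for an open neighborhood $V\subset X$ of $S$, and it suffices to show that its image in $H^\ast(U,n)$ dies for some smaller open neighborhood $U\subset V$ of $S$. To this end, apply Corollary \ref{cor:global-effacement-body} to the scheme $V$ (which inherits a smooth projective compactification from that of $X$), the closed subset $Z\cap V$, and the inclusion $\iota:S\hookrightarrow V$. The dimension hypothesis gives $\dim\iota(S)+\dim(Z\cap V)<\dim V$, so the final clause of that corollary produces a closed subset $W_V\subset V$ with $Z\cap V\subset W_V$ and an \emph{open neighborhood} $U\subset V$ of $S$ for which the composition
$$H^\ast_{Z\cap V}(V,n)\longrightarrow H^\ast_{W_V}(V,n)\longrightarrow H^\ast_{W_V}(U,n)$$
vanishes. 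Composing further with the enlargement-of-support pushforward $H^\ast_{W_V}(U,n)\to H^\ast(U,n)$ and invoking functoriality of $\iota_\ast$ from item \ref{item:f_*} to factor $H^\ast_{Z\cap V}(V,n)\to H^\ast(V,n)\to H^\ast(U,n)$ through the vanishing composition, we obtain that the image of $\alpha$ in $H^\ast(U,n)$ is zero, as required.

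Second, for the short exact sequence I would apply \ref{item:les-triple} with $W=V$ to each open neighborhood $V\supset S$ of $S$, giving a long exact sequence for the pair $(V,Z\cap V)$; passing to the direct limit over such $V$ (which is exact on abelian groups) yields
$$\cdots\longrightarrow H^i_Z(X_S,n)\longrightarrow H^i(X_S,n)\longrightarrow H^i(X_S\setminus Z,n)\stackrel{\partial}{\longrightarrow} H^{i+1}_Z(X_S,n)\longrightarrow\cdots.$$
The vanishing of $H^\ast_Z(X_S,n)\to H^\ast(X_S,n)$ established in the first step then splits this long exact sequence into the desired short exact sequences for each $i$.

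The only subtlety, really just bookkeeping, is the need to apply Corollary \ref{cor:global-effacement-body} inside each shrinking neighborhood $V\supset S$ (rather than a single time on $X$): otherwise we cannot guarantee that every class already represented on a small neighborhood of $S$ will be killed by passing to an even smaller one. This is precisely where the final assertion of Corollary \ref{cor:global-effacement-body} under the dimension hypothesis is essential, since it ensures the ``moved'' open $U$ always lies within $V$ and still contains $S$. Beyond this, no further ingredients are needed.
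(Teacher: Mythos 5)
Your proof is correct and follows essentially the same route as the paper: the paper likewise deduces the vanishing of $H^\ast_Z(X_S,n)\to H^\ast(X_S,n)$ from the global effacement corollary (using functoriality of $\iota_\ast$ and its compatibility with restriction to opens) and then obtains the short exact sequence by passing to the filtered colimit of the long exact sequences of triples. The point you flag as a subtlety --- applying the effacement statement inside each neighbourhood $V\supset S$ rather than once on $X$ --- is exactly what the paper's terse argument intends, and spelling it out (modulo replacing $S$ by the disjoint union of its components to meet the local equi-dimensionality hypothesis of Corollary \ref{cor:global-effacement-body}) is a welcome clarification rather than a deviation.
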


\begin{proof}  
Recall that the proper pushforwards from condition~\ref{item:f_*} are functorial and compatible with respect to pullbacks along open immersions (see condition~\ref{item:f_*:open-immersion}).
Using this, we see that
Corollary \ref{cor:global-effacement}  implies that the natural map $H^\ast_Z(X_S,n)\to H^\ast(X_S,n)$ is zero.
The given short exact sequence is then a direct consequence of  the long exact sequence of triples in condition~\ref{item:les-triple} which is functorial with respect to pullbacks and hence with respect to restrictions to the open neighbourhoods of $S$ in $X$ that define the pro-scheme $X_S$. 
This concludes the proof.
\end{proof}

The following corollary implies Corollary \ref{cor:local-effacement}, stated in the introduction.

\begin{corollary} 
\label{cor:local-effacement-body}
Let $X$ be a smooth equi-dimensional $k$-scheme which admits a smooth projective compactification.
Let $Z\subset X$ be a nowhere dense closed subset.
Let $S\subset X$ be either closed with $\dim S <\codim Z-1$ or a finite set of points.
Then there is a closed subset $W\subset X$ with $Z\subset W$ and $\dim W=\dim Z+1$ such that  the natural map  
$
H^\ast_Z(X_S,n)\to H^\ast_W(X_S,n) 
$
is zero.
Moreover, if $S$ is a finite set of points and $T\subset X_S$ is closed, then we may assume that the restriction of\, $W\setminus Z$ to $X_S$ meets  $T$ dimensionally transversally.
\end{corollary}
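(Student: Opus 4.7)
The plan is to deduce this from Theorem \ref{thm:moving-body}, crucially invoking its localization property (item (\ref{item:moving-main:4})). The strategy is: apply Theorem \ref{thm:moving-body} to $X$ with an appropriate morphism $f$ to produce a closed subset $W\subset X$ compatible with restriction to small open neighborhoods of $S$; for a class $\gamma\in H^\ast_Z(X_S,n)$ represented by $\tilde\gamma\in H^\ast_Z(V_0,n)$ on some open $V_0\supset S$, reapply Theorem \ref{thm:moving-body} to $V_0$ (which inherits a smooth projective compactification from $X$) to obtain a moving equation with target $H^\ast_{W\cap V_0}(V_0,n)$, then shrink $V_0$ further so that the source of this moving equation vanishes by Remark \ref{rem:H_emptyset=0}.

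First consider $S$ closed with $\dim S<\codim_XZ-1$. Take $f\colon S\hookrightarrow X$ the inclusion. Since $\dim f(S)<\codim_XZ-1$, the ``moreover'' clause of item (\ref{item:moving-main:4}) gives $S^\circ=S$, so for any $V_0\supset S$ the theorem applied to $V_0$ returns $W\cap V_0$ as a valid $W$-subset along with some $Z'_{V_0}\subset W\cap V_0$. The moving statement (item (\ref{item:moving-main:3})) produces $\beta\in H^\ast_{Z'_{V_0}}(V_0,n)$ with $\iota_\ast(\tilde\gamma)=\iota_\ast(\beta)$ in $H^\ast_{W\cap V_0}(V_0,n)$. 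Item (\ref{item:moving-main:1}) forces $\codim_S(Z'_{V_0}\cap S)\geq\codim_XZ>\dim S$, hence $Z'_{V_0}\cap S=\emptyset$, so we may shrink $V_0$ to $V_1\supset S$ with $V_1\cap Z'_{V_0}=\emptyset$. Restricting the moving equation to $V_1$ via \ref{item:f_*:open-immersion} yields $\iota_\ast(\tilde\gamma|_{V_1})=\iota_\ast(\beta|_{V_1})=0$, since $\beta|_{V_1}\in H^\ast_\emptyset(V_1,n)=0$. Passing to the limit gives $\gamma\mapsto 0$ in $H^\ast_W(X_S,n)$.

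For the case where $S$ is a finite set of points (reducing to closed points by a standard specialization argument), if $\codim Z\geq 2$ then $\dim S=0<\codim Z-1$ and the first case applies directly. If $\codim Z=1$, then $\dim W=\dim Z+1=\dim X$ forces $W=X$, so no localization property of $W$ is needed: for any representative $\tilde\gamma\in H^\ast_Z(V_0,n)$, Theorem \ref{thm:moving-body} applied directly to $V_0$ yields $Z'_{V_0}\subset V_0$ with $\codim_S(Z'_{V_0}\cap S)\geq 1>\dim S=0$, hence $Z'_{V_0}\cap S=\emptyset$; the same shrinking-and-restriction argument then gives $\iota_\ast(\tilde\gamma|_{V_1})=0$ in $H^\ast(V_1,n)=H^\ast_W(V_1,n)$, and the vanishing in the limit follows.

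The main technical subtleties are two-fold. First, in the finite points case with non-closed points, one must verify that each generic point $x_i$ remains in the open $S^\circ\subset\bigsqcup_i\overline{\{x_i\}}$ produced by item (\ref{item:moving-main:4}), which follows either from a specialization reduction to closed points, or from an additional transversality input to Lemma \ref{lem:transversal} requiring $W_{X\times X}\setminus\Delta_X$ to avoid $\bar Z\times\{x_i\}$ (possible by Theorem \ref{thm:moving} when $\codim Z\geq 2$). Second, for the final transversality claim with respect to $T\subset X_S$, one strengthens Lemma \ref{lem:transversal} by additionally applying Theorem \ref{thm:moving} to the morphism $\bar T^\nu\to X$ (where $\bar T\subset X$ is a closure of $T$), arranging $W_{X\times X}$ to meet $\bar T\times X$ dimensionally transversely; this transversality then descends, exactly as in the proof of item (\ref{item:moving-main:2}), to the required transversality of $W\setminus Z$ with $T$ inside $X_S$.
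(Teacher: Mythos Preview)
Your argument for the case where $S$ is closed with $\dim S<\codim_X Z-1$ is correct and is essentially the paper's argument, written out more explicitly.

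There are two genuine gaps in the remainder.

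\textbf{Non-closed points.} The ``standard specialization argument'' does not work. If $x\in S$ is non-closed and $s\in\overline{\{x\}}$ is a closed specialization, the restriction $H^\ast_Z(X_s,n)\to H^\ast_Z(X_x,n)$ need not be surjective: a class on $X_x$ is represented on some open $V\ni x$, and excision enlarges $V$ only to $X\setminus(Z\setminus V)$, which contains $s$ if and only if $s\notin Z$ or $s\in V$---so the trick fails whenever $x\in Z$ (since then every closed specialization lies in $Z$). Hence the $W$ produced for $s$ need not kill all of $H^\ast_Z(X_x,n)$. Note also that you cannot simply apply Theorem~\ref{thm:moving-body} with source $\Spec k(x)$, since this is not an algebraic $k$-scheme for $x$ non-closed. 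The paper's fix is clean: apply Theorem~\ref{thm:moving-body} to $f\colon\bar S^\nu=\bigsqcup_i\overline{\{x_i\}}\to X$. Item~(\ref{item:moving-main:4}) then produces a dense open $\bar S^\circ\subset\bar S^\nu$, and density alone forces each generic point $x_i$ to lie in $\bar S^\circ$; your case-1 localization argument then applies verbatim with $\bar S^\circ$ in place of $S$. You mention this alternative, but the extra transversality input you propose is unnecessary---density suffices.

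\textbf{Transversality with $T$.} The condition ``$W_{X\times X}$ meets $\bar T\times X$ dimensionally transversely'' is the wrong one. Since $W=q(W_{X\times X}\cap(\bar Z\times X))$, controlling $W_{X\times X}\cap(\bar T\times X)$ says nothing about $W\cap\bar T$; and ``applying Theorem~\ref{thm:moving} to $\bar T^\nu\to X$'' does not make sense here, as the relevant moving lemma is applied to morphisms into $X\times X$, not $X$. What the proof of item~(\ref{item:moving-main:2}) actually uses is the morphism $h=\tau_{\bar Z}\times f$, so the correct move is to include $\bar T^\nu$ in the \emph{source} of $f$: apply Theorem~\ref{thm:moving-body} to $\tilde f\colon\bar S^\nu\sqcup\bar T^\nu\to X$. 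Then item~(\ref{item:moving-main:2}) directly gives $\codim_{\bar T^\nu}\bigl((W\setminus Z)\cap\bar T\bigr)\geq\codim_X Z-1$, which is the required transversality. The paper further checks that $\bar T$ and the dense open $\bar T^\circ$ from item~(\ref{item:moving-main:4}) restrict identically to $X_S$, using the specific form $S^\circ=S\setminus\tilde f^{-1}(\bar A)$.
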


\begin{proof} 
Let $Z\subset X$ be closed and nowhere dense.  
We first deal with the case where $S\subset X$ is closed with $\dim S+\dim Z<\dim X-1$.
Let $S^\nu$ be the disjoint union of the irreducible components of $S$, and let $f\colon S^\nu\to X$ be the natural map.
We apply Theorem \ref{thm:moving-body} to $U:=X$ and $f$ to get closed subsets $Z',W\subset X$ with $Z,Z'\subset W$ and $\dim W\leq \dim Z+1$.
In contrast to the proof of Theorem \ref{thm:moving-lemma-cohomology}, we require  that in addition to conditions~\eqref{item:moving-main:0} and \eqref{item:moving-main:3},  condition~\eqref{item:moving-main:4} also holds  true,  and so our assumptions on $S$ ensure that $W$ is well behaved with respect to localization. 
The disadvantage is that $Z'$ may have components of dimension $\dim Z+1$, but condition~\eqref{item:moving-main:5} ensures that any such component is disjoint from $S$.
Hence, $U=X\setminus Z'$ is a neighbourhood of $S$ such that the composition
$ H^\ast_{Z}(X,n)\to H^\ast_{W}(X,n) \to H^\ast_{W}(U,n)
$ is zero.
The fact that $W$ is well behaved under localization then implies that
$H^\ast_Z(X_S,n)\to H^\ast_W(X_S,n)
$
is zero, as claimed.

It remains to deal with the case where
 $S\subset X$ is a finite set of possibly non-closed points.
Let $\bar S^\nu$ be the disjoint union of the closures in $X$ of the points in $S$.
(In particular, there  $S$ corresponds to the generic points of the irreducible components of $\bar S^\nu$.) 
We apply Theorem \ref{thm:moving-body} to $U=X$ and the natural map $f\colon\bar S ^{\nu} \to X$.
By  Theorem \ref{thm:moving-body}\eqref{item:moving-main:4}, there is a dense open subset $\bar S^\circ \subset  \bar S^\nu$ such that the closed subset $W$ in Theorem \ref{thm:moving-body} with respect to $f|_{\bar S^{\circ}}\colon\bar S^\circ \to X$ is well behaved under localization. 
The result follows because the open subset $\bar S^\circ \subset  \bar S^\nu$  is dense and so $S$ corresponds by the construction of $\bar S^\nu$ to the generic points of the components of~$\bar S^\circ$. 

If moreover $T\subset X_S$ is closed, we denote its closure in $X$ by $\bar T\subset X$.
Let further $\bar T^\nu$ denote the disjoint union of the irreducible components of $\bar T$.
Then in the above argument we  replace the morphism $f$ by the natural morphism $\tilde f\colon \bar S \sqcup \bar T^\nu \to X$.
By  Theorem \ref{thm:moving-body}\eqref{item:moving-main:4}, we get dense open subsets $\bar S^\circ\subset \bar S$ and $\bar T^\circ \subset\bar  T^\nu$ such that the closed subset $W\subset X$ produced this way is well behaved under localization.
We claim that locally at any point of $S$,  $W\setminus Z$ meets each component of $\bar T$ dimensionally transversally.
This will follow from  Theorem \ref{thm:moving-body}\eqref{item:moving-main:2} if we can show that 
\begin{align} \label{eq:ScapT=ScapTcirc}
S\cap \bar T=S\cap \bar T^{\circ}
\end{align}
as this implies that $\bar T$ and $\bar T^\circ$ have the same restriction to $X_S$ and so $\bar T^\circ|_{X_S}=T$.
To prove  \eqref{eq:ScapT=ScapTcirc}, we note that by Theorem \ref{thm:moving-body}\eqref{item:moving-main:4}, $\bar S^\circ \cup \bar T^\circ$ is the preimage of some open subset via the natural map $\bar S\cup \bar T\to X$. 
Since $S\subset \bar S^\circ$, we get that $S\cap \bar T\subset \bar T^\circ$ and so \eqref{eq:ScapT=ScapTcirc} holds, as we want.
This concludes the proof of the corollary.
\end{proof}

\subsection{Finite-level version of the Gersten conjecture}

The following  result  implies Corollary \ref{cor:gersten}, stated in the introduction. 

\begin{corollary} \label{cor:gersten-body}
Assume that the given cohomology theory with support satisfies conditions \ref{item:excision}--\ref{item:semi-purity}.
Let $X$ be a smooth equi-dimensional algebraic $k$-scheme which admits a smooth projective compactification $($\textit{e.g.} $\operatorname{char} k=0)$.
Let $S\subset X^{(c)}$ be a finite set of codimension $c$ points, and let $X_S=\Spec(\mathcal O_{X,S})$ be the localization of\, $X$ at $S$.
Let $T\subset X_S$ be closed, and let $Z_c=S\subset Z_{c-1}\subset \dots \subset Z_1\subset Z_0=X_S$ be a chain of closed reduced subschemes of\, $X_S$ of increasing dimensions such that each $Z_j$ meets $T\setminus S$ properly. 
Up to replacing the given chain $\{Z_j\}_j $ by one that is finer, i.e.\ by a chain $\{Z_j'\}$ as above with $Z_j\subset Z_j'$ for all $j$ $($in particular, each $Z'_j$ meets $T\setminus S$ properly$)$, 
the following complex is exact:
\begin{align*}
0\lra H^i(X_S,n)\lra  H_{\BM}^{i}(X_S\setminus Z_{1}) \stackrel{\del}\lra H_{\BM}^{i-1} (Z_1\setminus Z_{2}) \stackrel{\del}\lra \cdots \stackrel{\del}\lra H_{\BM}^{1}(Z_{i-1}\setminus Z_{i}) \stackrel{\del}\lra H_{\BM}^{0}(Z_{i}\setminus Z_{i+1}) \lra 0 ,
\end{align*}
where
$$
H^{i-j}_{\BM}(Z_j\setminus Z_{j+1}):=\lim_{\substack{\longrightarrow \\ S\subset  U\subset X}}H^{i+j}_{\bar Z_{j}\setminus \bar Z_{j+1}}(U\setminus  \bar Z_{j+1},n+j ),
$$
$\bar Z_j\subset X$ denotes the closure of $Z_j$ in $X$, and $\del$ is induced by the respective residue maps from the long exact sequence of triples in property~\ref{item:les-triple}.  
\end{corollary}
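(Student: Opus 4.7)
The plan is to deduce exactness by refining the chain $\{Z_j\}$ via Corollary \ref{cor:local-effacement-body} applied level-by-level, and then to verify exactness of the associated complex by an orchestrated diagram chase through the long exact sequences of triples. The exactness of the complex in the limit over all refinements is the classical Gersten conjecture of Bloch--Ogus; the content here is that this can be realized at a single finite refinement that simultaneously preserves transversality with $T$.

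First I would construct the refined chain $\{Z'_j\}$ inductively, starting from the bottom. At the $j$-th step, with $\{Z'_k\}_{k > j}$ in hand, apply Corollary \ref{cor:local-effacement-body} to the no-where dense closed subset $\bar Z'_{j+1} \subset X$ and the finite set $S$, using the auxiliary closed subset $T' := T \cup \bigcup_{k > j+1} ((W_k \setminus \bar Z'_k) \cap X_S)$ to preserve transversality with the previously added strata. This produces $W_{j+1} \subset X$ with $\bar Z'_{j+1} \subset W_{j+1}$, $\dim W_{j+1} = \dim Z'_{j+1} + 1$, such that $H^*_{\bar Z'_{j+1}}(X_S, n) \to H^*_{W_{j+1}}(X_S, n)$ is zero and $W_{j+1} \setminus \bar Z'_{j+1}$ meets $T' \setminus S$ dimensionally transversally in $X_S$. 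Set $Z'_j := Z_j \cup (W_{j+1} \cap X_S)$, inserting intermediate levels if needed to keep dimensions strictly increasing. Iterating yields the refined chain, which still meets $T \setminus S$ properly at every level.

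To verify exactness at an intermediate position $j \geq 1$: given $\alpha \in H^{i+j}_{\bar Z'_j \setminus \bar Z'_{j+1}}(U \setminus \bar Z'_{j+1}, n+j)$ with $\del \alpha = 0$, the LES of triples (axiom \ref{item:les-triple}) provides a lift $\tilde \alpha \in H^{i+j}_{\bar Z'_j \setminus \bar Z'_{j+2}}(U \setminus \bar Z'_{j+2}, n+j)$. Since the refinement ensures $W_j \subset \bar Z'_{j-1}$ near $S$ and $H^*_{\bar Z'_j}(X_S, n) \to H^*_{W_j}(X_S, n)$ is zero, a diagram chase combining this vanishing with the vanishings from the refinement at higher levels (which control the obstructions appearing in the LES of triples that compare cohomology on $U$ with cohomology on $U \setminus \bar Z'_{j+2}$) shows that after suitably shrinking $U$ around $S$ the image of $\tilde\alpha$ in $H^{i+j}_{\bar Z'_{j-1} \setminus \bar Z'_{j+2}}(U \setminus \bar Z'_{j+2}, n+j)$ vanishes. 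Restricting to $U \setminus \bar Z'_{j+1}$ and applying the LES of triples for $\bar Z'_j \setminus \bar Z'_{j+1} \subset \bar Z'_{j-1} \setminus \bar Z'_{j+1}$ then exhibits $\alpha$ as $\del \beta$ in the preceding term. Injectivity at the first position is the same argument at $j = 0$ (the kernel of restriction factors through $H^i_{W_1}(X_S, n) = 0$); semi-purity (axiom \ref{item:semi-purity}) ensures the complex terminates at $H^0_{BM}(Z'_i \setminus Z'_{i+1})$.

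The main technical obstacle lies precisely in this diagram chase: since $S \subset \bar Z'_{j+2}$, the class $\tilde\alpha$ lives on a scheme $U \setminus \bar Z'_{j+2}$ that does not contain $S$, so local effacement on $X_S$ does not apply to $\tilde\alpha$ directly. Propagating the level-$j$ vanishing down to the vanishing required for $\tilde\alpha$ forces one to iteratively control obstructions in the LES of triples that live in cohomology groups supported on $\bar Z'_{j+2}, \bar Z'_{j+3}, \dots$ of $U$, and it is exactly the vanishings produced by the refinement at those higher levels that allow these obstructions to be killed. This is why the refinement must be coordinated globally across all levels of the chain, rather than chosen independently at each level; the naturality of the LES of triples under pullbacks and the compatibility of pushforwards with open immersions (axiom \ref{item:f_*:open-immersion}) are the final ingredients that make the chase close.
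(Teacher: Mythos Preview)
Your construction of the refined chain via Corollary \ref{cor:local-effacement-body} is essentially the same as the paper's (though your auxiliary set $T'$ is more elaborate than necessary; the paper handles the transversality with $T$ directly through the last clause of Corollary \ref{cor:local-effacement-body}).

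Where you diverge is in the exactness argument. The paper organizes the long exact sequences of the triples $\bar Z'_{p+1}\subset \bar Z'_p\subset X_S$ into an exact couple $D_1\stackrel{\iota_\ast}\to D_1\stackrel{r}\to E_1\stackrel{\del}\to D_1$ with
\[
D_1^{p,q}=H^{p+q}_{\bar Z'_p}(X_S,n),\qquad E_1^{p,q}=H^{p+q}_{\bar Z'_p\setminus \bar Z'_{p+1}}(X_S\setminus \bar Z'_{p+1},n).
\]
The refinement makes $\iota_\ast=0$, so each long exact sequence breaks into short exact sequences $0\to D_1^{p,q}\to E_1^{p,q}\to D_1^{p+1,q}\to 0$, and these splice to give the exact complex in one stroke. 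Concretely: if $d_1\alpha=r(\del\alpha)=0$ in $E_1^{p+1,q}$, then injectivity of $r$ forces $\del\alpha=0$ in $D_1^{p+1,q}$, so $\alpha=r(\gamma)$ for some $\gamma\in D_1^{p,q}$; surjectivity of $\del$ onto $D_1^{p,q}$ then gives $\gamma=\del\beta$, hence $\alpha=d_1\beta$.

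This bypasses your ``technical obstacle'' entirely. The obstacle arises because you lift $\alpha$ to $H^{i+j}_{\bar Z'_j\setminus \bar Z'_{j+2}}(U\setminus \bar Z'_{j+2})$, a scheme not containing $S$; the paper instead lifts to $D_1^{j,\cdot}=H^\ast_{\bar Z'_j}(X_S)$, which is available precisely because $r$ is injective once $\iota_\ast=0$. The iterative propagation of obstructions you sketch would, if carried out, amount to re-deriving the exact-couple mechanism by hand, but as written it is not detailed enough to be verified. The spectral sequence formulation (standard, cf.\ \cite[\S 1]{CTHK}) is both shorter and avoids the issue you flagged.
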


\begin{proof}
This follows from Corollary \ref{cor:local-effacement-body} by standard arguments (\textit{cf.} \cite[Section~1]{CTHK}). 
We give some details for the reader's convenience.

Up to replacing $X$ by an affine neighbourhood of $S$, we can assume that $X$ is affine.
Taking the closure of the given chain $\{Z_j\}_j$ of closed subsets of the localization $X_S$, we get  a chain 
$$
\bar Z_c=\bar S\subset \bar Z_{c-1}\subset \dots \subset \bar Z_1\subset \bar Z_0=X
$$
of closed subsets of $X$ with $\dim \bar Z_j=\dim \bar Z_{j+1}+1$.
We replace this chain inductively by a finer chain $\{\bar Z'_j\}$ of closed subsets of $X$, as follows.
We let $\bar Z'_c:=\bar Z_c=S$, and if $\bar Z'_{j+1}$ is defined, then we define $\bar Z'_j$ to be the closed subset from Corollary \ref{cor:local-effacement-body} which satisfies 
\begin{itemize}
\item $\bar Z'_{j+1}\subset \bar Z'_j$ and $\dim \bar Z'_j=\dim \bar Z'_{j+1}+1$;
\item locally at any point of $S$, the closure  $\bar T\subset X$ of $T$ in $X$ meets  the subset $\bar Z'_j\setminus \bar Z'_{j+1}$ dimensionally transversely;
\item the map
\begin{align} \label{eq:gersten-body-effacement}
H^i_{\bar Z'_{j+1}}(X_S,n)\longrightarrow H^i_{\bar Z'_{j}}(X_S,n)
\end{align}
is zero.
\end{itemize}  
Up to enlarging $\bar Z_j'$, we may in addition assume that $\bar Z_j\subset \bar Z'_j$, so that the inductively defined chain $\{\bar Z'_j\}$ is a refinement of $\{ \bar Z_j\}$.
Since $\bar Z_j$ meets $\bar T\setminus \bar S$ properly, $\bar T$ meets $ \bar  Z'_j\setminus \bar Z'_{j+1}$ properly for all $j\geq c-1$.
Since $\bar Z'_c=\bar Z_c=\bar S$, it follows inductively that $\bar T$ meets $\bar Z'_j\setminus \bar S$ properly.
This way we constructed a refinement $\{\bar Z_j'\} $ of $\{\bar Z_j\} $.
We then let $Z'_j\subset X_S$ be the closed subset given by restriction of $\bar Z_j'\subset X$.
This way we get a chain $\{Z_j'\} $ of closed subsets of $X_S$ such that each $Z'_j$ meets $T\setminus S$ properly and \eqref{eq:gersten-body-effacement} is zero.
Hence, up to replacing the chain $\{Z_j\}$ of closed subsets of $X_S$ by the refinement $\{  Z'_j\}$  induced by $\{\bar Z'_j\}$, we may assume that   each $Z_j$ meets $T\setminus S$ properly and the natural map
\begin{align} \label{eq:proof-gersten-body:map-is-zero}
H^i_{\bar Z_{j+1}}(X_S,n)\longrightarrow H^i_{\bar Z_{j}}(X_S,n)
\end{align}
is zero for all $i,j$.

For each open neighbourhood $U\subset X$ of $S$, the long exact sequence of  triples from condition~\ref{item:les-triple} applied to $\bar Z_{j+1}\subset \bar Z_j\subset U$ yields
$$
\cdots \lra  H^{\ast-1}_{\bar Z_{j} }(U\setminus \bar Z_{j+1},n) \stackrel{\del} \lra H^\ast_{\bar Z_{j+1}}(U,n) \stackrel{\iota_\ast} \lra H^\ast_{\bar Z_{j}}(U,n) \stackrel{r} \lra  H^\ast_{\bar Z_{j} }(U\setminus \bar Z_{j+1},n) \stackrel{\del}\lra H^{\ast+1}_{\bar Z_{j+1}}(U,n) \lra \cdots .
$$
These sequences are compatible with respect to restrictions to finer open subsets $U'\subset U$.
This gives rise to compatible exact couples $D_1(U)\stackrel{\iota_\ast}\to D_1(U)\stackrel{r}\to E_1(U) \stackrel{\del}\to$ with
$$
D_1^{p,q}(U):=H^{p+q}_{\bar Z_p}(U,n)\quad \text{and}\quad E_1^{p,q}(U):=H^{p+q}_{\bar Z_{p}}(U\setminus \bar Z_{p+1},n),
$$
where $\iota_\ast$, $r$, and $\del$ are of bidegrees $(-1,1)$, $(0,0)$, and $(0,1)$, respectively; see \cite[Section~1.1]{CTHK}.
Passing to the direct limit over all open neighbourhoods $U\subset X$ of $S$ in $X$, we arrive at an exact couple $D_1\stackrel{\iota_\ast}\to D_1\stackrel{r}\to E_1 \stackrel{\del}\to$, where
$$
D_1^{p,q}:=\lim_{\substack{\longrightarrow \\ S\subset U\subset X}} H^{p+q}_{\bar Z_p}(U,n)\quad \text{and}\quad
 E_1^{p,q} :=\lim_{\substack{\longrightarrow \\ S\subset U\subset X}} H^{p+q}_{\bar Z_{p}}(U\setminus \bar Z_{p+1},n) .
$$
The map $\iota_\ast\colon D_1^{p,q}\to D_1^{p-1,q+1}$ is zero by
 \eqref{eq:proof-gersten-body:map-is-zero} for all $p,q$.
 It follows that the spectral sequence associated to the couple $D_1\stackrel{\iota_\ast}\to D_1\stackrel{r}\to E_1 \stackrel{\del}\to$ degenerates at $E_1$.
That is, we get a long exact exact sequence
$$
\cdots \lra  E_1^{p-1,q} \lra  E_1^{p,q} \lra  E_1^{p+1,q} \lra \cdots, 
$$ 
where the differential is given by $r\circ \del$.
This is precisely the exact complex stated in the corollary.
The corollary follows because each $Z_j$ meets $T\setminus S$ properly and semi-purity (see condition~\ref{item:semi-purity}) implies that
$$
H^{i-j}_{\BM}(Z_j\setminus Z_{j+1})=\lim_{\substack{\longrightarrow \\ S\subset  U\subset X}}H^{i+j}_{\bar Z_{j}\setminus \bar Z_{j+1}}(U\setminus  \bar Z_{j+1},n+j )
$$
vanishes for $j>i$; \textit{i.e.} Borel--Moore cohomology vanishes in negative degrees.
\end{proof}

\subsection{Injectivity and codimension j+1 purity theorems}

 Corollary \ref{cor:injectivity+purity-thm} follows from the following more general statement.

 \begin{corollary} 
\label{cor:inj+purity-thm-body}
Let $X$ be a smooth variety over $k$ that admits a smooth projective compactification.
Let $f\colon S\to X$ be a morphism from a locally equi-dimensional algebraic $k$-scheme $S$.
Then the following holds for any twisted cohomology theory as in \eqref{eq:cohomology-functor} that satisfies conditions \ref{item:excision}--\ref{item:action-of-cycles}: 
\begin{enumerate}
\item  Any class in $H^\ast(X,n)$ that vanishes on the complement of a closed subset $Z\subset X$ already vanishes on the complement of another closed subset $Z'\subset X$ with $\dim Z'=\dim Z$ such that $f^{-1}(Z')$  locally has codimension at least $\codim_X Z$ on $S$.
In particular, if $\dim f(S) <\codim_XZ$, then $f^{-1}(Z')=\emptyset$.
\label{item:thm:inj-body}
\item \label{item:thm:purity-body}
 Let $U\subset X$ be open with $\codim(X\setminus U)= j+2$.
There are open subsets $U'\subset X$ and $V\subset U\cap U'$  with $\codim(X\setminus V)= j+1$ and $\codim(X\setminus U')=j+2$  such that $f^{-1}(X\setminus U')$  locally has codimension at least $j+2$ on $S$ and
$$
\im(H^\ast(U,n)\lra H^\ast(V,n))\subset \im(H^\ast(U',n)\lra H^\ast(V,n)).
$$
In particular,  if $\dim f(S)<j+2$, then $f^{-1}(X\setminus U')=\emptyset$ and so $f(S)\subset U'$.
\end{enumerate}
\end{corollary}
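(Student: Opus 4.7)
The plan is to deduce both assertions directly from the moving lemma in Theorem \ref{thm:moving-body}, combined with the naturality of the long exact sequence of triples from axiom \ref{item:les-triple}. In both cases I apply Theorem \ref{thm:moving-body} to a closed subset $Z\subset X$ (the given $Z$ in part (\ref{item:thm:inj-body}), resp.\ $Z:=X\setminus U$ of codimension $j+2$ in part (\ref{item:thm:purity-body})) and to the morphism $f:S\to X$, asking only for items (\ref{item:moving-main:0}), (\ref{item:moving-main:3}), and (\ref{item:moving-main:5}) of Theorem \ref{thm:moving-body}. This yields closed subsets $Z'\subset W\subset X$ with $Z\subset W$, $\dim Z'\le \dim Z$, $\dim W=\dim Z+1$, and the stated transversality $\codim_S f^{-1}(Z')\ge \codim_X Z$. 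A harmless padding of $Z'$ (respectively $W$) by subvarieties of the desired dimension in general position with respect to $f(S)$ allows one to arrange the equalities of dimensions that the corollary demands, while preserving all codimension bounds.

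For item (\ref{item:thm:inj-body}), given $\alpha\in H^*(X,n)$ which vanishes on $U=X\setminus Z$, the long exact sequence for $Z\subset X$ lifts $\alpha$ to some $\tilde\alpha\in H^*_Z(X,n)$. Theorem \ref{thm:moving-body}(\ref{item:moving-main:3}) produces $\tilde\alpha'\in H^*_{Z'}(X,n)$ whose image in $H^*_W(X,n)$ coincides with that of $\tilde\alpha$. Pushing this equality forward along $W\hookrightarrow X$ and using functoriality of pushforwards (axiom \ref{item:f_*}, so that $\iota_{*}^{Z\to X}=\iota_{*}^{W\to X}\iota_{*}^{Z\to W}$), one concludes $\alpha=\iota_{*}^{Z'\to X}\tilde\alpha'$, whence $\alpha$ already vanishes on $X\setminus Z'$. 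The ``in particular'' statement follows from Lemma \ref{lem:fibre-dimension}, since under $\dim f(S)<\codim_XZ$ the codimension bound on $f^{-1}(Z')$ forces $f^{-1}(Z')=\emptyset$.

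For item (\ref{item:thm:purity-body}), set $V:=X\setminus W$ and $U':=X\setminus Z'$; then $V\subset U\cap U'$, and the codimension conditions demanded by the corollary hold after the padding mentioned above. Given $\beta\in H^i(U,n)$, I need to show that $\beta\vert_V$ lifts to a class in $H^i(U',n)$, equivalently that its image under the boundary $\partial_{U',V}:H^i(V,n)\to H^{i+1}_{W\cap U'}(U',n)$ from the long exact sequence of $V\subset U'$ is zero. The key computation is
\[
\partial_{U',V}(\beta\vert_V)
\;=\;\bigl(\partial_V(\beta\vert_V)\bigr)\vert_{U'}
\;=\;\bigl(\iota_{*}^{Z\to W}(\partial\beta)\bigr)\vert_{U'}
\;=\;\bigl(\iota_{*}^{Z'\to W}(\alpha')\bigr)\vert_{U'}
\;=\;0,
\]
where the first equality is the naturality of \ref{item:les-triple} applied to the open immersion $U'\hookrightarrow X$ (comparing the triples $(X,X,W)$ and $(U',U',W\cap U')$); the second is the naturality applied to $\id_X$ (comparing $(X,X,Z)$ and $(X,X,W)$), applied to $\beta\in H^i(U,n)$ whose boundary in the LES of $Z\subset X$ is $\partial\beta\in H^{i+1}_Z(X,n)$; the third is Theorem \ref{thm:moving-body}(\ref{item:moving-main:3}) applied to $\partial\beta$; and the last holds because $\alpha'$ is supported in $Z'=X\setminus U'$, so by exactness of the long exact sequence of the triple $(X,W,Z')$ the image $\iota_{*}^{Z'\to W}(\alpha')$ lies in the kernel of the restriction $H^{i+1}_W(X,n)\to H^{i+1}_{W\cap U'}(U',n)$. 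The ``in particular'' assertion follows again from Lemma \ref{lem:fibre-dimension}.

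The content is already done by the moving lemma; the main obstacle is bookkeeping, namely to deploy naturality of the long exact sequence of triples in two different directions and keep track of which pushforwards and restrictions identify $\partial_V$ (for $V\subset X$) and $\partial_{U',V}$ (for $V\subset U'$) with a common image in $H^{i+1}_W(X,n)$. Once that ladder of diagrams is correctly set up, the fact that the moving lemma hands us a representative $\alpha'$ whose support is disjoint from $U'$ makes the final vanishing automatic.
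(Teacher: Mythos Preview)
Your proof is correct and follows essentially the same approach as the paper: apply the moving lemma (Theorem \ref{thm:moving-body}) to produce $Z'\subset W$, then use functoriality of the long exact sequence of triples (axiom \ref{item:les-triple}) to translate the moving statement into the desired vanishing/lifting. The paper routes both items through the intermediate Corollary \ref{cor:global-effacement-body} (which packages the vanishing of $H^*_Z(X,n)\to H^*_W(X,n)\to H^*_W(U',n)$), whereas you unfold this directly; the chain of naturality diagrams you write out in item (\ref{item:thm:purity-body}) is exactly what underlies the paper's shorter phrasing.
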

\begin{proof}
To prove item \eqref{item:thm:inj-body}, we apply Corollary \ref{cor:global-effacement-body} to the closed subset $Z\subset X$ and the morphism $f\colon S\to X$.
We then get a closed subset $W\subset X$ with $\codim_X(W)\geq \codim_X(Z)-1$ and $Z\subset W$,  and an open subset $U\subset X$ with complement $Z':=X\setminus U$ such that $\codim_X (Z')\geq \codim_X(Z)$ and $f^{-1}(Z')\subset S$ locally has codimension at least $\codim_X(Z)$ and the composition 
$$
H^\ast_{Z}(X,n)\longrightarrow H^\ast_{W}(X,n)\longrightarrow H^\ast_{W}(U,n) 
$$
is zero. 
If $\dim f(S)<\codim_X(Z)$, then Lemma \ref{lem:fibre-dimension}, together with the fact that $f^{-1}(Z')\subset S$ has codimension at least $\codim_X(Z)$,  implies $f^{-1}(Z')=\emptyset$, as claimed.

Now let $\alpha\in H^\ast(X,n)$ be a class that vanishes on $X\setminus Z$.
By the long exact sequence of triples from condition~\ref{item:les-triple}, the class $\alpha$ lifts to a class $\alpha'\in H^\ast_{Z}(X,n)$.
By the functoriality of the long exact sequence of triples in condition~\ref{item:les-triple}, the fact that the image of $\alpha'$ in $H^\ast_{W}(U,n)$ vanishes implies that the image of $\alpha$ in  $H^\ast (U,n) $ vanishes.
The claim in item  \eqref{item:thm:inj-body} of the corollary thus follows from the fact that $f^{-1}(Z')\subset S$ locally has codimension at least $\codim_X(Z)$.

To prove item \eqref{item:thm:purity-body},  we apply Corollary \ref{cor:global-effacement-body} to the closed subset $Z:=X\setminus U$  and  to the morphism $f\colon S \to X$, as follows. 
Any class $\alpha\in H^\ast(U,n)$ gives by the long exact sequence of triples rise to a residue
$$
\del \alpha\in H^{\ast+1}_{Z}(X,n) .
$$ 
By Corollary \ref{cor:global-effacement-body},  there is a closed subset $W\subset X$ with $Z\subset W$ and   $\codim_X W\geq j+1$  such that  the following holds: 
there is an open subset $U'\subset X$ with $\codim(X\setminus U')\geq j+2$ such that $f^{-1}(X\setminus U')$  locally has codimension at least $j+2$ and the image of $\del \alpha$ in $H^\ast_{W}(U',n) $ vanishes.
We define $V:=(X\setminus W)\cap U'$.
Since $Z\subset W$, we have $V\subset U=X\setminus Z$.
By the functoriality of the  long exact sequence of triples in condition~\ref{item:les-triple},  we have a commutative diagram with exact rows 
$$
\xymatrix{
H^\ast(X,n)\ar[d]\ar[r]&H^\ast(U,n )\ar[d] \ar[r]^-{\del}&\ar[d] H^{\ast+1}_Z(X,n) \\
H^\ast(U',n)\ar[r]&H^\ast(V,n ) \ar[r]^-{\del}& H^{\ast+1}_W(U',n)\rlap{.}
}
$$
Since the image of  $\del \alpha$ in $H^{\ast+1}_{W}(U',n) $ vanishes, we conclude that the restriction of $\alpha$ to $V$ admits a lift to $H^\ast(U',n)$.
Finally, we note that $U'$ and $W$ in the above argument do not depend on the class $\alpha$.
This proves item \eqref{item:thm:purity-body}.
\end{proof}

\subsection{Refined unramified cohomology is motivic}

\begin{definition}
Let $X$ be a smooth equi-dimensional algebraic $k$-scheme.
For $j\in \Z$, let $F_jX$ be the pro-scheme given by all open subsets $U\subset X$ with $X^{(j)}\subset U$.
Let further  
$$
H^\ast(F_jX,n):=\lim_{\substack{\longrightarrow \\ U\subset X}}H^\ast(U,n) ,
$$  
where $U $ runs through all open subsets of $X$ with $X^{(j)}\subset U$, \textit{i.e.} with $\codim_X(X\setminus U)>j$.
\end{definition}

Note that $F_jX=\emptyset$ for $j<0$, and so $H^\ast(F_jX,n)=0$ for $j<0$ by Remark \ref{rem:H_emptyset=0}.

\begin{definition}
Let $X$ be a smooth equi-dimensional algebraic $k$-scheme. 
The $\supth{j}$ refined unramified cohomology of $X$ with respect to the cohomology theory \eqref{eq:cohomology-functor} is defined by
$$
H^\ast_{j,\nr}(X,n):=\im(H^\ast(F_{j+1}X,n)\lra H^\ast(F_{j}X,n)).
$$
\end{definition}

Note that $H^\ast_{j,\nr}(X,n)=H^\ast (X,n)$ for $j\geq \dim X$ (because $F_jX=X$ in this case); see Lemma \ref{lem:H_nr=H} below for a more general statement.
Moreover,  
  $H^\ast_{0,\nr}(X,n)=H_{\nr}^\ast(X,n)$ agrees with traditional unramified cohomology (\textit{cf.} \cite{CTO,CT,Sch-survey}); see \cite[Theorem 4.1.1(a)]{CT} and  \cite[Lemma 5.8]{Sch-refined}.

For $j'\leq j$, a class on $F_{j}X$ may be restricted to $F_{j'}X$.
This implies in particular that for $j'\leq j$,  there are canonical restriction maps
$$
H^\ast_{j,\nr}(X,n)\longrightarrow H^\ast_{j',\nr}(X,n) .
$$

The main result of this section is the following 
application
 of the moving lemma (Theorem \ref{thm:moving-body}), which shows that refined unramified cohomology is a motivic invariant naturally attached to any smooth projective variety and to any cohomology functor satisfying conditions~\ref{item:excision}--\ref{item:action-of-cycles}.

\begin{corollary} \label{cor:motivic-body}
Assume that the functor from \eqref{eq:cohomology-functor} satisfies conditions~\ref{item:excision}--\ref{item:action-of-cycles}, and let $X$ and $Y$ be smooth projective equi-dimensional $k$-schemes.
Then for any $c,i,j\geq 0$, there is a bi-additive pairing
$$
\CH^c(X\times Y)\times H^i_{j,\nr}(X,n)\longrightarrow H^{i+2c-2d_X}_{j+c-d_X,\nr}(Y,n+c-d_X),\quad ([\Gamma],[\alpha])\longmapsto [\Gamma]_\ast[\alpha] 
$$
with the following properties:
\begin{enumerate}
\item \label{item:thm:motivic-body:Gamma(W)}
 Let $\Gamma\in Z^c(X\times Y)$ be a representative of\, $[\Gamma]\in \CH^c(X\times Y)$, and let $\alpha\in H^i(U,n)$ be a representative of $[\alpha]\in H^i_{j,\nr}(X,n)$ for some open $U\subset X$ whose complement $R=X\setminus U$ has codimension at least $j+2$.
Let $W:=\supp \Gamma$, $R':=q(W\cap (R\times Y) )$, and $U':=Y\setminus R'$.

If\, $W\cap(R\times Y)$ has codimension at least $j+c+2$ in $X\times Y$, 
then 
$$
[\Gamma]_\ast[\alpha]=[\Gamma(W)_\ast(\alpha)]\in H^{i+2c-2d_X}_{j+c-d_X,\nr}(Y,n+c-d_X)
$$
is represented by $\Gamma(W)_\ast(\alpha)\in H^{i+2c-2d_X}(U',n+c-d_X)$ constructed in Lemma \ref{lem:Gamma-ast}.
\item \label{item:thm:motivic-body:restriction}
If $j'\leq j$, then the following diagram commutes: 
$$
\xymatrix{
\CH^c(X\times Y)\times H^i_{j,\nr}(X,n)\ar[rr]\ar[d]&& H^{i+2c-2d_X}_{j+c-d_X,\nr}(Y,n+c-d_X)\ar[d]\\
\CH^c(X\times Y)\times H^i_{j',\nr}(X,n)\ar[rr] && H^{i+2c-2d_X}_{j'+c-d_X,\nr}(Y,n+c-d_X) ,
}
$$
where the horizontal maps are the given pairings and the vertical maps are induced by the canonical restriction maps.
\item \label{item:thm:motivic-body:correspondence-functorial}
Let $Z$ be a smooth projective equi-dimensional $k$-scheme, and let $[\Gamma']\in \CH^{c'}(Y\times Z)$.
Then for all $[\alpha]\in H^i_{j,\nr}(X,n)$,
$$
[\Gamma']_\ast([\Gamma]_\ast [\alpha]) = ([\Gamma'] \circ [\Gamma])_\ast [\alpha]  \in H^{i+2c+2c'-2d_X-2d_Y}_{j+c+c'-d_X-d_Y,\nr}(Z,n+c+c'-d_X-d_Y) ,
$$ 
where $[\Gamma'] \circ [\Gamma]$ is the composition of correspondences and $d_Y=\dim Y$.
\item \label{item:thm:motivic-body:wrong-support}
If\, $\Gamma\in Z^c(X\times Y)$ is such that $q(\supp \Gamma)\subset Y$ has codimension at least $j+c-d_X+1$,  where $q\colon X\times Y\to Y$ denotes the second projection, then $[\Gamma]_\ast [\alpha]=0$ for all $[\alpha]\in H^i_{j,\nr}(X,n)$.
\end{enumerate} 
\end{corollary}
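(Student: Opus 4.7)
The plan is to build the pairing concretely using the action $\Gamma(W)_\ast$ from Lemma \ref{lem:Gamma-ast}, after bringing representatives into good position via the moving lemma for algebraic cycles (Theorem \ref{thm:moving}), and then to verify well-definedness and the stated properties by chasing the compatibility lemmas of Section \ref{sec:action}.

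\textbf{Construction.} Given $[\alpha]\in H^i_{j,nr}(X,n)$, pick a representative $\alpha\in H^i(U,n)$ with $R:=X\setminus U$ of codimension $\geq j+2$; let $R^\nu$ be the disjoint union of the components of $R$ and consider the morphism $R^\nu\to X$. Applying Theorem \ref{thm:moving} to a given representative of $[\Gamma]\in\CH^c(X\times Y)$ along the map $R^\nu\times Y\to X\times Y$, we may replace $\Gamma$ by a rationally equivalent cycle whose support $W$ meets $R\times Y$ in codimension $\geq j+c+2$. Then $R':=q(W\cap(R\times Y))$ has codimension $\geq j+c-d_X+2$ in $Y$, and Lemma \ref{lem:Gamma-ast} (with $Z=X$, $Z'=Y$, $U'=Y\setminus R'$) produces a class $\Gamma(W)_\ast(\alpha)\in H^{i+2c-2d_X}(U',n+c-d_X)$. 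We declare
\[
[\Gamma]_\ast[\alpha]\;:=\;\bigl[\Gamma(W)_\ast(\alpha)\bigr]\;\in\;H^{i+2c-2d_X}_{j+c-d_X,nr}(Y,n+c-d_X),
\]
which is the content of item (\ref{item:thm:motivic-body:Gamma(W)}). Bi-additivity is inherited from the additivity statements in Lemma \ref{lem:Gamma-ast}.

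\textbf{Well-definedness.} If $\Gamma_1,\Gamma_2$ represent the same class in $\CH^c(X\times Y)$ and both satisfy the codimension condition above with supports $W_1,W_2$, apply Theorem \ref{thm:moving}(2) to $R^\nu\times Y\to X\times Y$ to produce a closed subset $W''\subset X\times Y$ of codimension $c-1$ containing $W_1\cup W_2$, on which $\Gamma_1-\Gamma_2$ is rationally trivial, and such that $W''\cap(R\times Y)$ has codimension $\geq j+c+1$. Lemma \ref{lem:Gamma-ast:Gamma-sim-0} gives $(\Gamma_1-\Gamma_2)(W'')_\ast(\alpha)=0$; Lemma \ref{lem:Gamma-ast-compatible-W&U} identifies this difference, after restriction to an open subset of $Y$ of codimension $\geq j+c-d_X+1$, with $\Gamma_1(W_1)_\ast(\alpha)-\Gamma_2(W_2)_\ast(\alpha)$, so the two classes agree in refined unramified cohomology. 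Independence of the representative $\alpha$ is handled analogously: if $\alpha,\alpha'\in H^i(U,n), H^i(U',n)$ agree on some open $V\subset U\cap U'$ with $\codim(X\setminus V)\geq j+1$, enlarge $R$ to $X\setminus V$ (repeating the moving step for $\Gamma$), and use Lemma \ref{lem:Gamma-ast-compatible-W&U} together with excision to conclude.

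\textbf{Verification of the remaining properties.} Item (\ref{item:thm:motivic-body:restriction}) is immediate: the same representative $\alpha$ on $U$ with $\codim(X\setminus U)\geq j+2$ also represents the image of $[\alpha]$ in $H^i_{j',nr}(X,n)$ for any $j'\leq j$, and the construction above gives the same output class, whose image in $H^*_{j'+c-d_X,nr}(Y,\cdot)$ is the restriction. For item (\ref{item:thm:motivic-body:correspondence-functorial}), choose representatives $\Gamma,\Gamma'$ in sufficiently general position: one further application of Theorem \ref{thm:moving} along the inclusions $R^\nu\times Y\times Z\hookrightarrow X\times Y\times Z$ and $X\times R_Y^\nu\times Z\hookrightarrow X\times Y\times Z$ (with $R_Y$ the relevant support in $Y$) guarantees that the hypotheses of Proposition \ref{prop:action:composition-of-correspondences} are met, and that proposition yields the composition formula $[\Gamma']_\ast([\Gamma]_\ast[\alpha])=([\Gamma']\circ[\Gamma])_\ast[\alpha]$. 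Item (\ref{item:thm:motivic-body:wrong-support}) is a direct consequence of Lemma \ref{lem:Gamma-ast-zero-if-wrong-support}, since the hypothesis on the projection of $\supp\Gamma$ forces the factorization required there.

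\textbf{Main obstacle.} The principal difficulty is purely organizational: one must arrange, by repeated applications of Theorem \ref{thm:moving} with carefully chosen test morphisms (typically the various products $R^\nu\times Y$, $R^\nu\times Y\times Z$, and analogous maps from the supports of the bounding cycles), that \emph{all} cycles in play simultaneously satisfy the codimension hypotheses needed both to apply Lemma \ref{lem:Gamma-ast} and to discard error terms via Lemma \ref{lem:Gamma-ast:Gamma-sim-0}. Once the moving-lemma bookkeeping is set up, the remaining verifications are formal diagram chases built on Lemmas \ref{lem:Gamma-ast-compatible-W&U}--\ref{lem:Gamma-ast-zero-if-wrong-support} and the functoriality of Proposition \ref{prop:action:composition-of-correspondences}.
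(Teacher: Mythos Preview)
Your proposal is correct and reaches the same conclusion, but it takes a genuinely different route from the paper. The paper fixes a representative $\Gamma$ (and its support $W$) and then invokes the codimension $j{+}1$ purity theorem, Corollary \ref{cor:inj+purity-thm-body}(\ref{item:thm:purity-body}), to move the cohomology representative $\alpha$ so that $R=X\setminus U$ pulls back to $W^\nu$ in codimension $\geq j+2$; well-definedness then follows cleanly from the two squares of Lemma \ref{lem:Gamma-ast-compatible-W&U} (first independence of $\alpha$, then of $W$), and rational triviality is absorbed by allowing $\codim W\in\{c,c-1\}$. You instead fix $\alpha$ and move $\Gamma$ via Chow's moving lemma (Theorem \ref{thm:moving}) applied to $R^\nu\times Y\to X\times Y$. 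This is a legitimate alternative, and it has the mild conceptual advantage that it only uses the classical moving lemma for cycles and not the paper's main cohomological moving lemma (which underlies Corollary \ref{cor:inj+purity-thm-body}). The price is the extra bookkeeping you flag: for independence of $\alpha$ you must re-move $\Gamma$ to be in good position simultaneously with respect to $R$, $R'$, and $X\setminus V$, which is possible but slightly more delicate than the paper's two-step argument. For item (\ref{item:thm:motivic-body:wrong-support}) both proofs are essentially the same (enlarge $R'$ to contain $q(\supp\Gamma)$ via Lemma \ref{lem:Gamma-ast-compatible-W&U}, then apply Lemma \ref{lem:Gamma-ast-zero-if-wrong-support}); and for item (\ref{item:thm:motivic-body:correspondence-functorial}) the paper, like you, falls back on Theorem \ref{thm:moving} to put $\Gamma$ and $\Gamma'$ in position for Proposition \ref{prop:action:composition-of-correspondences}.
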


\begin{proof}
We first construct an action as required.
For this let $[\alpha]\in H^i_{j,\nr}(X,n)$, and let $\Gamma\in Z^c(X\times Y)$.
Let $W\subset X\times Y$ be a closed subset with $\supp \Gamma \subset W$, and let $S:=W^\nu$ be the disjoint union of the irreducible components of $W$ together with the natural map $f\colon S\to X$ given by the composition $S\to W\to X\times Y\stackrel{p}\to X$.
Applying Corollary \ref{cor:inj+purity-thm-body}\eqref{item:thm:purity-body}, we find that  $[\alpha]$ can be represented by a class $\alpha\in H^i(U,n)$ such that $R=X\setminus U$ satisfies $\codim_X (R)=j+2$ and $f^{-1}(R)\subset S$ locally has codimension at least $j+2$.
Since $S:=W^\nu$, we have $\dim(f^{-1}(R))=\dim (W\cap (R\times Y))$.
Hence,  
$$
\dim W-\dim (W\cap (R\times Y))\geq j+2.
$$
If we set $c':=\codim_{X\times Y}(W)=\dim(X\times Y)-\dim W$, then we get
$$
\codim_{X\times Y}( W\cap (R\times Y))\geq j+2+c' .
$$ 
 Hence,  $R':=q(W\cap (R\times Y))\subset Y$ satisfies
 $$
 \codim_Y(R')\geq j+c'-d_X+2.
 $$  
 
We get a class
$$
\Gamma(W)_\ast(\alpha)\in H^{i+2c-2d_X}(U',n+c-d_X) ,
$$ 
constructed in Lemma \ref{lem:Gamma-ast}, where $U':=Y\setminus R'$.

From now on we assume that $c'\in \{c,c-1\}$.
Then $F_{j+c-d_X}X\subset U'$, and we get a class
\begin{equation}\label{*}
\Gamma(W)_\ast(\alpha)\in H^{i+2c-2d_X}(F_{j+c-d_X}X,n+c-d_X).
\end{equation}
It follows from the commutativity of the first square  in \eqref{eq:diag:lem:Gamma-ast-compatible-W&U} in  Lemma \ref{lem:Gamma-ast-compatible-W&U} that \eqref{*} does not depend on the chosen representative $\alpha$ of $[\alpha]\in H^i_{j,\nr}(X,n)$; that is, \eqref{*} does not change if we replace $\alpha$ by another representative $\tilde \alpha\in H^i(\tilde U,n)$ such that $\tilde R=X\setminus \tilde R$ satisfies $\codim_X(\tilde R)=j+2$ and  $f^{-1}(\tilde R)\subset S$  locally has codimension at least $j+2$.
Using this, we can apply the above construction to the union of two given closed subsets to deduce 
 from the commutativity of the second square in  \eqref{eq:diag:lem:Gamma-ast-compatible-W&U} that \eqref{*} does not depend on the chosen closed subset $W\subset X\times Y$ with $\supp \Gamma \subset W$ and $\codim_{X\times X}(W) \in \{c,c-1\}$.
Applying this to the particular case where $W=\supp \Gamma$ has codimension $c$, we see that the closed subset $R'\subset Y$ has codimension at least $j+c-d_X+2$ and so  we find that \eqref{*} is unramified:
$$
\Gamma(W)_\ast(\alpha)\in H^{i+2c-2d_X}_{j+c-d_X,\nr}(X,n+c-d_X).
$$
Moreover, the additivity in Lemma \ref{lem:Gamma-ast} implies that the above class depends additively on $\Gamma$.
Finally, if $\Gamma$ is rationally equivalent to zero, then by the definition of rational equivalence there is a closed subset $W$ of codimension $c-1$ such that $\Gamma$ is rationally equivalent to zero on $W$. 
Applying the above construction to any such $W$,  Lemma \ref{lem:Gamma-ast:Gamma-sim-0} implies $\Gamma(W)_\ast(\alpha)=0$.
Since we already know the additivity of the above construction in $\Gamma$, we conclude that \eqref{*} depends not on $\Gamma$ but only on the rational equivalence class of $\Gamma$.
Altogether, we conclude that 
$$
[\Gamma]_\ast[\alpha]:=[\Gamma(W)_\ast(\alpha)]\in H^{i+2c-2d_X}_{j+c-d_X,\nr}(Y,n+c-d_X)
$$
for some $W\subset X\times Y$ of codimension $c$ or $c-1$ and with $\supp \Gamma \subset W$
yields a well-defined pairing as we want.
 
Item \eqref{item:thm:motivic-body:Gamma(W)} in the corollary is now a direct consequence of the construction.

Item \eqref{item:thm:motivic-body:restriction} follows from the well-definedness of the construction together with commutativity of the first square of \eqref{eq:diag:lem:Gamma-ast-compatible-W&U}.

To prove item \eqref{item:thm:motivic-body:correspondence-functorial}, let $[\Gamma'] \in \CH^{c'}(Y\times Z)$.
Let further $\alpha \in H^i(U,n)$ be a representative of $[\alpha]\in H^i_{j,\nr}(X,n)$ for some open subset $U\subset X$ with complement $R_1\subset X$ of codimension at least $j+2$.
By the moving lemma for algebraic cycles (see Theorem \ref{thm:moving}), we may choose a representative  $\Gamma$ of $[\Gamma]\in \CH^c(X\times Y) $ such that $W_1=\supp \Gamma$ meets $R_1\times Y$ dimensionally transversely.
Hence, $U_2:=Y\setminus p^{12}_2(W_1\cap (R_1\times Y))$ contains $F_{c-d_X+j+1}Y$.
Applying Theorem \ref{thm:moving} once again, we may choose a   representative $\Gamma'$ of $[\Gamma']$ with support $W_2=\supp  \Gamma'$ such that 
$$
R_3=p_3( (W_1\times Z) \cap (X\times W_2) \cap (R_1\times Y\times Z))\subset Z
$$ 
has codimension at least $c+c'-d_X-d_Y+j+2$. 
Hence, the open subset $U_3:=Z\setminus R_3$ contains $F_{j+c+c'-d_X-d_Y+1}Z$, and by Proposition \ref{prop:action:composition-of-correspondences}, we have 
$$
( \Gamma\circ \Gamma')(W_{12})(\alpha)= \Gamma'(W_2)_\ast( \Gamma(W_1)_\ast (\alpha))
 $$ 
The compatibility stated in item~\eqref{item:thm:motivic-body:correspondence-functorial} therefore follows from the construction (\textit{cf.} item \eqref{item:thm:motivic-body:Gamma(W)} proven above), together with Proposition \ref{prop:action:composition-of-correspondences}.

Finally, item \eqref{item:thm:motivic-body:wrong-support} follows from item \eqref{item:thm:motivic-body:Gamma(W)} and Lemmas \ref{lem:Gamma-ast-compatible-W&U} and  \ref{lem:Gamma-ast-zero-if-wrong-support}.

This concludes the proof of the corollary.
\end{proof}

Corollary \ref{cor:motivic-body} implies the existence of pullback maps along morphisms between smooth projective equi-dimensional $k$-schemes.
This can be generalized to open varieties as follows.

\begin{corollary} \label{cor:pullbacks-for-H_nr}
Let $f\to X\to Y$ be a morphism between smooth quasi-projective equi-dimensional $k$-schemes that admit a smooth projective compactification.
Then there are functorial pullback maps $f^\ast\to H^i_{j,\nr}(Y,n)\to H^i_{j,\nr}(X,n)$ with the following property: if $[\alpha]\in H^i_{j,\nr}(Y,n)$ is represented by a class $\alpha\in H^i(U,n)$ with $F_{j+1}Y\subset U$ such that $F_{j+1}X\subset f^{-1}(U)$, then $f^\ast[\alpha]$ is represented by $f^\ast(\alpha)\in H^i(f^{-1}(U),n)$.
\end{corollary}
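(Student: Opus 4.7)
The main obstacle is that for a non-flat morphism $f:X\to Y$, the preimage $f^{-1}(U)$ of an open $U\subset Y$ with $F_{j+1}Y\subset U$ need not contain $F_{j+1}X$, so a representative $\alpha\in H^i(U,n)$ of a class in $H^i_{j,nr}(Y,n)$ cannot be directly pulled back to a representative of a class in $H^i_{j,nr}(X,n)$. The plan is to overcome this using the codimension $j+1$ purity theorem from Corollary \ref{cor:inj+purity-thm-body}(\ref{item:thm:purity-body}) to replace $\alpha$ by another representative on a smaller open whose preimage does contain $F_{j+1}X$.

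Concretely, given $[\alpha]\in H^i_{j,nr}(Y,n)$ represented by some $\alpha\in H^i(V,n)$ with $\codim_Y(Y\setminus V)\geq j+2$, I would apply Corollary \ref{cor:inj+purity-thm-body}(\ref{item:thm:purity-body}) to $Y$ and $f:X\to Y$, obtaining opens $V'\subset V$ and $U\subset Y$ with $\codim_Y(Y\setminus V')\geq j+1$, $\codim_Y(Y\setminus U)\geq j+2$, and $\codim_X(X\setminus f^{-1}(U))\geq j+2$, together with a class $\tilde\alpha\in H^i(U,n)$ whose restriction to $V'$ coincides with $\alpha|_{V'}$. Since $\tilde\alpha$ and $\alpha$ agree on the open $V'\supset F_jY$, they represent the same element of $H^i_{j,nr}(Y,n)$, and the inclusion $F_{j+1}X\subset f^{-1}(U)$ lets me define $f^\ast[\alpha]\in H^i_{j,nr}(X,n)$ as the image of $f^\ast\tilde\alpha\in H^i(f^{-1}(U),n)$ under $H^i(F_{j+1}X,n)\to H^i(F_jX,n)$.

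The key step will be well-definedness. Given two representatives $\tilde\alpha_k\in H^i(U_k,n)$ ($k=1,2$) with $F_{j+1}Y\subset U_k$ and $F_{j+1}X\subset f^{-1}(U_k)$, representing the same class $[\alpha]$, they agree after restriction to some open $W\subset U_1\cap U_2$ with $\codim_Y(Y\setminus W)\geq j+1$, i.e.\ $\tilde\alpha_1-\tilde\alpha_2\in H^i(U_1\cap U_2,n)$ vanishes on $W$. I would then apply the injectivity result of Corollary \ref{cor:inj+purity-thm-body}(\ref{item:thm:inj-body}) to the smooth variety $U_1\cap U_2$ (which inherits a smooth projective compactification from $Y$) and the restricted morphism $f^{-1}(U_1\cap U_2)\to U_1\cap U_2$, producing a closed subset $Z'\subset U_1\cap U_2$ with $\codim_YZ'\geq j+1$ and $\codim_Xf^{-1}(Z')\geq j+1$, such that $\tilde\alpha_1-\tilde\alpha_2$ still vanishes on $(U_1\cap U_2)\setminus Z'$. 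Consequently $f^\ast\tilde\alpha_1$ and $f^\ast\tilde\alpha_2$ agree on $f^{-1}(U_1\cap U_2)\setminus f^{-1}(Z')$, whose complement in $X$, equal to $(X\setminus f^{-1}(U_1\cap U_2))\cup f^{-1}(Z')$, has codimension at least $\min(j+2,j+1)=j+1$; hence their images in $H^i(F_jX,n)$ coincide, so $f^\ast[\alpha]$ is well-defined.

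Functoriality for $f:X\to Y$ and $g:Y\to Z$ should then follow formally by choosing, via two applications of Corollary \ref{cor:inj+purity-thm-body}(\ref{item:thm:purity-body}) (equivalently, by applying it once to the morphism $X\sqcup Y\to Z$), a single representative $\hat\alpha\in H^i(\hat U,n)$ of any given $[\alpha]\in H^i_{j,nr}(Z,n)$ satisfying $F_{j+1}Z\subset\hat U$, $F_{j+1}Y\subset g^{-1}(\hat U)$, and $F_{j+1}X\subset(g\circ f)^{-1}(\hat U)$ simultaneously. With such a common representative, each of $g^\ast[\alpha]$, $f^\ast g^\ast[\alpha]$, and $(g\circ f)^\ast[\alpha]$ is computed directly from $\hat\alpha$, and the identity $f^\ast\circ g^\ast=(g\circ f)^\ast$ reduces to the ordinary functoriality of pullbacks on $H^i(\hat U,n)$. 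The hardest part of the proof is really the well-definedness step, where one must combine both the purity and injectivity theorems in order to find a single open in $X$ on which the two candidate pullbacks literally agree.
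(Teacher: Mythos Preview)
Your proposal is correct and follows the same overall strategy as the paper: use the codimension $j{+}1$ purity statement (Corollary \ref{cor:inj+purity-thm-body}(\ref{item:thm:purity-body})) to produce a representative $\tilde\alpha\in H^i(U,n)$ on an open $U$ whose preimage $f^{-1}(U)$ contains $F_{j+1}X$, then define $f^\ast[\alpha]:=[f^\ast\tilde\alpha]$.

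The one place where you diverge from the paper is the well-definedness step. The paper's proof is extremely terse here: it simply cites Lemma \ref{lem:Gamma-ast-compatible-W&U}. That lemma (or, in the pullback case, plain functoriality of $f^\ast$) tells you that if $\tilde\alpha_1$ and $\tilde\alpha_2$ agree on some open $W\supset F_jY$, then $f^\ast\tilde\alpha_1$ and $f^\ast\tilde\alpha_2$ agree on $f^{-1}(W)$; but as you correctly identify, $f^{-1}(W)$ need not contain $F_jX$, so this alone does not finish the argument. Your remedy---apply the injectivity statement Corollary \ref{cor:inj+purity-thm-body}(\ref{item:thm:inj-body}) on $U_1\cap U_2$ to move the vanishing locus of $\tilde\alpha_1-\tilde\alpha_2$ to a closed set $Z'$ whose preimage under $f$ has the right codimension---is exactly what is needed to close this gap, and it is the natural companion to the purity step. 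Your handling of functoriality via a common representative for $X\sqcup Y\to Z$ is likewise correct and slightly more explicit than the paper's one-line appeal to functoriality of pullbacks. In short, your argument is a fleshed-out version of the paper's proof, with the well-definedness step made rigorous via the injectivity theorem rather than left implicit.
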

\begin{proof} 
By Corollary \ref{cor:inj+purity-thm-body}\eqref{item:thm:purity-body} applied to the morphism $f\colon X\to Y$, we find that any class $[\alpha]\in H^i_{j,\nr}(Y,n)$ is represented by a class $\alpha\in H^i(U,n)$ with $F_{j+1}Y\subset U$ such that $F_{j+1}X\subset f^{-1}(U)$.
We may use such a representative to define $f^\ast[\alpha]:=[f^\ast(\alpha)]$, where $f^\ast(\alpha)\in H^i(f^{-1}(U),n)$.
We claim that this construction is well defined; \textit{i.e.} it does not depend on the choice of $U$ or on the choice of the representative $\alpha$ of $[\alpha]$.

\textit{Independence of the choice of $U$}: If $V\subset Y$ is another open subsets with $F_{j+1}Y\subset V$ and $F_{j+1}X\subset f^{-1}(V)$, then the resulting pullbacks coincide on $f^{-1}(U\cap V)$ by functoriality, and hence the respective class in $H^i(F_jY,n)$ is well defined.

\textit{Independence of the choice of $\alpha$}: Let $[\alpha]=[\beta]$.
By the independence of $U$, proven above, we can assume that $\alpha,\beta \in H^i(U,n)$ for some open subset $U\subset Y$  with $F_{j+1}Y\subset U$ and $F_{j+1}X\subset f^{-1}(U)$.
We replace $\alpha$ by $\alpha-\beta$ and are reduced to showing the following:
assume that $\alpha\in H^i(U,n)$ vanishes when restricted to $F_jY$; then $f^\ast\alpha$ vanishes when restricted to $F_jX$.
By Corollary \ref{cor:inj+purity-thm-body}\eqref{item:thm:inj-body}, there is an open subset $V\subset U$ with $F_jY\subset V$ and $F_jX\subset f^{-1}(V)$ such that $\alpha|_V=0$.
The functoriality of pullbacks thus shows that 
$$
f^\ast \alpha=0\in H^i(F_jX,n),
$$
as we want.
This concludes the proof of the well-definedness of the above pullback map.

The functoriality of the pullback on refined unramified cohomology follows from well-definedness together with the functoriality of pullbacks for the functor in Section \ref{sec:axioms}.
This concludes the proof.
\end{proof}

\begin{remark}
If $f\colon X\to Y$ is any morphism between smooth equi-dimensional algebraic $k$-schemes with $c=\dim Y-\dim X$, then the image $f(x)$ of a codimension $j$ point $x\in X^{(j)}$ has codimension at least $j+c$, and so the
 pushforward maps from condition~\ref{item:f_*} induce pushforward maps
$$
f_\ast\colon H^i(F_jX,n)\longrightarrow H^{i+2c}(F_{j+c}X,n+c)\quad \text{and}\quad  f_\ast\colon H^i_{j,\nr}(X,n)\longrightarrow H^{i+2c}_{j+c,\nr}(X,n+c)
$$ 
that are compatible with the pullbacks from Corollary \ref{cor:pullbacks-for-H_nr}.
We note that the existence of pushforwards does not rely on the moving lemma; see also \cite[Lemma 2.5]{Sch-griffiths}.
\end{remark}

\subsection{Behaviour under products with projective space  and birational properties of refined unramified cohomology}

The following lemma is an analogue of \cite[Corollary 5.10]{Sch-refined}.

\begin{lemma} \label{lem:H_nr=H}
Assume that the functor from \eqref{eq:cohomology-functor} satisfies conditions~\ref{item:les-triple} and \ref{item:semi-purity}.
Then for any smooth equi-dimensional  quasi-projective $k$-scheme,  the natural map  $H^i(X,m)\stackrel{\lowcong}\to H^i_{j,\nr}(X,m)$ is an isomorphism for $j\geq \lceil i/2\rceil$.
\end{lemma}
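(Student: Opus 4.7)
The plan is to unwind the definition and reduce the claim to a direct computation using only the long exact sequence of triples \ref{item:les-triple} and semi-purity \ref{item:semi-purity}. By definition,
$$
H^i_{j,nr}(X,m) = \im\bigl(H^i(F_{j+1}X,m) \to H^i(F_jX,m)\bigr),
$$
and the natural map $H^i(X,m) \to H^i_{j,nr}(X,m)$ factors through $H^i(F_{j+1}X,m)$. Hence it suffices to show that for $j \geq \lceil i/2\rceil$, both restriction maps
$$
H^i(X,m) \longrightarrow H^i(F_{j+1}X,m) \quad \text{and} \quad H^i(X,m) \longrightarrow H^i(F_jX,m)
$$
are isomorphisms, for then $H^i_{j,nr}(X,m)$ identifies with $H^i(X,m)$ via either of these maps.

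The key step is the following local statement: if $U \subset X$ is open with closed complement $Z := X \setminus U$ satisfying $\codim_X Z \geq c$ and $c \geq \lceil i/2\rceil + 1$, then the restriction $H^i(X,m) \to H^i(U,m)$ is an isomorphism. This follows by applying the long exact sequence of triples \ref{item:les-triple} to $\emptyset \subset Z \subset X$, which produces an exact sequence
$$
H^i_Z(X,m) \longrightarrow H^i(X,m) \longrightarrow H^i(U,m) \longrightarrow H^{i+1}_Z(X,m).
$$
Semi-purity \ref{item:semi-purity} gives $H^\ast_Z(X,m) = 0$ in degrees $< 2\codim_X Z$; since $i+1 \leq 2\lceil i/2\rceil + 1 < 2c \leq 2\codim_X Z$, both outer groups vanish, so the middle arrow is an isomorphism.

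Passing to the direct limit over the defining systems of $F_{j}X$ and $F_{j+1}X$, the hypothesis $j \geq \lceil i/2\rceil$ ensures that every open $U$ in either system has complement of codimension at least $j+1 \geq \lceil i/2\rceil + 1$, so the previous step applies. This yields the two required isomorphisms, and the lemma follows at once. There is no genuine obstacle here; the only point requiring care is the book-keeping of codimensions (recalling that $F_jX$ is defined by the condition $\codim_X(X\setminus U) > j$, so $\codim_X Z \geq j+1$).
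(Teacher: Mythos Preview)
Your proof is correct and follows essentially the same approach as the paper: both arguments reduce to showing that $H^i(X,m)\to H^i(U,m)$ is an isomorphism whenever $\codim_X(X\setminus U)\geq j+1$ with $j\geq\lceil i/2\rceil$, using the long exact sequence of triples together with semi-purity to kill $H^i_Z(X,m)$ and $H^{i+1}_Z(X,m)$. The only cosmetic difference is that the paper phrases the reduction as a single statement ($H^i(X,m)\stackrel{\cong}{\to}H^i(F_jX,m)$ for all $j\geq\lceil i/2\rceil$), which then covers both $F_jX$ and $F_{j+1}X$ at once, whereas you treat the two cases separately.
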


\begin{proof}
It suffices to prove that the natural map $H^i(X,m)\to H^i(F_jX,m)$ is an isomorphism for $j\geq \lceil i/2\rceil$.
To this end, let $U\subset X$ be open with complement $Z=X\setminus U$ of codimension at least $j+1$.
Then condition~\ref{item:les-triple} yields a long exact sequence
$$
\cdots \lra H^{i }_Z(X,m)\lra H^i(X,m)\lra H^i(U,m)\lra H^{i+1}_Z(X,m)\lra \cdots .
$$
By condition~\ref{item:semi-purity}, we have $H^{i+1}_Z(X,m)=0$ for $i+1<2\codim_X(Z)$.
Since $\codim_X(Z)\geq j+1$, we get $H^{i+1}_Z(X,m)=0$ for $j+1>(i+1)/2$.
The latter condition is equivalent to any of the following conditions: $j>(i-1)/2$,  $j\geq i/2$, or $j \geq \lceil i/2\rceil$.
The same argument shows that $H^i_Z(X,m)=0$ for $j+1>i/2$,  which holds if   $j \geq \lceil i/2\rceil$.
The above sequence thus  proves the lemma. 
\end{proof}

\begin{corollary}[Projective bundle formula] \label{cor:YxP^n} 
For any twisted cohomology theory as in Section \ref{sec:axioms} which satisfies conditions~\ref{item:excision}--\ref{item:semi-purity} and any smooth projective equi-dimensional $k$-scheme $Y$, there is a canonical isomorphism 
$$
\sum f_l:
 \bigoplus_{l=0}^{\min(j,n)}H^{i-2l}_{j-l,\nr}(Y,m-l)\stackrel{\lowcong}\longrightarrow 
H^i_{j,\nr}(Y \times \CP^n_k,m) , 
$$
where $f_l\colon H^{i-2l}_{j-l,\nr}(Y,m-l)\to  H^i_{j,\nr}(Y \times \CP^n_k,m)$ is given by the natural pullback map along the projection $Y\times \CP^{n-l}_k\to Y$ followed by the pushforward along the inclusion  $Y\times \CP^{n-l}_k\to Y\times \CP^{n}_k$ induced by some linear embedding $ \CP^{n-l}_k\subset  \CP^{n}_k $.
\end{corollary}
\begin{proof}
We first note  that
$$
 \bigoplus_{l=0}^{\min(j,n)}H^{i-2l}_{j-l,\nr}(Y,m-l)= \bigoplus_{l=0}^n H^{i-2l}_{j-l,\nr}(Y,m-l)
$$ 
 because $H^{i-2l}_{j-l,\nr}(Y,m-l)$ vanishes for $l>j$ by Remark \ref{rem:H_emptyset=0} since $F_jX=\emptyset$ for $j<0$. 

We have the map 
$$
f_l:H^{i-2l}_{j-l,\nr}(Y,m-l) \longrightarrow 
H^i_{j,\nr}(Y \times \CP^n_k,m)
$$ 
from the statement of the corollary, which is given by the pullback along the projection $Y\times \CP^{n-l}_k\to Y$ followed by the pushforward along the inclusion  $Y\times \CP^{n-l}_k\to Y\times \CP^{n}_k$.
In addition, we consider the map 
$$
g_l:H^i_{j,\nr}(Y \times \CP^n_k,m)\longrightarrow H^{i-2l}_{j-l,\nr}(Y,m-l)
$$
given by the pullback along a linear embedding $Y\times \CP^l_k\to Y\times \CP_k^n$ followed by the pushforward along the projection $Y\times \CP^l_k\to Y$.

Taking sums, we get maps
$$
\sum_lf_l: \bigoplus_lH^{i-2l}_{j-l,\nr}(Y,m-l) \longrightarrow 
H^i_{j,\nr}(Y \times \CP^n_k,m)
$$
and
$$
\sum_l g_l: H^i_{j,\nr}(Y \times \CP^n_k,m)\longrightarrow \bigoplus_l  H^{i-2l}_{j-l,\nr}(Y,m-l) ;
$$
here and in what follows, the indices will always run from $0$ to $n$ if not mentioned otherwise.
We claim that these maps are inverses to each other.
To see this, it suffices to show that the compositions
\begin{align} \label{eq:fl-gl-composition1}
\sum_{l'} g_{l'}\circ \sum_lf_l=\sum_{l,l'} g_{l'}\circ f_l:
\bigoplus_l H^{i-2l}_{j-l,\nr}(Y,m-l)  \longrightarrow 
H^i_{j,\nr}(Y \times \CP^n_k,m) \longrightarrow \bigoplus_l H^{i-2l}_{j-l,\nr}(Y,m-l)
\end{align}
and
\begin{align}\label{eq:fl-gl-composition2}
\sum_lf_l\circ \sum_{l'} g_{l'}=\sum_l f_l\circ g_l:
H^i_{j,\nr}(Y \times \CP^n_k,m)\longrightarrow \bigoplus_l H^{i-2l}_{j-l,\nr}(Y,m-l)\longrightarrow H^i_{j,\nr}(Y \times \CP^n_k,m)
\end{align}
both agree with the respective identities.
(Note that there is a certain asymmetry in the computation of the two compositions above, due to the fact that $\sum f_l$ maps from a direct sum while $\sum g_l$ maps to a direct sum.)

To show this,  note that
$f_l$ agrees with the action $[\Gamma_l]_\ast$ of the correspondence 
$$
\Gamma_l:=\Delta_Y\times \CP^{n-l}\in Z^{d_Y+l}(Y\times Y\times  \CP^n),
$$ 
while $g_l$ agrees with the action $[\Omega_l]_\ast$ of the correspondence 
$$
\Omega_l:=\Delta_Y\times \CP^{l}\in Z^{d_Y+n-l}(Y\times \CP^n\times Y),
$$
where $d_Y:=\dim Y$.
 
A simple computation shows that
$$
[\Omega_{l'}]\circ [\Gamma_l]=\delta_{l,l'}[\Delta_Y]\in \CH^{d_Y}(Y\times Y),
$$
where $\delta_{l,l'}$ denotes the Kronecker delta, which is $1$ for $l=l'$ and zero otherwise.
It thus follows from the additivity and functoriality of the action of correspondences in Corollary \ref{cor:motivic-body} 
 that \eqref{eq:fl-gl-composition1}
agrees with the identity map,
as we want.

To prove the same for the other composition, we will use that
$$
\Delta_{\CP^n}=\sum_{i=0}^n h^i\times h^{n-i}\in \CH^n(\CP^n\times \CP^n), 
$$
where $h\in \CH^1(\CP^n)$ denotes the class of a linear section.
With this at hand, one easily gets
$$
\sum_{l=0}^n [\Gamma_l] \circ  [\Omega_{l}] =[\Delta_{Y\times \CP^n}] \in \CH^{d_Y+n}(Y\times \CP^n\times Y\times \CP^n) .
$$
Hence,  \eqref{eq:fl-gl-composition2} agrees with the respective identity map, 
as we want. 
This concludes the proof.
\end{proof}

\begin{remark}
Kok and Zhou give an alternative proof of a version of Corollary \ref{cor:YxP^n} in \cite{kok-zhou}.
\end{remark}

The groups $H^i_{j,\nr}(X,n)$ carry a natural descending filtration $F^\ast$ with $F^{j+1}H^i_{j,\nr}(X,n)=H^i_{j,\nr}(X,n)$,  which for $m\geq j+1$ is given by (see \cite[Definition 5.3]{Sch-refined})
$$
F^mH^i_{j,\nr}(X,n):=\im(H^i(F_mX,n)\lra H^i(F_jX,n)) .
$$

\begin{corollary} \label{cor:iso-in-codim-c-H_nr}
Let $X$ and $Y$ be smooth projective equi-dimensional $k$-schemes, and let $f:X\dashrightarrow Y$ be a birational map which is an isomorphism in codimension $c$; \textit{i.e.} there are closed subsets $Z_X\subset X$ and $Z_Y\subset Y$ of codimensions greater than~$c$ such that $f\colon X\setminus Z_X\to Y\setminus Z_Y$ is an isomorphism.
Then for all $j\leq c$, there is a natural isomorphism
$$
f^\ast \colon H^i_{j,\nr}(Y,n) \stackrel{\lowsim}\longrightarrow H^i_{j,\nr}(X,n),
$$
given by the action $[\Gamma_{f^{-1}}]_\ast$ of the closure of the graph of $f^{-1}$.
Moreover, $f^\ast$
 respects the filtration $F^\ast$.
\end{corollary}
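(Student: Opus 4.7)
The plan is to define $f^* := [\Gamma_{f^{-1}}]_* \colon H^i_{j,nr}(Y,n) \to H^i_{j,nr}(X,n)$ using the pairing from Corollary \ref{cor:motivic-body} applied to $[\Gamma_{f^{-1}}] \in \CH^d(Y\times X)$, where $d = \dim X = \dim Y$. Filtration compatibility with $F^*$ is tracked directly from the construction of Corollary \ref{cor:motivic-body}: if $[\alpha] \in F^m H^i_{j,nr}(Y,n)$ is representable on an open $U$ with $\codim_Y(Y\setminus U) \geq m+1$, then Chow's moving lemma (Theorem \ref{thm:moving}) produces a representative $\Gamma$ of $[\Gamma_{f^{-1}}]$ meeting $(Y\setminus U)\times X$ properly, whence $f^*[\alpha]$ is represented on an open of $X$ whose complement also has codimension $\geq m+1$.

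To show $f^*$ is an isomorphism, I would introduce $(f^{-1})^* := [\Gamma_f]_* \colon H^i_{j,nr}(X,n) \to H^i_{j,nr}(Y,n)$ and use the functoriality of the correspondence action under composition (item (\ref{item:thm:motivic-body:correspondence-functorial}) of Corollary \ref{cor:motivic-body}) together with Lemma \ref{lem:Gamma-ast:Gamma=Delta} to reduce the claim to showing that $E := [\Gamma_f \circ \Gamma_{f^{-1}}] - [\Delta_Y] \in \CH^d(Y\times Y)$ acts as zero on $H^i_{j,nr}(Y,n)$ for $j \leq c$ (the analogous statement for $[\Gamma_{f^{-1}} \circ \Gamma_f] - [\Delta_X]$ being entirely symmetric). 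A direct computation of graph compositions on $(Y\setminus Z_Y)^2$ shows $E|_{(Y\setminus Z_Y)^2} = 0$; then by the localization sequence and Mayer--Vietoris, we decompose $E = E_1 + E_2$ in $\CH^d(Y\times Y)$ with $E_1$ having a representative supported in $Z_Y\times Y$ and $E_2$ in $Y\times Z_Y$. The $E_2$-piece acts trivially by item (\ref{item:thm:motivic-body:wrong-support}) of Corollary \ref{cor:motivic-body}, since the projection of its support onto the second factor lies in $Z_Y$, which has codimension $\geq c+1 \geq j+1$.

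The main obstacle is the vanishing $(E_1)_* = 0$: since $\supp E_1 \subset Z_Y\times Y$ can have second-factor projection equal to all of $Y$, item (\ref{item:thm:motivic-body:wrong-support}) does not apply directly. For $j \leq c-1$ this is resolved cleanly by representing $[\alpha]$ on $V := Y\setminus Z_Y$, which is a valid representative in $F_{j+1}Y$ since $\codim Z_Y \geq c+1 \geq j+2$; then the flat pullback of $E_1$ to $V\times Y$ vanishes as $V\cap Z_Y = \emptyset$, and Lemma \ref{lem:Gamma-ast} immediately yields $(E_1)_*[\alpha] = 0$. The delicate boundary case $j = c$, in which $\codim Z_Y$ may equal $c+1$ exactly and thus forces any representative of $[\alpha]$ to meet $Z_Y$, I would handle by iteratively applying the codimension $j+2$ purity statement (Corollary \ref{cor:inj+purity-thm-body}, item (\ref{item:thm:purity-body})) to the inclusion $Z_Y\hookrightarrow Y$ to produce a representative of $[\alpha]$ on an open whose complement meets $Z_Y$ only in codimension $\geq c+2$ within $Z_Y$; combined with Chow's moving lemma applied to $E_1$ to bring its support into suitable general position, this would reduce the action to a class whose restriction to a sufficiently small open of codimension $\geq j+1$ complement vanishes, which is precisely the criterion for being zero in $H^i_{j,nr}(Y,n)$.
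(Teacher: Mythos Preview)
Your overall strategy coincides with the paper's: set $f^\ast=[\Gamma_{f^{-1}}]_\ast$, introduce $(f^{-1})^\ast=[\Gamma_f]_\ast$, and show via item~(\ref{item:thm:motivic-body:correspondence-functorial}) of Corollary~\ref{cor:motivic-body} and Lemma~\ref{lem:Gamma-ast:Gamma=Delta} that the ``error'' $E=[\Gamma_f]\circ[\Gamma_{f^{-1}}]-[\Delta_Y]$ acts trivially. The divergence is in how you control $E$, and your treatment of the boundary case $j=c$ is the weak point.

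You localize too coarsely. You only use $E|_{(Y\setminus Z_Y)^2}=0$, which forces the decomposition $E=E_1+E_2$ and leaves you with an $E_1$ supported in $Z_Y\times Y$ whose action you cannot kill cleanly when $j=c$. But in fact $E|_{Y\times(Y\setminus Z_Y)}=0$ already. The reason is that $\Gamma_f\cap(X\times(Y\setminus Z_Y))$ is exactly the graph of the morphism $f^{-1}\colon Y\setminus Z_Y\to X$: since $f\colon X\setminus Z_X\to Y\setminus Z_Y$ is an isomorphism, the set $\{(f^{-1}(y),y):y\in Y\setminus Z_Y\}$ is closed in $X\times(Y\setminus Z_Y)$ (as $X$ is separated), hence equals the closure $\Gamma_f$ restricted there. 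In particular this forces the middle $X$-coordinate into $X\setminus Z_X$, where $\Gamma_{f^{-1}}$ is likewise just a graph, and one reads off that the composition restricted to $Y\times(Y\setminus Z_Y)$ is $[\Delta_Y]$. By localization, $E=\Omega_Y$ is represented by a cycle supported in $Y\times Z_Y$, and item~(\ref{item:thm:motivic-body:wrong-support}) of Corollary~\ref{cor:motivic-body} kills it for all $j\le c$ at once. No decomposition, no separate boundary case.

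As written, your $j=c$ argument is not convincing: moving $E_1$ within $\CH^d(Y\times Y)$ destroys the containment $\supp E_1\subset Z_Y\times Y$ that you rely on, while moving it on the singular scheme $Z_Y\times Y$ is not covered by Theorem~\ref{thm:moving}; and the purity statement you invoke controls how the complement of $U$ meets $Z_Y$, but does not make $E_1|_{U\times Y}$ vanish. For the filtration statement, note also that item~(\ref{item:thm:motivic-body:restriction}) of Corollary~\ref{cor:motivic-body} gives $F^\ast$-compatibility directly, without a separate moving-lemma computation.
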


\begin{proof}
Since $f$ is birational, $d:=\dim X=\dim Y$.
Let $\Gamma:=\Gamma_f\in Z^d(X\times Y)$ and $\Gamma':=\Gamma_{f^{-1}}\in Z^d(Y\times X)$ be the closures of the graphs of $f$ and $f^{-1}$, respectively.
Since $f$ is an isomorphism in codimension $c$, we get
$$
[\Gamma]\circ [\Gamma']=[\Delta_Y]+[\Omega_Y]\in \CH^d(Y\times Y)
\quad 
\text{and}
\quad 
[\Gamma']\circ [\Gamma] =[\Delta_X]+[\Omega_X]\in \CH^d(X\times X)
$$
for some cycles $\Omega_Y,\Omega_X$ with the property that the respective images via the second projection $\pr_2(\supp(\Omega_Y))\subset Y$ and $\pr_2(\supp(\Omega_X))\subset X$ have codimension at least $c+1$.
Hence, Corollary \ref{cor:motivic-body}\eqref{item:thm:motivic-body:wrong-support} implies that 
$$
[\Omega_X]_\ast \colon H^i_{j,\nr}(X,n)\longrightarrow H^i_{j,\nr}(X,n) \quad \text{and}\quad [\Omega_Y]_\ast \colon H^i_{j,\nr}(Y,n)\longrightarrow H^i_{j,\nr}(Y,n) 
$$
are zero for $j\leq c$.
The above decompositions together with Corollary \ref{cor:motivic-body}\eqref{item:thm:motivic-body:correspondence-functorial} thus implies that 
$$
[\Gamma]_\ast\colon H^i_{j,\nr}(X,n)\longrightarrow H^i_{j,\nr}(Y,n) \quad \text{and}\quad  [\Gamma']_\ast\colon H^i_{j,\nr}(Y,n)\longrightarrow H^i_{j,\nr}(X,n)
$$
are inverse to each other for $j\leq c$.
Moreover,  Corollary \ref{cor:motivic-body}\eqref{item:thm:motivic-body:restriction}  implies that the above isomorphisms respect the filtrations $F^\ast$ on both sides.
This concludes the proof of the corollary.
\end{proof}

\begin{proof}[Proof of Corollary \ref{cor:H_j-nr-basic}]
Corollary \ref{cor:H_j-nr-basic} follows from Corollaries \ref{cor:YxP^n} and \ref{cor:iso-in-codim-c-H_nr}. 
\end{proof}

\appendix

\section{Twisted pro-\'etale cohomology satisfies the properties from Section \ref{sec:axioms}} \label{app:A}

The purpose of this appendix is to prove Proposition \ref{prop:pro-etale-coho} from Section \ref{subsec:example}. 
We will see that item~\eqref{item:prop:pro-etale-coho} of  Proposition \ref{prop:pro-etale-coho} implies items \eqref{item:prop:etale-coho} and \eqref{item:prop:continuous-etale-coho}, and so it suffices to treat  pro-\'etale cohomology. 
Most results and arguments are certainly well known to experts; we include them here for the reader's convenience.

 \subsection{Pro-\'etale and continuous \'etale cohomology}
The pro-\'etale site $X_{\proet}$ of a scheme $X$ is formed by weakly \'etale maps of schemes $U\to X$; see \cite[Definition 4.1.1 and Remark 4.1.2]{BS}.  
Since any \'etale map is also weakly \'etale, there is a natural map of topoi $\nu\colon \Sh(X_{\proet})\to \Sh(X_{\et})$; see \cite[Section~5]{BS}.  
In particular, we get functors $\nu^\ast\colon\Ab(X_\et)\to \Ab(X_\proet)$ and $\nu_\ast \colon\Ab(X_\proet)\to \Ab(X_\et)$ on the corresponding categories of sheaves of abelian groups.
Here,  $\nu^\ast$ is exact and right adjoint to the left  exact functor $\nu_\ast$; see  \cite[\href{https://stacks.math.columbia.edu/tag/00X9}{Tag 00X9}]{stacks-project}.
Since $\nu^\ast$ is (left) exact, $\nu_\ast$ maps injective objects to injective objects. 

For a closed subset $Z\subset X$, we have the functor
$
\Gamma_Z\colon \Ab(X_\proet)\to \Ab ,
$
given by taking global sections with support on $Z$.
For a bounded complex $K$ of sheaves of abelian groups on $X_\proet$,  we let 
\begin{align} \label{def:H_proet(X,K)}
H^i_Z(X_\proet,K):=\RR^i\Gamma_Z(X_\proet,K).
\end{align}
In the special case where $K$ is a sheaf $F$ concentrated in degree zero, we also write $H^i_Z(X_\proet,F):= \RR^i\Gamma_Z(X_\proet,F)$.
(Note that by slight abuse of notation, we do not distinguish between the hypercohomology of a complex as in \eqref{def:H_proet(X,K)} and the cohomology of a sheaf.)

\begin{lemma}[\textit{cf.} \cite{BS}] \label{lem:BS-H_cont=H_proet}
Let $X$ be a scheme, and let $(F_r )$ be an inductive system of sheaves on the small \'etale site $X_{\et}$ and with surjective transition maps. 
Let $Z\subset X$ be a closed subset.
Then there is a canonical identification
$$
H^i_{Z,\cont}(X_{\et},(F_r))\cong H^i_Z(X_{\proet},\lim_{\substack{\longleftarrow\\ r}} \nu^\ast F_r) ,
$$
where the left-hand side denotes Jannsen's continuous \'etale cohomology groups with support; see \cite{jannsen}.
\end{lemma}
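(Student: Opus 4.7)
The plan is to identify both sides with the derived inverse limit $\RR\lim_r \RR\Gamma_Z(X_\et, F_r)$ in the derived category of abelian groups and then pass to cohomology. The right-hand side of the claimed identification is, by Jannsen's construction in \cite{jannsen}, obtained precisely as $H^i$ of this complex. For the left-hand side, I would carry out a three-step comparison on the pro-\'etale site.

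First, for each individual \'etale sheaf $F$ I would use the comparison
\[ \RR\Gamma_Z(X_\proet, \nu^\ast F) \simeq \RR\Gamma_Z(X_\et, F), \]
which follows from $\RR\nu_\ast \nu^\ast F \simeq F$ (cf.\ \cite[Lemma 5.1.2]{BS}) combined with the Leray spectral sequence and the observation that $\nu_\ast$ intertwines the support functor $\Gamma_Z$, since this functor depends only on the underlying topological data of $X$. Second, since $\RR\Gamma_Z$ is a right derived functor, it commutes with homotopy limits, giving
\[ \RR\Gamma_Z(X_\proet, \RR\lim_r \nu^\ast F_r) \simeq \RR\lim_r \RR\Gamma_Z(X_\proet, \nu^\ast F_r). \]
Third, I would show that the derived and naive inverse limits of the system $(\nu^\ast F_r)$ agree, i.e.\ $\RR^i\lim_r \nu^\ast F_r = 0$ for $i>0$.

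This last step is the main technical point, and is where the good behaviour of the pro-\'etale site becomes essential. I would invoke the existence of a basis of weakly contractible affine objects $U$ in $X_\proet$, from \cite[Proposition 4.2.8]{BS}. On such a $U$, all higher pro-\'etale cohomology vanishes, so sheaf-theoretic operations can be computed termwise: the surjectivity of the transition maps of $(F_r)$ on the \'etale site transfers, via $\nu^\ast$ and weak contractibility of $U$, to surjectivity of $\Gamma(U, \nu^\ast F_{r+1}) \to \Gamma(U, \nu^\ast F_r)$. The Mittag--Leffler vanishing of $\RR^1\lim$ for surjective towers of abelian groups then yields $\RR^i\lim_r \Gamma(U, \nu^\ast F_r) = 0$ for $i>0$; sheafifying on a cofinal family of such $U$ gives the desired sheaf-level vanishing.

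Chaining the three identifications produces
\[ \RR\Gamma_Z(X_\proet, \lim_r \nu^\ast F_r) \simeq \RR\Gamma_Z(X_\proet, \RR\lim_r \nu^\ast F_r) \simeq \RR\lim_r \RR\Gamma_Z(X_\et, F_r), \]
and taking $H^i$ yields the claimed identification with $H^i_{Z,cont}(X_\et, (F_r))$. The principal obstacle I anticipate is the careful handling of the support functor $\Gamma_Z$ under the morphism $\nu$: one must verify both that $\nu_\ast$ is compatible with $\Gamma_Z$ at the level of derived functors and that all comparison maps are natural in $(F_r)$, so that the identifications assemble compatibly inside the $\RR\lim$ and produce a canonical isomorphism rather than merely a non-canonical one.
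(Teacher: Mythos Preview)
Your proposal is correct and follows essentially the same route as the paper: both identify the pro-\'etale side with $\RR\lim_r \RR\Gamma_Z(X_\et,F_r)$ via (i) the comparison $\RR\Gamma_Z(X_\proet,\nu^\ast F)\simeq \RR\Gamma_Z(X_\et,F)$, (ii) commutation of $\RR\Gamma_Z$ with $\RR\lim$, and (iii) the vanishing $\RR\lim\nu^\ast F_r\simeq\lim\nu^\ast F_r$ coming from the locally weakly contractible basis of $X_\proet$ (the paper cites \cite[Proposition~3.1.10]{BS} directly rather than spelling out the Mittag--Leffler step). Note that you have swapped ``left'' and ``right'' throughout your write-up: in the statement the Jannsen side is the left-hand side and the pro-\'etale side is the right-hand side.
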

\begin{proof}
This follows by the same argument as in \cite[Proposition 5.6.2]{BS}; we give the details for the reader's convenience.
By definition, $H^i_{Z,\cont}(X_{\et},(F_r))$, resp.\ $H^i_Z(X_{\proet},\lim \nu^\ast F_r) $, is the $\supth{i}$ cohomology of the complex
$$
\RR\Gamma_Z (X_{ \et},\RR \lim(F_r)) \in D(\Ab), \quad \text{resp. }\  \RR\Gamma_Z (X_{ \proet},\lim \nu^\ast F_r )\in D(\Ab) .
$$

We claim that $\Gamma_Z(X_{\proet},-)\colon\Ab(X_\proet)\to \Ab$ takes injectives to injectives. 
To prove this, note that $\Gamma_Z(X_{\proet},-)$ is right adjoint to the exact functor $A\mapsto \iota_\ast \underline{A}_Z$, where $\iota\colon Z\hookrightarrow X$ denotes the inclusion and $ \underline{A}_Z$ denotes the locally constant sheaf with values $A$ on $Z$. 
The adjointness property thus implies that  for any injective sheaf $\mathcal I\in \Ab(X_{\proet})$, the functor $\Hom(-,\Gamma_Z(X_\proet,\mathcal I))\colon\Ab\to \Ab$ is exact.
Hence, $\Gamma_Z(X_\proet,\mathcal I)\in \Ab$ is an injective object, as we want.

Similarly, for any abelian category $\mathcal B$, the functor $\lim\colon\mathcal B^{\N}\to \mathcal B$ maps injectives to injectives, see \cite[Remark 1.17(a)]{jannsen}, where $\mathcal B^{\N}$ denotes the category of inverse systems of objects in $\mathcal B$.

Since $\lim$ and $\Gamma_Z$ commute, we conclude from Grothendieck's theorem on composed functors that
$$
\RR \lim \circ \RR \Gamma_Z=\RR (\lim\circ \Gamma_Z)= \RR \Gamma_Z \circ \RR \lim
$$
as functors $\Ab(X_{\proet})^{\N}\to \Ab$.
The same argument shows that the above identity holds when the objects are viewed as functors $\Ab(X_{\et})^{\N}\to \Ab$.
 Hence,
\begin{align} \label{eq:RGammaRlim}
\RR\Gamma_Z (X_{ \et},\RR \lim(F_r))\cong \RR \lim \circ \RR\Gamma_Z (X_{ \et},F_r)&\cong \RR \lim\circ  \RR\Gamma_Z (X_{ \proet},\nu^\ast F_r )\\
&\cong  \RR\Gamma_Z (X_{ \proet},\RR \lim (\nu^\ast F_r) ), \nonumber
\end{align}
where the second isomorphism uses the fact that $\nu_\ast\colon\Ab(X_{\proet})\to \Ab(X_\et)$ maps injectives to injectives and $\id\to \RR \nu_\ast \nu^\ast$ is an equivalence on bounded below complexes (see \cite[Proposition 5.2.6(2)]{BS}), while the last equality uses $\RR \Gamma_Z\circ \RR \lim\cong\RR \lim\circ \RR \Gamma_Z$. 
By \cite[Propositions 3.2.3 and 4.2.8]{BS}, the topos $\Sh(X_\proet)$ is replete, 
and so \cite[Proposition 3.1.10]{BS} implies 
\begin{align} \label{eq:Rlim=lim}
\RR \lim (\nu^\ast F_r)\cong \lim (\nu^\ast F_r),
\end{align}
where we use that the transition maps in $(F_r)$ are surjective.
Hence, the above isomorphism simplifies to
$
\RR\Gamma_Z (X_{ \et},\RR \lim(F_r))\cong  \RR\Gamma_Z (X_{ \proet},  \lim (\nu^\ast F_r) ) 
$, which proves the proposition. 
\end{proof}

\subsection{Constructible complexes and six-functor formalism}
For simplicity, we recall the six-functor formalism in the pro-\'etale  topology  that we need from \cite{BS} only in the special case of algebraic schemes, which suffices for the purpose of this paper. 

Let $k$ be a field, and let $\ell$ be a prime that is invertible in $k$.
Let $X$ be an algebraic scheme over $k$ (\textit{i.e.} a separated scheme of finite type over $k$). 
For any integer $n$, we consider the sheaf 
$$
\widehat{\Z}_\ell(n):=\lim_{\substack{\longleftarrow\\ r}} \mu_{\ell^r}^{\otimes n} \in \Ab(X_{\proet}) 
$$
and write $\widehat {\Z}_\ell:=\widehat {\Z}_\ell(0)$, which is a sheaf of rings.
Let $D(X_{\proet},\widehat {\Z}_\ell)$ denote the derived category of the abelian category  
of $\widehat{\Z}_\ell$-modules on $ X_{\proet}$.

A complex $K\in D(X_{\proet},\widehat {\Z}_\ell)$ is constructible if it is $\ell$-adically complete, \textit{i.e.}  the canonical map
$$
K\longrightarrow \RR \lim  (K\otimes^{\mathbb L}_{\widehat \Z_\ell} \Z/\ell^{r})
$$
is a quasi-isomorphism (see \cite[Definition 3.5.2]{BS}) and if for each $r$, we have $K\otimes^{\mathbb L}_{\widehat \Z_\ell} \Z/\ell^{r}\cong \nu^\ast K_r$ for some constructible complex $K_r\in D(X_{\et},\Z/\ell^r)$ of sheaves of $\Z/\ell^r$-modules on $X_{\et}$; see \cite[Definition 6.5.1]{BS}.
The full triangulated subcategory spanned by constructible complexes is denoted by 
$$
D_{\cons}(X_{\proet},\widehat {\Z}_\ell)\subset D(X_{\proet},\widehat {\Z}_\ell).
$$
By \cite[Lemma 6.5.3]{BS}, each $K\in D_{\cons}(X_{\proet},\widehat {\Z}_\ell)$ is bounded.
Since $\mu_{\ell^{r+1}}^{\otimes n}\to \mu_{\ell^{r}}^{\otimes n}$ is surjective in the \'etale topology, \eqref{eq:Rlim=lim} implies that $\widehat{\Z}_\ell(n)$ is $\ell$-adically complete, and so for any integer $m$, the complex $\widehat{\Z}_\ell(n)[m]\in D(X_{\proet},\widehat {\Z}_\ell)$ is constructible.

For a morphism $f\colon X\to Y$ between algebraic schemes over $k$, there are functors
 {\small
$$
\RR f_\ast, \RR f_!\colon D_{\cons}(X_\proet,\widehat \Z_\ell)\longrightarrow D_{\cons}(Y_\proet,\widehat \Z_\ell),\quad f^\ast_{\comp}, f^!\colon D_{\cons}(Y_\proet,\widehat \Z_\ell)\longrightarrow D_{\cons}(X_\proet,\widehat \Z_\ell), 
$$
}where
$f_{\comp}^\ast$ is defined as the usual pullback followed by derived completion; see \cite[Section~6.7]{BS}.

We aim to recall the construction of $f^!$.
To this end,   first note that 
\begin{align}\label{eq:nu*}
\nu^\ast\colon D_{\cons}(X_\et,  \Z/\ell^r) \stackrel{\lowcong} \longrightarrow D_{\cons}(X_\proet,  \Z/\ell^r)
\end{align} 
is an equivalence for all $r\geq 1$, see \cite[paragraph after Definition 6.5.1]{BS};  we will freely use this in the following to identify complexes in the two categories with each other.
For $K\in D_{\cons}(Y_\proet,\widehat \Z_\ell)$,  we set $K_r:= K\otimes^{\mathbb L}_{\widehat \Z_\ell} \Z/\ell^{r}$.
Note that any constructible complex of sheaves of $\Z/\ell^r$-modules on $X_\proet$ is also constructible when viewed as a complex of $\widehat \Z_\ell$-modules.
Using the equivalence in \eqref{eq:nu*}, we may therefore view $f_r^!K_r$  as an object in $D_{\cons}(X_\proet,\widehat  \Z_\ell) $, where $f_r^!\colon D_{\cons}(Y_\et, \Z/\ell^r)\to D_{\cons}(X_\et, \Z/\ell^r) $ denotes the exceptional pullback along $f$ in the \'etale site; \textit{cf.} \cite[Expos\'e XVIII]{SGA4.3}.
By \cite[Lemma 6.7.18]{BS}, the $f_r^!K_r\in D_{\cons}(X_\proet,\widehat  \Z_\ell) $ form a projective system.
Bhatt--Scholze then define
\begin{align} \label{eq:f!K}
f^!K:=\RR \lim f_r^! K_r \in D_{\cons}(X_\proet,\widehat \Z_\ell); 
\end{align}
see \cite[Lemma 6.7.19]{BS}. 

Using the above construction of $f^!$, it follows from  \cite[Proposition XVIII.3.1.8]{SGA4.3} that for any closed immersion $i\colon Z\to X$, 
\begin{align} \label{eq:RGamma_Z}
\RR \Gamma_Z(X_\proet,-)\cong \RR \Gamma(Z_\proet, i^!(-)) .
\end{align}

By adjunction (see \cite[Lemmas 6.7.2 and 6.7.19]{BS}),  there are morphisms of functors $\Tr_f\colon\RR f_!f^!\to \id$ and $\theta_f\colon\id\to \RR f_\ast f^\ast_{\comp}$. 
In particular,  for any $K\in D(Y_\proet,\widehat \Z_\ell)$, the morphism $\theta_f$ yields functorial pullback maps
\begin{align} \label{eq:f^ast-K}
f^\ast\colon\RR \Gamma^i(Y_\proet,K)\longrightarrow \RR \Gamma^i(X_\proet, f_{\comp}^\ast K).
\end{align}
If $f$ is proper, then $\RR f_\ast=\RR f_!$ (see \cite[Definition 6.7.6]{BS}), and so $\Tr_f$ yields a functorial pushforward map
\begin{align} \label{eq:f_ast-K}
f_\ast \colon\RR \Gamma^i(X_\proet,f^!K)\longrightarrow \RR \Gamma^i(Y_\proet, K).
\end{align} 

\subsection{Notation}
We aim to prove Proposition \ref{prop:pro-etale-coho}.
To this end, we fix a constructible complex  
$$
K\in D_{\cons}((\Spec(k))_{\proet},\widehat {\Z}_\ell)
$$ 
of $\widehat {\Z}_\ell$-modules on $(\Spec(k))_\proet$.
We then use the following notation: 
$$
H^\ast_Z(X,n):= H^\ast_{Z}(X_\proet, \pi_{\comp}^\ast K \otimes _{\widehat \Z_\ell} \widehat \Z_{\ell}(n)).
$$ 
We emphasize that the above group of course depends on the complex $K$,  even though we decided to drop it from the notation in what follows.  
The existence of functorial pullback maps follows from \eqref{eq:f^ast-K} together with the fact that the tensor product of two constructible  complexes is constructible; see \cite[Lemma 6.5.5]{BS}.
 
\subsection{Poincar\'e duality}

\begin{lemma}\label{lem:f!g!=(gf)!}
Let $f\colon X\to Y$ and $g\colon Y\to Z$ be morphisms of algebraic schemes.
Then there is a canonical isomorphism of functors $f^!g^!\stackrel{\lowcong}\to (g\circ f)^!$ on $D_{\cons}(Z_\proet,\widehat \Z_\ell)$.
\end{lemma}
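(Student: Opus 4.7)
The plan is to reduce the statement to the analogous composition law for exceptional pullback in the (derived) \'etale categories of $\Z/\ell^r$-modules, via the defining formula $f^!K = \RR\lim_r f_r^!K_r$ given in \eqref{eq:f!K}.

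First I would unwind the definition of $f^!g^!K$ for $K\in D_{cons}(Z_\proet,\widehat\Z_\ell)$. Writing $K_r:=K\otimes^{\mathbb L}_{\widehat\Z_\ell}\Z/\ell^r$, we have $g^!K=\RR\lim_r g_r^!K_r$, and so
\[
  f^!g^!K \;=\; \RR\lim_r\bigl(f_r^!\,(g^!K)_r\bigr).
\]
The first substantive step is to identify $(g^!K)_r$ with $g_r^!K_r$ in $D_{cons}(Y_\proet,\Z/\ell^r)$ under the equivalence \eqref{eq:nu*}. This uses that $g^!K$ is constructible (hence complete by definition), together with \cite[Lemma~6.7.18]{BS}, which is precisely the statement that the pro-system $\{g_r^!K_r\}$ computes the derived reduction mod~$\ell^r$ of $\RR\lim_r g_r^!K_r$. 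Thus
\[
  f^!g^!K \;\cong\; \RR\lim_r f_r^!g_r^!K_r.
\]

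Second, I would invoke the standard composition isomorphism for exceptional pullback in the derived category of \'etale $\Z/\ell^r$-sheaves (see \cite[Expos\'e XVIII]{SGA4.3}), yielding functorial isomorphisms $f_r^!g_r^!K_r\stackrel{\sim}\to (g\circ f)_r^!K_r$ that are compatible with the transition maps of the pro-system (the latter compatibility is built into the construction of $f^!$ in \cite[Lemma~6.7.19]{BS}). Passing to $\RR\lim_r$ then gives
\[
  f^!g^!K \;\cong\; \RR\lim_r (g\circ f)_r^!K_r \;=\; (g\circ f)^!K,
\]
again by the definition \eqref{eq:f!K}. Naturality in $K$ and functoriality in the pair $(f,g)$ follow from the naturality of each step.

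The only real obstacle is the identification $(g^!K)_r\cong g_r^!K_r$ and the compatibility of the resulting composition isomorphism with the transition maps defining $\RR\lim$; both are handled in \cite[\S 6.7]{BS} (specifically Lemmas~6.7.18 and~6.7.19), so the argument is essentially a bookkeeping exercise on top of the \'etale composition law and the Bhatt--Scholze framework. Nothing further is needed beyond what is already cited in the paper.
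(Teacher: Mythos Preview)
Your proposal is correct and follows essentially the same approach as the paper: the paper's proof simply says that the result follows from the defining formula \eqref{eq:f!K} together with the analogous composition law on the \'etale site, referring to \cite[Lemma 6.1(3)]{Sch-refined} for details. You have spelled out exactly those details, including the identification $(g^!K)_r\cong g_r^!K_r$ via \cite[Lemmas 6.7.18 and 6.7.19]{BS} and the passage through $\RR\lim$.
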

\begin{proof}
This follows from \eqref{eq:f!K} together with the analogous result on the \'etale site; 
see \textit{e.g.}  \cite[Lemma 6.1(3)]{Sch-refined} for more details.
\end{proof}

\begin{lemma}[Poincar\'e duality] \label{lem:PD}
Let $f\colon X\to Y$ be a smooth morphism of algebraic $k$-schemes of pure dimension $d$.
Then there is a canonical isomorphism $f^\ast_{\comp}(d)[2d]\stackrel{\lowcong}\to f^!$ of functors on $D_{\cons}(Y_{\proet},\widehat \Z_\ell)$, where $f^\ast_{\comp}(d):=f_{\comp}^\ast(-\otimes_{\widehat \Z_\ell}  \widehat \Z_\ell(d))$.
\end{lemma}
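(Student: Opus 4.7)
The plan is to reduce the statement to classical Poincar\'e duality on the small \'etale site with torsion coefficients (SGA 4, Expos\'e XVIII) and then to pass to the derived inverse limit that defines $f^!$ in the pro-\'etale setting.

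First I would unwind the definition (\ref{eq:f!K}): for $K\in D_{cons}(Y_\proet,\widehat\Z_\ell)$, writing $K_r:=K\otimes_{\widehat\Z_\ell}^{\mathbb L}\Z/\ell^r$, we have $f^!K=\RR\lim_r f_r^!K_r$, where via the equivalence (\ref{eq:nu*}) the functor $f_r^!$ is identified with the exceptional pullback on $D_{cons}(-,\Z/\ell^r)$ for the \'etale sites. Classical Poincar\'e duality for a smooth morphism of algebraic $k$-schemes of pure relative dimension $d$, proven in \cite[Expos\'e XVIII, Th\'eor\`eme 3.2.5]{SGA4.3}, provides for each $r$ a canonical isomorphism of functors
\[
\alpha_r\colon f_{\et}^\ast(-)\otimes_{\Z/\ell^r}\mu_{\ell^r}^{\otimes d}[2d]\stackrel{\cong}\longrightarrow f_r^!
\]
on $D_{cons}(Y_\et,\Z/\ell^r)$.

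Next I would verify that the family $(\alpha_r)_r$ is compatible with the transition maps defining the inverse systems on both sides; that is, that $\alpha_r$ intertwines the natural reduction map $f_{r+1}^!K_{r+1}\to f_r^!K_r$ with the map obtained by tensoring the source down from $\Z/\ell^{r+1}$ to $\Z/\ell^r$ and $\mu_{\ell^{r+1}}^{\otimes d}$ to $\mu_{\ell^r}^{\otimes d}$. This is essentially formal from the construction of $\alpha_r$ via the trace morphism, which is by design compatible with change of coefficients. Once this is checked, I would take $\RR\lim_r$ on both sides of $\alpha_r$.

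Finally, since $\widehat\Z_\ell(d)=\lim_r\mu_{\ell^r}^{\otimes d}$ is complete by surjectivity of the transition maps in the Kummer system (invoking (\ref{eq:Rlim=lim})) and $f_{comp}^\ast$ is by definition the usual pullback followed by derived completion, $\RR\lim_r$ of the left-hand side is naturally identified with $f_{comp}^\ast K\otimes_{\widehat\Z_\ell}^{\mathbb L}\widehat\Z_\ell(d)[2d]=f_{comp}^\ast(d)[2d](K)$, while the right-hand side is $f^!K$ by definition. The resulting isomorphism is then natural in $K$ and functorial in the obvious way, yielding the desired equivalence of functors on $D_{cons}(Y_\proet,\widehat\Z_\ell)$. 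The main obstacle is the bookkeeping for the second step: verifying that the duality isomorphisms $\alpha_r$ from SGA 4 are sufficiently natural in $r$ and coefficients to survive the passage to $\RR\lim$, and that the derived completion on the source of $f^\ast_{comp}$ indeed matches the $\RR\lim$ on the target. All of the geometric content is already in the \'etale theorem; the labor here is purely in organizing the $\ell$-adic package compatibly with the Bhatt--Scholze definition of $f^!$.
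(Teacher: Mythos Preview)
Your proposal is correct and follows essentially the same approach as the paper: the paper's proof simply states that the result follows from Poincar\'e duality on the \'etale site \cite[XVIII, Th\'eor\`eme 3.2.5]{SGA4.3} by applying $\RR\lim$, referring to \cite[Lemma 6.1(4)]{Sch-refined} for details. Your write-up is a faithful expansion of exactly this argument, correctly identifying the definition (\ref{eq:f!K}) of $f^!$ as the place where $\RR\lim$ enters and flagging the compatibility of the classical duality isomorphisms $\alpha_r$ under change of $r$ as the only nontrivial bookkeeping.
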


\begin{proof} 
This follows from Poincar\'e duality on the \'etale site; see \cite[Th\'eor\`eme XVIII.3.2.5]{SGA4.3}, by applying $\RR \lim$; the details are given in \cite[Lemma 6.1(4)]{Sch-refined}.
\end{proof}

For open immersions $j\colon U\to X$ we have a canonical isomorphism $j_{\comp}^\ast\cong j^!$; see \textit{e.g.} \cite[Lemma 6.1(2)]{Sch-refined}.

\begin{lemma}\label{lem:PD-compatible}
In the notation of Lemma \ref{lem:PD}, the following holds.
For any open immersion $j:U\hookrightarrow X$ and $f|_U:=f\circ j$, the diagram
$$
\xymatrix{
 (f|_U)_{\comp}^\ast(d)[2d] \ar[r]^-{\cong}&
f|_U^!\\ 
 j_{\comp}^\ast f_{\comp}^\ast(d)[2d] \ar[r]^-{\cong} \ar[u]  &
  j_{\comp}^\ast f^! \cong j^!f^! \ar[u]\\ 
}
$$
commutes, where the horizontal isomorphisms are given by Lemma \ref{lem:PD} and the vertical maps are the canonical maps, given by the functoriality of $f_{\comp}^\ast$ and $f^!$, respectively.
\end{lemma}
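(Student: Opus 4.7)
The plan is to reduce the assertion to its analogue on the \'etale site at every finite level $r$, where it is essentially built into the construction of the Poincar\'e duality isomorphism.

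First, unwind the two vertical arrows. By definition $f^\ast_{comp}$ is pullback followed by derived completion, so the identity $(f|_U)^\ast_{comp}\cong j^\ast_{comp}\,f^\ast_{comp}$ is canonical (and compatible with Tate twists), which identifies the left-hand vertical arrow. For the right-hand side, the canonical isomorphism $j^\ast_{comp}\cong j^!$ for open immersions, together with Lemma \ref{lem:f!g!=(gf)!}, gives $j^\ast_{comp} f^!\cong j^! f^!\cong (f|_U)^!$; this is the right vertical arrow. Thus the diagram makes sense and the content is that the two composite isomorphisms $(f|_U)^\ast_{comp}(d)[2d]\to (f|_U)^!$ from Lemma \ref{lem:PD} agree.

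Next, reduce modulo $\ell^r$. Since every functor involved is constructed in \cite{BS} by applying $\RR\lim$ to its mod $\ell^r$ counterpart, and because constructibility gives completeness, it suffices to check the analogous commutativity after tensoring with $\Z/\ell^r$ and passing to $D_{cons}(-_\et,\Z/\ell^r)$ via the equivalence in (\ref{eq:nu*}). Concretely, writing $K_r:=K\otimes^{\mathbb L}_{\widehat\Z_\ell}\Z/\ell^r$, one obtains the corresponding diagram for the \'etale functors $f_r^\ast$, $j_r^\ast$, $f_r^!$, $j_r^!$, $(f|_U)_r^!$, and the upper and lower horizontal arrows become the classical Poincar\'e duality isomorphisms of \cite[XVIII, Th\'eor\`eme 3.2.5]{SGA4.3}.

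Finally, invoke the compatibility on the \'etale site. There the Poincar\'e duality isomorphism $f_r^\ast(d)[2d]\xrightarrow{\cong} f_r^!$ is induced by the fundamental class $\mathrm{cl}_f\in H^{2d}(X_\et,\mu_{\ell^r}^{\otimes d})$ (equivalently, by the trace $\mathrm{Tr}_f:\RR f_!\,f_r^\ast(d)[2d]\to \id$), whose formation is \'etale-local on the source; in particular $j^\ast \mathrm{cl}_f=\mathrm{cl}_{f|_U}$, and the coherence  $j^! f^!\cong (fj)^!$ from Lemma \ref{lem:f!g!=(gf)!} is compatible with these trace morphisms by the functoriality of the six-functor formalism in SGA 4.3. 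The desired commutativity is exactly this compatibility, so the result follows.

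The main obstacle is purely bookkeeping: making sure that the canonical identifications $(fj)^\ast_{comp}\cong j^\ast_{comp}f^\ast_{comp}$, $j^\ast_{comp}\cong j^!$, and $j^!f^!\cong (fj)^!$ are all traced back at the mod-$\ell^r$ level to the standard identifications on the \'etale site before invoking the naturality of $\mathrm{cl}_f$; once this is done the statement is the well-known naturality of Poincar\'e duality under restriction to opens of the source.
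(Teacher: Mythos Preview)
Your approach is correct and is essentially the argument behind the reference the paper cites (namely \cite[Lemma 6.1(5)]{Sch-refined}): reduce to the \'etale site at each finite level via $\RR\lim$ and completeness, and there invoke the standard compatibility of the Poincar\'e duality isomorphism (equivalently, of the trace map from \cite[XVIII, Th\'eor\`eme 3.2.5]{SGA4.3}) with restriction to opens of the source. One minor quibble: the relative fundamental class for $f$ does not live in $H^{2d}(X_\et,\mu_{\ell^r}^{\otimes d})$ unless $Y$ is a point; rather it is a canonical section $\Lambda_X\to f^!\Lambda_Y(-d)[-2d]$, or equivalently the datum of the trace $\RR f_!\,f^\ast(d)[2d]\to\id$ you also mention---but your argument only uses the latter, so this is purely cosmetic.
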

\begin{proof}
This is \cite[Lemma 6.1(5)]{Sch-refined}.
\end{proof}

 \begin{lemma}[Purity] \label{lem:purity}
Let $i:Z\hookrightarrow X$ be a closed immersion between smooth equi-dimensional algebraic schemes over $k$.
Assume that $Z$ has codimension $c$ in $X$.
Then  there is a canonical isomorphism of functors
$$
i^\ast_{\comp}(-c)[-2c]\stackrel{\lowcong}\longrightarrow i^! .
$$ 
\end{lemma}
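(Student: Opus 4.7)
The plan is to reduce to Gabber's absolute purity theorem on the \'etale site and transport it through the completed inverse limit construction of $i^!$ recorded in (\ref{eq:f!K}).

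First I would invoke Gabber's absolute purity theorem \cite{fujiwara-2}: for each $r\geq 1$, the closed immersion $i:Z\hookrightarrow X$ of codimension $c$ between smooth equi-dimensional schemes admits, when viewed on the \'etale sites with $\Z/\ell^r$-coefficients, a canonical isomorphism of functors
$$
i_r^\ast(-c)[-2c]\xrightarrow{\cong} i_r^!
$$
on $D_{cons}(X_\et,\Z/\ell^r)$. Via the equivalence (\ref{eq:nu*}), these may be viewed as isomorphisms of functors on $D_{cons}(X_\proet,\Z/\ell^r)$, and their construction is compatible with the surjective transition maps $\Z/\ell^{r+1}\twoheadrightarrow \Z/\ell^r$ as $r$ varies.

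Next, given $K\in D_{cons}(X_\proet,\widehat{\Z}_\ell)$, I would set $K_r:=K\otimes^{\mathbb L}_{\widehat{\Z}_\ell}\Z/\ell^r$ and apply the level-$r$ isomorphism to obtain a compatible family $(i_r^\ast K_r)(-c)[-2c]\xrightarrow{\cong} i_r^! K_r$. Taking $\RR\lim$ over $r$ and using the definition in (\ref{eq:f!K}), the right-hand side becomes $i^! K$. For the left-hand side, the completeness of $K$ (which is part of constructibility) together with the compatibility of derived completion with pullback along $i$ and with the shift--twist $(-c)[-2c]$ identifies $\RR\lim(i_r^\ast K_r)(-c)[-2c]$ with $i^\ast_{comp}(K)(-c)[-2c]$. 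Combining these gives the asserted canonical isomorphism.

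The main obstacle will be the careful verification of the compatibility of derived completion with pullback, the Tate twist, and $\RR\lim$ within the Bhatt--Scholze formalism, so that the two sides of the final identification really match and so that the assembled map is the intended canonical one. The absolute purity isomorphism at each finite level is canonical and classical, so the actual content lies in the passage to the limit and in checking that naturality is preserved throughout.
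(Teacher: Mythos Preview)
Your approach is correct, but the paper takes a shorter and more elementary route that avoids Gabber's absolute purity altogether. Instead of proving purity for the closed immersion $i$ directly at each finite level and passing to the limit, the paper exploits the fact that both $X$ and $Z$ are smooth over $k$: it applies Poincar\'e duality (Lemma~\ref{lem:PD}) to the structure morphisms $\pi_X$ and $\pi_Z$, obtaining $\pi_X^!\cong(\pi_X)^\ast_{comp}(d_X)[2d_X]$ and $\pi_Z^!\cong(\pi_Z)^\ast_{comp}(d_Z)[2d_Z]$, and then combines these with the composition formula $i^!\pi_X^!\cong\pi_Z^!$ from Lemma~\ref{lem:f!g!=(gf)!} to read off $i^!\cong i^\ast_{comp}(-c)[-2c]$.

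The trade-off is this. The paper's argument is a clean ``two-out-of-three'' deduction using only Poincar\'e duality for smooth morphisms (which goes back to SGA4 and is already recorded as Lemma~\ref{lem:PD}); it works precisely because we are in the geometric situation where both schemes are smooth over the base field. Your argument invokes the much deeper absolute purity theorem for regular closed immersions, which is unnecessary here but has the advantage of being more transparently an isomorphism of functors on all of $D_{cons}(X_{\proet},\widehat{\Z}_\ell)$, whereas the paper's argument literally only compares the two functors after precomposition with $(\pi_X)^\ast_{comp}$ (one then extends to general coefficients via the projection formula for $i^!$, which the paper leaves implicit). Both arguments are valid; the paper's is lighter on inputs and better matched to what has already been set up.
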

\begin{proof}  
Let $\pi_X$ and $\pi_Z$ denote the structure morphisms of $X$ and $Z$, respectively. 
There is a canonical isomorphism  $\pi_X^!i^!\stackrel{\lowcong}\to \pi_Z^!$ (see Lemma \ref{lem:f!g!=(gf)!}).
Since $X$ and $Z$ are smooth and equi-dimensional, $\pi_X^!$ and $\pi_Z^!$ are expressed by Lemma \ref{lem:PD}, and we get a canonical isomorphism $i^\ast_{\comp}(-c)[-2c]\stackrel{\lowcong}\to i^!$.
This proves the lemma. 
\end{proof}

\subsection{Pullbacks and proper pushforwards}

\begin{lemma}\label{lem:f_*-f*}
Let $f\colon X\to Y$ be a morphism between algebraic schemes, and let $Z_Y\subset Y$ and $Z_X\subset X$ be closed subsets.
\begin{enumerate}
\item If $f^{-1}(Z_Y)\subset Z_X$, then there are functorial pullback maps\label{item:pullback}
\begin{align}  \label{eq:f^*A(n)}
f^\ast\colon H^i_{Z_Y}(Y,n)\longrightarrow H^i_{Z_X}(X,n).
\end{align}
\item Assume that $X$ and $Y$ are smooth and equi-dimensional.\label{item:pushforward}
If $f$ is proper of relative codimension $c$  and $f(Z_X)\subset Z_Y$, then there are functorial 
pushforward maps
\begin{align}  \label{eq:f_*A(n)}
f_\ast \colon H^{i-2c}_{Z_X}(X,n-c)\longrightarrow H^i_{Z_Y}(Y,n) .
\end{align}
\end{enumerate}
\end{lemma}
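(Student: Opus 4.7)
For item (\ref{item:pullback}), the plan is to use the functorial pullback on ordinary pro-\'etale cohomology from (\ref{eq:f^ast-K}) together with the long exact sequence of the pair. Concretely, the hypothesis $f^{-1}(Z_Y)\subset Z_X$ is equivalent to $f(X\setminus Z_X)\subset Y\setminus Z_Y$, so $f$ restricts to a morphism $f':X\setminus Z_X\to Y\setminus Z_Y$. Setting $\mathcal F=\pi_{Y,comp}^\ast K\otimes_{\widehat\Z_\ell}\widehat\Z_\ell(n)$ and using the canonical identification $f^\ast_{comp}\mathcal F\cong \pi_{X,comp}^\ast K\otimes_{\widehat\Z_\ell}\widehat\Z_\ell(n)$ (given by $\pi_X=\pi_Y\circ f$ and the monoidality of completed pullback), the pullback maps of (\ref{eq:f^ast-K}) for $f$ and $f'$ fit into a morphism of the distinguished triangles $\RR\Gamma_{Z_Y}(Y,\mathcal F)\to\RR\Gamma(Y,\mathcal F)\to\RR\Gamma(Y\setminus Z_Y,\mathcal F)$ and its analogue on $X$; taking $H^i$ yields the desired map $f^\ast$. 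Functoriality in $f$ follows from functoriality of $f_{comp}^\ast$ and of the long exact sequence.

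For item (\ref{item:pushforward}), the strategy is to first rewrite both cohomology groups via $i^!$ using (\ref{eq:RGamma_Z}), and then apply the proper pushforward (\ref{eq:f_ast-K}) to the restricted morphism $g:=f|_{Z_X}:Z_X\to Z_Y$ (which is defined because $f(Z_X)\subset Z_Y$, and is proper since $f$ is). Writing $i_X:Z_X\hookrightarrow X$ and $i_Y:Z_Y\hookrightarrow Y$, the commutative square $i_Y\circ g=f\circ i_X$ gives, by Lemma \ref{lem:f!g!=(gf)!},
$$
g^!\, i_Y^!\cong (i_Y g)^!\cong (f i_X)^!\cong i_X^!\, f^!.
$$
The main computation is to identify $f^!\mathcal F$ with a shift/twist of $f_{comp}^\ast\mathcal F$. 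This is where I would use Poincar\'e duality not for $f$ itself (which need not be smooth) but for the smooth structure morphisms $\pi_X,\pi_Y$: Lemma \ref{lem:PD} gives $\pi_X^!K\cong \pi_{X,comp}^\ast K(d_X)[2d_X]$ and $\pi_Y^!K\cong \pi_{Y,comp}^\ast K(d_Y)[2d_Y]$, while Lemma \ref{lem:f!g!=(gf)!} applied to $\pi_X=\pi_Y\circ f$ yields $f^!\pi_Y^!K\cong \pi_X^!K$. Combining these with $c=d_Y-d_X$ gives a canonical isomorphism
$$
f^!\,\mathcal F\cong \pi_{X,comp}^\ast K(n-c)[-2c],
$$
where again $\mathcal F=\pi_{Y,comp}^\ast K(n)$. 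Pulling this back along $i_X$ and applying (\ref{eq:RGamma_Z}) identifies $H^i(Z_{X,\proet},g^!i_Y^!\mathcal F)$ with $H^{i-2c}_{Z_X}(X,n-c)$ and $H^i(Z_{Y,\proet},i_Y^!\mathcal F)$ with $H^i_{Z_Y}(Y,n)$. The pushforward $g_\ast$ of (\ref{eq:f_ast-K}), applied to the object $i_Y^!\mathcal F$, then provides the required map $f_\ast$.

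Functoriality under composition $h=f\circ f':X'\to X\to Y$ reduces, via the same identifications, to the functoriality of proper pushforward in the six-functor formalism together with the canonical isomorphism $(ff')^!\cong (f')^!f^!$ (Lemma \ref{lem:f!g!=(gf)!}) and the compatibility of these with the Poincar\'e duality isomorphism from Lemma \ref{lem:PD}; here Lemma \ref{lem:PD-compatible} is the key coherence we need. The hardest part of the argument is keeping track of this coherence: ensuring that the shifts/twists produced by applying Poincar\'e duality to $\pi_X,\pi_Y,\pi_{X'}$ respect composition, so that the pushforward maps $f_\ast$, $f'_\ast$, and $(ff')_\ast$ defined as above are strictly compatible. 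Once this bookkeeping is in place, both (\ref{item:pullback}) and (\ref{item:pushforward}) follow from the already-established pro-\'etale six-functor formalism of Bhatt--Scholze.
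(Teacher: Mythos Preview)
Your approach to item (\ref{item:pushforward}) is essentially the same as the paper's: rewrite both sides via $i^!$ using (\ref{eq:RGamma_Z}), use the commutative square $i_Y\circ g = f\circ i_X$ together with Lemma~\ref{lem:f!g!=(gf)!}, apply Poincar\'e duality (Lemma~\ref{lem:PD}) to the smooth structure morphisms $\pi_X,\pi_Y$ to produce the shift/twist, and then invoke the trace map (\ref{eq:f_ast-K}) for $g=f|_{Z_X}$. The paper writes the chain of isomorphisms slightly differently (it passes directly from $(i_X)^!\pi_X^!$ to $g^!\,i_Y^!\,\pi_Y^!$ without isolating $f^!$), but the content is identical. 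For functoriality under composition the paper simply invokes functoriality of the trace map; your appeal to Lemma~\ref{lem:PD-compatible} is a bit off, since that lemma concerns open immersions rather than compositions of proper maps, but the coherence you need is standard and follows from Lemma~\ref{lem:f!g!=(gf)!} together with the naturality of the adjunction $\Tr_f$.

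For item (\ref{item:pullback}) there is a small difference worth noting. You construct $f^\ast$ indirectly, by completing the morphism of distinguished triangles determined by the pullback on $\RR\Gamma(Y,-)$ and $\RR\Gamma(Y\setminus Z_Y,-)$; the paper is more direct and simply observes that there is a natural transformation of functors $\Gamma_{Z_Y}(Y_{\proet},-)\to \Gamma_{Z_X}(X_{\proet},f_{comp}^\ast(-))$ (a section with support in $Z_Y$ pulls back to a section with support in $f^{-1}(Z_Y)\subset Z_X$), which derives to the desired map. Your route is fine, but strictly speaking one should be careful: the third map in a morphism of triangles is not uniquely determined by the other two, so you should say that the map on the cone is the one induced by this explicit natural transformation rather than merely an abstract fill-in.
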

\begin{proof}
The pullback maps are induced by the natural map of functors $ \Gamma_{Z_Y}(Y_{\proet},-) \to \Gamma_{Z_X}(X_{\proet},f^\ast_{\comp}(-)) $ on $\Mod(Y_\proet,\widehat \Z_\ell)$.
The functoriality is clear from this description.
This proves item~\eqref{item:pullback}.

Let $\pi_X\colon X\to \Spec(k)$ and $\pi_Y\colon Y\to \Spec(k)$ denote the structure morphisms, and let $i\colon Z_Y\to Y$ and  $i'\colon Z_X\to X$ denote the respective closed immersions.
Let further $d_X:=\dim X$ and $d_Y:=\dim Y$, so that $c=d_Y-d_X$.
Applying Lemmas \ref{lem:f!g!=(gf)!} and \ref{lem:PD}, we get canonical isomorphisms  of functors
\begin{align} \label{eq:i!-pushforward}
(i')^! (\pi_X)_{\comp}^\ast (n)\cong (i')^! \pi_X^!(n-d_X)[-2d_X]\cong  (f|_{Z_X})^!i^!\pi_Y^!(n-d_X)[-2d_X]\cong (f|_{Z_X})^!i^! (\pi_Y)^\ast_{\comp} (n+c)[2c],
\end{align}
where $f|_{Z_X}\colon Z_X\to Z_Y $ is the restriction of $f$.
The trace map in \eqref{eq:f_ast-K} thus induces by \eqref{eq:RGamma_Z} pushforward maps as claimed in item \eqref{item:pushforward}.
The functoriality in $f$ follows from the functoriality of the trace map. 
This concludes the proof of the lemma. 
\end{proof}

\begin{remark}\label{rem:pushforward-id}
Let $X$ be a smooth equi-dimensional algebraic scheme, and let  $\iota_{Z'}:Z'\hookrightarrow X$ and $\iota_Z\colon Z\to Z'$ be closed subschemes.
By \eqref{eq:RGamma_Z}, we have $H^i_{Z'}(X_\proet,-)=H^i(Z'_\proet, \iota_{Z'}^! -)$ and 
$$
H^i_{Z}(X_\proet,-)=H^i(Z_\proet, \iota_{Z}^!\iota_{Z'}^!  -)=H^i(Z'_\proet, \RR {\iota_{Z}}_\ast \iota_{Z}^!\iota_{Z'}^! -).
$$
Hence, $\Tr_{\iota_Z}$ induces a pushforward map
$$
(\iota_Z)_\ast\colon H^i_Z(X,n)\longrightarrow H_{Z'}^i(X,n),
$$
which agrees with the respective pushforward map along the identity in \eqref{eq:f_*A(n)}.
By the construction of the adjunction map $\Tr_{\iota_Z}\colon \RR {\iota_Z}_\ast {\iota_Z}^!\to \id$, we find that $(\iota_Z)_\ast$ agrees with the map induced by the natural map $\RR \Gamma_Z(X_\proet,-)\to \RR \Gamma_{Z'} (X_\proet,-)$ of functors, induced by the inclusion $ \Gamma_Z(X_\proet,-)\hookrightarrow  \Gamma_{Z'} (X_\proet,-)$.
\end{remark}

\begin{lemma}[Topological invariance]\label{lem:top-invariance}
Let $f\colon X'\to X$ be a morphism of algebraic $k$-schemes that is a universal homeomorphism $($e.g.\ $X'=X^{\red}$ or $X'=X\times_k k'$ for a purely inseparable base extension $k'/k)$.
Let $Z\hookrightarrow X $ be a closed immersion with base change $Z'\hookrightarrow X'$.
Then
$
f^\ast\colon H^i_Z(X,n)\to H^i_{Z'}(X',n)
$ is an isomorphism.
\end{lemma}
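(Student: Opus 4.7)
The plan is to reduce topological invariance in pro-\'etale cohomology with support to the classical topological invariance of the (small) \'etale topos, via the standard devissage for constructible $\widehat{\Z}_\ell$-complexes. Throughout, set $L:=\pi_{comp}^\ast K\otimes_{\widehat\Z_\ell}\widehat\Z_\ell(n)$, which is constructible by \cite[Lemmas 6.5.5 and 6.7.5]{BS}, and write $i:Z\hookrightarrow X$, $i':Z'\hookrightarrow X'$ for the closed immersions, with $g:Z'\to Z$ the (again universal homeomorphism) base change of $f$. It suffices to show that the natural map
$$
f^\ast:\RR\Gamma_Z(X_\proet,L)\longrightarrow \RR\Gamma_{Z'}(X'_\proet,f_{comp}^\ast L)
$$
is a quasi-isomorphism, since this is the derived analogue of the map inducing $f^\ast$ on $H^i_Z$.

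First, because $L$ is constructible, we have $L\simeq \RR\lim L_r$ with $L_r:=L\otimes^{\mathbb L}_{\widehat\Z_\ell}\Z/\ell^r$, and each $L_r$ is of the form $\nu^\ast L_r^{\et}$ for some $L_r^{\et}\in D_{cons}(X_\et,\Z/\ell^r)$ via the equivalence $\nu^\ast$ in (\ref{eq:nu*}). Since $f_{comp}^\ast$ commutes with $\otimes^{\mathbb L}\Z/\ell^r$ and with $\RR\lim$, and since $\RR\Gamma_Z$ likewise commutes with $\RR\lim$ (as in (\ref{eq:RGammaRlim})), we reduce, by a Milnor $\RR\lim$ argument, to showing that for every $r$ the natural map
$$
\RR\Gamma_Z(X_\proet,\nu^\ast L_r^{\et})\longrightarrow \RR\Gamma_{Z'}(X'_\proet, \nu^\ast (f^\ast L_r^{\et}))
$$
is a quasi-isomorphism, where we used that $f_{comp}^\ast\circ \nu^\ast\simeq \nu^\ast\circ f^\ast$ on torsion constructible complexes from the \'etale site.

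Next, using \cite[Proposition 5.2.6(2)]{BS}, i.e.\ $\id\stackrel{\cong}\to \RR\nu_\ast\nu^\ast$ on bounded-below complexes, together with the compatibility $\nu_\ast\circ\Gamma_Z=\Gamma_Z\circ \nu_\ast$ (since $\nu$ is a morphism of topoi over $X$), we identify
$$
\RR\Gamma_Z(X_\proet,\nu^\ast L_r^{\et})\simeq \RR\Gamma_Z(X_\et,L_r^{\et}),\qquad
\RR\Gamma_{Z'}(X'_\proet,\nu^\ast f^\ast L_r^{\et})\simeq \RR\Gamma_{Z'}(X'_\et,f^\ast L_r^{\et}),
$$
and under these identifications the pullback $f^\ast$ on pro-\'etale cohomology corresponds to the usual pullback on \'etale cohomology (with support). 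The problem is thus reduced to the statement that, for any constructible $\Z/\ell^r$-complex $L_r^\et$ on $X_\et$, the natural map
$$
\RR\Gamma_Z(X_\et,L_r^{\et})\longrightarrow \RR\Gamma_{Z'}(X'_\et,f^\ast L_r^{\et})
$$
is a quasi-isomorphism.

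This last statement is classical topological invariance of the \'etale topos along a universal homeomorphism \cite[\'Expose VIII, Th\'eor\`eme 1.1]{SGA4.2} (recall that $\ell$ is invertible in $k$, so no inseparability issues arise for torsion coefficients), applied both to $f$ and, via (\ref{eq:RGamma_Z}) together with Lemma \ref{lem:f!g!=(gf)!}, to the base change $g:Z'\to Z$, which is again a universal homeomorphism. The main (minor) obstacle is bookkeeping: one has to check that the comparison map obtained at finite level $r$ is compatible with the transition maps $L_{r+1}^\et\to L_r^\et$ and with $f^\ast$, so that passing to $\RR\lim$ is legitimate; this is however immediate from the functoriality of $f^\ast$ and $\nu^\ast$ and the naturality of the adjunction $\id\to \RR\nu_\ast\nu^\ast$.
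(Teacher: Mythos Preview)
Your argument is correct. The paper, by contrast, dispatches the lemma in a single line by citing \cite[Lemma 5.4.2]{BS}, which asserts (essentially) that a universal homeomorphism induces an equivalence of pro-\'etale topoi, so that the statement for cohomology with support is immediate without first passing to finite level.

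Your route---reducing via completeness to torsion coefficients, then descending along $\nu^\ast$ to the \'etale site using \cite[Proposition~5.2.6(2)]{BS}, and finally invoking the classical topological invariance of the small \'etale topos from SGA4---is a legitimate and more self-contained unpacking of the same phenomenon. It has the modest advantage that it uses only the comparison machinery already recalled in the paper (the equivalence (\ref{eq:nu*}) and the argument behind Lemma~\ref{lem:BS-H_cont=H_proet}), rather than a further black-box citation. Two small remarks: first, in the final step you do not really need the detour through (\ref{eq:RGamma_Z}) and Lemma~\ref{lem:f!g!=(gf)!}; once the \'etale topoi of $X$ and $X'$ (and of their open complements $X\setminus Z$, $X'\setminus Z'$) are identified by $f^\ast$, the sections-with-support functors $\Gamma_Z$ and $\Gamma_{Z'}$ correspond directly. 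Second, the parenthetical about $\ell$ being invertible is not needed for the topological invariance of the \'etale topos itself, which holds for arbitrary abelian sheaves.
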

\begin{proof}
The claim follows from \cite[Lemma 5.4.2]{BS}. 
\end{proof}

\subsection{Comparison to \'etale cohomology and excision} 

Since $K\in D_{\cons}((\Spec(k))_\proet ,\widehat \Z_\ell)$ is constructible,  $K\otimes_{\widehat \Z_\ell}\Z/\ell^r=\nu^\ast K_r$ for a constructible complex $K_r\in D_{\cons}((\Spec(k))_\et , \Z/\ell^r)$; see \cite[Definition 6.5.1]{BS}.
Moreover, $K\cong \RR \lim\nu^\ast K_r$ by completeness.
We then have the following; \textit{cf.} \cite[Proposition 1.6]{jannsen}.

\begin{lemma} \label{lem:extension-compatible}
Let $\pi_X\colon X\to \Spec(k)$ be an algebraic $k$-scheme with a closed subset $Z\subset X$.
Then there is a canonical short exact sequence
\begin{align} \label{eq:ses-cont-etale}
0\longrightarrow \RR ^1 \lim H^{i-1}_Z\left(X_\et,  \pi_X ^\ast K_r\otimes_{\Z/\ell^r} \mu_{\ell^r}^{\otimes n}\right)\longrightarrow H^i_Z(X,n)\longrightarrow  \lim H^{i}_Z\left(X_\et, \pi_X^\ast K_r\otimes_{\Z/\ell^r}\mu_{\ell^r}^{\otimes n} \right)\longrightarrow 0,
\end{align} 
where $\lim$ denotes the inverse limit functor along $r$.
Moreover, this sequence is compatible with respect to pullbacks \eqref{eq:f^*A(n)} and pushforwards \eqref{eq:f_*A(n)} on the individual pieces.
\end{lemma}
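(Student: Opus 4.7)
The plan is to first realize the pro-étale hypercohomology group as the $i$-th cohomology of $\RR\lim$ applied to a system of étale cohomology groups, and then derive the short exact sequence from the standard Milnor $\lim$--$\RR^1\lim$ sequence for inverse systems of abelian groups.

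First, I would identify the coefficient complex $\pi_{X,comp}^\ast K\otimes_{\widehat \Z_\ell}\widehat \Z_\ell(n)$ with a derived inverse limit on $X_\proet$. Since $K\cong \RR\lim \nu^\ast K_r$ by completeness and $\widehat\Z_\ell(n)=\lim \mu_{\ell^r}^{\otimes n}$ (with surjective transition maps, hence with vanishing $\RR^1\lim$ by \cite[Prop.~3.1.10]{BS} applied to the locally weakly contractible site $X_\proet$), the tensor product is computed modulo $\ell^r$ as the pullback of the étale complex $K_r\otimes_{\Z/\ell^r}\mu_{\ell^r}^{\otimes n}$, i.e.\
\[
\bigl(\pi_{X,comp}^\ast K\otimes_{\widehat \Z_\ell}\widehat \Z_\ell(n)\bigr)\otimes^{\mathbb L}_{\widehat \Z_\ell}\Z/\ell^r \cong \nu^\ast(\pi_X^\ast K_r\otimes_{\Z/\ell^r}\mu_{\ell^r}^{\otimes n}).
\]
By completeness of the left hand side, the natural map
\[
\pi_{X,comp}^\ast K\otimes_{\widehat \Z_\ell}\widehat \Z_\ell(n)\longrightarrow \RR\lim_{r} \nu^\ast\bigl(\pi_X^\ast K_r\otimes_{\Z/\ell^r}\mu_{\ell^r}^{\otimes n}\bigr)
\]
is a quasi-isomorphism in $D(X_\proet,\widehat\Z_\ell)$.

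Next, I would move $\RR\Gamma_Z$ past $\RR\lim$ and across $\nu^\ast$. Using the same chain of identifications as in the proof of Lemma~\ref{lem:BS-H_cont=H_proet} (the key inputs being that $\RR\Gamma_Z$ commutes with $\RR\lim$, and that $\nu_\ast$ preserves injectives with $\id\to \RR\nu_\ast\nu^\ast$ an isomorphism on bounded below complexes, cf.\ \cite[Prop.~5.2.6(2)]{BS}), I obtain
\[
\RR\Gamma_Z\bigl(X_\proet,\pi_{X,comp}^\ast K\otimes_{\widehat \Z_\ell}\widehat \Z_\ell(n)\bigr)\cong \RR\lim_{r}\RR\Gamma_Z\bigl(X_\et,\pi_X^\ast K_r\otimes_{\Z/\ell^r}\mu_{\ell^r}^{\otimes n}\bigr).
\]
Taking the $i$-th cohomology and applying the standard $\lim$--$\RR^1\lim$ short exact sequence for $\RR\lim$ of an inverse system of complexes of abelian groups (which is valid because $\RR\lim$ has cohomological dimension $\leq 1$ on $\mathrm{Ab}^{\mathbb N}$) yields exactly the short exact sequence (\ref{eq:ses-cont-etale}).

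Finally, for the claimed compatibility with pullbacks (\ref{eq:f^*A(n)}) and pushforwards (\ref{eq:f_*A(n)}), I would argue by naturality: the pullback map is induced by the morphism of functors $\Gamma_Z(Y_\proet,-)\to \Gamma_{Z'}(X_\proet,f_{comp}^\ast-)$, and the latter is obtained level by level from the analogous étale pullback morphisms, which commute with the transition maps in the inverse system; the same applies for the trace/pushforward map constructed in (\ref{eq:i!-pushforward}) via Lemma~\ref{lem:f!g!=(gf)!} and Poincaré duality (Lemma~\ref{lem:PD}), whose pro-étale incarnation is defined precisely by applying $\RR\lim$ to its étale counterpart (see (\ref{eq:f!K})). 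The main subtlety to address carefully is the first step, namely that the completed pullback tensored with $\widehat\Z_\ell(n)$ really is the $\RR\lim$ of the individual étale coefficient complexes, since both completeness of $K$ and completeness of $\widehat\Z_\ell(n)$ are used and one must check that the tensor product remains complete \cite[Lemma~6.5.5]{BS}; once this identification is in place the remaining formal manipulations are routine.
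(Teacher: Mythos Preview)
Your proposal is correct and follows essentially the same approach as the paper: identify the pro-\'etale complex with $\RR\lim$ of the \'etale complexes (using completeness, that $\RR\Gamma_Z$ commutes with $\RR\lim$, and \cite[Prop.~5.2.6(2)]{BS}), then apply the Milnor sequence from the cohomological dimension $\leq 1$ of $\RR\lim$ on $\mathrm{Ab}^{\mathbb N}$. For the pushforward compatibility the paper simply cites \cite[Lemma~5.2]{Sch-refined}, which packages exactly the naturality argument you sketch via the definition $f^!=\RR\lim f_r^!$.
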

\begin{proof} 
Recall that $\RR \lim$ and $\RR \Gamma_Z$ commute with each other; see the proof of Lemma \ref{lem:BS-H_cont=H_proet}.
Since $K\cong \RR \lim\nu^\ast K_r$, $H^i_Z(X, n) $ is the $\supth{i}$ cohomology of the complex
$$
\RR \lim \RR \Gamma_Z\left(X_\proet, \pi_X^\ast \nu^\ast K_r\otimes \mu_{\ell^r}^{\otimes n} \right) \cong
\RR \lim \RR \Gamma_Z\left(X_\proet, \nu^\ast \pi_X^\ast K_r\otimes \mu_{\ell^r}^{\otimes n} \right), 
$$
where we use that $\nu^\ast$ and $\pi_X^\ast$ commute. 
Since $\nu_\ast\colon \Ab(X_{\proet})\to \Ab(X_\et)$ maps injectives to injectives and $\id\to \RR \nu_\ast \nu^\ast$ is an equivalence on bounded below complexes (see \cite[Proposition 5.2.6(2)]{BS}), the above complex identifies with
$$
\RR \lim \RR \Gamma_Z\left(X_\et, \pi_X^\ast K_r\otimes \mu_{\ell^r}^{\otimes n}\right). 
$$
The lemma then follows from the composed functor spectral sequence, where we note that $\RR \lim$ has cohomological dimension at most $1$ on $\Ab$.
The compatibility with respect to pullbacks is obvious; the compatibility with respect to pushforwards follows from \cite[Lemma 6.2]{Sch-refined}.
\end{proof}

\begin{lemma}[Excision] \label{lem:excision}
Let $f\colon X'\to X$ be an \'etale morphism between algebraic schemes.
Let $Z\subset X$ and $Z'\subset X'$ be closed subschemes with $f(Z')=Z$ such that $f|_{Z'}\colon Z'\to Z$ is an isomorphism and  $f(X'\setminus Z')\subset X\setminus Z$.
Then
$$
f^\ast\colon H^i_Z(X,n)\stackrel{\lowcong}\longrightarrow H^i_{Z'}(X',n) .
$$ 
\end{lemma}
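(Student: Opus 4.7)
The plan is to reduce the statement to the classical excision theorem for \'etale cohomology with torsion coefficients, using Lemma \ref{lem:extension-compatible} to pass between pro-\'etale cohomology and the inverse system of \'etale cohomology groups.

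First, I observe that the hypotheses force $f^{-1}(Z)=Z'$ as underlying reduced closed subschemes of $X'$: the condition $f(Z')=Z$ with $f|_{Z'}$ an isomorphism gives $Z'\subset f^{-1}(Z)$, while $f(X'\setminus Z')\subset X\setminus Z$ gives the reverse inclusion. In particular, the square
$$
\xymatrix{
Z'\ar[r]^{i'}\ar[d]_{f|_{Z'}} & X'\ar[d]^f \\
Z\ar[r]^{i} & X
}
$$
is Cartesian on underlying reduced subschemes and the left vertical arrow is an isomorphism.

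Next, for each $r\geq 1$, I apply Lemma \ref{lem:extension-compatible} to both pairs $(X,Z)$ and $(X',Z')$ with the complex $K_r$. Since (\ref{eq:ses-cont-etale}) is compatible with pullback along $f$, we obtain a commutative ladder with exact rows. By the five lemma, it is enough to show that for every $r\geq 1$ and every $i$ the natural map
$$
f^\ast: H^i_Z(X_\et, \pi_X^\ast K_r\otimes_{\Z/\ell^r}\mu_{\ell^r}^{\otimes n})\longrightarrow H^i_{Z'}(X'_\et, \pi_{X'}^\ast K_r\otimes_{\Z/\ell^r}\mu_{\ell^r}^{\otimes n})
$$
is an isomorphism; applying $\lim$ and $\RR^1\lim$ to the resulting isomorphisms then yields the pro-\'etale conclusion via the five lemma.

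For this last step I invoke the classical excision theorem for \'etale cohomology with torsion coefficients. Writing $\mathcal G_r:=\pi_X^\ast K_r\otimes \mu_{\ell^r}^{\otimes n}$ and using the identification $H^i_Z(X_\et,\mathcal G_r)\cong H^i(Z_\et,i^!\mathcal G_r)$ (the \'etale analogue of (\ref{eq:RGamma_Z}), cf.\ \cite[Expos\'e XVIII]{SGA4.3}), the base change morphism $(f|_{Z'})^\ast i^!\mathcal G_r\longrightarrow (i')^! f^\ast \mathcal G_r$ is an isomorphism because $f$ is \'etale and the square above is Cartesian (for \'etale $f$, one has $f^!=f^\ast$, and the assertion then reduces to Lemma \ref{lem:f!g!=(gf)!} applied to $i\circ f|_{Z'}=f\circ i'$). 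Since $f|_{Z'}:Z'\to Z$ is itself an isomorphism, the induced pullback on cohomology is an isomorphism as well.

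The main (rather modest) obstacle is formal: tracking the compatibility of (\ref{eq:ses-cont-etale}) with $f^\ast$ (which is already built into Lemma \ref{lem:extension-compatible}) and verifying the base change isomorphism for $(-)^!$ along \'etale morphisms. Once these standard inputs are in place, the five-lemma argument closes the proof.
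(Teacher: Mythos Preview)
Your proposal is correct and follows essentially the same strategy as the paper: reduce to \'etale cohomology via the short exact sequence of Lemma~\ref{lem:extension-compatible} (and the five lemma), then invoke the classical \'etale excision theorem. The only minor difference is that the paper first reduces to the case where $K$ is a single sheaf (by induction on the length of $K$ and the local-to-global spectral sequence) and then cites \cite[p.~92, III.1.27]{milne}, whereas you work directly at the derived level with the complex $K_r$ and justify \'etale excision via the base-change isomorphism for $(-)^!$ along \'etale morphisms; your route avoids the d\'evissage step and is slightly cleaner, but the substance is the same.
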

\begin{proof}
Since $K$ is bounded (see \cite[Lemma 6.5.3]{BS}), induction on its length together with the local-to-global spectral sequence reduces us to the case where $K$ is a single sheaf.
In this case, the statement is proven for \'etale cohomology in \cite[Proposition~III.1.27]{milne}; the case of pro-\'etale cohomology follows from this by Lemma \ref{lem:extension-compatible}. 
\end{proof}

\begin{lemma}\label{lem:f_*circf*=degf}
Let $X$ and $Y$ be  smooth algebraic $k$-schemes.
Let $f\colon X\to Y$ be a finite \'etale morphism of constant degree $d$. 
Then the composition
$$
H^i(Y,n)\xlongrightarrow{f^\ast} H^i(X,n)\xlongrightarrow{f_\ast} H^{i}(Y,n)
$$
is given by multiplication with $d$.
\end{lemma}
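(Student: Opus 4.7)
The plan is to reduce to the analogous degree formula for torsion \'etale cohomology, where it is classical. Since $f$ is \'etale, $\dim X=\dim Y$ and so $c=0$; the assertion is that the composition $f_\ast\circ f^\ast$ is multiplication by $d$ on $H^i(Y,n)$.

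First I would apply Lemma~\ref{lem:extension-compatible} to both $X$ and $Y$. Noting that $\pi_X^\ast K_r = f^\ast \pi_Y^\ast K_r$, this produces short exact sequences relating $H^i(-,n)$ to the inverse limit and the $\RR^1\lim$ of the \'etale cohomology groups $H^i((-)_\et,\mathcal F_r)$, where $\mathcal F_r:=\pi_Y^\ast K_r\otimes\mu_{\ell^r}^{\otimes n}$. By the final compatibility statement in that lemma, the two sequences (for $X$ and $Y$) are compatible with $f^\ast$ and $f_\ast$ on each piece, where on the right one uses the classical \'etale pullback and pushforward. By the five-lemma it therefore suffices to show that $f_\ast\circ f^\ast = d\cdot\id$ on $H^i(Y_\et,\mathcal F_r)$ for every $r$.

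Next I would deduce this from the classical identity $\Tr_f\circ\eta = d\cdot\id$ as endomorphisms of the identity functor on abelian \'etale sheaves, where $\eta:\id\to f_\ast f^\ast$ is the unit of adjunction and $\Tr_f$ is the trace map attached to the finite \'etale morphism $f$ (cf.\ \cite[Expos\'e XVIII]{SGA4.3}). This identity of functors is \'etale-local on $Y$: after passing to an \'etale cover $Y'\to Y$ over which $f$ trivializes as $\coprod_{i=1}^d Y'$, the unit becomes the diagonal $s\mapsto(s,\dots,s)$ and the trace becomes summation $(s_1,\dots,s_d)\mapsto\sum_i s_i$, whose composition is visibly multiplication by $d$. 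Applying $H^i(Y_\et,-)$ to this equality of endomorphisms of $\mathcal F_r$ yields the statement on each finite level.

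The main subtle point is ensuring that the pushforward constructed via the pro-\'etale six functor formalism in Lemma~\ref{lem:f_*-f*} reduces on each finite level to the classical \'etale trace; this is precisely what is built into the compatibility clause of Lemma~\ref{lem:extension-compatible}. Once that compatibility is granted, the whole argument reduces cleanly to a standard degree computation on the \'etale site.
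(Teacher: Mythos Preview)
Your approach is essentially correct and close to the paper's, but the invocation of the five lemma is not quite right. Knowing that an endomorphism $\phi$ of a short exact sequence $0\to A\to B\to C\to 0$ restricts to $d\cdot\id$ on $A$ and induces $d\cdot\id$ on $C$ does \emph{not} force $\phi=d\cdot\id$ on $B$: the difference $\phi-d\cdot\id$ factors through some (possibly nonzero) map $C\to A$. So the reduction from pro-\'etale to \'etale cohomology cannot go through the short exact sequence of Lemma~\ref{lem:extension-compatible} at the level of cohomology groups alone. The correct reduction is one level up: the endomorphism $f_\ast f^\ast$ of $H^i(Y,n)$ comes from an endomorphism of $\RR\Gamma(Y_\proet,K(n))\cong\RR\lim\RR\Gamma(Y_\et,K_r(n))$, and it suffices to check $f_\ast f^\ast=d$ on each $\RR\Gamma(Y_\et,K_r(n))$ in the derived category, compatibly in $r$.

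That said, your second paragraph actually proves this stronger derived-level statement. Since $f$ is finite \'etale, both $f_\ast$ and $f^\ast$ are exact on abelian sheaves, so the identity $\Tr_f\circ\eta=d\cdot\id$ that you verify \'etale-locally (by trivializing $f$ as $\coprod_{i=1}^d Y'\to Y'$) is an equality of natural transformations and hence passes to complexes, giving $f_\ast f^\ast=d$ on $K_r(n)$ in $D(Y_\et,\Z/\ell^r)$. Applying $\RR\Gamma$ and then $\RR\lim$ finishes. This is in fact slightly more direct than the paper's route, which after invoking Lemma~\ref{lem:extension-compatible} uses truncation triangles $\tau_{\le j}K_r\to K_r\to\tau_{\ge j+1}K_r$ to reduce to a single sheaf and then cites \cite[p.~554, XVIII, Th\'eor\`eme~2.9, (Var~4)]{SGA4.3}. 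Your functor-level identity makes the truncation step unnecessary; just replace ``by the five-lemma'' with ``by passing to $\RR\lim$ at the derived level''.
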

\begin{proof}
By Lemma \ref{lem:extension-compatible}, it suffices to prove the statement for  \'etale cohomology with coefficients in constructible complexes of $\Z/\ell^r$-modules. By the canonical truncation functor and the exact triangle $\tau_{\leq j}K\to K\to \tau_{\geq j+1}K$, 
induction on the length of the complex $K$ reduces us  to the case where $K$ is a sheaf concentrated in a single degree.
(Here we may lose the condition that $K$ is constructible; the statement still makes sense because on the \'etale site $f^!$ and hence $\Tr_f$ exist on the derived category of bounded $\Z/\ell^r$-modules; see \cite[Th\'eor\`eme XVIII.3.1.4]{SGA4.3}.)
In this case, the result follows from 
\cite[Th\'eor\`eme XVIII.2.9, (Var 4)]{SGA4.3}. 
\end{proof}

\subsection{Compatibility of pushforwards and pullbacks}

\begin{lemma}\label{lem:pull-push-compatible}
Consider the  diagrams
$$
\xymatrix{
X'\ar[r]^{g'}\ar[d]_-{f'} & X \ar[d]^-{f} \\
Y'\ar[r]_{g}& Y ,
}
\qquad  
\xymatrix{
H_{Z_{X'}}^{i-2c} (X',n-c)\ar[d]^-{f'_\ast} &  H^{i-2c}_{Z_X} (X,n-c) \ar[d]^-{f_\ast} \ar[l]_{(g')^\ast} \\
 H^{i}_{Z_{Y'}}(Y',n)&  H^{i}_{Z_{Y}} (Y,n) \ar[l]_{g^\ast}  ,
}
$$ 
where the diagram on the left is a Cartesian diagram of smooth equi-dimensional algebraic $k$-schemes, $f$ is proper of pure relative codimension $c=\dim Y-\dim X$, $Z_X\subset X$ and $Z_Y\subset Y$ are closed with $f(Z_X)\subset Z_Y$, $Z_{Y'}=g^{-1}(Z_Y)$, and $Z_{X'}=Z_{Y'}\times_{Y}Z_{X}\subset X'$. 
\begin{enumerate}
\item If $g$ is an open immersion, then the diagram on the right commutes.\label{item:f_*:open-immersion-body}
\item If $f$ is smooth of pure relative dimension and if $Z_X=X$ and  $Z_Y= Y$,  then the diagram on the right commutes. \label{item:f_*:f,g-smooth-body}
\end{enumerate}  
\end{lemma}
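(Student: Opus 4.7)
The plan is to reduce both parts to the corresponding compatibility on the \'etale site with $\mathbb Z/\ell^r$-coefficients. First I would apply Lemma \ref{lem:extension-compatible}: the short exact sequence (\ref{eq:ses-cont-etale}) is compatible with both pullbacks and pushforwards term-by-term, so a five-lemma argument reduces the diagram for $H^i_Z(-, n)$ to the analogous diagram for $H^i_Z(-_\et, \pi^\ast K_r \otimes \mu_{\ell^r}^{\otimes n})$, for each $r$ separately.

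For part (1), the base change of an open immersion is again an open immersion, so $g'$ is open. Under (\ref{eq:RGamma_Z}) and the canonical identification $g^! \cong g_{comp}^\ast$, the pullback $g^\ast$ becomes restriction of sections (and similarly for $(g')^\ast$). The commutativity then follows from two standard facts on the \'etale site: the trivial base change isomorphism $g^\ast \RR f_\ast \cong \RR f'_\ast (g')^\ast$ for proper $f$ and open $g$, and the naturality of the adjunction trace $\Tr_f: \RR f_! f^! \to \id$ with respect to restriction to opens. Combined with the construction of $f_\ast$ from (\ref{eq:i!-pushforward}) on both sides, this closes the case.

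For part (2), with $f$ (and hence $f'$) smooth of pure relative dimension $d = -c$, I would use Poincar\'e duality (Lemma \ref{lem:PD}) to rewrite $f^!$ as $f^\ast_{comp}(d)[2d]$ and similarly for $(f')^!$. Under these rewritings, the diagram unfolds into a commutativity involving only pullbacks along $f$, $f'$, $g$, $g'$, the trace maps $\Tr_f$, $\Tr_{f'}$, and Tate twists. Functoriality of pullbacks gives $(g')^\ast_{comp} f^\ast_{comp} \cong (f')^\ast_{comp} g^\ast_{comp}$, and proper base change for $f$ yields $g^\ast \RR f_\ast \cong \RR f'_\ast (g')^\ast$, compatibly with the trace maps. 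The reduction in the first paragraph then reduces the assertion to the classical one on the \'etale site.

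The main obstacle will be in part (2): ensuring that the Poincar\'e duality isomorphism of Lemma \ref{lem:PD}, constructed as $\RR\lim$ of the classical \'etale Poincar\'e duality isomorphisms, is compatible with arbitrary base change along $g$, rather than only along open immersions as explicitly recorded in Lemma \ref{lem:PD-compatible}. This comes down to the base-change property of the trace class for smooth proper morphisms on the \'etale site, see \cite[XVIII]{SGA4.3}, combined with the definition of $f^!$ in (\ref{eq:f!K}) as $\RR\lim$ of the corresponding \'etale exceptional pullbacks; once this compatibility is pinned down, the rest of the argument is routine diagram chasing.
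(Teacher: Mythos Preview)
Your proposal is correct and follows essentially the same route as the paper: reduce via Lemma~\ref{lem:extension-compatible} to \'etale cohomology with $\Z/\ell^r$-coefficients, handle part~(1) by compatibility of the trace with restriction to opens, and handle part~(2) via Poincar\'e duality and the base-change property of the trace map from \cite[XVIII, Th\'eor\`eme 2.9, (Var~2)]{SGA4.3}. The paper makes the last step slightly more explicit than your ``routine diagram chasing'': it reduces by canonical truncation to the case $K=F[0]$ a single sheaf, and then observes that both $\RR f_\ast f^! F$ and $\RR g_\ast g^\ast F$ are concentrated in suitable degree ranges so that any map between them in the derived category is determined by its effect on $\mathcal H^0$, where the SGA4 trace compatibility applies directly; this is precisely how the obstacle you flag is resolved.
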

\begin{proof} 
By Lemma \ref{lem:extension-compatible}, it suffices to prove the compatibilities in question for \'etale cohomology with values in constructible complexes of $\Z/\ell^r$-modules.

The case where $g$ is an open immersion holds because the isomorphism in \eqref{eq:i!-pushforward} which is used to define the pushforward map is compatible with open immersions (\textit{cf.} Lemma \ref{lem:PD-compatible}) and the trace map is compatible with base change (see  \cite[Th\'eor\`eme XVIII.2.9, (Var 2)]{SGA4.3} and \cite[Lemma 6.7.19]{BS}) and hence in particular with respect to Zariski localization.
This proves item \eqref{item:f_*:open-immersion-body}.
(In fact, by the arguments in \cite[Axiom~(1.2.2), Property~(1.3.5),  and Section~1, p.\ 186]{BO}, the result holds  true more generally in the case where $g$ is \'etale, but we will not need this.)

Next we deal with item \eqref{item:f_*:f,g-smooth-body}, so that $f$ is smooth of  pure relative dimension $d=\dim X-\dim Y$.
By Lemma \ref{lem:PD}, there are natural isomorphisms of functors 
\begin{align} \label{eq:PD-compatibility-lemma-BO}
f^!\cong f^\ast(d)[2d],\quad (f')^!\cong (f')^\ast(d)[2d] .
\end{align}
By \cite[Proposition 4.5(2)]{verdier}, the natural map of functors $(g')^\ast f^!\stackrel{\lowcong}\to  (f')^! g^\ast $ is an isomorphism.
Combining this with $\RR f_\ast \RR g'_\ast=\RR g_\ast \RR f'_\ast$, we get an isomorphism $\alpha\colon \RR f_\ast \RR g'_\ast (g')^\ast f^! \stackrel{\lowcong}\to   \RR g_\ast \RR f'_\ast (f')^! g^\ast$ which fits into a diagram
\begin{align}\label{eq:diag:BO}
\xymatrix{
\RR f_\ast f^!\ar[r]^-{\theta_{g'}}\ar[d]^{\Tr_f}& \RR f_\ast \RR g'_\ast (g')^\ast f^! \ar[r]_-{\cong}^-\alpha& \RR g_\ast \RR f'_\ast (f')^! g^\ast \ar[d] ^{\Tr_{f'}}\\
\id\ar[rr]^-{\theta_g}&& \RR g_\ast g^\ast 
}
\end{align}
of functors on $D_{\cons}(Y_\et,\Z/\ell^r)$. 
We claim that \eqref{eq:diag:BO} is commutative.
Recall that any complex $K\in D_{\cons}(Y_\et,\Z/\ell^r) $ is bounded.
If $K$ has length $n>1$, then  the canonical truncation functors yield an exact triangle $\tau_{\leq j}K\to K\to \tau_{\geq j+1}K$ such that for suitable $j$, $ \tau_{\leq j}K$ and $ \tau_{\geq j+1}K $ have length less than $n$.
As in the proof of Lemma \ref{lem:f_*circf*=degf}, by induction on the length, it thus suffices to prove that \eqref{eq:diag:BO} is commutative when applied to sheaves concentrated in degree zero: $K=F[0]$ for some $F\in \Mod(Y_\et,\Z/\ell^r)$.
To simplify notation, we will identify $K=F[0]$ with $F$.
We further use the identifications in \eqref{eq:PD-compatibility-lemma-BO}. 
It follows that $\RR f_\ast f^\ast F(d)[2d]$ is a complex concentrated in non-positive degrees, while $\RR g_\ast g^\ast F$ is concentrated in non-negative degrees.
This implies that any map $\RR f_\ast f^\ast F(d)[2d]\to  \RR g_\ast g^\ast F$ in the derived category is uniquely determined by the induced map
$$
\RR^{2d}f_\ast  f^\ast F(d)\longrightarrow g_\ast g^\ast F
$$
on the $\supth{0}$ cohomology sheaf, 
where $d=\dim X-\dim Y=-c$.
(To see this, use the canonical truncation triangle $\tau_{\leq j}\to \id \to \tau_{\geq j+1}$ for $j=0$ and for $j=-1$.)
Hence, in order to check that the diagram in question is commutative, it suffices to note that
the trace map $\Tr_f\colon \RR^{2d}f_\ast f^\ast F(d) \to F$ is compatible with base change; see \cite[Th\'eor\`eme XVIII.2.9, (Var 2)]{SGA4.3}.
This concludes the proof of the claim that 
 \eqref{eq:diag:BO} is commutative.

The commutative diagram of cohomology groups claimed in the lemma now follows by precomposing the   functors in \eqref{eq:diag:BO} with $\RR \Gamma(Y_\et,-)$,  using \eqref{eq:PD-compatibility-lemma-BO}, and applying this to the  complex $\pi_Y^! K_r\otimes_{\Z/\ell^r} \mu_{\ell^r}^{\otimes n}$. 
This concludes the proof of the lemma. 
\end{proof}

\subsection{Long exact sequence of triples and semi-purity}
\begin{lemma}[Long exact sequence of triples] 
\label{lem:les-pairs} 
Let $X$ be an algebraic scheme, and let $Z\subset W\subset X$ be closed subsets. 
Then there is a canonical long exact sequence
$$
\cdots \longrightarrow H^i_Z(X,n)  \stackrel{\iota_\ast} \longrightarrow 
H^i_W(X,n) \xrightarrow{\restr}  H^i_{W\setminus Z}(X\setminus Z,n)\stackrel{\del}\longrightarrow H^{i+1}_Z(X,n) \longrightarrow \cdots 
$$
such that for any morphism  $f\colon X'\to X$ of algebraic  schemes, with closed subsets $Z'\subset W'\subset X'$, the following hold:
\begin{enumerate}
\item If $f^{-1}(Z)\subset Z'$ and $f^{-1}(W)\subset W'$, then pullback along $f $ $($as in \eqref{eq:f^*A(n)}$)$ induces a commutative ladder between the long exact sequence of the triple $(X',W',Z')$ and that of  $(X,W,Z)$.\label{item:les:pullback}
\item If\, $X'$ and $X$ are smooth,  $f$ is proper of pure relative codimension $c$,  and $f(Z')= Z$ and $f(W')= W$,  then pushforward along $f$ $($as in \eqref{eq:f_*A(n)}$)$ induces a commutative ladder between the long exact sequence of the triple $(X',W',Z')$ and that of  $(X,W,Z)$, suitably shifted.\label{item:les:pushforward}
\end{enumerate}
\end{lemma}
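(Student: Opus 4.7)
The plan is to deduce the long exact sequence from a distinguished triangle at the level of the six-functor formalism, and then read off the two functoriality statements from naturality of the building blocks. Throughout, let $i_Z:Z\hookrightarrow X$, $i_W:W\hookrightarrow X$, $k:Z\hookrightarrow W$ the closed immersion, and $l:W\setminus Z\hookrightarrow W$ its open complement, and write $\mathcal K:=\pi_{X,comp}^\ast K\otimes_{\widehat\Z_\ell}\widehat\Z_\ell(n)$.

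First I would invoke the standard localization triangle for the closed-open decomposition $(k,l)$ of $W$: for any complex $\mathcal L$ on $W_\proet$ there is a canonical distinguished triangle $k_\ast k^!\mathcal L\to \mathcal L\to \RR l_\ast l^\ast\mathcal L\to [+1]$ in $D(W_\proet,\widehat\Z_\ell)$. Applying this to $\mathcal L=i_W^!\mathcal K$ and then taking $\RR\Gamma(W_\proet,-)$ gives a distinguished triangle whose three terms I would identify as follows. By \eqref{eq:RGamma_Z} and Lemma \ref{lem:f!g!=(gf)!}, $\RR\Gamma(W,k_\ast k^!i_W^!\mathcal K)=\RR\Gamma(Z,i_Z^!\mathcal K)=\RR\Gamma_Z(X_\proet,\mathcal K)$; similarly the middle term is $\RR\Gamma_W(X_\proet,\mathcal K)$; and for the third term I would use the canonical identification $l^\ast i_W^!\cong i_{W\setminus Z}^!(-|_{X\setminus Z})$ (compatibility of upper-shriek with open restriction on the base, cf.\ Lemma \ref{lem:PD-compatible}) together with $\RR\Gamma(W\setminus Z,l^\ast(\dots))=\RR\Gamma_{W\setminus Z}(X\setminus Z,\mathcal K|_{X\setminus Z})$. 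Taking the long exact sequence on cohomology then produces exactly the desired sequence, with $\iota_\ast$ and $\operatorname{restr}$ matching the maps described in Remark \ref{rem:pushforward-id} and the natural restriction map by construction.

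For functoriality under pullback (item (\ref{item:les:pullback})), I would observe that the distinguished triangle $k_\ast k^!\to \id\to \RR l_\ast l^\ast$ is natural with respect to the six-functor operations. Given $f:X'\to X$ with $f^{-1}(Z)\subset Z'$ and $f^{-1}(W)\subset W'$, one has a diagram of closed/open immersions with the primed data, and the base-change morphisms $f^\ast_{comp}k_\ast\to k'_\ast (f|_{W'})^\ast_{comp}$ and analogously for $\RR l_\ast$ (the latter an isomorphism for open immersions) combine to give a morphism between the two localization triangles; applying $\RR\Gamma$ and using the definition of $f^\ast$ in \eqref{eq:f^*A(n)} yields the required commutative ladder.

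The main obstacle is item (\ref{item:les:pushforward}), the compatibility with proper pushforward, because it requires interchanging the upper-shriek functors used to define $f_\ast$ (via \eqref{eq:i!-pushforward}) with the localization triangle, and then invoking compatibility of the trace map with base change along an open immersion. Here I would argue as follows: by Lemma \ref{lem:extension-compatible} and its compatibility with pullback and pushforward, it suffices to work with constructible $\Z/\ell^r$-coefficients on the \'etale site. For $f:X'\to X$ proper of pure relative codimension $c$, smooth base-change gives a canonical isomorphism $(g')^\ast f^!\cong (f'|_{W'})^!g^\ast$ for the open immersion $g:X\setminus Z\hookrightarrow X$ with Cartesian base change $g':X'\setminus Z'\to X'$, and the trace map $\Tr_f$ is compatible with this base change (as recalled in the proof of Lemma \ref{lem:pull-push-compatible}, via \cite[p.~553, XVIII, Th\'eor\`eme 2.9, (Var 2)]{SGA4.3}). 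Combining this with the naturality of the triangle $k_\ast k^!\to \id\to \RR l_\ast l^\ast$ under the counit $\RR f_\ast f^!\to\id$ (resp.\ its relative version for $k,l$) produces the required commutative ladder between the long exact sequences of $(X',W',Z')$ and $(X,W,Z)$. The bookkeeping is tedious but formal once the base-change compatibilities are in place, and as in Lemma \ref{lem:pull-push-compatible} it reduces by an induction on the length of $K_r$ to the case of a sheaf in degree zero, where the verification is done on the relevant $0$-th cohomology sheaf.
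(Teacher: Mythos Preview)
Your construction of the long exact sequence is essentially identical to the paper's: both apply the localization triangle for the closed/open pair $(k,l)$ on $W$ (the paper cites \cite[Lemma 6.1.16]{BS} for this, which is exactly your triangle $k_\ast k^!\to\id\to\RR l_\ast l^\ast$) to $i_W^!\mathcal K$, take derived global sections, and identify the three terms via \eqref{eq:RGamma_Z}, Lemma~\ref{lem:f!g!=(gf)!}, and the fact that $j^\ast=j^!$ for open immersions. One small citation slip: your appeal to Lemma~\ref{lem:PD-compatible} for the third-term identification is not quite right, since that lemma is about compatibility of the Poincar\'e duality isomorphism with open restriction; the paper instead uses $l^\ast\cong l^!$ and functoriality of $(-)^!$ directly, which is what you actually need.

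For the functoriality statements (\ref{item:les:pullback}) and (\ref{item:les:pushforward}), the paper takes a shortcut: it reduces via Lemma~\ref{lem:extension-compatible} to \'etale cohomology with $\Z/\ell^r$-coefficients and then simply cites \cite[Definition 5.1.1 and \S 7.3(1)]{CTHK} for the well-known result there. Your sketch unwinds this by arguing directly with naturality of the localization triangle under base-change morphisms and compatibility of the trace map with open base change. This is correct in outline and amounts to reproving what the CTHK reference provides; the paper's route is shorter but less self-contained, while yours makes the mechanism visible at the cost of leaving some ``tedious but formal'' bookkeeping unverified.
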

\begin{proof}
If $j$ is an open immersion, then $j_{\comp}^\ast= j^\ast $  (see \cite[Remark 6.5.10]{BS}), and so we will suppress the subscript ``comp'' for pullbacks along open immersions in what follows.

Let $i^W_Z\colon Z\to W$, $i_W\colon W\to X$, $j_{W\setminus Z}\colon W\setminus Z\to W$, and $j_{X\setminus Z}\colon X\setminus Z\to X$ denote the natural inclusions.
By \cite[Lemma 6.1.16]{BS}, there is an exact triangle
$$
 \RR (i^W_Z)_\ast (i^W_Z)^! \longrightarrow \id \longrightarrow  \RR (j_{W\setminus Z})_\ast (j_{W\setminus Z})^\ast  .
$$ 
Precomposing with $\RR (i_W)_\ast$ and postcomposing with $(i_W)^!$, we get the exact triangle
$$
\RR (i_W)_\ast \RR (i^W_Z)_\ast (i^W_Z)^! (i_W)^! \longrightarrow \RR (i_W)_\ast  (i_W)^! \longrightarrow \RR (i_W)_\ast \RR (j_{W\setminus Z})_\ast (j_{W\setminus Z})^\ast (i_W)^!    .
$$ 
We apply $\RR \Gamma(X_\proet,-)$ and use $(i^W_Z)^! (i_W)^!\cong i_Z^!$ and \eqref{eq:RGamma_Z} to arrive at the triangle
$$
\RR \Gamma_Z(X_\proet,-)\longrightarrow \RR \Gamma_W(X_\proet,-)\longrightarrow \RR \Gamma ((W\setminus Z)_\proet,(j_{W\setminus Z})^\ast (i_W)^! (-)).
$$
If  $j$ is an open immersion,  then $j^\ast=j^!$ (see \textit{e.g.} \cite[Lemma 6.1(2)]{Sch-refined}), and so
$$
(j_{W\setminus Z})^\ast (i_W)^! \cong (j_{W\setminus Z})^! (i_W)^! \cong  (j_{X\setminus Z}\circ i_{W\setminus Z})^!   \cong   (i_{W\setminus Z})^!    (j_{X\setminus Z}) ^! \cong  (i_{W\setminus Z})^!    (j_{X\setminus Z}) ^\ast ,
$$
where $i_{W\setminus Z}\colon W\setminus Z\to X\setminus Z$ denotes the restriction of $i_W$.
This implies that the last term in the above triangle identifies with $\RR \Gamma_{W\setminus Z} ((X\setminus Z)_\proet, j_{X\setminus Z}^\ast (-))$. 
The long exact sequence for the triple $(X,W,Z)$ follows immediately from this.
By Lemma \ref{lem:extension-compatible}, the compatibility properties with respect to pullbacks and pushforwards reduce to the analogous results for \'etale cohomology, which are well known; see \textit{e.g.} \cite[Definition 5.1.1 and Section~7.3(1)]{CTHK}.
\end{proof}

\begin{lemma}[Semi-purity]  \label{lem:semi-purity}
Let $X$ be a smooth equi-dimensional algebraic $k$-scheme,  and let $i:Z\hookrightarrow X$ be a closed subset.   
Assume that the complex $K\in D_{\cons}((\Spec(k))_\proet,\widehat \Z_\ell)$ is concentrated in non-negative degrees.
Then
$$
H^i_Z(X,n)=0\quad \text{for all $i<2\codim_X Z$.}
$$ 
\end{lemma}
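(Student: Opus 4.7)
The plan is to prove $H^i_Z(X,n) = 0$ for $i < 2c$ (where $c := \codim_X Z$) by Noetherian induction on $\dim Z$, using the long exact sequence of triples (Lemma~\ref{lem:les-pairs}) to peel off the smooth open locus of $Z$ and then applying purity (Lemma~\ref{lem:purity}) in the smooth equidimensional case.

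First I would reduce to the case of a perfect ground field via topological invariance (Lemma~\ref{lem:top-invariance}), ensuring that the smooth locus $U$ of the reduced scheme $Z$ is dense open. I then induct on $\dim Z$, the case $Z = \emptyset$ being trivial. For $Z \neq \emptyset$, set $Z' := Z \setminus U$, so that $\dim Z' < \dim Z$ and hence $\codim_X Z' \geq c+1$; by induction $H^i_{Z'}(X,n) = 0$ for $i < 2c+2$, and in particular for $i < 2c$. The long exact sequence of triples applied to $Z' \subset Z \subset X$ then reduces the problem to showing $H^i_U(X \setminus Z', n) = 0$ for $i < 2c$, and splitting $U$ into its connected components further reduces to the case of a smooth equidimensional closed subscheme $W \hookrightarrow Y := X \setminus Z'$ of some codimension $c_W \geq c$.

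For such $W$, Lemma~\ref{lem:purity} combined with the identification (\ref{eq:RGamma_Z}) gives
\[
H^i_W(Y,n) \cong H^{i - 2c_W}\bigl(W_{\proet},\, \pi_{W,comp}^\ast K \otimes_{\widehat\Z_\ell} \widehat\Z_\ell(n-c_W)\bigr),
\]
where $\pi_W : W \to \Spec k$ is the structure morphism. Since right-derived global sections preserve $D^{\geq 0}$, it suffices to verify that the coefficient complex above lies in $D^{\geq 0}(W_{\proet}, \widehat\Z_\ell)$: this will yield the claimed vanishing for $i - 2c_W < 0$, hence for $i < 2c$. Flatness of $\widehat\Z_\ell(n-c_W)$ shows tensoring preserves non-negative concentration, reducing to the statement that $\pi_{W,comp}^\ast K \in D^{\geq 0}$. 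Writing $\pi_{W,comp}^\ast K = \RR\lim_r \pi_W^\ast(K \otimes^{\mathbb L}_{\widehat\Z_\ell} \Z/\ell^r)$, the potential degree $-1$ contribution of $K \otimes^{\mathbb L} \Z/\ell^r$ is of the form $\mathcal H^0(K)[\ell^r]$; the transition maps in this inverse system are multiplication by $\ell^{r'-r}$ on torsion, which by constructibility of $K$ (giving bounded exponent of $\ell$-power torsion) become zero for $r' - r$ sufficiently large, so both $\lim$ and $\lim^1$ vanish.

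The main obstacle is the last step, the technical verification that the derived $\ell$-adic completion preserves non-negative degree concentration; this is precisely where the constructibility hypothesis on $K$ is essential, guaranteeing that the relevant $\ell$-power torsion has bounded exponent so that the inverse systems in question satisfy the (trivial) Mittag--Leffler condition needed for the $\lim$ and $\lim^1$ terms to vanish.
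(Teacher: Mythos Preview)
Your proof is correct and follows essentially the same strategy as the paper: reduce to perfect $k$ via topological invariance, handle the smooth equidimensional case by purity (Lemma~\ref{lem:purity}), and reduce the general case to this one by induction on $\dim Z$ using the long exact sequence of triples. The only difference is that the paper simply asserts the vanishing of $H^{i-2c}(Z_\proet, (\pi\circ i)_{comp}^\ast K \otimes \widehat\Z_\ell(n-c))$ for $i<2c$ from the hypothesis on $K$, whereas you supply the extra verification that completed pullback preserves $D^{\geq 0}$ via the bounded-torsion argument; otherwise the two proofs are identical.
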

\begin{proof}
By \eqref{eq:RGamma_Z}, we have
$$
H^i_Z(X,n)\cong
H^i(Z_\proet, i^! ( \pi_{\comp}^\ast K \otimes _{\widehat \Z_\ell} \widehat \Z_{\ell}(n))) ,
$$
where $\pi\colon X\to \Spec(k)$ is the structure morphism.

First assume that $Z$ is smooth.
By \'etale excision (see Lemma \ref{lem:excision}), we may assume that $Z$ is equi-dimensional of pure codimension $c$ in $X$.
By Lemma \ref{lem:purity}, we have $i^\ast_{\comp}(-c)[-2c]\stackrel{\lowcong}\to i^!$, and so
$$
H^i_Z(X,n)\cong
H^{i-2c}(Z_\proet, (\pi\circ i)_{\comp}^\ast K \otimes _{\widehat \Z_\ell} \widehat \Z_{\ell}(n-c)) .
$$
Since $K$ is concentrated in non-negative degrees, the above hyper-cohomology group vanishes for $i-2c<0$.
This concludes the case where $Z$ is smooth.

For the general case, note that  by topological invariance (see Lemma \ref{lem:top-invariance}), we may assume that $k$ is perfect.
Hence, $Z$ contains an open subset that is smooth over $k$.
Using this and the fact that we know the result for $Z$ smooth, we can prove the claim with the help of Lemma \ref{lem:les-pairs} by induction on $\dim Z$.
This concludes the proof.
\end{proof}

\begin{lemma}  \label{lem:purity-les}
Let $X$ be a smooth algebraic scheme,  and let $i:Z\hookrightarrow X$ be a closed subset. 
Let further $W\subset X$ be a closed subset with complement $U=X\setminus W$ such that $Z\cap W$ has codimension at least $j$ in $X$.
If the complex $K$ is concentrated in non-negative degrees, 
then the restriction map
$
H^i_Z(X,n)\to H^i_{Z\cap U}(U,n)
$
is an isomorphism for all $i<2j-1$.
\end{lemma}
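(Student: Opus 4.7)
The plan is to deduce the lemma from the long exact sequence of triples (Lemma \ref{lem:les-pairs}) combined with semi-purity (Lemma \ref{lem:semi-purity}) and excision (Lemma \ref{lem:excision}). Set $Z':=Z\cap W$, which is closed in $X$ with $Z'\subset Z$, and by hypothesis $\codim_X(Z')\geq j$.

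First I would apply Lemma \ref{lem:les-pairs} to the triple $Z'\subset Z\subset X$, obtaining the long exact sequence
\begin{align*}
\cdots \longrightarrow H^i_{Z'}(X,n) \longrightarrow H^i_{Z}(X,n) \longrightarrow H^i_{Z\setminus Z'}(X\setminus Z',n) \stackrel{\del}\longrightarrow H^{i+1}_{Z'}(X,n) \longrightarrow \cdots .
\end{align*}
Since $K$ is concentrated in non-negative degrees, semi-purity (Lemma \ref{lem:semi-purity}) gives $H^m_{Z'}(X,n)=0$ for all $m<2j$. Applied to $m=i$ and $m=i+1$, this forces both flanking groups above to vanish as soon as $i<2j-1$, so the middle map becomes an isomorphism
\begin{align*}
H^i_{Z}(X,n)\stackrel{\cong}\longrightarrow H^i_{Z\setminus Z'}(X\setminus Z',n) \ \ \ \text{for}\ i<2j-1 .
\end{align*}

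Next I would identify the right-hand side with $H^i_{Z\cap U}(U,n)$. Observe that $Z\setminus Z' = Z\setminus (Z\cap W)=Z\cap U$, and that this set is already contained in the open subset $U=X\setminus W$ of $X\setminus Z'$ (indeed $W\supset Z'$, so $U=X\setminus W$ is open in $X\setminus Z'$, and $Z\cap U$ is closed in $U$ and thus in $X\setminus Z'$). The open immersion $j:U\hookrightarrow X\setminus Z'$ therefore satisfies the hypotheses of excision (Lemma \ref{lem:excision}) with respect to the closed subset $T:=Z\cap U$: one has $j^{-1}(T)=T$, the map $j|_T$ is the identity, and $j(U\setminus T)\subset (X\setminus Z')\setminus T$. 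Excision yields
\begin{align*}
H^i_{Z\setminus Z'}(X\setminus Z',n)\stackrel{\cong}\longrightarrow H^i_{Z\cap U}(U,n) .
\end{align*}
Composing the two isomorphisms and unwinding the definitions shows that the composition is nothing but the restriction map of the lemma, finishing the proof.

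No step looks genuinely hard: the only thing to be careful about is the book-keeping of which indices semi-purity kills, which is what forces the sharp bound $i<2j-1$ rather than $i<2j$.
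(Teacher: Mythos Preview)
Your proof is correct and is essentially the same as the paper's: the paper first uses excision to reduce to the case $W\subset Z$ (i.e.\ replaces $W$ by $Z'=Z\cap W$) and then applies the long exact sequence of the triple $W\subset Z\subset X$ together with semi-purity, whereas you run the long exact sequence for $Z'\subset Z\subset X$ first and do the excision step afterwards. The two arguments are the same up to this reordering.
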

\begin{proof}  
Excision (see Lemma \ref{lem:excision}) reduces us to the case where $W\subset Z$. 
By semi-purity (see Lemma \ref{lem:semi-purity}), $H^i_W(X,n)=0$ for all $i<2j$, and so the result follows from the long exact sequence in Lemma \ref{lem:les-pairs}.
\end{proof}

\subsection{Cup products} \label{subsubsec:cup}

The pro-\'etale topos $\Sh(X_\proet)$ of a scheme $X$ is locally weakly contractible (see \cite[Definition 3.2.1 and Proposition 4.2.8]{BS}), which means in particular that it is locally coherent.
From now on we assume that $X$ is an algebraic scheme, and so local coherence implies that $X$ is coherent.
By \cite[Proposition VI.9.0]{SGA4.2}, it follows that $\Sh(X_\proet)$ has enough points.
Hence, any $F\in \Mod(X_\proet,\widehat \Z_\ell)$ admits a Godement resolution.
More generally,  for any bounded below complex $K\in D^+(X_\proet,\widehat \Z_\ell)$, there is a natural quasi-isomorphism $K\to \mathcal G^\bullet (K)$, given by the   simple complex of the Godement double complex  associated to $K$.
The sheaves in the complex $\mathcal G^\bullet (K)$ are $\Gamma_Z$-acyclic, and so
  $\RR \Gamma_Z(X_{\proet}, K)= \Gamma_Z(X_{\proet},\mathcal G^\bullet(K))$.

For any $F,G\in \Mod(X_\proet,\widehat \Z_\ell)$ and closed subschemes $Z,W\subset X$,  we have a canonical map
\begin{align} \label{eq:cup-Gamma}
\Gamma_{Z}(X,F)\otimes_{\Z_\ell}  \Gamma_W(X,G)\longrightarrow \Gamma_{Z\cap W}(X,F\otimes_{\widehat \Z_\ell}  G),
\end{align}
given by the tensor product of sections.  
This  induces  a map
\begin{align} \label{eq:cup-RGamma}
\RR \Gamma_Z(X_\proet,M)\otimes_{\Z_\ell}^{\mathbb L}\RR \Gamma_W(X_\proet,L)\longrightarrow \RR \Gamma_{Z\cap W}(X_\proet,M\otimes^{\mathbb L}_{\widehat \Z_\ell} L)
\end{align}
in the derived category for any constructible complexes $M,L\in D_{\cons}(X_\proet,\widehat \Z_\ell)$.
Using the above-mentioned Godement resolutions,  \eqref{eq:cup-RGamma} may be described as follows.
Recall that constructible complexes are bounded (see \cite[Lemma 6.5.3]{BS}) and that there are enough $\widehat \Z_\ell$-flat objects in $\Mod(X_\proet, \widehat \Z_\ell)$ (the latter is a general fact that holds for any ringed site; see \cite[\href{https://stacks.math.columbia.edu/tag/05NI}{Tag 05NI}]{stacks-project}). 
We may therefore choose a quasi-isomorphism $\mathcal F^\bullet(M)\to M$, where $\mathcal F^\bullet (M)$ is a complex of $\widehat \Z_\ell$-flat sheaves; see  \cite[\href{https://stacks.math.columbia.edu/tag/05T7}{Tag 05T7}]{stacks-project}. 
Then the Godement resolution $\mathcal F^\bullet(M)\to \mathcal G^\bullet(\mathcal F^\bullet(M))$ is still a complex of flat $\widehat \Z_\ell$-modules.
Hence,   $\mathcal G^\bullet(\mathcal F^\bullet(M))\otimes_{\widehat \Z_\ell} \mathcal G^\bullet(L)$ represents the derived tensor product $M\otimes^{\mathbb L}_{\widehat \Z_\ell} L$.
Applying the Godement resolution once again, we get a quasi-isomorphism 
$$
\mathcal G^\bullet(\mathcal F^\bullet (M))\otimes_{\widehat \Z_\ell} \mathcal G^\bullet(L)\longrightarrow \mathcal G^\bullet(\mathcal G^\bullet(\mathcal F^\bullet (M))\otimes_{\widehat \Z_\ell} \mathcal G^\bullet(L)).
$$
The cup product \eqref{eq:cup-RGamma} may then explicitly be described by the natural map 
$$
\Gamma_Z(X_\proet,\mathcal G^\bullet (\mathcal F^\bullet (M)))\otimes_{\Z_\ell}   \Gamma_W(X_\proet,\mathcal G^\bullet (L))\longrightarrow   \Gamma_{Z\cap W}(X_\proet,\mathcal G^\bullet (\mathcal G^\bullet (\mathcal F^\bullet (M))\otimes_{\widehat \Z_\ell} \mathcal G^\bullet (L))) .
$$ 
In the following arguments, the above explicit description of \eqref{eq:cup-RGamma} will be used throughout.

We denote from now on by 
\begin{align} \label{eq:H_cont(X,Z_ell(n))}
  H^i_Z(X,\Z_\ell(n)):=\RR^i \Gamma_Z(X_\proet,\pi_X^\ast \widehat \Z_\ell(n))\cong H^i_{Z,\cont}(X_\et,(\mu_{\ell^r}^{\otimes n})_r)
\end{align}
twisted $\ell$-adic pro-\'etale cohomology with support, which coincides with continuous $\ell$-adic \'etale cohomology with support by Lemma \ref{lem:BS-H_cont=H_proet}.
As before we write $H^i_Z(X,n):=H^i_{Z}(X_\proet, \pi_{\comp}^\ast K \otimes _{\widehat \Z_\ell} \widehat \Z_{\ell}(n))$.

\begin{lemma}[Cup products] \label{lem:cup}
Let $X$ be an algebraic $k$-scheme, and let $Z,W\subset X$ be closed subsets. 
Then there are cup product maps
$$
H^i_Z(X,\Z_\ell(n))\otimes_{\Z_\ell}  H^j_W(X,m)\longrightarrow H^{i+j}_{Z\cap W}(X, n+m) ,\quad \alpha\otimes \beta\longmapsto \alpha\cup \beta
$$
which are functorial with respect to pullbacks.
\end{lemma}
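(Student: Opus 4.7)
The plan is to instantiate the general cup product construction (\ref{eq:cup-RGamma}) recalled in Section \ref{subsubsec:cup}. Concretely, I would apply the map
$$
\RR \Gamma_Z(X_\proet,M)\otimes_{\Z_\ell}^{\mathbb L}\RR \Gamma_W(X_\proet,L)\longrightarrow \RR \Gamma_{Z\cap W}(X_\proet,M\otimes^{\mathbb L}_{\widehat \Z_\ell} L)
$$
to $M:=\pi_X^\ast\widehat{\Z}_\ell(n)$ and $L:=\pi_{comp}^\ast K\otimes_{\widehat{\Z}_\ell}\widehat{\Z}_\ell(m)$, both of which are constructible. Passing to $H^{i+j}$ on the left and composing with the usual edge map $H^i(-)\otimes_{\Z_\ell} H^j(-)\to H^{i+j}(-\otimes^{\mathbb L}-)$ produces a pairing into $H^{i+j}_{Z\cap W}(X_\proet,M\otimes^{\mathbb L}_{\widehat\Z_\ell} L)$.

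The next step is to identify this target with $H^{i+j}_{Z\cap W}(X,n+m)$. Since $\widehat{\Z}_\ell(n)=\lim_r \mu_{\ell^r}^{\otimes n}$ is already complete, the canonical map $\pi_X^\ast\widehat{\Z}_\ell(n)\to \pi_{comp}^\ast\widehat{\Z}_\ell(n)$ is an isomorphism; and $\widehat{\Z}_\ell(n)$ is (up to quasi-isomorphism) a flat $\widehat{\Z}_\ell$-module since it is an invertible $\widehat{\Z}_\ell$-module \'etale locally. One can therefore replace $\otimes^{\mathbb L}_{\widehat{\Z}_\ell}$ by $\otimes_{\widehat{\Z}_\ell}$ when pairing with $\widehat{\Z}_\ell(n)$, giving the natural identification
$$
\pi_X^\ast\widehat{\Z}_\ell(n)\otimes^{\mathbb L}_{\widehat{\Z}_\ell}\pi_{comp}^\ast K\otimes_{\widehat{\Z}_\ell}\widehat{\Z}_\ell(m)\;\cong\;\pi_{comp}^\ast K\otimes_{\widehat{\Z}_\ell}\widehat{\Z}_\ell(n+m),
$$
exactly the coefficient complex defining $H^{i+j}_{Z\cap W}(X,n+m)$.

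For functoriality with respect to pullbacks along a morphism $f\colon X'\to X$ of algebraic $k$-schemes with $f^{-1}(Z)\subset Z'$ and $f^{-1}(W)\subset W'$, I would appeal to the explicit Godement model of (\ref{eq:cup-RGamma}) outlined in Section \ref{subsubsec:cup}: the cup product is induced by the tensor product of sections (\ref{eq:cup-Gamma}), which is visibly natural in the scheme and in the coefficients, and the pullback maps on cohomology are induced by the canonical maps of functors $\RR\Gamma_Z(X_\proet,-)\to \RR\Gamma_{Z'}(X'_\proet,f_{comp}^\ast(-))$. Chasing this through the two Godement resolutions (first a flat replacement $\mathcal F^\bullet M\to M$, then $\mathcal G^\bullet$), the required commutative diagram reduces to the naturality of $\mathcal G^\bullet$ and of the tensor product of sections, both of which are formal.

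The only substantive technical point I expect is the handling of the derived tensor product: I need to know that after taking Godement resolutions, the relevant tensor product computes $\otimes^{\mathbb L}$, and that flatness of $\widehat{\Z}_\ell(n)$ over $\widehat{\Z}_\ell$ makes the identification of the target coefficient complex canonical (not just a zig-zag of quasi-isomorphisms that could a priori depend on choices). Once the flatness of $\widehat{\Z}_\ell(n)$ is invoked and a functorial flat resolution is fixed, the rest of the argument is a diagram chase from the definitions and the explicit description of cup products via Godement resolutions.
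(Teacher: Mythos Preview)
Your proposal is correct and essentially identical to the paper's proof: the paper also instantiates (\ref{eq:cup-RGamma}) with $M=\widehat\Z_\ell(n)$ and $L=\pi_{comp}^\ast K\otimes_{\widehat\Z_\ell}\widehat\Z_\ell(m)$, then identifies the target using that $\widehat\Z_\ell(n)$ is a locally free $\widehat\Z_\ell$-module of rank one, and takes cohomology. You spell out the functoriality with respect to pullbacks via the Godement model more explicitly than the paper does, but this is just elaboration on the same argument.
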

\begin{proof}
We apply  \eqref{eq:cup-RGamma} to $M=\widehat \Z_\ell(m)$ and $L=K\otimes_{\widehat \Z_\ell} \widehat \Z_\ell(n)$.
The lemma thus follows by taking cohomology because
$$
\widehat \Z_{\ell}(n)
\otimes^{\mathbb L}_{\widehat \Z_\ell}
(\pi_{\comp}^\ast K \otimes _{\widehat \Z_\ell} \widehat \Z_{\ell}(m)) \cong \pi_{\comp}^\ast K \otimes _{\widehat \Z_\ell} \widehat \Z_{\ell}(n+m) 
$$
since $\widehat \Z_{\ell}(n)$ is a locally free $\Z_\ell$-module (of rank 1). 
\end{proof}

\begin{lemma}  \label{lem:cup-enlarge-support}
Let $X$ be an algebraic $k$-scheme, and let $Z,Z',W,W'\subset X$ be closed with $Z\subset Z'$ and $W\subset W'$.
Then the cup product maps from Lemma \ref{lem:cup} fit into a commutative diagram
$$
\xymatrix{
H^i_Z(X,\Z_\ell(n))\otimes_{\Z_\ell}  H^j_W(X,m) \ar[r]^-{\cup}\ar[d] & H^{i+j}_{Z\cap W}(X, n+m) \ar[d] \\
H^i_{Z'}(X,\Z_\ell(n))\otimes_{\Z_\ell}  H^j_{W'}(X ,m) \ar[r]^-{\cup} & H^{i+j}_{Z'\cap W'}(X, n+m) ,
}
$$
where the horizontal maps are the natural maps given by the pushforward along the identity on $X$. 
\end{lemma}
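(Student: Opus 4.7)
The plan is to unwind both the cup product and the enlargement-of-support map at the level of sections and observe commutativity there, and then lift to the derived category via the Godement construction already used in the preceding discussion.

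First, I would recall from Remark \ref{rem:pushforward-id} that the vertical maps in the diagram, which are the pushforwards along the identity induced by the inclusions $Z\subset Z'$ and $W\subset W'$, are induced by the natural inclusions of functors $\Gamma_Z(X_\proet,-)\hookrightarrow \Gamma_{Z'}(X_\proet,-)$ (and similarly for $W\subset W'$ and $Z\cap W\subset Z'\cap W'$). In particular, they are on the nose given by inclusions on sections, not by some more subtle trace construction. This reduces the problem to a statement about the tensor product of sections.

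Next, I would note that at the level of sections the diagram
\begin{equation*}
\xymatrix{
\Gamma_{Z}(X,F)\otimes_{\Z_\ell} \Gamma_{W}(X,G)\ar[r]\ar@{^(->}[d] & \Gamma_{Z\cap W}(X,F\otimes_{\widehat\Z_\ell} G)\ar@{^(->}[d]\\
\Gamma_{Z'}(X,F)\otimes_{\Z_\ell}\Gamma_{W'}(X,G)\ar[r] & \Gamma_{Z'\cap W'}(X,F\otimes_{\widehat\Z_\ell} G)
}
\end{equation*}
commutes tautologically: a pair $(s,t)$ of sections supported on $(Z,W)$, when viewed as a pair supported on $(Z',W')$, is sent to the same tensor-product section $s\otimes t$, whose support lies in $Z\cap W\subset Z'\cap W'$. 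This is just the statement that the pairing (\ref{eq:cup-Gamma}) is natural in the closed subsets.

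To upgrade this to cohomology I would use the explicit model of the cup product from Section~\ref{subsubsec:cup}: choose a quasi-isomorphism $\mathcal{F}^\bullet(\widehat\Z_\ell(n))\to \widehat\Z_\ell(n)$ with $\mathcal{F}^\bullet(\widehat\Z_\ell(n))$ a bounded-below complex of $\widehat\Z_\ell$-flat sheaves, and apply the Godement resolution functor $\mathcal{G}^\bullet$ throughout. The Godement resolution being functorial in the sheaf (and in particular independent of the support condition used to form $\Gamma_Z$), the section-level diagram above extends verbatim to a commutative diagram of complexes computing the relevant $\RR\Gamma_Z$-groups. Passing to cohomology then yields the asserted commutative square and uses the isomorphism $\widehat\Z_\ell(n)\otimes^{\mathbb L}_{\widehat\Z_\ell}(\pi_{comp}^\ast K\otimes_{\widehat\Z_\ell}\widehat\Z_\ell(m))\cong \pi_{comp}^\ast K\otimes_{\widehat\Z_\ell}\widehat\Z_\ell(n+m)$ already used in the proof of Lemma \ref{lem:cup}.

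I do not anticipate a real obstacle here, since both sides of the diagram come from the same section-level pairing via the same Godement model; the only point requiring care is to confirm (via Remark \ref{rem:pushforward-id}) that the vertical maps are genuinely the tautological inclusion of support-sections and not something that could introduce a sign or a correction term.
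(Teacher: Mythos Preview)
Your argument is correct and follows essentially the same route as the paper: identify the vertical maps via Remark~\ref{rem:pushforward-id} as the tautological inclusions $\Gamma_Z\hookrightarrow\Gamma_{Z'}$, observe the obvious commutativity of the section-level square built from (\ref{eq:cup-Gamma}), and then pass to the derived category and take cohomology. The paper states the derived step more tersely, but your use of the explicit Godement model from Section~\ref{subsubsec:cup} is exactly how that passage is justified.
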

\begin{proof}
Recall that the pushforward with respect to the identity corresponds to the map induced by the natural inclusion $\Gamma_Z(X,-)\hookrightarrow \Gamma_{Z'}(X,-)$; see Remark \ref{rem:pushforward-id}.
Clearly, the product map \eqref{eq:cup-Gamma} fits into a similar commutative diagram.
Passing to the derived category,  for bounded below complexes of sheaves $M,L\in D(X_\proet,\widehat \Z_\ell)$, we get a commutative diagram
$$
\xymatrix{
\RR \Gamma_Z(X_\proet, M) \otimes_{\Z_\ell}^{\mathbb L} \RR \Gamma_W(X_\proet, L)\ar[r]\ar[d]&\RR \Gamma_{Z\cap W}(X_\proet, M\otimes_{\widehat \Z_\ell}^{\mathbb L} L)\ar[d] \\
\RR \Gamma_{Z'}(X_\proet, M) \otimes_{\Z_\ell}^{\mathbb L} \RR \Gamma_{W'}(X_\proet, L)\ar[r] & \RR \Gamma_{Z'\cap W'}(X_\proet, M\otimes_{\widehat \Z_\ell}^{\mathbb L} L).
}
$$
Applying this to $M=\widehat \Z_\ell(m)$ and $L=K\otimes_{\widehat \Z_\ell} \widehat \Z_\ell(n)$, we obtain the lemma by taking cohomology.
\end{proof}

\begin{lemma} \label{lem:trace-tensor}
Let $f\colon X\to Y$ be a proper morphism between smooth equi-dimensional algebraic $k$-schemes.  
Assume that $f$ is either a closed immersion or a smooth map.
Let $M,L\in D_{\cons}(Y_\proet,\widehat \Z_\ell)$.
Then the following diagram is commutative: 
\begin{align} \label{diag:psi}
\xymatrix{
(\RR f_\ast f^!   M)\otimes_{\widehat \Z_\ell}^{\mathbb L}  L  \ar[r]^-{\psi}_-{\cong} \ar[d]^{\Tr_f\otimes \id} & \RR f_\ast f^!   (M\otimes_{\widehat \Z_\ell}^{\mathbb L}   L) \ar[d]^{\Tr_f}  \\
  M\otimes_{\widehat \Z_\ell}^{\mathbb L}   L \ar[r]^-{=} &  M\otimes_{\widehat \Z_\ell}^{\mathbb L}  L ,
}
\end{align}
where $\psi$ is a  natural isomorphism $($described in the proof\,$)$. 
\end{lemma}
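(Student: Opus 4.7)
The plan is to build $\psi$ as a composition of two natural maps, check it is an isomorphism in each of the two cases, and then derive the commutativity of (\ref{diag:psi}) from the adjunction defining the trace map. Concretely, I would set
\[
\psi : (\RR f_\ast f^! M)\otimes^{\mathbb L}_{\widehat \Z_\ell} L \xrightarrow{\mathrm{pr}} \RR f_\ast\bigl(f^! M \otimes^{\mathbb L}_{\widehat \Z_\ell} f^\ast_{comp} L\bigr) \xrightarrow{\alpha} \RR f_\ast f^!(M\otimes^{\mathbb L}_{\widehat \Z_\ell} L),
\]
where $\mathrm{pr}$ is the projection formula isomorphism for the proper morphism $f$, and $\alpha$ comes from a natural map $\beta : f^! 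M \otimes^{\mathbb L}_{\widehat \Z_\ell} f^\ast_{comp} L \to f^!(M\otimes^{\mathbb L}_{\widehat \Z_\ell} L)$. To see that $\psi$ is an isomorphism, I would reduce to showing that $\beta$ is one: for $f$ smooth of pure relative dimension $d$, Lemma \ref{lem:PD} gives $f^!\cong f^\ast_{comp}(d)[2d]$, so $\beta$ becomes the identity on $f^\ast_{comp}(M\otimes L)(d)[2d]$; for $f$ a closed immersion of smooth schemes of codimension $c$, Lemma \ref{lem:purity} gives $f^!\cong f^\ast_{comp}(-c)[-2c]$, and the same verification applies.

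Next, I would establish the commutativity of (\ref{diag:psi}) by a general adjunction argument. The composition $\Tr_f\circ \psi$ corresponds, under the adjunction $\Hom(\RR f_\ast(-),-)\cong \Hom(-,f^!(-))$ that produces $\Tr_f$ as the counit, to the map $\beta$ above. On the other hand, $\Tr_f\otimes \id_L$ corresponds, under the same adjunction applied after tensoring with $L$ (and using the projection formula to absorb $L$ into $\RR f_\ast$), again to $\beta$. Hence the two maps $\Tr_f\circ \psi$ and $\Tr_f\otimes \id_L$ have the same image under the adjunction isomorphism, so they are equal.

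To make this rigorous in the pro-\'etale framework, I would reduce to the analogous statement on the \'etale site with $\Z/\ell^r$-coefficients. Since $f^! = \RR\lim f_r^!$ by (\ref{eq:f!K}), since $\RR\lim$ commutes with derived tensor products of inverse systems of flat sheaves with surjective transition maps (cf.\ the proof of Lemma \ref{lem:extension-compatible}), and since the trace map in the pro-\'etale setting is built from the \'etale traces via $\RR\lim$, the full diagram (\ref{diag:psi}) is obtained by applying $\RR\lim$ to its counterparts for $M_r:= M\otimes^{\mathbb L}_{\widehat \Z_\ell}\Z/\ell^r$ and $L_r := L\otimes^{\mathbb L}_{\widehat \Z_\ell}\Z/\ell^r$. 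In the \'etale setting with torsion coefficients, the relevant projection formula and its compatibility with the trace are classical; for $f$ smooth this is \cite[Expos\'e XVIII, Th\'eor\`eme 2.9]{SGA4.3}, and for $f$ a closed immersion the compatibility is essentially formal, because $\RR i_\ast$ is fully faithful and the projection formula $\RR i_\ast(A)\otimes L\cong \RR i_\ast(A\otimes i^\ast L)$ is built directly from the tensor-product map of sections.

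The main obstacle is the rigorous passage between the pro-\'etale six functor formalism of \cite{BS} and the \'etale one: one has to verify that the projection formula map $\mathrm{pr}$, the coefficient-change map $\beta$, and the trace $\Tr_f$ are all compatible under derived completion and $\RR\lim$. Once this compatibility is in place, the commutativity of (\ref{diag:psi}) reduces, level by level in $r$, to the classical statement in \cite[Expos\'e XVIII]{SGA4.3}.
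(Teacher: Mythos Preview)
Your construction of $\psi$ via the projection formula composed with the natural map $\beta:f^!M\otimes f^\ast_{comp}L\to f^!(M\otimes L)$ matches the paper's, and your check that $\beta$ is an isomorphism via Lemmas \ref{lem:PD} and \ref{lem:purity} is the same idea. Where you diverge is in the verification of commutativity: the paper argues case by case---for a closed immersion it identifies $\RR f_\ast f^!$ with the derived functor of local sections with support and checks the square at the non-derived level; for $f$ smooth it reduces (by truncation and by replacing $M$ with $\widehat\Z_\ell$) to the situation where both source complexes are concentrated in non-positive degrees and the target in degree zero, so the square is determined by its effect on $\RR^{2d}f_\ast$, where it follows from base-change compatibility of the trace.

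Your uniform adjunction argument is cleaner and does work, but be careful about one point. You use $\beta$ in two roles: first as the map which ``becomes the identity'' under the purity identification (to see $\psi$ is an isomorphism), and second as the adjoint of $(\Tr_f\otimes\id)\circ\mathrm{pr}^{-1}$ (to run the adjunction comparison). These are a priori two different descriptions of a map $f^!M\otimes f^\ast_{comp}L\to f^!(M\otimes L)$, and the argument only goes through if they agree. The safest route is to \emph{define} $\beta$ by adjunction; then the commutativity of (\ref{diag:psi}) is formal exactly as you say, and the remaining task is to show this adjunction-defined $\beta$ is an isomorphism when $f$ is smooth or a regular closed immersion---a standard fact (see e.g.\ \cite[Expos\'e XVIII, 3.1.12.2]{SGA4.3}) which can indeed be read off from the compatibility of the purity isomorphisms with the trace. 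Once this is in place, your reduction to \'etale $\Z/\ell^r$-coefficients via $\RR\lim$ is unnecessary: the adjunction $(\RR f_!,f^!)$ and the projection formula already hold in $D_{cons}(-_{\proet},\widehat\Z_\ell)$ by \cite[\S 6.7]{BS}, so the argument runs directly there.
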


\begin{proof}
Recall that the derived tensor product of $\ell$-adically complete complexes is again $\ell$-adically complete; see \cite[Lemma 6.5.5]{BS}.
Since $f$ is proper,  the projection formula in \cite[Lemma 6.7.14]{BS} implies that the canonical map
\begin{align} \label{eq:diag:projection-formula-2}
\RR f_\ast f^!  M\otimes_{\widehat \Z_\ell}^{\mathbb L}  L \stackrel{\lowcong}\longrightarrow \RR f_\ast ( f^!  M\otimes_{\widehat \Z_\ell}^{\mathbb L} f^\ast_{\comp}   L ) 
\end{align}
is an isomorphism;  see also  \cite[Proposition~XVII.5.2.9]{SGA4.3}.
By Lemmas \ref{lem:PD} and \ref{lem:purity},  our assumptions imply that there is a natural isomorphism  $f^!\cong f^\ast(d)[2d]$, where $d:=\dim X-\dim Y$.
Using this, we find that the above isomorphism induces a natural isomorphism
$$
\psi\colon \RR f_\ast f^!  M\otimes_{\widehat \Z_\ell}^{\mathbb L}  L \stackrel{\lowcong}\longrightarrow \RR f_\ast  f^!  ( M\otimes_{\widehat \Z_\ell}^{\mathbb L}    L)  ,
$$
where we used that $f^\ast$ commutes with tensor products.
It remains to see that  \eqref{diag:psi}  is commutative.

First assume  that $f$ is a closed immersion.
In this case, $\RR f_\ast f^!$ is the derived functor associated to the functor $\mathcal F\mapsto \mathcal H^0_X(Y,\mathcal F)$ that maps a sheaf $\mathcal F\in \Mod(X_\proet, \widehat \Z_\ell)$ to the subsheaf of  local sections with support contained in $X$.
Moreover, on the non-derived level, the trace map corresponds  to the natural inclusion $\mathcal H^0_X(Y,\mathcal F)\to \mathcal F$.
Using this, we find that \eqref{diag:psi} arises by deriving the  diagram
$$
\xymatrix{
 \mathcal H^0_X(Y,\mathcal F)\otimes_{\widehat \Z_\ell} \mathcal G \ar[r]\ar[d] &  \mathcal H^0_X(Y,\mathcal F\otimes_{\widehat \Z_\ell} \mathcal G )\ar[d]\\
 \mathcal F\otimes_{\widehat \Z_\ell} \mathcal G\ar[r]^{=}&  \mathcal F\otimes_{\widehat \Z_\ell} \mathcal G ,
}
$$
where the upper horizontal arrow is given by noting that  if $s$ is a local section of $\mathcal F$ whose support is contained in $X$ and $t$ is a local section of $\mathcal G$, then the local section $s\otimes t$ of $\mathcal F\otimes_{\widehat \Z_\ell} \mathcal G$ has support contained in~$X$.
The above diagram is clearly commutative, and so is the derived version in \eqref{diag:psi}.

Now assume  that $f$ is smooth.
Here it is convenient to apply further reduction steps first.
In fact, one can easily formally reduce the problem  to  the case where $M=\widehat \Z_\ell$ (by applying the result to various choices for $L$ and comparing the results).
Applying the canonical truncation triangle $\tau_{\leq j}L \to L \to \tau_{\geq j+1} L$, by induction on the length of $L$, we further  reduce  to the case where $L=\mathcal G[0]$ is a sheaf $\mathcal G\in \Mod(X_\proet,\widehat \Z_\ell)$ placed in degree zero.
The complexes
$$
(\RR f_\ast f^!   M)\otimes_{\widehat \Z_\ell}^{\mathbb L}  L =(\RR f_\ast f^!   \widehat \Z_\ell)\otimes_{\widehat \Z_\ell}^{\mathbb L}  \mathcal G 
$$
and
$$
 \RR f_\ast f^!   (M\otimes_{\widehat \Z_\ell}^{\mathbb L}   L)= \RR f_\ast f^!  \mathcal G
$$
are then concentrated in non-positive degrees, while $ M\otimes_{\widehat \Z_\ell}^{\mathbb L}   L=\mathcal G$ is concentrated in degree zero.
It follows that the vertical arrows in 
\eqref{diag:psi}  factor through the respective $\supth{0}$ cohomology sheaves, and so it suffices to prove the commutativity of the following diagram:
\begin{align} \label{diag:psi-2}
\xymatrix{
 \RR^{2d} f_\ast \widehat \Z_\ell(d) \otimes_{\widehat \Z_\ell}  \mathcal G  \ar[r] \ar[d]^{\Tr_f\otimes \id} & \RR^{2d} f_\ast \mathcal G(d)  \ar[d]^{\Tr_f}  \\
 \widehat  \Z_\ell  \otimes_{\widehat \Z_\ell} \mathcal G\ar[r]^-{=} & \mathcal G .
}
\end{align}
This follows directly from the construction of the trace map; see \textit{e.g.} \cite[Equation~(6.10)]{Sch-refined} and \cite[Step~(2) in the proof of Proposition~3.1]{verdier}.
This concludes the proof of the lemma. 
\end{proof}

\begin{lemma}[Projection formula] \label{lem:projection-formula}
Let $f\colon X\to Y$ be a proper morphism between smooth equi-dimensional $k$-schemes of relative codimension $c=\dim Y-\dim X$. 
Let $Z_Y,W_Y\subset Y$ be closed, and let $Z_X=f^{-1}(Z_Y)$ and $W_X=f^{-1}(W_Y)$.
Assume that $f$ is either a closed immersion or a smooth map.
Then the following hold:
\begin{enumerate}
\item \label{item:lem:proj-formula-1} For any $\alpha\in H^j_{W_X}(X,\Z_\ell(m))$ and $\beta\in H^i_{Z_Y}(Y,n)$,  we have
$$
f_\ast(\alpha \cup f^\ast \beta )=f_\ast \alpha\cup \beta \in H^{i+j+2c}_{Z_Y\cap W_Y}(Y,n+m+c) ,
$$
where $f_\ast \alpha \in H^{j+2c}_{W_Y}(Y,\Z_\ell(m+c))$ and $f^\ast \beta\in H^i_{Z_X}(X,n)$.
\item \label{item:lem:proj-formula-2} For any $\alpha\in H^i_{Z_Y}(Y,\Z_\ell(n))$ and $\beta\in H^j_{W_X}(X, m )$, we have
$$
f_\ast(f^\ast \alpha \cup  \beta )=\alpha\cup f_\ast \beta \in H^{i+j+2c}_{Z_Y\cap W_Y}(Y,n+m+c) ,
$$
where $f^\ast \alpha \in H^{i}_{Z_X}(X,\Z_\ell(n))$ and $f_\ast \beta\in H^{j+2c}_{W_Y}(Y,m+c)$.
\end{enumerate} 
\end{lemma}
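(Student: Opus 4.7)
The plan is to deduce the projection formula from the sheaf-level statement provided by Lemma \ref{lem:trace-tensor}, combined with the naturality of the cup product constructed in \S\ref{subsubsec:cup} and the definition of the pushforward via the trace map.

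First, I would set $M := \widehat \Z_\ell(m+c)$ and $L := \pi_Y^\ast K \otimes_{\widehat \Z_\ell} \widehat \Z_\ell(n)$, viewed as objects of $D_{cons}(Y_\proet, \widehat \Z_\ell)$, so that $\beta$ is a class in $\RR^i\Gamma_{Z_Y}(Y_\proet,L)$. Purity (Lemma \ref{lem:PD} in the smooth case, Lemma \ref{lem:purity} in the closed immersion case) gives a canonical isomorphism $f^!M \cong \widehat\Z_\ell(m)[-2c]$ on $X_\proet$, under which $\alpha$ becomes a class in $\RR^{j+2c}\Gamma_{W_X}(X_\proet, f^!M)$. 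Since $f^{-1}(W_Y)=W_X$ and $f$ is proper, there is a canonical identification $\RR\Gamma_{W_Y}(Y_\proet,\RR f_\ast f^!M)\cong \RR\Gamma_{W_X}(X_\proet, f^!M)$, and the pushforward $f_\ast\alpha$ is by definition obtained by applying the trace $\Tr_f:\RR f_\ast f^!M\to M$ as in Lemma \ref{lem:f_*-f*}.

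Next, I would apply $\RR\Gamma_{W_Y\cap Z_Y}(Y_\proet,-)$ to the commutative square in Lemma \ref{lem:trace-tensor} and paste it with the cup product natural transformation (\ref{eq:cup-RGamma}) on $Y$ for the pair $(\RR f_\ast f^!M,\, L)$. Using (i) the identification $\RR\Gamma_{W_Y}(Y,\RR f_\ast f^!M)\cong \RR\Gamma_{W_X}(X, f^!M)$, (ii) the pullback $\RR\Gamma_{Z_Y}(Y,L)\to \RR\Gamma_{Z_X}(X, f^\ast_{comp}L)$ encoding $\beta\mapsto f^\ast\beta$, and (iii) the isomorphism $f^!M\otimes^{\mathbb L}_{\widehat\Z_\ell} f^\ast_{comp}L\cong f^!(M\otimes^{\mathbb L}_{\widehat\Z_\ell} L)$ supplied by purity, the cup product for $(\RR f_\ast f^!M,L)$ on $Y$ is identified, via the isomorphism $\psi$ of Lemma \ref{lem:trace-tensor}, with the cup product on $X$ for $(f^!M, f^\ast_{comp}L)$. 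The commutative square of Lemma \ref{lem:trace-tensor} then becomes a commutative square of cohomology pairings whose outer commutativity is precisely $f_\ast(\alpha\cup f^\ast\beta)=f_\ast\alpha\cup\beta$, proving item (\ref{item:lem:proj-formula-1}). Item (\ref{item:lem:proj-formula-2}) follows by the symmetric argument, applying Lemma \ref{lem:trace-tensor} to the second tensor factor instead of the first; graded commutativity of $\otimes^{\mathbb L}_{\widehat\Z_\ell}$ accounts for any signs.

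The main obstacle is not conceptual but a coherence verification: one must check that the cup product on $X$ for the sheaves $f^!M$ and $f^\ast_{comp}L$ is compatible, via $\psi$ and the trace, with the cup product on $Y$ for $\RR f_\ast f^!M$ and $L$. As in the proofs of Lemmas \ref{lem:trace-tensor} and \ref{lem:cup}, this is most easily handled by choosing flat resolutions followed by Godement resolutions, so that both cup products are represented by explicit tensor products of sheaves of sections; the required compatibility then reduces to the naturality of the adjunction $(\RR f_\ast, f^!)$, together with the projection formula isomorphism (\ref{eq:diag:projection-formula-2}), against these tensor products. A small additional subtlety is the careful bookkeeping of supports under $f_\ast$, where one uses that $f^{-1}(W_Y\cap Z_Y)=W_X\cap Z_X$ to identify $\RR\Gamma_{W_Y\cap Z_Y}(Y,\RR f_\ast(-))$ with $\RR\Gamma_{W_X\cap Z_X}(X,-)$.
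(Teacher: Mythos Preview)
Your proposal is correct and follows essentially the same route as the paper: both arguments identify $\alpha$ with a class in $\RR\Gamma_{W_Y}(Y,\RR f_\ast f^! M)$ via purity, write down the cup product on $Y$ with $\RR f_\ast f^! M$ in one slot (the paper's diagram (\ref{eq:diag:projection-formula-1})), apply $\RR\Gamma_{Z_Y\cap W_Y}$ to the commutative square of Lemma~\ref{lem:trace-tensor}, and combine the two to obtain item~(\ref{item:lem:proj-formula-1}); item~(\ref{item:lem:proj-formula-2}) is then deduced by swapping the tensor factors and using (anti-)commutativity of the cup product. The coherence check you flag is exactly what the paper absorbs into the statement that the diagram (\ref{eq:diag:projection-formula-1}) is ``clearly commutative''.
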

\begin{proof}
Let $M,L\in D_{\cons}((\Spec(k))_\proet,\widehat \Z_\ell)$ be constructible complexes of  $\widehat \Z_\ell$-modules on the pro-\'etale site $(\Spec(k))_\proet$, and let $\pi_X\colon X\to \Spec(k)$ and $\pi_Y\colon Y\to \Spec(k)$ be the structure morphisms. 
Since $Z_X=f^{-1}(Z_Y)$,
$$
\RR \Gamma_{Z_X}(X, (\pi_X)_{\comp}^\ast M(-c)[-2c])=\RR \Gamma_{Z_Y}(Y, \RR f_\ast (\pi_X)_{\comp}^\ast M(-c)[-2c]) .
$$
Since $X$ and $Y$ are smooth, Lemmas \ref{lem:f!g!=(gf)!} and \ref{lem:PD} imply $ (\pi_X)_{\comp}^\ast M(-c)[-2c]\cong f^! (\pi_Y)^\ast_{\comp} M $ and so 
\begin{align} \label{eq:diag:projection-formula-0}
\RR \Gamma_{Z_X}(X, (\pi_X)_{\comp}^\ast M(-c)[-2c])=\RR \Gamma_{Z_Y}(Y,\RR f_\ast f^! (\pi_Y)^\ast_{\comp} M ) .
\end{align}
We then consider the  diagram
\begin{align} \label{eq:diag:projection-formula-1}
\xymatrix{
\RR \Gamma_{Z_Y}(Y,\RR f_\ast f^! (\pi_Y)^\ast_{\comp} M)\otimes^{\mathbb L}_{\Z_\ell}  \RR \Gamma_{W_Y}(Y, (\pi_Y)^\ast_{\comp} L) \ar[r]^-{\cup} \ar[d]^{\Tr_f\otimes \id} & \RR \Gamma_{Z_Y\cap W_Y}(Y,\RR f_\ast f^!  (\pi_Y)^\ast_{\comp} M\otimes_{\widehat \Z_\ell}^{\mathbb L}  (\pi_Y)^\ast_{\comp}  L ) \ar[d]^{\Tr_f\otimes \id}   \\
\RR \Gamma_{Z_Y}(Y, (\pi_Y)^\ast_{\comp} M)\otimes ^{\mathbb L}_{\Z_\ell} \RR \Gamma_{W_Y}(Y,(\pi_Y)^\ast_{\comp} L)\ar[r]^-{\cup} & \RR \Gamma_{Z_Y\cap W_Y}(Y, (\pi_Y)^\ast_{\comp}  M\otimes_{\widehat \Z_\ell}^{\mathbb L}  (\pi_Y)^\ast_{\comp}  L  ) ,
}
\end{align}
which is clearly commutative.

By Lemma \ref{lem:trace-tensor}, there is a commutative diagram
\begin{align*} 
\xymatrix{
\RR f_\ast f^!  (\pi_Y)^\ast_{\comp} M\otimes_{\widehat \Z_\ell}^{\mathbb L}  (\pi_Y)^\ast_{\comp}  L  \ar[r]_-{\psi}^-{\cong} \ar[d]^{\Tr_f\otimes \id} & \RR f_\ast f^!   (\pi_Y)^\ast_{\comp}  (M\otimes_{\widehat \Z_\ell}^{\mathbb L}   L) \ar[d]^{\Tr_f}  \\
(\pi_Y)^\ast_{\comp}  M\otimes_{\widehat \Z_\ell}^{\mathbb L}  (\pi_Y)^\ast_{\comp}  L \ar[r]^-{\cong} &  (\pi_Y)^\ast_{\comp}  (M\otimes_{\widehat \Z_\ell}^{\mathbb L}  L ) .
}
\end{align*}
Applying $\RR \Gamma_{Z_{Y}\cap W_Y}(Y,-)$ to this and combining the resulting commutative square with \eqref{eq:diag:projection-formula-1}, we get a commutative diagram of the form
$$
\xymatrix{
\RR \Gamma_{Z_Y}(Y,\RR f_\ast f^! (\pi_Y)^\ast_{\comp} M)\otimes^{\mathbb L}_{\Z_\ell}  \RR \Gamma_{W_Y}(Y, (\pi_Y)^\ast_{\comp} L) \ar[r]^-{\cup} \ar[d]^{\Tr_f\otimes \id} & \RR \Gamma_{Z_Y\cap W_Y}(Y,\RR f_\ast f^!   (\pi_Y)^\ast_{\comp}  (M\otimes_{\widehat \Z_\ell}^{\mathbb L}   L) ) \ar[d]^{\Tr_f }   \\
\RR \Gamma_{Z_Y}(Y, (\pi_Y)^\ast_{\comp} M)\otimes ^{\mathbb L}_{\Z_\ell} \RR \Gamma_{W_Y}(Y,(\pi_Y)^\ast_{\comp} L)\ar[r]^-{\cup} & \RR \Gamma_{Z_Y\cap W_Y}(Y, (\pi_Y)^\ast_{\comp}  (M\otimes_{\widehat \Z_\ell}^{\mathbb L}  L ) ) .
}
$$
By \eqref{eq:diag:projection-formula-0} (applied to the complex $M\otimes_{\widehat \Z_\ell}^{\mathbb L}   L$ and the closed subset $Z_X\cap W_X\subset X$), the term in the right upper corner of the above diagram is given by
 $$
 \RR \Gamma_{Z_Y\cap W_Y}(Y,\RR f_\ast f^!   (\pi_Y)^\ast_{\comp}  (M\otimes_{\widehat \Z_\ell}^{\mathbb L}   L) )=\RR \Gamma_{Z_X\cap W_X}(X, (\pi_X)_{\comp}^\ast (M\otimes_{\widehat \Z_\ell}^{\mathbb L}   L)(-c)[-2c]). 
 $$
Item \eqref{item:lem:proj-formula-1} in the lemma therefore follows by applying the above argument to $M=  \widehat \Z_\ell(m)$ and $L=K\otimes_{\widehat \Z_\ell} \widehat \Z_\ell(n)$ and taking cohomology of the above commutative square. 
Item  \eqref{item:lem:proj-formula-2} follows similarly by applying the above argument to  $L= \widehat \Z_\ell(m)$ and $M=K\otimes_{\widehat \Z_\ell} \widehat \Z_\ell(n)$ and using the anti-commutativity of cup products.
This concludes the proof of the lemma.
\end{proof}

\subsection{Cycle class}

\begin{lemma}[Cycle class] \label{lem:cycle-class}
Let $X$ be an equi-dimensional smooth algebraic $k$-scheme, and let
$\Gamma\in Z^c(X)$ be a cycle of codimension $c$.  Let $Z\subset X$ be
any closed subscheme that contains the support of\, $\Gamma$.  Then
there is a cycle class
$$
\cl^X_Z(\Gamma)\in H^{2c}_Z(X,\Z_\ell(c)) 
$$
with the following properties:
\begin{enumerate}
\item If\, $W\subset X$ is closed with $ Z\subset W$, then $\cl_W^X(\Gamma)$ is the image of\, $\cl_Z^X(\Gamma)$ via the natural map $H^{2c}_Z(X,\Z_\ell(c))\to H^{2c}_W(X,\Z_\ell(c)) $.\label{item:lem:cycle-class:1}
\item If\, $\Gamma=\Gamma_1+\Gamma_2$ for cycles $\Gamma_i\in Z^c(X)$ with $\supp(\Gamma_i)\subset Z$, then $\cl_Z^X(\Gamma)=\cl_Z^X(\Gamma_1)+\cl_Z^X(\Gamma_2)$.\label{item:lem:cycle-class:2} 
\end{enumerate}
\end{lemma}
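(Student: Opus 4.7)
The plan is to construct the class first for a prime cycle via purity on its smooth locus, and then extend by linearity using the pushforward along the identity.

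\textbf{Step 1 (reduction to the perfect base).} By topological invariance (Lemma \ref{lem:top-invariance}), pulling back along the purely inseparable extension $X\times_k k^{\operatorname{perf}}\to X$ identifies the relevant cohomology groups, so I may assume $k$ is perfect. In particular, for every integral closed subscheme $V\subset X$ the smooth locus $V^{\sm}\subset V$ is open and dense, and $V^{\sing}:=V\setminus V^{\sm}$ has codimension at least $c+1$ in $X$.

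\textbf{Step 2 (construction for a prime cycle).} Let $V\subset X$ be integral of codimension $c$ and consider the coefficient complex $\widehat\Z_\ell[0]$, which lies in non-negative degrees. Applying semi-purity (Lemma \ref{lem:semi-purity}) to $V^{\sing}$ yields $H^i_{V^{\sing}}(X,\Z_\ell(c))=0$ for $i<2(c+1)$. The long exact sequence of triples (Lemma \ref{lem:les-pairs}) for $V^{\sing}\subset V\subset X$ together with excision (Lemma \ref{lem:excision}) therefore gives canonical isomorphisms
\[
H^{2c}_V(X,\Z_\ell(c))\;\xrightarrow{\cong}\;H^{2c}_{V^{\sm}}(X\setminus V^{\sing},\Z_\ell(c)).
\]
Since the inclusion $i\colon V^{\sm}\hookrightarrow X\setminus V^{\sing}$ is a closed immersion between smooth equi-dimensional schemes of pure codimension $c$, purity (Lemma \ref{lem:purity}) further identifies the right-hand side with $H^0(V^{\sm}_{\proet},\widehat\Z_\ell)$. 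I define $\cl^X_V([V])\in H^{2c}_V(X,\Z_\ell(c))$ to be the preimage of the canonical section $1\in H^0(V^{\sm}_{\proet},\widehat\Z_\ell)$.

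\textbf{Step 3 (extension to arbitrary cycles and verification).} For $\Gamma=\sum_i n_i V_i\in Z^c(X)$ with $\supp(\Gamma)\subset Z$, each inclusion $\iota_i\colon V_i\hookrightarrow Z$ induces a pushforward along the identity $(\iota_i)_\ast\colon H^{2c}_{V_i}(X,\Z_\ell(c))\to H^{2c}_Z(X,\Z_\ell(c))$ (Remark \ref{rem:pushforward-id}), and I set
\[
\cl^X_Z(\Gamma):=\sum_i n_i\,(\iota_i)_\ast\,\cl^X_{V_i}([V_i]).
\]
Property (\ref{item:lem:cycle-class:2}) is immediate from this definition since additivity in $\Gamma$ amounts to additivity in the coefficients $n_i$. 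Property (\ref{item:lem:cycle-class:1}) follows from the functoriality of pushforwards along the identity: if $Z\subset W$, then the natural map $H^{2c}_Z(X,\Z_\ell(c))\to H^{2c}_W(X,\Z_\ell(c))$ is induced by enlarging the support, so it intertwines the pushforwards from $H^{2c}_{V_i}(X,\Z_\ell(c))$.

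\textbf{Main obstacle.} The delicate point is Step 2, where the cycle class must be constructed despite the possible singularities of $V$. The crucial observation is the dimension bound $\codim_X V^{\sing}\ge c+1$, which makes semi-purity kick in exactly in the range needed to canonically lift the class $1\in H^0(V^{\sm},\widehat\Z_\ell)$ provided by purity on the smooth locus to a class supported on all of $V$. Everything else is essentially a formal consequence of the excision and purity isomorphisms established earlier in the appendix.
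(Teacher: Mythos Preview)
Your proof is correct but follows a different route from the paper. The paper identifies $H^{2c}_Z(X,\Z_\ell(c))$ with continuous \'etale cohomology via Lemma~\ref{lem:BS-H_cont=H_proet}, invokes Jannsen's isomorphism $H^{2c}_Z(X,\Z_\ell(c))\cong\varprojlim_r H^{2c}_Z(X,\mu_{\ell^r}^{\otimes c})$ (\cite[Theorem~3.23]{jannsen}) when $Z=\supp\Gamma$, and then imports the cycle class from \cite[p.~143, 2.2.10]{SGA4.5} level by level. By contrast, you work entirely inside the pro-\'etale formalism already set up in the appendix: reduce to perfect $k$, strip out the singular locus of each prime component via semi-purity, and read off the class from the purity isomorphism on the smooth locus. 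Your argument is more self-contained and avoids the external reference to Jannsen's theorem. The paper's approach, however, buys something for the sequel: the later Lemmas~\ref{lem:pullback-cycle-class}, \ref{lem:cycle-class-cup}, and \ref{lem:del-vaprhi=cl} are all proved by reducing modulo $\ell^r$ and citing properties of the SGA4$\frac{1}{2}$ class, so with your definition you would owe a one-line check that your class agrees with the classical one after reduction (which it does, since the construction in \cite[cycle, 2.2]{SGA4.5} is likewise via purity on the smooth locus).
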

\begin{proof}
Recall from Lemma \ref{lem:BS-H_cont=H_proet} that $H^{2c}_Z(X,\Z_\ell(c)) =H^{2c}_{Z,\cont}(X_{\et},\Z_\ell(c)) $ agrees with continuous \'etale cohomology with values in the inverse system $(\mu_{\ell^r}^{\otimes c})_r$.
By the additivity of continuous \'etale cohomology, it suffices to deal with the case where $X$ is integral, hence a variety.
If $\Gamma=\sum a_i\Gamma_i$ with $\supp \Gamma_i \subset Z$,  we define
$$
\cl_Z^X(\Gamma):=\sum_i a_i\cdot \cl_Z^X(\Gamma_i)
$$
 and hence reduce the problem to the case where $\Gamma$ is a prime cycle, \textit{i.e.} a subvariety of codimension $1$. 
Let us first assume that $Z=\supp \Gamma$.
By \eqref{eq:H_cont(X,Z_ell(n))} and \cite[Theorem 3.23]{jannsen}, there is a canonical isomorphism 
\begin{align} \label{eq:H_Z=limH_Z(X)}
 H^{2c}_Z(X,\Z_\ell(c))\cong \lim_{\substack{\longleftarrow\\ r}}H^{2c}_Z(X,\mu_{\ell^r}^{\otimes c}), 
 \end{align}
 and the class $\cl^X_Z(\Gamma)$ is defined as limit of the cycle classes of $\Gamma$ in
 $
 H^{2c}_Z(X,\mu_{\ell^r}^{\otimes c})
 $
 from \cite[Section~2.2.10, p.\ 143]{SGA4.5},  where one uses the compatibility for different values of $r$; \textit{cf.}  \cite[Proof of Theorem~3.23, p.\ 221]{jannsen}. 
In general, if $Z$ is any closed subset with $|\Gamma|:=\supp \Gamma \subset Z$, then we define $\cl_Z^X(\Gamma)$ as the image of $\cl_{|\Gamma|}^X(\Gamma)$ via the natural pushforward map $H^{2c}_{|\Gamma|}(X,\Z_\ell(c))\to H^{2c}_Z(X,\Z_\ell(c)) $.
Item \eqref{item:lem:cycle-class:1} then follows from the functoriality of  pushforwards, while item \eqref{item:lem:cycle-class:2} follows directly from the construction together with the linearity of pushforwards.
 This proves the lemma.
\end{proof}

\begin{lemma} \label{lem:pullback-cycle-class}
Let $f\colon X\to Y$ be a morphism between smooth  equi-dimensional algebraic $k$-schemes.  
\begin{enumerate}
\item \label{item:pullback-cycle-class} Let $\Gamma\in Z^{i}(Y)$ be a cycle on $Y$ with support $Z_Y:=\supp(\Gamma)$.
Assume that $Z_X:=f^{-1}(Z_Y)\subset X$ has pure codimension $i$.
Then $f^\ast \cl^Y_{Z_Y}(\Gamma)= \cl^X_{Z_X}(f^\ast \Gamma)$. 
\item  \label{item:pushforward-cycle-class} Let $\Gamma\in Z^{i}(X)$ with support $Z_X:=\supp(\Gamma)$.
Assume that $f$ is proper of pure relative codimension $c$.
Set $Z_Y:=f(Z_X)$ with the reduced scheme structure. 
Then $f_\ast \cl^X_{Z_X}(\Gamma)=\cl^Y_{Z_Y}(f_\ast \Gamma)$. 
\end{enumerate}

\end{lemma}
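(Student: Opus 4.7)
The plan is to reduce both compatibilities to their finite-level $\ell^r$-analogues in \'etale cohomology, where they are classical, and then to conclude by passing to the inverse limit.

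First I would reduce to the case where $\Gamma$ is a prime cycle. By Lemma \ref{lem:cycle-class}(\ref{item:lem:cycle-class:2}), both sides of each claimed equality are additive in $\Gamma$, and the operations $f^\ast$ and $f_\ast$ on cycles are additive, so this is immediate. Next, using the identification $H^{2i}_Z(X,\Z_\ell(i))\cong \lim_r H^{2i}_Z(X,\mu_{\ell^r}^{\otimes i})$ from (\ref{eq:H_cont(X,Z_ell(n))}) and \cite[Theorem 3.23]{jannsen}, together with the compatibility of this identification with pullbacks and pushforwards supplied by Lemma \ref{lem:extension-compatible} (applied with $K_r=\Z/\ell^r$), I would reduce the two statements to the corresponding equalities at each finite level $r$, namely
\[
f^\ast \cl^Y_{Z_Y,r}(\Gamma)=\cl^X_{Z_X,r}(f^\ast \Gamma) \in H^{2i}_{Z_X}(X,\mu_{\ell^r}^{\otimes i})
\]
and
\[
f_\ast \cl^X_{Z_X,r}(\Gamma)=\cl^Y_{Z_Y,r}(f_\ast \Gamma) \in H^{2i+2c}_{Z_Y}(Y,\mu_{\ell^r}^{\otimes(i+c)}) ,
\]
where $\cl^{-}_{-,r}$ denotes the \'etale cycle class from \cite[p.\ 143, 2.2.10]{SGA4.5}.

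For item (\ref{item:pullback-cycle-class}), the hypothesis that $Z_X=f^{-1}(Z_Y)$ has pure codimension $i$ guarantees that the flat pullback $f^\ast\Gamma$ is a well-defined cycle of codimension $i$ with support in $Z_X$. The compatibility of the refined \'etale cycle class with pullbacks under this proper-intersection hypothesis is the standard functoriality statement in \cite[Cycle, 2.3.8]{SGA4.5}, which gives the finite-level equality directly.

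For item (\ref{item:pushforward-cycle-class}), I would factor $f$ through its graph, $X\xrightarrow{\Gamma_f} X\times Y \xrightarrow{\pr_2} Y$, where $\Gamma_f$ is a closed immersion (between smooth schemes, since $X$ and $Y$ are smooth) and $\pr_2$ is smooth and proper. By the functoriality of pushforwards (Lemma \ref{lem:f_*-f*}(\ref{item:pushforward})), it suffices to treat the two cases separately. For a closed immersion $i:Z\hookrightarrow X$ between smooth schemes, the defining property of the refined cycle class makes $i_\ast \cl^Z_{Z,r}([Z])=\cl^X_{Z,r}([Z])$ tautological via Gabber's purity isomorphism, and the general case for a closed immersion follows by linearity. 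For the smooth proper case, one uses the projection formula (Lemma \ref{lem:projection-formula}) together with the fact that the trace of the cycle class of the relative fundamental class computes the degree, which is the classical compatibility in \cite[XVIII, Th\'eor\`eme 2.9]{SGA4.3}. The main obstacle is verifying the smooth proper case, which ultimately rests on the fact that the \'etale cycle class of a prime cycle is characterized by its local behavior at the generic point of the support together with its pushforward from the normalization; this reduces via purity to the trivial case of the fundamental class of a smooth subscheme, where the projection formula and Lemma \ref{lem:trace-tensor} give the result.
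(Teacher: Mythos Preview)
Your treatment of item~(\ref{item:pullback-cycle-class}) matches the paper's proof: reduce to finite levels via (\ref{eq:H_Z=limH_Z(X)}) and cite \cite[Cycle, Th\'eor\`eme 2.3.8(ii)]{SGA4.5}. One small correction: $f$ is not assumed flat, so $f^\ast\Gamma$ is the refined pullback made sense of by the codimension hypothesis, not the flat pullback; this is exactly what the cited result handles.

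For item~(\ref{item:pushforward-cycle-class}) your approach differs from the paper's and has a genuine gap. The graph factorization $X\xrightarrow{\Gamma_f} X\times Y\xrightarrow{\pr_2} Y$ only produces a \emph{proper} second projection when $X$ itself is proper over $k$, which is not assumed in the lemma; without properness of $\pr_2$, the pushforward $(\pr_2)_\ast$ in the sense of Lemma~\ref{lem:f_*-f*}(\ref{item:pushforward}) is undefined, so the factorization does not yield $f_\ast=(\pr_2)_\ast\circ(\Gamma_f)_\ast$. Even if one assumes $X$ proper, your smooth proper step is not complete: the projection formula and Lemma~\ref{lem:trace-tensor} do not by themselves compute $(\pr_2)_\ast\cl([W])$ for an arbitrary prime cycle $W\subset X\times Y$; one still has to localize until $W$ and $\pr_2(W)$ are smooth and $\pr_2|_W$ is finite \'etale, and then invoke Lemma~\ref{lem:f_*circf*=degf}. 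That is precisely the paper's argument, so the factorization adds the closed-immersion step without simplifying the core.

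The paper avoids the factorization and argues directly for the given proper $f$: reduce to perfect $k$ via topological invariance (Lemma~\ref{lem:top-invariance}); use Lemma~\ref{lem:purity-les} to shrink until $Z_X$ and $Z_Y$ are smooth; apply purity (Lemma~\ref{lem:purity}) to identify $H^{2i}_{Z_X}(X,\Z_\ell(i))\cong H^0(Z_X,\Z_\ell)$ and $H^{2i+2c}_{Z_Y}(Y,\Z_\ell(i+c))\cong H^0(Z_Y,\Z_\ell)$; observe from the construction of $f_\ast$ in (\ref{eq:i!-pushforward}) that it corresponds under these identifications to $(f|_{Z_X})_\ast$ on $H^0$; reduce to $Z_X$, $Z_Y$ integral, shrink once more so that $f|_{Z_X}$ is finite \'etale, and conclude by Lemma~\ref{lem:f_*circf*=degf}. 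Note also that the reduction to perfect $k$, which you omit, is what guarantees the smooth loci of $Z_X$ and $Z_Y$ are dense.
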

\begin{proof}
To prove item \eqref{item:pullback-cycle-class}, we note that by the construction of the cycle class via the isomorphism \eqref{eq:H_Z=limH_Z(X)},  it suffices to prove the compatibility result for \'etale cohomology with coefficients in  $\mu_{\ell^r}^{\otimes i}$ with $r\geq 1$.
Hence,  item \eqref{item:pullback-cycle-class} follows from \cite[Th\'eor\`eme 2.3.8.(ii)]{SGA4.5}.

We prove  item \eqref{item:pushforward-cycle-class} next.
Lemma \ref{lem:top-invariance} reduces us to the case where the ground field $k$ is perfect.
By  Lemma \ref{lem:purity-les}, we may then assume that $Z_Y$ and $Z_X$ are smooth and pure-dimensional of pure codimensions $i$ and $i+c$ in $Y$ and $X$, respectively (this uses that any reduced scheme is generically regular, hence smooth because $k$ is perfect).
Lemma \ref{lem:purity} yields canonical isomorphisms
$$
H^{2i}_{Z_X}(X,\Z_\ell(i))\cong H^0(Z_X,\Z_\ell(0))\quad \text{and}\quad H^{2i+2c}_{Z_Y}(Y,\Z_\ell(i+c))\cong H^0(Z_Y,\Z_\ell(0)) .
$$
It follows from the construction of the pushforward $f_\ast$ that via the above isomorphism, it is compatible  with the pushforward 
$$
\left(f|_{Z_X}\right)_\ast\colon H^0(Z_X,\Z_\ell(0))\longrightarrow  H^0(Z_Y,\Z_\ell(0)) .
$$
Since $Z_X$ and $Z_Y$ are smooth, the above groups decompose according to the irreducible components of $Z_X$ and $Z_Y$, respectively.
Using this, we reduce to the case where  $Z_X$ and $Z_Y$ are integral, and we need to show that $(f|_{Z_X})_\ast$ is given by multiplication with $\deg(f|_{Z_X})$.
Up to shrinking $Z_Y$ further, we may assume that $f|_{Z_X}\colon Z_X\to Z_Y$ is \'etale (this uses that $k$ is perfect), and so the claim follows from Lemma \ref{lem:f_*circf*=degf}.
This proves item \eqref{item:pushforward-cycle-class} and concludes the proof of the lemma.
\end{proof}

\begin{lemma} \label{lem:cycle-class-cup}
Let $X$ be a smooth equi-dimensional algebraic scheme, and  for $i=1,2$, let  $\Gamma_i\in Z^{c_i}(X)$ be a cycle of codimension $c_i$ on $X$.
We put $Z_i:=\supp \Gamma_i$, which is closed of pure codimension $c_i$ in $X$. 
Assume that $Z_1$ and $Z_2$ meet dimensionally transversely, so that $Z:=Z_1\cap Z_2$ either is empty or has codimension $c=c_1+c_2$ in $X$.
Then  
$$
\cl^X_Z(\Gamma_1\cdot \Gamma_2)=\cl^X_{Z_1}(\Gamma_1)\cup \cl^X_{Z_2}(\Gamma_2)\in  H^{2c}_Z(X,\Z_\ell(c)) .
$$ 
\end{lemma}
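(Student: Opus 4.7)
The plan is to reduce the statement to the classical compatibility of the \'etale cycle class with the intersection product in \'etale cohomology with finite coefficients, which is \cite[Cycle, Théorème 2.3.8]{SGA4.5}. This proceeds in three steps.

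First, I would pass to finite coefficients. By Lemma \ref{lem:BS-H_cont=H_proet} together with the isomorphism (\ref{eq:H_Z=limH_Z(X)}), we have $H^{2c}_Z(X,\Z_\ell(c))\cong \lim_r H^{2c}_Z(X_\et,\mu_{\ell^r}^{\otimes c})$, and by the construction in the proof of Lemma \ref{lem:cycle-class}, the pro-\'etale cycle class $\cl^X_{Z}(\Gamma)$ corresponds via this isomorphism to the inverse system of the \'etale cycle classes of \cite[p.\ 143, 2.2.10]{SGA4.5}. Next, the cup product of Lemma \ref{lem:cup}, constructed via (\ref{eq:cup-Gamma})--(\ref{eq:cup-RGamma}) through Godement resolutions, is compatible under $\RR\lim$ with the standard cup products on $H^{\ast}_{-}(X_\et,\mu_{\ell^r}^{\otimes\ast})$ applied to the defining inverse systems $(\mu_{\ell^r}^{\otimes c_1})_r$ and $(\mu_{\ell^r}^{\otimes c_2})_r$ of $\widehat\Z_\ell(c_1)$ and $\widehat\Z_\ell(c_2)$, using the canonical identifications $\mu_{\ell^r}^{\otimes c_1}\otimes_{\Z/\ell^r}\mu_{\ell^r}^{\otimes c_2}\cong \mu_{\ell^r}^{\otimes c}$. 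Hence it suffices to prove the analogous identity in $H^{2c}_Z(X_\et,\mu_{\ell^r}^{\otimes c})$ for every $r\geq 1$.

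Second, I would reduce to the case where $Z_i=\supp(\Gamma_i)$ and $Z=\supp(\Gamma_1)\cap\supp(\Gamma_2)$. By the linearity of both sides (Lemmas \ref{lem:cycle-class}(\ref{item:lem:cycle-class:2}) and \ref{lem:cup}) it is enough to treat prime cycles $\Gamma_i=[V_i]$ with $V_i\subset Z_i$. Lemmas \ref{lem:cycle-class}(\ref{item:lem:cycle-class:1}) and \ref{lem:cup-enlarge-support} imply that both the cycle class and the cup product are compatible with enlarging supports along the identity pushforwards $H^{\ast}_{|V_i|}(X,-)\to H^{\ast}_{Z_i}(X,-)$ and $H^{\ast}_{|V_1|\cap|V_2|}(X,-)\to H^{\ast}_{Z}(X,-)$, so the identity with arbitrary $Z_i,Z$ follows formally from the identity with $Z_i=|V_i|$ and $Z=|V_1|\cap|V_2|$. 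In this reduced setting the claim is precisely the multiplicativity of the \'etale cycle class map under dimensionally transverse intersection from \cite[Cycle, Théorème 2.3.8]{SGA4.5}.

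The main obstacle is the verification in Step 1 that our cup product, defined pro-\'etale via Godement resolutions in \S\ref{subsubsec:cup}, really identifies under (\ref{eq:H_Z=limH_Z(X)}) with the inverse limit of the classical \'etale cup products. This compatibility is standard but slightly tedious: it uses that the derived tensor product of complete complexes is complete (\cite[Lemma 6.5.5]{BS}), that $\RR\lim$ commutes with $\RR\Gamma_Z$, and that the finite-level cup products are compatible with the natural transition maps $\mu_{\ell^{r+1}}^{\otimes\ast}\to\mu_{\ell^r}^{\otimes\ast}$; once granted, the rest of the proof is purely formal.
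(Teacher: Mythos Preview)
Your proposal is correct and follows essentially the same approach as the paper: reduce to finite coefficients via (\ref{eq:H_Z=limH_Z(X)}) and the compatibility of the cup product with reduction modulo $\ell^r$, then invoke \cite[Cycle, Th\'eor\`eme 2.3.8]{SGA4.5}. The paper additionally records that the intersection multiplicities in $\Gamma_1\cdot\Gamma_2$ agree with those given by Serre's Tor formula (citing \cite[\S 20.4 and Example 7.1.2]{fulton}), which is what the SGA result actually computes; you might add this remark for completeness, but otherwise the arguments coincide.
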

\begin{proof} 
Since the cycle class in Jannsen's continuous \text{\'etale} cohomology from Lemma \ref{lem:cycle-class} is constructed  via the isomorphism \eqref{eq:H_Z=limH_Z(X)},  and because the cup product constructed in Lemma \ref{lem:cup} is compatible with reduction modulo $\ell^r$, it suffices to prove the lemma modulo $\ell^r$, \textit{i.e.} for \'etale cohomology with coefficients in $\mu_{\ell^r}^{\otimes c}$.
In this case, the   result in question follows from \cite[Th\'eor\`eme 2.3.8(iii) and Remarque 2.3.9]{SGA4.5} together with the fact that the multiplicities of the intersection product $\Gamma_1\cdot \Gamma_2$ may be computed by Serre's Tor formula; see \cite[Section~20.4 and Example 7.1.2]{fulton}.
This concludes the proof of the lemma. 
\end{proof}

\begin{lemma} \label{lem:del-vaprhi=cl}
Let $X$ be a smooth equi-dimensional algebraic scheme over a perfect field $k$.
Let $\Gamma\in Z^{c}(X )$ be a cycle with support $Z:=\supp(\Gamma)$.
Assume that there is a closed subset $W\subset X$ of codimension $c-1$  with $Z\subset W$ such that $\Gamma$, viewed as a cycle on $W$, is rationally equivalent to zero on $W$.
Then there is a class
$$
\varphi \in H^{2c-1}_{W\setminus Z}(X\setminus Z,\Z_\ell (c))\quad \text{with }\ \del \varphi=\cl_Z^{X}(\Gamma)\in H^c_Z(X,\Z_\ell (c)).
$$
\end{lemma}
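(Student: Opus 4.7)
The plan is to deduce the existence of $\varphi$ from the vanishing of a single cycle class, and then establish that vanishing via a pushforward from a smooth alteration of $W$.

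First, I would use the long exact sequence of the triple $(X,W,Z)$ from Lemma~\ref{lem:les-pairs}: the class $\cl_Z^X(\Gamma)$ lies in the image of $\partial$ if and only if its image under $\iota_\ast\colon H^{2c}_Z(X,\Z_\ell(c))\to H^{2c}_W(X,\Z_\ell(c))$ vanishes, and by Lemma~\ref{lem:cycle-class}(\ref{item:lem:cycle-class:1}) this image equals $\cl_W^X(\Gamma)$. So the task reduces to proving $\cl_W^X(\Gamma)=0$. By the definition of rational equivalence for codimension-one cycles on $W$, I may write $\Gamma=\sum_j (\iota_{V_j})_\ast\Div_{V_j}(f_j)$ where the $V_j\subset W$ are the irreducible components and $f_j\in k(V_j)^\ast$; combining the additivity in Lemma~\ref{lem:cycle-class}(\ref{item:lem:cycle-class:2}) with Lemma~\ref{lem:cycle-class}(\ref{item:lem:cycle-class:1}) applied to each inclusion $V_j\subset W$, it suffices to treat the components one at a time. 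I may therefore assume that $W$ is integral of pure codimension $c-1$ in $X$ and $\Gamma=\Div_W(f)$ for some $f\in k(W)^\ast$.

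The main step is a pushforward from a smooth alteration. Since $k$ is perfect and $\ell$ is invertible in $k$, Gabber's prime-to-$\ell$ refinement of de~Jong's alteration theorem yields a proper surjective generically finite morphism $\pi\colon \tilde W\to W$ with $\tilde W$ smooth equi-dimensional and $d:=[k(\tilde W):k(W)]$ coprime to $\ell$ (in characteristic zero, Hironaka's resolution of singularities gives the same with $d=1$). Writing $\iota_W\colon W\hookrightarrow X$ for the closed immersion, the composition $\tilde \pi := \iota_W\circ \pi\colon \tilde W\to X$ is a proper morphism between smooth equi-dimensional $k$-schemes of pure relative codimension $c-1$. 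The classical projection formula for Weil divisors under a generically finite proper map (see e.g.\ \cite[Proposition~1.4]{fulton}) yields
\[
\tilde\pi_\ast\bigl(\Div_{\tilde W}(\pi^\ast f)\bigr) \;=\; \Div_W\bigl(N_{k(\tilde W)/k(W)}(\pi^\ast f)\bigr) \;=\; \Div_W(f^d) \;=\; d\cdot\Gamma
\]
as codimension-$c$ cycles on $X$. Writing $\Sigma:=\supp(\Div_{\tilde W}(\pi^\ast f))$, Lemma~\ref{lem:pullback-cycle-class}(\ref{item:pushforward-cycle-class}) applied to $\tilde\pi$ gives
\[
\tilde\pi_\ast \cl^{\tilde W}_\Sigma\bigl(\Div_{\tilde W}(\pi^\ast f)\bigr)\;=\; \cl^X_{\tilde\pi(\Sigma)}(d\,\Gamma)\;=\; d\cdot \cl^X_{\tilde\pi(\Sigma)}(\Gamma)
\]
in $H^{2c}_{\tilde\pi(\Sigma)}(X,\Z_\ell(c))$. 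Enlarging supports on both sides (using Lemma~\ref{lem:cycle-class}(\ref{item:lem:cycle-class:1}) together with the compatibility of $\tilde\pi_\ast$ with support enlargement, a consequence of Lemma~\ref{lem:les-pairs}(\ref{item:les:pushforward}) applied to the triples $(\tilde W,\tilde W,\Sigma)$ and $(X,W,\tilde\pi(\Sigma))$), the right-hand side becomes $d\cdot \cl_W^X(\Gamma)$, while the left-hand side becomes $\tilde\pi_\ast\cl^{\tilde W}_{\tilde W}(\Div_{\tilde W}(\pi^\ast f))$. The latter vanishes because $\tilde W$ is smooth and $\Div_{\tilde W}(\pi^\ast f)$ is principal, so the corresponding cycle class in $H^2(\tilde W,\Z_\ell(1))$ equals $c_1(\mathcal O_{\tilde W}(\Div_{\tilde W}\pi^\ast f)) = c_1(\mathcal O_{\tilde W}) = 0$. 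Hence $d\cdot \cl_W^X(\Gamma)=0$ in $H^{2c}_W(X,\Z_\ell(c))$, and since $d$ is a unit in $\Z_\ell$ we conclude $\cl_W^X(\Gamma)=0$.

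The main obstacle is the existence of a prime-to-$\ell$ smooth alteration: in positive characteristic this requires Gabber's nontrivial refinement of de~Jong's theorem, and is the only non-elementary input. The remaining ingredients -- the reduction to the integral principal-divisor case, the projection formula for Weil divisors, the vanishing of $c_1$ of a trivial line bundle, and the compatibility of proper pushforward with enlargement of support -- are routine.
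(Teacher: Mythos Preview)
Your proof is correct but takes a genuinely different route from the paper. The paper constructs $\varphi$ explicitly: it passes to the normalization $\tau\colon W'\to W$, writes $\Gamma=\tau_\ast\Div(\varphi')$ for a rational function $\varphi'$ on $W'$, uses Lemma~\ref{lem:purity-les} to shrink $X$ so that $W'$, $Z$, and $\tau^{-1}(Z)$ become smooth, pushes forward the Kummer class $(1/\varphi')\in H^1(W'\setminus\tau^{-1}(Z),\Z_\ell(1))$ along $\tau$ to obtain $\varphi$, and then checks $\partial\varphi=\cl_Z^X(\Gamma)$ directly from the compatibility of $\partial$ with proper pushforward (Lemma~\ref{lem:les-pairs}(\ref{item:les:pushforward})). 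You instead reverse the logical dependence between Lemmas~\ref{lem:del-vaprhi=cl} and~\ref{lem:cl=0}: you prove $\cl_W^X(\Gamma)=0$ first (which is precisely the content of Lemma~\ref{lem:cl=0}) and deduce the mere existence of $\varphi$ from the long exact sequence.

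Your argument is clean, but the appeal to Gabber's prime-to-$\ell$ alteration is a substantially deeper input than anything the paper uses, and it is not actually needed for your strategy. The normalization $\tau\colon W'\to W$ already suffices: it is birational (so $d=1$), and $W'$ is regular in codimension one, so after removing from $X$ the image of the singular locus of $W'$ --- a closed subset of codimension at least $c+1$, which by Lemma~\ref{lem:purity-les} leaves $H^{2c}_W(X,\Z_\ell(c))$ unchanged --- the induced map from the remaining smooth open part of $W'$ to the open part of $X$ is still proper between smooth equi-dimensional schemes, and your pushforward argument goes through verbatim with $d=1$. The paper's direct construction has the advantage of producing an explicit $\varphi$ in terms of the rational function witnessing the rational triviality; your approach is structurally simpler once the vanishing $\cl_W^X(\Gamma)=0$ is in hand, but is non-constructive.
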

\begin{proof}
We endow $W$ with the canonical reduced scheme structure and consider the normalization $\tau\colon W'\to W$.
The assumptions imply that there is a rational function $\varphi'$ on $W'$ such that $\Gamma=\tau_\ast \Div(\varphi')$.  

By Lemma \ref{lem:purity-les}, the groups in question do not change when we remove from $X$ a closed subset of codimension at least $c+1$.
Since $W'$ is regular in codimension $1$,  we may thus without loss of generality assume that $W'$ is regular, hence smooth.
For the same reason, we may assume that $Z$ as well as the preimage $Z'=\tau^{-1}(Z)$ are smooth.
We thus have a pushforward map 
$$
\tau_\ast\colon H^1(W'\setminus Z',\Z_\ell (1))\longrightarrow H^{2c-1}_W(X\setminus Z,\Z_\ell (c)),
$$
 given by \eqref{eq:f_*A(n)}.
Via Jannsen's Kummer sequence \cite[Equation~(3.27)]{jannsen}, the rational function $\varphi'$ yields a class $(\varphi') \in H^1(W'\setminus Z',\Z_\ell (1))$,  and we let 
$$
\varphi:=\tau_\ast (1/\varphi')\in H^{2c-1}_{W\setminus Z}(X\setminus Z,\Z_\ell (c)).
$$
The image of $(\varphi')$ via the residue map $ H^1(W'\setminus Z',\Z_\ell (1))\to H^2_{Z'}(W' ,\Z_\ell (1))\cong H^0(Z',\Z_\ell (0))$ coincides with the cycle given by $-\Div(\varphi')$ (indeed, it suffices to prove this modulo $\ell^r$, hence for \'etale cohomology with coefficients in $\mu_{\ell^r}$, where it follows from \cite[Lemme 2.3.6]{SGA4.5} and the anti-commutativity of the diagram in \cite[Section~2.1.3]{SGA4.5}).
Since by Lemma \ref{lem:les-pairs}, the pushforwards from \eqref{eq:f_*A(n)} induce  commutative ladders between the respective long exact sequences of triples, we find that 
$$
\del(\varphi)=
\del(\tau_\ast (1/\varphi'))=\tau_\ast(\del (1/\varphi'))=-\tau_\ast(\del (\varphi'))=\cl_Z^X(\tau_\ast \Div(\varphi'))=\cl_Z^X(\Gamma) .
$$ 
This concludes the proof. 
\end{proof}

\begin{lemma} \label{lem:cl=0}
In the notation of Lemma \ref{lem:del-vaprhi=cl}, we have 
 $\cl_W^X(\Gamma)=0\in  H^{2c}_W(X,\Z_\ell (c))$.
\end{lemma}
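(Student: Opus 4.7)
The plan is to combine Lemma \ref{lem:del-vaprhi=cl} with the exactness of the long exact sequence of triples from Lemma \ref{lem:les-pairs} and the compatibility of cycle classes with enlargement of support from Lemma \ref{lem:cycle-class}(\ref{item:lem:cycle-class:1}). Concretely, Lemma \ref{lem:del-vaprhi=cl} produces a class $\varphi \in H^{2c-1}_{W\setminus Z}(X\setminus Z,\Z_\ell(c))$ whose residue equals $\cl_Z^X(\Gamma)$, so $\cl_Z^X(\Gamma)$ lies in the image of $\del$ from the long exact sequence of the triple $(X,W,Z)$.

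By Lemma \ref{lem:les-pairs}, the sequence
\[
H^{2c-1}_{W\setminus Z}(X\setminus Z,\Z_\ell(c))\stackrel{\del}\longrightarrow H^{2c}_Z(X,\Z_\ell(c))\stackrel{\iota_\ast}\longrightarrow H^{2c}_W(X,\Z_\ell(c))
\]
is exact. Hence $\iota_\ast(\cl_Z^X(\Gamma)) = \iota_\ast(\del\varphi) = 0$. Finally, Lemma \ref{lem:cycle-class}(\ref{item:lem:cycle-class:1}) tells us that $\cl_W^X(\Gamma) = \iota_\ast(\cl_Z^X(\Gamma))$, so $\cl_W^X(\Gamma) = 0$, as desired. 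There is no real obstacle here: the lemma is a direct combinatorial consequence of the three preceding results.
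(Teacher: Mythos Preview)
Your proof is correct and follows essentially the same approach as the paper: both argue that $\cl_W^X(\Gamma)$ is the image of $\cl_Z^X(\Gamma)$ under $\iota_\ast$, and that the latter lies in the image of $\del$ by Lemma~\ref{lem:del-vaprhi=cl}, hence dies under $\iota_\ast$ by exactness of the long exact sequence of triples. (The paper cites Lemma~\ref{lem:pullback-cycle-class} for the compatibility $\cl_W^X(\Gamma)=\iota_\ast\cl_Z^X(\Gamma)$, but your citation of Lemma~\ref{lem:cycle-class}(\ref{item:lem:cycle-class:1}) is the more accurate one.)
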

\begin{proof}
By  Lemma \ref{lem:pullback-cycle-class}, $\cl_W^X(\Gamma) $ is the image of $\cl_Z^X(\Gamma)$ via the natural map $H^{2c}_Z(X,\Z_\ell (c))\to H^{2c}_W(X,\Z_\ell (c))$.
The assertion in the lemma thus follows from the long exact sequence of triples (see Lemma \ref{lem:les-pairs}) together with
the fact that $ \del \varphi=\cl_Z^{X}(\Gamma)\in H^{2c}_Z(X,\Z_\ell (c))$.
\end{proof}

\subsection{Proof of Proposition  \ref{prop:pro-etale-coho}} \label{subsec:proof-prop-appendix}

\begin{proof}[Proof of Proposition \ref{prop:pro-etale-coho}] 
By Lemma \ref{lem:BS-H_cont=H_proet},  item  \eqref{item:prop:continuous-etale-coho} follows from item \eqref{item:prop:pro-etale-coho} applied to the complex $K$ given by the pro-\'etale sheaf $\lim \nu^\ast F_r$ placed in degree zero.
We further claim that item  \eqref{item:prop:etale-coho} follows from item \eqref{item:prop:continuous-etale-coho}.
To see this, note that \'etale cohomology commutes with direct limits in the coefficients; see \cite[\href{https://stacks.math.columbia.edu/tag/09YQ}{Tag 09YQ}]{stacks-project}.
It thus suffices to prove  item  \eqref{item:prop:etale-coho}  in the case where there is some integer $r$ so that $\mathcal F=\pi_X^\ast F$ is an $\ell^r$-torsion \'etale sheaf on   $X_\et$.
By \cite[Equation~(3.1)]{jannsen}, the result then follows  from  item \eqref{item:prop:continuous-etale-coho} applied to the constant inverse system $(\mathcal F)_r$. 

Altogether we have thus seen that it suffices to prove
 item \eqref{item:prop:pro-etale-coho}.
 To this end, we use the notation $H^i_Z(X,n):=H^\ast _{Z}(X_\proet, (\pi_{X})_{\comp}^\ast K \otimes _{\widehat \Z_\ell} \widehat \Z_{\ell}(n))$ from above.
The pullback maps from Lemma \ref{lem:f_*-f*} make $(X,Z)\mapsto H^i_Z(X,n)$ into a functor as we want.
If the complex $K$ is concentrated in non-negative degrees, then semi-purity (\textit{i.e.} condition~\ref{item:semi-purity}) follows from Lemma \ref{lem:semi-purity}.

Condition~\ref{item:excision} follows from Lemma \ref{lem:excision}, condition~\ref{item:f_*} follows from Lemmas \ref{lem:f_*-f*} and \ref{lem:pull-push-compatible}, and condition~\ref{item:les-triple} follows from Lemma \ref{lem:les-pairs}.
It thus remains to prove condition
\ref{item:action-of-cycles}.
The action of cycles is given by cup product and the cycle class map; see Lemmas \ref{lem:cup} and \ref{lem:cycle-class}.
Condition \ref{item:action-of-cycles-rationally-trivial} then follows from Lemma~\ref{lem:cl=0}.\looseness=-1

To prove condition~\ref{item:action-of-cycles:projection-formula}, let $\iota:W\hookrightarrow X$ be a closed embedding with $W$ smooth and irreducible of codimension $c$, and let $\Gamma=[W]\in Z^c(X)$.
Let  $\alpha\in H^i_Z(X,n)$, and note that $\cl_X^X(X)\in H^0(X,\Z_\ell(0))$ satisfies $\cl_X^X(X)\cup \alpha=\alpha$. 
The projection formula in Lemma \ref{lem:projection-formula} thus yields 
$$
\iota_\ast \iota^\ast \alpha=\iota_\ast (\iota^\ast \cl_X^X(X) \cup \iota^\ast \alpha)=(\iota_\ast \iota^\ast \cl_X^X(X))  \cup \alpha .
$$
By Lemma \ref{lem:pullback-cycle-class}, we have $\iota_\ast \iota^\ast \cl_X^X(X) =\iota_\ast \cl_W^W(W)=\cl_W^X(\Gamma)$, and so the above equation reads 
$
\iota_\ast \iota^\ast \alpha=\cl_W^X(\Gamma)\cup \alpha
$,
as we want. 

Condition~\ref{item:action-of-cycles:enlargening-the-support} follows  from Lemma \ref{lem:cup-enlarge-support}. 
Condition~\ref{item:action-of-cycles:compatible-with-restriction} follows from the functoriality of the cup product in Lemma \ref{lem:cup} with respect to pullbacks (hence with respect to restrictions), together with the compatibility of the cycle class map with flat pullback from Lemma \ref{lem:pullback-cycle-class}.

The formula $\cl_W^X(\Gamma)\cup  (\cl_{W'}^X(\Gamma')\cup  \alpha)=\cl^X_{W\cap W'}(\Gamma\cdot \Gamma')\cup \alpha $  in condition~\ref{item:action-of-cycles:compatible-with-intersections} follows from the associativity of cup products in \eqref{eq:cup-RGamma} together with the formula $\cl_W^X(\Gamma)\cup  \cl_{W'}^X(\Gamma')= \cl^X_{W\cap W'}(\Gamma\cdot \Gamma')$ from Lemma \ref{lem:cycle-class-cup}, where we use that $W=\supp \Gamma$ meets $W'=\supp \Gamma'$ properly by assumption.

It remains to prove condition~\ref{item:action-of-cycles:projection-formula-everything}.
If $f\colon X'\to X$ is flat, then the formula $f^\ast(\cl_W^X(\Gamma)\cup  \alpha)=\cl_{f^{-1}(W)}^X(f^\ast\Gamma)\cup f^\ast \alpha$ follows from the compatibility of cup products with pullbacks (see Lemma \ref{lem:cup}) together with the fact that $f^\ast \cl_W^X(\Gamma)=\cl_{f^{-1}(W)}^X(f^\ast\Gamma)$ by Lemma \ref{lem:pullback-cycle-class}.
Finally, if  $f\colon X'\to X$ is smooth and proper, the formula $f_\ast(\cl_{W'}^{X'}(\Gamma)\cup f^\ast \alpha)=\cl_{f(W')}^X(f_\ast\Gamma)\cup \alpha $ follows from Lemma  \ref{lem:projection-formula}\eqref{item:lem:proj-formula-1} together with Lemma \ref{lem:pullback-cycle-class}, while $ f_\ast(\cl_{W}^{X}(f^\ast \Gamma)\cup \alpha)=\cl_{W}^X(\Gamma)\cup f_\ast \alpha $ follows from Lemma  \ref{lem:projection-formula}\eqref{item:lem:proj-formula-2}  because $\cl_{W}^{X}(f^\ast \Gamma)=f^\ast\cl_{W}^{X}(\Gamma) $ by Lemma \ref{lem:pullback-cycle-class}.

This concludes the proof of the proposition.
\end{proof}

\section{Comparison with action on algebraic cycles} \label{ap:B}
 
\subsection{Comparison to the action on Chow groups} \label{subsec:comparison-act-on-Chow}  
Let $X$ be a  smooth projective equi-dimensional scheme over a perfect field $k$, and let $\ell$ be a prime that is invertible in $k$.
For $Z\subset X$ closed, we let as before
$$
H^i_Z(X,\Z_\ell(n)):=\RR^i\Gamma_Z(X_\proet,\widehat \Z_\ell(n))  
\quad \text{and}\quad   H^i(X,\Z_\ell(n)):=H^i_X(X,\Z_\ell(n)).
$$

The theory of refined unramified cohomology developed in \cite{Sch-refined} relies on an $\ell$-adic Borel--Moore cohomology theory $H^i_{\BM}(Z,\Z_\ell(n))$ for each algebraic $k$-scheme $Z$; see \cite[Section 4 and Proposition~6.6]{Sch-refined}. 
If $X$ is a equi-dimensional smooth algebraic $k$-scheme and  $Z\subset X$ is closed of codimension $c$ (not necessarily of pure dimension), then by \eqref{eq:RGamma_Z} and Lemmas \ref{lem:f!g!=(gf)!} and \ref{lem:PD}, the Borel--Moore cohomology of $Z$ is given by 
$$
H^i_{\BM}(Z,\Z_\ell(n))=H^{i+2c}_Z(X,\Z_\ell(n+c)).
$$
In particular, $H^i_{\BM}(X,\Z_\ell(n)=H^i(X,\Z_\ell(n))$ for $X$ smooth and equi-dimensional.

We let $\CH^i(X)_{\Z_\ell}:=\CH^i(X)\otimes_\Z \Z_\ell$.
The coniveau filtration $N^\ast$ on $\CH^i(X)_{\Z_\ell}$ is the decreasing filtration given by the condition that a class $[z]\in \CH^i(X)_{\Z_\ell}$ lies in $N^j\CH^i(X)_{\Z_\ell}$ if and only if $[z]$ can be represented by a cycle $z$ that is homologically trivial on a closed subset $W\subset X$ of codimension $j$; \textit{i.e.} $\supp z\subset W$ with $\cl_W^X(z)=0\in H^{2i}_W(X,\Z_\ell(i))$; see \cite{bloch-DMJ,jannsen-3} or \cite[Definition 7.3]{Sch-refined}. 
For instance, 
$ 
N^{0} \CH^i(X)_{\Z_\ell}
$ 
is the space of $\ell$-adic cycles with trivial cycle class in $H^{2i}(X,\Z_\ell(i))$.
Moreover, 
$ 
N^{i-1} \CH^i(X)_{\Z_\ell}
$ 
is zero if $k$ is finitely generated (or an inseparable extension thereof), and  it 
is the space of algebraically trivial $\ell$-adic cycles of codimension $i$ if $k$ is algebraically closed; see  \cite[Proposition 6.6 and Lemma 7.5]{Sch-refined} or \cite[Lemmas 5.7 and 5.8]{jannsen-3} for the corresponding rational statements.

We define
\begin{align*} 
A ^i(X)_{\Z_\ell}:=\CH^i(X)_{\Z_\ell}/N^{i-1}\CH^i(X)_{\Z_\ell}.
\end{align*}  
By what we have said above, this is the $\ell$-adic Chow group of algebraic cycles modulo rational equivalence if $k$ is finitely generated (or the perfect closure of such a field), while it is the group of $\ell$-adic cycles modulo algebraic equivalence if $k$ is algebraically closed.
We further let $A_0^i(X)_{\Z_\ell}\subset A ^i(X)_{\Z_\ell}$ be the subspace of cycles with trivial cycle class on $X$, \textit{i.e.}
\begin{align} \label{eq:A0iX}
A_0^i(X)_{\Z_\ell} =N^0\CH^i(X)_{\Z_\ell}/N^{i-1}\CH^i(X)_{\Z_\ell}.
\end{align}   
By  \cite[Lemma 7.4 and Proposition 7.11]{Sch-refined}, there is  a canonical isomorphism
\begin{align} \label{eq:Grifft=H_nr}
 \frac{H^{2i-1}_{i-2,\nr}(X,\Z_\ell(i))}{H^{2i-1}(X,\Z_\ell(i))} \stackrel{\lowcong}\longrightarrow \Grifft ^i(X)_{\Z_\ell} .
\end{align}  
 
By Corollary \ref{cor:motivic-body}\eqref{item:thm:motivic-body:restriction}, the action of correspondences on refined unramified cohomology  descends to an action on the left-hand side of \eqref{eq:Grifft=H_nr}, where we use $H^{2i-1}_{m,\nr}(X,\Z_\ell(i))=H^{2i-1}(X,\Z_\ell(i))$ for $m\geq \dim X $.  
There is also a natural action on the right-hand side of \eqref{eq:Grifft=H_nr}.
The main result of this appendix shows that both actions agree with each other.

\begin{proposition} \label{prop:Gamma-ast-compat-Chow}
Let $X$ and $Y$ be  smooth projective equi-dimensional  schemes over a perfect  field $k$, let $d_X:=\dim (X)$, and let $\ell$ be a prime that is invertible in $k$.
Then the bilinear pairing 
$$
\CH^{c}(X\times Y)\times \frac{H^{2i-1}_{i-2,\nr}(X,\Z_\ell(i))}{H^{2i-1}(X,\Z_\ell(i))}\longrightarrow \frac{H^{2(i+c-d_X)-1}_{i+c-d_X-2,\nr}(Y,\Z_\ell(i+c-d_X))}{H^{2(i+c-d_X)-1}(Y,\Z_\ell(i+c-d_X))}
$$
induced by the pairing on refined unramified cohomology in Corollary \ref{cor:motivic-body} agrees via the isomorphism in \eqref{eq:Grifft=H_nr} with the natural action on $A_0^i(X)_{\Z_\ell} =N^0\CH^i(X)_{\Z_\ell}/N^{i-1}\CH^i(X)_{\Z_\ell}$. 
\end{proposition}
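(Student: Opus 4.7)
The plan is to trace both actions through an explicit description of the isomorphism (\ref{eq:Grifft=H_nr}) in terms of residue maps, and then reduce the desired compatibility to standard compatibilities of cycle classes with pullback, pushforward, cup product, and the boundary map. Concretely, I would recall from \cite[\S 4, \S 6]{Sch-refined} (cf.\ also Lemmas \ref{lem:del-vaprhi=cl} and \ref{lem:cl=0}) that any class $[\alpha]$ on the left-hand side of (\ref{eq:Grifft=H_nr}) is represented by some $\alpha\in H^{2i-1}(U,\Z_\ell(i))$ with $U=X\setminus Z$ and $\codim_X Z\geq i$, that the residue $\partial\alpha\in H^{2i}_Z(X,\Z_\ell(i))$ equals $\cl^X_Z(z)$ for a unique $\ell$-adic cycle $z\in Z^i(X)_{\Z_\ell}$ with $\supp z\subset Z$, and that $[\alpha]\mapsto[z]$ descends to the claimed isomorphism into $A_0^i(X)_{\Z_\ell}$.

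Next, for a fixed $[\Gamma]\in\CH^c(X\times Y)$ and a class $[\alpha]$ as above, I would apply Chow's moving lemma (Theorem \ref{thm:moving}) to the natural map from $Z\sqcup \supp z$ into $X\subset X\times Y$ in order to choose a representative $\Gamma$ of $[\Gamma]$ whose support $W:=\supp\Gamma$ has codimension $c$ and meets both $Z\times Y$ and $\supp z\times Y$ dimensionally transversely. This will simultaneously ensure the transversality hypothesis of Corollary \ref{cor:motivic-body}(\ref{item:thm:motivic-body:Gamma(W)}) (so that $[\Gamma]_\ast[\alpha]$ is represented by the concrete class $\Gamma(W)_\ast(\alpha)$ from Lemma \ref{lem:Gamma-ast}) and the proper-intersection hypothesis of Lemma \ref{lem:cycle-class-cup} for $\Gamma$ and $p^\ast z$.

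The main calculation is then to compute the residue $\partial\bigl(\Gamma(W)_\ast\alpha\bigr)$ and match it with the cycle class of $q_\ast(\Gamma\cdot p^\ast z)$. Inspecting the five-step composition (\ref{eq:big-composition-Gamma}) defining $\Gamma(W)_\ast$, I would verify that $\partial$ commutes with each step: commutativity with $p^\ast$, with excision, and with restriction follows from functoriality of the long exact sequence of triples (Lemma \ref{lem:les-pairs}(\ref{item:les:pullback})); commutativity with the proper pushforward $q_\ast$ is Lemma \ref{lem:les-pairs}(\ref{item:les:pushforward}); the remaining step, commutativity with cup product by the fixed cycle class $\cl^{U\times Y}_{W\setminus W_R}(\Gamma|_{U\times Y})$, would be obtained from the derived-category construction of cup products in Section \ref{subsubsec:cup}. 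Combining these gives
\[
\partial\bigl(\Gamma(W)_\ast\alpha\bigr)=q_\ast\Bigl(\cl^{X\times Y}_{W}(\Gamma)\cup p^\ast\partial\alpha\Bigr).
\]
Substituting $\partial\alpha=\cl^X_Z(z)$, rewriting $p^\ast\cl^X_Z(z)=\cl^{X\times Y}_{Z\times Y}(p^\ast z)$ via Lemma \ref{lem:pullback-cycle-class}(\ref{item:pullback-cycle-class}), applying Lemma \ref{lem:cycle-class-cup} to identify the cup product of cycle classes with the cycle class of $\Gamma\cdot p^\ast z$, and finally applying Lemma \ref{lem:pullback-cycle-class}(\ref{item:pushforward-cycle-class}) to commute the cycle class past $q_\ast$, one obtains
\[
\partial\bigl(\Gamma(W)_\ast\alpha\bigr)=\cl^{Y}_{q(W\cap(Z\times Y))}\bigl(q_\ast(\Gamma\cdot p^\ast z)\bigr),
\]
which, read through the isomorphism (\ref{eq:Grifft=H_nr}) on the target side, is precisely $[\Gamma]_\ast[z]\in A_0^{i+c-d_X}(Y)_{\Z_\ell}$.

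The main obstacle will be the commutativity of $\partial$ with the cup product step: the class $\cl^{U\times Y}_{W\setminus W_R}(\Gamma|_{U\times Y})$ is itself a class with support, and one must argue that the boundary map, which is sensitive to support conditions, interacts correctly with the supports involved. The cleanest way forward is to return to the derived-category origin of the cup product in Section \ref{subsubsec:cup} and observe that cupping with a fixed class is induced by a natural transformation of functors on $D(X_\proet,\widehat\Z_\ell)$, hence is automatically compatible with the distinguished triangles underlying the long exact sequence of triples. A secondary but nontrivial bookkeeping issue, handled by a single joint application of the moving lemma as above, is to arrange all the transversality conditions in play simultaneously.
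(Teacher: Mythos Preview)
Your proposal is correct and follows essentially the same route as the paper: represent $[\alpha]$ by a class on an open $U$ with $\partial\alpha=\cl_Z^X(z)$, use the moving lemma to put $W=\supp\Gamma$ in good position, and then commute $\partial$ past each step of the composition (\ref{eq:big-composition-Gamma}) to obtain $\partial(\Gamma(W)_\ast\alpha)=\cl^Y(q_\ast(\Gamma\cdot p^\ast z))$. The one technical point you flag---compatibility of $\partial$ with the cup product by a class with support---is precisely what the paper isolates as Lemma \ref{lem:les-pairs-cup}, proved exactly as you suggest by going back to the derived-category description of the cup product from Section \ref{subsubsec:cup}.
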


Proposition~\ref{prop:Gamma-ast-compat-Chow} relies on the results recalled in Appendix \ref{app:A}, together with the following result (we will only need the special case where $W=U=X$, but we state and prove the more general version below).

\begin{lemma}[Compatibility of cup products with residue maps] \label{lem:les-pairs-cup} 
Let $X$ be an algebraic $k$-scheme.
Let $Z\subset W\subset X$ and $Y\subset X$ be closed subsets.
Let $U\subset X$ be an open subset with $W\cap Y\subset U$. 
 
Then for any $\beta \in H^j_{Y}(U,\Z_\ell(m))$, the following diagram is commutative:
$$
\xymatrix{
 H^i_{W }(U\setminus Z,n)\ar[r]^-{\del} \ar[d]^{\cup \beta|_{U\setminus Z}} & H^{i+1}_{Z}(U,n)\ar[d]^{\cup \beta} \\
 H^{i+j}_{W\cap Y}(U\setminus Z, n+m ) & H^{i+j+1}_{Z\cap Y}(U, n+m ) \\
  H^{i+j}_{W\cap Y }(X\setminus (Z\cap Y), n+m )\ar[u]^{\cong}\ar[r]^-{\del} & H^{i+j+1}_{Z\cap Y}(X, n+m ) \ar[u]^{\cong} ,
}
$$
where the cohomology groups in question are those from Proposition \ref{prop:pro-etale-coho}\,\eqref{item:prop:pro-etale-coho}  $($and the notation $H^i_Z(U,n):=H^i_{Z\cap U}(U,n)$ etc.\ is used\,$)$, the horizontal maps are parts of the long exact sequence in Lemma \ref{lem:les-pairs}, the upper vertical maps are the cup product maps from Lemma \ref{lem:cup}, and the lower vertical maps are the isomorphisms given by pullback and excision $($where we use $W\cap Y\subset U$ and $Z\cap Y\subset U)$.
\end{lemma}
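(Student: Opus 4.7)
The plan is to prove commutativity at the level of complexes by exploiting the explicit description of cup products via Godement resolutions from Section \ref{subsubsec:cup}, combined with the naturality of the connecting morphism in the snake lemma.

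First, I would unwind the construction of $\del$. Following the proof of Lemma \ref{lem:les-pairs}, the long exact sequence of triples arises by applying $\Gamma(U_\proet,-)$ to a short exact sequence of complexes of sheaves of the shape
$$0 \longrightarrow \mathcal G^\bullet_Z(U,M) \longrightarrow \mathcal G^\bullet_W(U,M) \longrightarrow \mathcal G^\bullet_{W\setminus Z}(U\setminus Z,M) \longrightarrow 0,$$
where $\mathcal G^\bullet_A(V,M)$ denotes the subcomplex of sections on $V$ supported on $A$ of a Godement resolution $\mathcal G^\bullet(\mathcal F^\bullet(M))$ of a flat resolution of $M = (\pi_U)_{comp}^\ast K\otimes \widehat\Z_\ell(n)$. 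The map $\del$ in the top row is the connecting homomorphism of this sequence; the $\del$ in the bottom row is the analogous connecting homomorphism for $X$ in place of $U$ and supports $Z\cap Y\subset W\cap Y$ in place of $Z\subset W$, with coefficients twisted by $L := (\pi_X)_{comp}^\ast K\otimes \widehat\Z_\ell(m+n)$.

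Second, I would represent $\beta$ by a cocycle $\tilde\beta$ of degree $j$ in $\Gamma_Y(U_\proet,\mathcal G^\bullet((\pi_U)_{comp}^\ast K\otimes\widehat\Z_\ell(m)))$. By the explicit formula recalled in Section \ref{subsubsec:cup}, the cup product with $\beta$ on $H^\ast_A(U,n)$ is induced by the chain map ``tensor with $\tilde\beta$''. Since a product of local sections vanishes outside the intersection of their supports, this chain map sends sections supported on $A$ to sections supported on $A\cap Y$. Applied compartment-wise to the sequence in Step~1, tensoring with $\tilde\beta$ produces a morphism of short exact sequences from the one above to the analogous sequence with supports $Z\cap Y\subset W\cap Y$ and $(W\setminus Z)\cap Y$, and coefficients $L$. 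By the naturality of the connecting homomorphism arising from the snake lemma, this yields commutativity of the upper rectangle of the diagram.

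Third, for the lower rectangle I would invoke the functoriality of the long exact sequence of triples with respect to pullback along the open immersion $U\hookrightarrow X$ (Lemma \ref{lem:les-pairs}(\ref{item:les:pullback})). The hypotheses $W\cap Y\subset U$ and $Z\cap Y\subset U$, together with $Z\subset W$, imply $(W\cap Y)\setminus Z = W\cap Y\setminus (Z\cap Y)\subset U\setminus Z$ and $(Z\cap Y)\subset U$, so that excision (Lemma \ref{lem:excision}) provides the vertical isomorphisms and these are compatible with $\del$.

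The main obstacle is the bookkeeping with supports: one must verify strictly at the level of Godement complexes that tensoring with $\tilde\beta$ maps the subcomplexes of sections supported on $Z$, $W$, $W\setminus Z$ into those supported on $Z\cap Y$, $W\cap Y$, $(W\setminus Z)\cap Y$ respectively, and that this gives a genuine morphism of short exact sequences (not merely up to homotopy), so that the snake lemma applies directly. Once this functoriality is established, the commutativity of both rectangles is routine.
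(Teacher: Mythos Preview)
Your proposal is correct and follows essentially the same route as the paper's proof: both arguments tensor the exact sequence underlying the long exact sequence of the triple $(U,W,Z)$ with (a complex computing) $\beta$, observe that this respects supports and hence yields a map of sequences, deduce compatibility with $\del$ from naturality of the connecting map, and finally compare with the triple on $X$ via excision. The paper phrases this at the level of exact triangles in the derived category (tensoring the triangle with $\RR\Gamma_Y(U,L)$ and mapping via the cup-product morphism (\ref{eq:cup-RGamma})), whereas you work directly at the cochain level with Godement resolutions and the snake lemma; these are two presentations of the same argument. One small slip: the cocycle $\tilde\beta$ lives in $\Gamma_Y(U_\proet,\mathcal G^\bullet(\widehat\Z_\ell(m)))$, without the factor $(\pi_U)_{comp}^\ast K$, since $\beta\in H^j_Y(U,\Z_\ell(m))$.
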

\begin{proof} 
We write $Z_Y:=Z\cap Y$ and $W_Y:=W\cap Y$.
For $U\subset X$ open, we will further use the notation $\RR \Gamma_Z(U,-):=\RR \Gamma_{Z\cap U}(U,-)$ and so on.
Then let $M,L\in D_{\cons}(X_{\proet},\widehat{\Z}_\ell )$, and consider the exact triangle
$$
\RR \Gamma_{Z}(U,M)\longrightarrow \RR \Gamma_{W}(U,M)\longrightarrow \RR \Gamma_{W\setminus Z}(U\setminus Z,M)
$$
in $D(\Mod_{\Z_\ell})$ from the proof of Lemma \ref{lem:les-pairs}, where by slight abuse of notation we do not distinguish between $M$ and the pullback to the respective open subsets above.

We may take the derived tensor product of the above triangle with the complex $\RR \Gamma_{Y}(U,L)$.
This gives rise to an exact triangle
\begin{align*}
\RR \Gamma_{Z}(U,M)\otimes_{\Z_\ell}^{\mathbb L} \RR \Gamma_{Y}(U,L)\longrightarrow \RR \Gamma_{W}&(U,M)\otimes_{\Z_\ell}^{\mathbb L} \RR \Gamma_{Y}(U,L)
\longrightarrow \RR \Gamma_{W\setminus Z}(U\setminus Z,M) \otimes ^{\mathbb L}_{\Z_\ell}  \RR \Gamma_{Y}(U,L).
\end{align*}
The maps in \eqref{eq:cup-RGamma} yield a map from this triangle 
to the sequence
\begin{align} \label{eq:lem:les-pairs-cup}
\RR \Gamma_{ Z_Y }(U,M\otimes_{\widehat \Z_\ell}^{\mathbb L} L)\longrightarrow \RR \Gamma_{W_Y}(U,M\otimes_{\widehat \Z_\ell}^{\mathbb L}  L)\longrightarrow \RR \Gamma_{ W_ Y  \setminus   Z_Y}(U\setminus Z,M\otimes_{\widehat \Z_\ell}^{\mathbb L}  L)
\end{align} 
such that the corresponding diagram commutes (as it commutes before taking derived functors).
Note that the sequence in \eqref{eq:lem:les-pairs-cup} is \textit{a priori} not an exact triangle.
However, the natural restriction maps yield a map from the  exact  triangle
$$
\RR \Gamma_{Z_Y }(X,M\otimes_{\widehat \Z_\ell}^{\mathbb L} L)\longrightarrow \RR \Gamma_{W_Y}(X,M\otimes_{\widehat \Z_\ell}^{\mathbb L}  L)\longrightarrow \RR \Gamma_{W_ Y\setminus Z_Y}(X\setminus Z_Y,M\otimes_{\widehat \Z_\ell}^{\mathbb L}  L)
$$
 to the sequence  in \eqref{eq:lem:les-pairs-cup} such that the corresponding diagram commutes. 
The lemma follows from this by setting $M:=\widehat \Z_\ell(m)$ and $L:=(\pi_X)^\ast_{\comp} K\otimes_{\widehat \Z_\ell} \widehat \Z_\ell(n)$ and taking cohomology, where we note that the restriction map from the above triangle to \eqref{eq:lem:les-pairs-cup} yields isomorphisms in cohomology by Lemma \ref{lem:excision} because $W_Y\subset W_U$ and $Z_Y\subset Z_U$.
(This shows in fact that \eqref{eq:lem:les-pairs-cup} is isomorphic to an exact triangle, hence is an exact triangle itself.) 
\end{proof}

\begin{proof}[Proof of Proposition \ref{prop:Gamma-ast-compat-Chow}]
Let $[\alpha]\in H^{2i-1}_{i-2,\nr}(X,\Z_\ell(i))$, and let $[\Gamma]\in \CH^c(X\times Y)$. 
Applying either Theorem \ref{thm:moving} to $\Gamma$ or Corollary \ref{cor:inj+purity-thm-body}\eqref{item:thm:purity-body} to $[\alpha]$ (\textit{cf.} the proof of Corollary \ref{cor:motivic-body}), we can assume that 
there is a representative $\alpha \in H^{2i-1}(U,\Z_\ell(i))$ for some open subset $U\subset X$ whose complement $R=X\setminus U$ is pure-dimensional of codimension $i$ and such that $R\times Y$ meets $W:=\supp \Gamma$ properly.
Let $S:=q( (R\times Y) \cap W)$ and $V=Y\setminus S$.
Then the class
$$
\Gamma(W)_\ast(\alpha)\in H^{2(i+c-d_X)-1} (V,\Z_\ell(i+c-d_X))
$$
from Lemma \ref{lem:Gamma-ast} represents $[\Gamma]_\ast [\alpha]\in H^{2(i+c-d_X)-1}_{i+c-d_X-2,\nr}(Y,\Z_\ell(i+c-d_X))$. 
This yields via \eqref{eq:Grifft=H_nr} an ($\ell$-adic)  cycle on $Y$, and we aim to show that this cycle is $[\Gamma]_\ast[z]$, 
where $[z]\in A^i_0(X)_{\Z_\ell}$ is the class represented by $[\alpha]$ via \eqref{eq:Grifft=H_nr}.

To begin with, we aim to describe the cycle $z$ on $X$ explicitly.
To this end,  let
$$
\del \alpha\in H^{2i}_{R}(X,\Z_\ell(i)) ,
$$
where $\del$ denotes the residue map from Lemma \ref{lem:les-pairs}.
Since $k$ is perfect, $R$ is generically smooth, and so its singular locus $R^{\sing}$ has codimension at least $ i+1$ in $X$.
Hence, Lemma \ref{lem:purity-les} implies that the natural map
$$
H^{2i}_{R}(X,\Z_\ell(i))\stackrel{\lowcong} \longrightarrow H^{2i}_{ R^{\sm}}(X\setminus R^{\sing},\Z_\ell(i))
$$
is an isomorphism, where $R^{\sm}=R\setminus R^{\sing}$.
By purity (see Lemma \ref{lem:purity}), there is a natural isomorphism
$$
H^{2i}_{R^{\sm}}(X\setminus R^{\sing},\Z_\ell(i))\cong H^{0}(R^{\sm},\Z_\ell(0)) .
$$
Combining the above isomorphisms, we see that the  natural map
\begin{align} \label{eq:iso:appendixB}
H^{2i}_{R}(X,\Z_\ell(i))\longrightarrow 
\bigoplus_{x\in  R ^{(0)}}H^0(x,\Z_\ell)=\bigoplus_{x\in R^{(0)}} [x]\Z_\ell
\end{align}
given by pullback to $X\setminus R^{\sing}$ and purity is an isomorphism.
The image of $\del \alpha$ via \eqref{eq:iso:appendixB} is a cycle $z \in Z^i(X)_{\Z_\ell}$ with coefficients in $\Z_\ell$ whose support is given by some components of $R$.
In fact,  we get 
\begin{align} \label{eq:Theta}
\cl_{R}^X(z)=\del \alpha \in H^{2i}_{R}(X,\Z_\ell(i)) ,
\end{align}  
which defines $z$ uniquely  because \eqref{eq:iso:appendixB} is an isomorphism.
It follows directly from the construction of the map in  \eqref{eq:Grifft=H_nr} (see \cite[Proposition 7.11]{Sch-refined}) that via  \eqref{eq:Grifft=H_nr}, the class $[z]\in A^i_0(X)_{\Z_\ell}$ is  represented by 
$$
[\alpha]\in H^{2i-1}_{i-2,\nr}(X,\Z_\ell(i))/H^{2i-1} X,\Z_\ell(i)).
$$

Similarly, the unramified class  $[\Gamma]_\ast [\alpha]\in H^{2(i+c-d_X)-1}_{i+c-d_X-2,\nr}(Y,\Z_\ell(i+c-d_X))$ corresponds to the cycle $z'$ with $\supp z'\subset S$ on $Y$,  which is uniquely determined by
\begin{align} \label{eq:Omega}
\cl_S^Y(z')=\del ( \Gamma(W)_\ast(\alpha)) \in H^{2(i+c-d_X)}_S (Y,\Z_\ell(i+c-d_X)).
\end{align}

By the construction of $\Gamma(W)_\ast(\alpha)$, we have
$$
\Gamma(W)_\ast(\alpha)=q_\ast( \exc( \cl_{W}^{X\times Y}(\Gamma)\cup p^\ast \alpha)) ,
$$
where $\exc\colon H^{2i-1+2c}_{W}(U\times Y, \Z_\ell(i+c))\stackrel{\lowcong}\to H^{2i-1+2c}_{W}((X\times Y)\setminus W_R, \Z_\ell(i+c)) $ with $W_R:=W\cap R\times Y$ is the isomorphism given by excision.
By Lemma \ref{lem:les-pairs}\eqref{item:les:pushforward}, we find
$$
\del (\Gamma(W)_\ast(\alpha))=q_\ast\left( \del \left(  \exc\left(\cl_{W}^{X\times Y}(\Gamma)\cup p^\ast \alpha\right) \right) \right).
$$
Since
$
\cl_W^{X\times Y}(\Gamma)\in H^{2c}_{W }(X\times Y ,\Z_\ell(c)) ,
$
 Lemma \ref{lem:les-pairs-cup} implies that 
$$
\del(\exc (\cl_W^{X\times Y}(\Gamma) \cup p^\ast \alpha))=\cl_W^{X\times Y}(\Gamma) \cup \del(p^\ast \alpha) \in H^{2(i+c)}_{W_{R}}(X\times Y,\Z_\ell(i+c)),
$$
where $W_{R}=W\cap (R\times Y)$.
By Lemma \ref{lem:les-pairs}\eqref{item:les:pullback}, 
$$
\del(p^\ast \alpha) =p^\ast (\del \alpha)=p^\ast \cl_{R}^X(z) \in H^{2i}_{R\times Y}(X\times Y,\Z_\ell(i)),
$$
where $\cl_{R}^X(z)=\del \alpha$ is from \eqref{eq:Theta}.
By Lemma \ref{lem:pullback-cycle-class}, $p^\ast \cl_{R}^X(z)= \cl_{R\times Y}^{X\times Y}(p^\ast z)$ and so
$$
\del(\cl_W^{X\times Y}(\Gamma) \cup p^\ast \alpha )=\cl_W^{X\times Y}(\Gamma) \cup \cl_{R\times Y}^{X\times Y}(p^\ast\ z)\in H^{2(i+c)}_{W_{R}}(X\times Y,\Z_\ell(i+c)).
$$
By Lemma \ref{lem:cycle-class-cup},
$$
\cl_W^{X\times Y}(\Gamma) \cup \cl_{R\times Y}^X(p^\ast z)=\cl_{W_{R}}^{X\times Y}(\Gamma\cdot p^\ast z)\in H^{2(i+c)}_{W_{R}}(X\times Y,\Z_\ell(i+c)).
$$
Since $W$ and $R\times Y$ meet properly by assumption,  $W_{R}$ has codimension $c+i$, and the natural restriction map together with purity (see Lemma \ref{lem:purity}) yield a map
$$
H^{2(i+c)}_{W_{R}}(X\times Y,\Z_\ell(i+c))\longrightarrow \bigoplus_{x\in W_{R}^{(0)}} H^0(x,\Z_\ell)=\bigoplus_{x\in W_{R}^{(0)}}[x]\cdot \Z_\ell
$$
which, by Lemma \ref{lem:purity-les}, is an isomorphism as before.
The image of $\cl_{W_{R}}^{X\times Y}(\Gamma\cdot p^\ast\ z)$ via this map is given by the cycle $\Gamma\cdot p^\ast z$, where multiplicities are computed via Serre's Tor formula; see \cite[Section~20.4]{fulton}.
The cycle 
$$
z' \in \bigoplus_{x\in S_{(d_Y+d_X-i-c)}}[x]\cdot \Z_\ell
$$
from \eqref{eq:Omega} is thus given by the pushforward of  $\Gamma\cdot p^\ast z$ via $q\colon z'=q_\ast(\Gamma\cdot p^\ast z) $, 
which proves the proposition.
\end{proof}

\subsection{Transcendental Abel--Jacobi maps are motivic}

Let $X$ be a smooth equi-dimensional scheme over a perfect field $k$, and let $\ell$ be a prime invertible in $k$.
We let $H^i(X,\Q_\ell(n)):=H^i(X_\proet,\widehat \Z_\ell(n)) \otimes_{\Z_\ell} \Q_\ell$ and $H^i(X,\Q_\ell/\Z_\ell(n))=\colim_r H^i(X_\proet,\nu^\ast \mu_{\ell^r}^{\otimes n})$, where we note that $H^i(X_\proet,\nu^\ast \mu_{\ell^r}^{\otimes n})\cong H^i(X_\et, \mu_{\ell^r}^{\otimes n})$; \textit{cf.}  Lemma \ref{lem:BS-H_cont=H_proet}.

Recall the $\ell$-adic cycle group $A_0^i(X)_{\Z_\ell}$ from \eqref{eq:A0iX}.
We denote by $A_0^i(X)[\ell^\infty]$ the torsion subgroup of $A_0^i(X)_{\Z_\ell}$.
By \cite[Section~7.5]{Sch-refined}, there is a transcendental Abel--Jacobi map on torsion cycles
\begin{equation}\label{**}
  \lambda_{\tr}^i\colon A_0^i(X)[\ell^\infty]\longrightarrow H^{2i-1}(X,\Q_\ell/\Z_\ell(i))/N^{i-1}H^{2i-1}(X,\Q_\ell (i)),
\end{equation}
where $N^\ast$ denotes Grothendieck's coniveau filtration on cohomology; \textit{i.e.} $\alpha\in H^{2i-1}(X,\Q_\ell (i))$ lies in $N^jH^{2i-1}(X,\Q_\ell (i))$ if $\alpha$ vanishes on the complement of a closed codimension $j$ set of $X$.
Correspondences between smooth projective equi-dimensional $k$-schemes act on both sides of \eqref{**}, and the main result in this section is that these actions are compatible with the map $\lambda_{tr}^i$.

\begin{corollary}
Let $X$ and $Y$ be  smooth projective equi-dimensional  schemes over a perfect  field $k$, let $d_X:=\dim (X)$, and let $\ell$ be a prime that is invertible in $k$.
Let $[\Gamma]\in \CH^c(X\times Y)$ be a correspondence.
Then the following diagram is commutative:
$$
\xymatrix{
A_0^i(X)[\ell^\infty]\ar[rrr]^-{\lambda_{tr}^i} \ar[d]^{[\Gamma]_\ast} & & & \frac{H^{2i-1}(X,\Q_\ell/\Z_\ell(i))}{N^{i-1}H^{2i-1}(X,\Q_\ell (i))} \ar[d]^{[\Gamma]_\ast} \\
A_0^{i+c-d_X}(Y)[\ell^\infty]\ar[rrr]^-{\lambda_{tr}^{i+c-d_X}}& & & \frac{H^{2i+2c-2d_X-1}(Y,\Q_\ell/\Z_\ell(i+c-d_X))}{N^{i+c-d_X-1}H^{2i+2c-2d_X-1}(Y,\Q_\ell (i+c-d_X))} .
}
$$
\end{corollary}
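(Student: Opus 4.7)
The plan is to realize both sides of the diagram as quotients of groups on which Corollary \ref{cor:motivic-body} furnishes a natural $[\Gamma]_\ast$-action, and then invoke Proposition \ref{prop:Gamma-ast-compat-Chow} as the key input identifying this action with the correspondence action on cycles.

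First, I would recall from \cite[\S 7]{Sch-refined} that $\lambda_{tr}^i$ is built out of the Bockstein sequence associated to $0 \to \Z_\ell(i) \to \Q_\ell(i) \to \Q_\ell/\Z_\ell(i) \to 0$, combined with the isomorphism (\ref{eq:Grifft=H_nr}). A torsion class $[z] \in A_0^i(X)[\ell^\infty]$ corresponds under (\ref{eq:Grifft=H_nr}) to an $\ell^\infty$-torsion class $[\alpha]$ in $H^{2i-1}_{i-2,nr}(X,\Z_\ell(i))/H^{2i-1}(X,\Z_\ell(i))$; an $\ell^r$-torsion lift of $[\alpha]$ to $H^{2i-1}_{i-2,nr}(X,\Q_\ell/\Z_\ell(i))$ then maps, via the natural map to ordinary cohomology of the pro-scheme $F_{i-1}X$, into the target of $\lambda_{tr}^i$. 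Here one uses that $N^{i-1}H^{2i-1}(X,\Q_\ell(i))$ is precisely the kernel of restriction to $F_{i-1}X$, i.e.\ the Bloch--Ogus description of the coniveau filtration.

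Next, I would apply Corollary \ref{cor:motivic-body} to the cohomology theories with coefficients $\Z_\ell(n)$ and $\Q_\ell/\Z_\ell(n)$, both of which satisfy the axioms of Section \ref{sec:axioms} by Proposition \ref{prop:pro-etale-coho}. This yields a $[\Gamma]_\ast$-action on every refined unramified cohomology group appearing in the construction of $\lambda_{tr}^i$. Because the pairing in Corollary \ref{cor:motivic-body} is built from pullbacks, pushforwards, and cup product with a cycle class (see Lemma \ref{lem:Gamma-ast}), each of which is functorial in the coefficient complex, the action commutes with the Bockstein connecting maps. Moreover, item (\ref{item:thm:motivic-body:restriction}) of Corollary \ref{cor:motivic-body} shows that $[\Gamma]_\ast$ respects the filtration $F^\ast$, hence descends to the quotient by the coniveau filtration on the right-hand side.

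Finally, Proposition \ref{prop:Gamma-ast-compat-Chow} identifies the $[\Gamma]_\ast$-action on $H^{2i-1}_{i-2,nr}(X,\Z_\ell(i))/H^{2i-1}(X,\Z_\ell(i))$ with the correspondence action on $A_0^i(X)_{\Z_\ell}$; restricting to $\ell^\infty$-torsion yields equivariance of the left vertical arrow in the diagram. Putting the three ingredients together, $\lambda_{tr}^i$ is exhibited as a composition of $[\Gamma]_\ast$-equivariant maps, and the commutativity of the diagram follows. The main obstacle I anticipate is the careful verification that $\lambda_{tr}^i$ really does factor through refined unramified cohomology in the precise way described, and that the action of $[\Gamma]_\ast$ on ordinary cohomology---recovered from Corollary \ref{cor:motivic-body} by taking $j \geq \dim X$ via Lemma \ref{lem:H_nr=H}---genuinely respects the coniveau filtration $N^{i-1}$; this last point reduces, via the filtration $F^\ast$, to the purity/injectivity theorem of Corollary \ref{cor:injectivity+purity-thm}.
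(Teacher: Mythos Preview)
Your proposal is essentially correct and follows the same overall strategy as the paper: both use Proposition~\ref{prop:Gamma-ast-compat-Chow} for the cycle side and Corollary~\ref{cor:motivic-body} for compatibility of the correspondence action with the refined unramified groups entering the construction of $\lambda_{tr}^i$.

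The difference is one of presentation rather than substance. You package the compatibility step abstractly, appealing to functoriality of $\Gamma(W)_\ast$ in the coefficient complex and compatibility with the Bockstein, whereas the paper works concretely with the explicit description of $\lambda_{tr}^i$ from \cite[Lemma~7.15]{Sch-refined}: given $[z]$ with $\ell^r[z]=0$, choose $[\alpha]$ representing $[z]$ via (\ref{eq:Grifft=H_nr}), lift $\ell^r[\alpha]$ to $\beta\in H^{2i-1}(X,\Z_\ell(i))$, and set $\lambda_{tr}^i([z])=[\beta/\ell^r]$. The paper then directly computes $\Gamma(W)_\ast(\alpha)$, $\Gamma(W)_\ast(\beta)$, and $\Gamma(W)_\ast(\beta/\ell^r)$, using item~(\ref{item:thm:motivic-body:restriction}) of Corollary~\ref{cor:motivic-body} to see that $\ell^r\Gamma(W)_\ast(\alpha)$ extends to $\Gamma(W)_\ast(\beta)$ on all of $Y$. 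This sidesteps any need to formulate a general ``$[\Gamma]_\ast$ commutes with Bockstein'' statement. Your more abstract route would work as well, but your description of $\lambda_{tr}^i$ (as a lift to $H^{2i-1}_{i-2,nr}(X,\Q_\ell/\Z_\ell(i))$ followed by a map to cohomology of $F_{i-1}X$) is imprecise and does not quite match the actual construction; you would need to replace it by the concrete formula above before the argument goes through cleanly. The concern you raise about $[\Gamma]_\ast$ preserving $N^{i-1}$ is handled automatically in the paper's computation, since everything is expressed in terms of the filtration $F^\ast$ on refined unramified cohomology, which is preserved by item~(\ref{item:thm:motivic-body:restriction}).
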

\begin{proof}
By \cite[Lemma 7.15]{Sch-refined},  the map $\lambda_{tr}^i$ can be described as follows.
Let $[z]\in A_0^i(X)[\ell^\infty]$, and let $[\alpha]\in H^{2i-1}_{i-2,\nr}(X,\Z_\ell(i))$ be a class that represents $[z]$ via the isomorphism in \eqref{eq:Grifft=H_nr}.
This means that whenever $[\alpha]$ can be represented by a class $\alpha\in H^i(U,\Z_\ell(i))$ on some open subset $U\subset X$ whose complement $R=X\setminus U$ has codimension $2i$,  the residue 
$$
\del \alpha\in H^{2i}_R(X,\Z_\ell(i))\cong \bigoplus_{x\in R_{(d_X-i)} }\Z_\ell [x]
$$
is a cycle $z$ that represents the class $[z]$.
Since $[z]$ is torsion, there is some $r\geq 1$ such that $\ell^r[z]=0\in A^i(X)_{\Z_\ell}$.
This implies that $\ell^r [\alpha]$ lifts to a class $[\beta]\in H^{2i-1}(X,\Z_\ell(i))$.
Then, $[\beta/\ell^r]$ gives rise to a class in $H^{2i-1}(X,\Q_\ell(i))$ and hence in $H^{2i-1}(X,\Q_\ell/\Z_\ell(i))$, and we have
$$
\lambda_{tr}^i([z]) = [\beta/\ell^r]\in H^{2i-1}(X,\Q_\ell/\Z_\ell(i))/N^{i-1}H^{2i-1}(X,\Q_\ell (i)) .
$$

By Proposition \ref{prop:Gamma-ast-compat-Chow}, the cycle $[\Gamma]_\ast [z]$ corresponds via \eqref{eq:Grifft=H_nr} to the class 
$$
[\Gamma]_\ast [\alpha] \in  H^{2i+2c+2d_X-1}_{i+c-d_X-2,\nr}(X,\Z_\ell(i+c+d_X)).
$$
Applying the moving lemma (Theorem \ref{thm:moving}) to $\Gamma$ or Corollary \ref{cor:inj+purity-thm-body}\eqref{item:thm:purity-body} to $[\alpha]$, we can assume that there is a representative  $\alpha\in H^i(U,\Z_\ell(i))$ of $[\alpha]$ as above such that $R\times Y$ meets $W:=\supp \Gamma$ in codimension at least $i+c$.
We then let $S=q(W\cap (R\times Y))$ and $V=Y\setminus S$ and find by Corollary \ref{cor:motivic-body}\eqref{item:thm:motivic-body:Gamma(W)}  that $[\Gamma]_\ast [\alpha] $ is represented by
$$
\Gamma(W)_\ast(\alpha)\in H^{2i+2c-2d_X-1}(V,\Z_\ell(i+c-d_X)) 
$$
from Lemma \ref{lem:Gamma-ast}.
By Corollary \ref{cor:motivic-body}\eqref{item:thm:motivic-body:restriction},  we find that $\ell^r\cdot \Gamma(W)_\ast(\alpha)=\Gamma(W)_\ast(\ell^r\cdot \alpha)$ extends to the class $\Gamma(W)_\ast(\beta)\in H^{2i+2c-2d_X-1}(Y,\Z_\ell(i+c-d_X))$.
Hence, by the description of $\lambda_{tr}^{i }$ given above, we find that 
$$
\lambda_{tr}^i([\Gamma]_\ast [z]) = [\Gamma(W)_\ast \beta/\ell^r]\in  \frac{H^{2i+2c-2d_X-1}(Y,\Q_\ell/\Z_\ell(i+c-d_X))}{N^{i+c-d_X-1}H^{2i+2c-2d_X-1}(X,\Q_\ell (i+c-d_X))} .
$$
The class $\Gamma(W)_\ast(\beta/\ell^r)\in H^{2i+2c-2d_X-1}(Y,\Q_\ell/\Z_\ell(i+c-d_X))$ from Lemma \ref{lem:Gamma-ast} agrees by construction with $[\Gamma]_\ast (\beta/\ell^r)$, \textit{i.e.}  with the image of $\beta/\ell^r$ via the action of the correspondence $\Gamma$.
This shows that the diagram in question is commutative, which concludes the proof of the corollary.
\end{proof}

 \section*{Acknowledgements}    
 I am grateful to Sergey Gorchinskiy for asking whether refined unramified cohomology is motivic  and to Kees Kok and Lin Zhou for pointing out a mistake in the previous version of Corollary \ref{cor:YxP^n}.
 Thanks to Andreas Krug for discussions and to the excellent referees for their careful reading and suggestions.


\end{document}